\def\csname opt@stmaryrd.sty\endcsname{only,shortleftarrow,shortrightarrow}
\tikzset{>=stealth}
\tikzset{rot90/.style={anchor=south, rotate=90, inner sep=.5mm}}
\numberwithin{equation}{subsection}
\newtheorem{theorem}[equation]{Theorem}
\newtheorem{proposition}[equation]{Proposition}
\newtheorem{corollary}[equation]{Corollary}
\newtheorem{lemma}[equation]{Lemma}
\newtheorem{deftheorem}[equation]{Definition/Theorem}
\newtheoremstyle{named}{}{}{\itshape}{}{\bfseries}{.}{.5em}{\thmname{#3} \thmnumber{#2}}
\theoremstyle{named}
\theoremstyle{definition}
\newtheorem{definition}[equation]{Definition}
\newtheorem{example}[equation]{Example}
\newtheorem{notation}[equation]{Notation}
\theoremstyle{remark}
\newtheorem{remark}[equation]{Remark}
\newcommand{\inv}{^{-1}}
\renewcommand{\le}{\leqslant}
\renewcommand{\ge}{\geqslant}
\newcommand{\widesim}[2][1.5]{\mathrel{\overset{#2}{\scalebox{#1}[1]{$\sim$}}}}
\newcommand{\thereis}{\,\exists\,}
\newcommand{\surj}{\twoheadrightarrow}
\newcommand{\inj}{\hookrightarrow}
\newcommand{\bij}{\xrightarrow{\raisebox{-1.2ex}[0ex][1ex]{$\sim$}}}
\newcommand{\bijects}{\xrightarrow{\,\raisebox{-1.2ex}[0ex][1ex]{$\widesim[1]{}$\,}}}
\newcommand{\aro}{\xrightarrow{\;\;\;\;\;}}
\newcommand{\acts}{\mathrel{\reflectbox{$\righttoleftarrow$}}}
\newcommand{\racts}{\righttoleftarrow}
\NewDocumentCommand{\dotimes}{t_}{\IfBooleanTF{#1}{\otimeop}{\otimes}}
\NewDocumentCommand{\otimeop}{m}{\mathbin{\mathop{\otimes}\displaylimits_{#1}}}
\NewDocumentCommand{\lotimes}{t_}{\IfBooleanTF{#1}{\lotimeop}{\otimes}}
\NewDocumentCommand{\lotimeop}{m}{\mathbin{\mathop{\overset{\mathrm{L}}{\otimes}}\displaylimits_{#1}}}
\newcommand{\xmapsfrom}[2][]{%
	\ext@arrow3095\leftarrowfill@{#1}{#2}\mapsfromchar}
\newcounter{sarrow}
\newextarrow{\xtwoarrows}{{20}{20}{20}{20}}
{\bigRelbar\bigRelbar{\bigtwoarrowsleft\rightarrow\rightarrow}}
\DeclareMathOperator{\Ad}{Ad}
\newcommand{\cHom}{\mathcal{H}om}
\DeclareMathOperator{\rank}{rank}
\DeclareMathOperator{\Ext}{Ext}
\DeclareMathOperator{\End}{End}
\DeclareMathOperator{\Sym}{Sym}
\DeclareMathOperator{\ad}{ad}
\DeclareMathOperator{\ind}{ind}
\DeclareMathOperator{\Mod}{Mod}
\newcommand{\BA}{\mathbb{A}}
\newcommand{\BC}{\mathbb{C}}
\newcommand{\BD}{\mathbb{D}}
\newcommand{\BP}{\mathbb{P}}
\newcommand{\BQ}{\mathbb{Q}}
\newcommand{\BZ}{\mathbb{Z}}
\newcommand{\cD}{\mathcal{D}}
\newcommand{\cF}{\mathcal{F}}
\newcommand{\cH}{\mathcal{H}}
\newcommand{\cI}{\mathcal{I}}
\newcommand{\cL}{\mathcal{L}}
\newcommand{\cM}{\mathcal{M}}
\newcommand{\cN}{\mathcal{N}}
\newcommand{\cO}{\mathcal{O}}
\newcommand{\cU}{\mathcal{U}}
\newcommand{\cV}{\mathcal{V}}
\newcommand{\cZ}{\mathcal{Z}}
\newcommand{\fb}{\mathfrak{b}}
\newcommand{\fc}{\mathfrak{c}}
\newcommand{\fg}{\mathfrak{g}}
\newcommand{\fh}{\mathfrak{h}}
\newcommand{\fl}{\mathfrak{l}}
\newcommand{\fn}{\mathfrak{n}}
\newcommand{\fp}{\mathfrak{p}}
\newcommand{\fs}{\mathfrak{s}}
\newcommand{\fu}{\mathfrak{u}}
\newcommand{\dtri}[3]{%
	\begin{tikzcd}[column sep = tiny, row sep = small, ampersand replacement = \&]
		{#1} \ar[rr] \&\& {#2} \ar[dl]\\
		\& {#3} \ar[ul,"{[1]}"]
\end{tikzcd}}
\DeclareMathOperator{\ch}{ch}
\title[Kazhdan-Lusztig Algorithm for Whittaker Modules]{Kazhdan-Lusztig Algorithm for Whittaker Modules with Arbitrary Infinitesimal Characters}
\author{Qixian Zhao}
\address{Department of Mathematics, University of Utah, Salt Lake City, Utah 84112, U.S.A.}
\email{zhao@math.utah.edu}
\address{Yau Mathematical Sciences Center, Tsinghua University, Haidian District, Beijing 100084, China}
\email{zhao\_qixian@tsinghua.edu.cn}
\date{\today}
\keywords{Whittaker modules, Character formula, D-modules, Localization, Kazhdan-Lusztig polynomials}
\subjclass[2020]{17B10, 32C38}
\begin{document}
\allowdisplaybreaks

\begin{abstract}
	Let $\mathfrak{g}$ be a complex semisimple Lie algebra. We give a description of characters of irreducible Whittaker modules for $\mathfrak{g}$ with any infinitesimal character, along with a Kazhdan-Lusztig algorithm for computing them. This generalizes Mili\v ci\'c-Soergel's and Romanov's results for integral infinitesimal characters. As a special case, we recover the non-integral Kazhdan-Lusztig conjecture for Verma modules.
\end{abstract}

\maketitle
\tableofcontents

\section{Introduction}\label{sec:intro}

Let $\fg$ be a complex semisimple Lie algebra. Let $\fn \subset \fg$ be a maximal nilpotent subalgebra, and let $\cZ(\fg)$ denote the center of the enveloping algebra $\cU(\fg)$ of $\fg$. This paper studies \textit{Whittaker modules} which are finitely generated $\fg$-modules that are locally finite over both $\fn$ and $\cZ(\fg)$. They originate from the study of Whittaker functionals for reductive groups which were first considered by Jacquet \cite{Jacquet:Whittaker} to study principal series representations of Chevalley groups. In the context of group representations, a Whittaker functional is, roughly speaking, a linear functional on a smooth representation of a reductive group $G$ (over a local field) that transforms according to a one dimensional representation of a unipotent subgroup $N$. The Whittaker modules we consider are natural analogous (for Lie algebras) of representations generated by a Whittaker functional.

Kostant first studied Whittaker modules in his beautiful paper \cite{Kostant:Whittaker}. He showed that when the $\fn$-character $\eta$ is non-degenerate (see \textsection\ref{subsec:Wh_prelim} for the definition of non-degeneracy) any cyclic Whittaker modules that is generated by an $\eta$-eigenvector and admits an infinitesimal characters is irreducible, and the category of Whittaker modules $\cN_\eta$ on which $\fn$ acts by $\eta$ (defined in \textsection\ref{subsec:Wh_prelim}) is equivalent to the category of finite dimensional $\cZ(\fg)$-modules. Later Mili\v ci\'c-Soergel \cite[\textsection 5]{Milicic-Soergel:Whittaker_geometric} showed that the subcategory $\cN_{\theta,\eta}$ of modules on which $\cZ(\fg)$ acts by a fixed infinitesimal character $\chi_\theta$ (defined at the beginning of \textsection\ref{sec:prelim}) is semisimple. It is then natural to ask for a description of the category $\cN_{\theta,\eta}$ in the degenerate case and, in particular, a description of composition series of cyclic modules. Towards this direction, McDowell \cite{McDowell:Whittaker} constructed and studied \textit{standard Whittaker modules} (Definition \ref{def:std_Wh_mods}), which are analogs of Verma modules. Using algebraic methods, Mili\v ci\'c and Soergel later showed that cyclic modules are filtered by standard modules (in fact a direct sum of standard modules in nice cases), and composition series of standard modules are described when the infinitesimal character $\chi_\theta$ is integral and regular \cite{Milicic-Soergel:Whittaker_algebraic}. Here integrality is a usual assumption and is the ``basic case'' compared to general infinitesimal characters.

It was observed by Mili\v ci\'c-Soergel \cite{Milicic-Soergel:Whittaker_geometric} that Kostant's result in the non-degenerate case has an alternative proof based on the localization theory of Beilinson-Bernstein \cite{Beilinson-Bernstein:Localization}, and the degenerate case should be also solvable using localization, similar to the solution of the Kazhdan-Lusztig conjecture for Verma modules. However, the latter is based on the decomposition theorem for perverse sheaves (or equivalently regular holonomic $\cD$-modules) due to Beilinson-Bernstein-Deligne \cite{Beilinson-Bernstein-Deligne:Decomposition} which does not apply to localizations of Whittaker modules \--- they have irregular singularities. Therefore, the argument for Verma modules was not translated to Whittaker modules until a decomposition theorem for general holonomic $\cD$-modules was proven by Mochizuki \cite{Mochizuki:Decomp}. Based on Mochizuki's result, Romanov proved an algorithm for computing composition series of standard Whittaker modules with integral regular infinitesimal characters \cite{Romanov:Whittaker}. Along the way, she developed a character theory for $\cN_{\theta,\eta}$ and computed characters of standard modules. 

Despite the success of the geometric methods, the results of Mili\v ci\'c-Soergel and Romanov were not extended beyond integral regular infinitesimal characters. This paper fulfills the gap. Namely, we prove a Kazhdan-Lusztig algorithm for Whittaker modules for regular infinitesimal characters, and deduce from the algorithm a character formula for irreducible Whittaker modules for arbitrary infinitesimal characters. In particular, we recover the non-integral Kazhdan-Lusztig conjecture for Verma modules.

\subsection{The character formula}\label{subsec:main_results}

To state the character formula, let us introduce more notations.  We choose a Cartan subalgebra $\fh$ of $\fg$ normalizing $\fn$ (so that $\fb = \fh + \fn$ is a Borel subalgebra). Let $G$ be a connected algebraic group over $\BC$ with Lie algebra $\fg$ and write $N$ for the unipotent subgroup with Lie algebra $\fn$. We write $\Sigma \supset \Sigma^+ \supset \Pi$ and $W$ for the set of roots of $(\fg,\fh)$, the set of positive roots determined by $\fn$, the set of simple roots, and the Weyl group of $\Sigma$, respectively. We set
\begin{equation}\label{eqn:defn_of_Theta_intro}
	\Theta = \{ \alpha \in \Pi \mid \eta \text{ is nonzero on the $\alpha$-root space in } \fn \}
\end{equation}
and let $\lambda \in \fh^*$. We use a subscript $\Theta$ (resp. $\lambda$) on objects to denote corresponding subobjects that are defined by $\Theta$ (resp. integral to $\lambda$). So $\Sigma_\Theta$ is the subsystem of $\Sigma$ generated by $\Theta$; the parabolic subgroup $W_\Theta$ is the Weyl group of $\Sigma_\Theta$, embedded as a subgroup of $W$; the integral root system $\Sigma_\lambda$ consists of those roots $\alpha \in \Sigma$ so that $\alpha^\vee(\lambda) \in \BZ$, where $\alpha^\vee$ is the coroot of $\alpha$; the set of positive integral roots $\Sigma_\lambda^+$ is defined to be $\Sigma_\lambda \cap \Sigma^+$ and $\Pi_\lambda \subseteq \Sigma_\lambda^+$ is the corresponding set of simple roots (which may not be simple in $\Sigma^+$); the integral Weyl group $W_\lambda$ is the Weyl group of $\Sigma_\lambda$, which can be embedded in $W$ as $\{w \in W \mid w \lambda - \lambda \in \BZ \cdot \Sigma \}$. We write $\theta$ for the Weyl group orbit of $\lambda$.

Let us fix a $\lambda$ that is antidominant regular with respect to $\Sigma^+$. This means $\alpha^\vee(\lambda)$ is not a non-negative integer for all $\alpha \in \Sigma^+$. The Grothendieck group of the category $\cN_{\theta,\eta}$ has two natural basis: one is given by McDowell's standard Whittaker modules $M(w^C \lambda, \eta)$, and another is given by irreducible quotients $L(w^C \lambda,\eta)$ of the standard modules, both labeled by right $W_\Theta$-cosets $C$ in $W$. Here, $w^C$ denotes the unique longest element in $C$ under the Bruhat order. Romanov defined a character map $\ch$ on objects of $\cN_{\theta,\eta}$ that factors through the Grothendieck group \cite[\textsection 2.2]{Romanov:Whittaker}. We aim to express the character of $L(w^C \lambda,\eta)$ in terms of the characters of standard modules $M(w^D \lambda,\eta)$ (the latter were computed by Romanov). These facts about Whittaker modules will be recalled in \textsection\ref{subsec:Wh_prelim}. 

The precise expression of the character involves combinatorial data extracted from double cosets $W_\Theta \backslash W / W_\lambda$. Each double coset $W_\Theta u W_\lambda$ contains a unique shortest element $u$ with respect to Bruhat order (Corollary \ref{thm:cross-section_db_coset}). We can then take the intersections of $u W_\lambda$ with various right $W_\Theta$-cosets in $W_\Theta u W_\lambda$. This produces a partition of $u W_\lambda$. Left-translating back into $W_\lambda$, we obtain a partition of $W_\lambda$, which coincides with the partition given by right $W_{\lambda,\Theta(u,\lambda)}$-cosets of $W_\lambda$ (Proposition \ref{lem:int_Whittaker_model}). Here, $W_{\lambda,\Theta(u,\lambda)}$ is a parabolic subgroup of $W_\lambda$ corresponding to the subset of simple roots $\Theta(u,\lambda) = u\inv \Sigma_\Theta \cap \Pi_\lambda \subseteq \Pi_\lambda$. We thus obtain a map from the set of right $W_\Theta$-cosets in $W_\Theta u W_\lambda$ to the set of right $W_{\lambda,\Theta(u,\lambda)}$-cosets in $W_\lambda$, i.e. a map
\begin{equation}\label{eqn:defn_of_(-)|_lambda_intro}
	(-)|_\lambda: W_\Theta \backslash W_\Theta u W_\lambda \to W_{\lambda,\Theta(u,\lambda)} \backslash W_\lambda
\end{equation}
(Notation \ref{not:right_coset_partition}). Recall that there is a partial order $\le$ on $W_\Theta \backslash W$ inherited from the restriction of Bruhat order to the set of the longest element in each coset (see \textsection\ref{subsec:WTheta_prelim}). We denote the partial order on $W_{\lambda,\Theta(u,\lambda)} \backslash W_\lambda$ by $\le_{u,\lambda}$. 

The double cosets reflect the block decomposition of $\cN_{\theta,\eta}$ (here a ``block'' means an indecomposable direct summand of $\cN_{\theta,\eta}$). On the level of character formula, $\ch M(w^D \lambda,\eta)$ ($\ch$ denotes the character map) appears in $\ch L(w^C \lambda,\eta)$ only if $D$ and $C$ are in the same double coset $W_\Theta u W_\lambda$ and $D|_\lambda \le_{u,\lambda} C|_\lambda$ (for which we will simply write $D \le_{u,\lambda} C$). The precise coefficient of $\ch M(w^D \lambda,\eta)$ is described by Whittaker Kazhdan-Lusztig polynomials. For a triple $(W,\Pi,\Theta)$, Whittaker Kazhdan-Lusztig polynomials are polynomials $P_{CD} \in \BZ[q]$ labeled by pairs $(C,D)$ of right $W_\Theta$-cosets with $D \le C$ (Definition/Theorem \ref{def:parabolic_KL_poly_Theta}). These polynomials compute (at $q = -1$) the character formula of irreducible Whittaker modules for integral infinitesimal characters. Applied to the triple $(W_\lambda, \Pi_\lambda, \Theta(u,\lambda))$ and the pair $(C|_\lambda,D|_\lambda)$, we obtain polynomials $P_{CD}^{u,\lambda} = P_{C|_\lambda, D|_\lambda}^{u,\lambda}$ (Definition/Theorem \ref{def:parabolic_KL_poly}, or see \ref{enum:WKL_basis_expansion} and \ref{enum:WKL_basis_U} in \textsection\ref{subsec:geom_idea}).

\begin{theorem}[Character formula: regular case]\label{thm:multiplicity_intro}
	Let $\lambda$ be antidominant regular. For any $C \in W_\Theta \backslash W$, let $W_\Theta u W_\lambda$ be the double coset containing $C$, where $u$ is the unique shortest element in this double coset. Then
	\begin{equation*}
		\ch L(w^C\lambda,\eta) = \ch M(w^C\lambda,\eta) +
		\sum_{\substack{D \in W_\Theta \backslash W_\Theta u W_\lambda\\%
				D <_{u,\lambda} C}} 
		P_{CD}^{u,\lambda}(-1) \ch M(w^D \lambda,\eta),
	\end{equation*}
\end{theorem}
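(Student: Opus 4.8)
The plan is to reduce the regular non-integral case to the already-known integral case by passing through the geometry of twisted $\mathcal{D}$-modules on the flag variety $X = G/B$. First I would recall, via Beilinson-Bernstein localization at the regular antidominant $\lambda$, that $\mathcal{N}_{\theta,\eta}$ is equivalent to a category of $N$-equivariant (with respect to the character $\eta$) holonomic $\mathcal{D}_\lambda$-modules on $X$. Under this equivalence the standard module $M(w^C\lambda,\eta)$ corresponds to the standard $\mathcal{D}_\lambda$-module supported on the closure of the $N$-orbit indexed by the coset $C$ (these orbits are exactly parametrized by $W_\Theta\backslash W$, since the $N$-orbit structure refines the $B$-orbit structure by the $\Theta$-data), and $L(w^C\lambda,\eta)$ corresponds to the intersection-cohomology extension of the $\eta$-twisted local system on that orbit. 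The character map $\ch$ intertwines with taking stalks/Euler characteristics of these $\mathcal{D}$-modules, so the sought multiplicity is computed by the graded dimensions of the stalk cohomology of the IC sheaf — that is, by the Whittaker Kazhdan-Lusztig polynomials $P_{CD}$ of the triple $(W,\Pi,\Theta)$ — evaluated at $q=-1$. The point of the whole argument is that when $\lambda$ is \emph{non-integral}, the relevant orbits and the relevant twist only see the integral directions, so these polynomials collapse onto the smaller triple $(W_\lambda,\Pi_\lambda,\Theta(u,\lambda))$.

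The key steps, in order, are: (1) Establish the localization equivalence and the dictionary $M(w^C\lambda,\eta)\leftrightarrow$ standard, $L(w^C\lambda,\eta)\leftrightarrow$ IC, compatibly with $\ch$; this uses Romanov's character theory (recalled in \textsection\ref{subsec:Wh_prelim}) together with Mochizuki's decomposition theorem for irregular holonomic $\mathcal{D}$-modules to guarantee that IC extensions decompose with the expected multiplicities. (2) Identify the block decomposition: show that the indecomposable summands of the relevant $\mathcal{D}$-module category are indexed by the double cosets $W_\Theta\backslash W/W_\lambda$, so that $\ch M(w^D\lambda,\eta)$ can occur in $\ch L(w^C\lambda,\eta)$ only when $C,D$ lie in the same $W_\Theta u W_\lambda$. (3) For a fixed double coset with shortest element $u$, translate the geometry on the $\lambda$-relevant locus by $u$ to identify the local combinatorics with that of the triple $(W_\lambda,\Pi_\lambda,\Theta(u,\lambda))$: this is exactly the content of Proposition \ref{lem:int_Whittaker_model} and the map $(-)|_\lambda$ of \eqref{eqn:defn_of_(-)|_lambda_intro}, so the partial order $\le$ restricted to $W_\Theta u W_\lambda$ matches $\le_{u,\lambda}$, and the stalk-cohomology polynomials $P_{CD}$ become the polynomials $P^{u,\lambda}_{CD}$ of the smaller triple. (4) Invoke the integral case of the algorithm — i.e. the integral-$\lambda$ instance of this same character formula, which is essentially Romanov's theorem together with Definition/Theorem \ref{def:parabolic_KL_poly_Theta} — applied to $(W_\lambda,\Pi_\lambda,\Theta(u,\lambda))$, and read off the coefficients. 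Summing over $D$ in the double coset and specializing $q=-1$ yields the displayed formula, with the diagonal term $\ch M(w^C\lambda,\eta)$ coming from $P^{u,\lambda}_{CC}=1$.

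The main obstacle I expect is step (1) combined with step (3): making the localization–delocalization comparison precise for \emph{non-integral} $\lambda$ and checking that it is genuinely compatible with the character map $\ch$ and with the normalizations of the standard/irreducible objects. Concretely, one must verify that translating by $u$ on the flag variety carries $\eta$-twisted $N$-equivariant $\mathcal{D}_\lambda$-modules on the $W_\Theta u W_\lambda$-stratum to $\eta'$-twisted $N_\lambda$-equivariant $\mathcal{D}_{\lambda}$-modules (for a suitable integral-type datum), preserving IC extensions, perverse degrees, and hence stalk cohomology; the subtlety is purely in bookkeeping the $\Theta$-character under this translation, which is where $\Theta(u,\lambda)=u^{-1}\Sigma_\Theta\cap\Pi_\lambda$ enters. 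A secondary difficulty is confirming that Mochizuki's decomposition theorem applies uniformly across the non-integral strata so that no spurious multiplicities appear; once that is in hand, the passage to $q=-1$ and the final summation are formal.
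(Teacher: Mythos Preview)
Your step (3) contains a genuine gap that cannot be filled as stated. You propose to ``translate by $u$ on the flag variety'' so as to carry $\eta$-twisted $N$-equivariant $\cD_\lambda$-modules to ``$\eta'$-twisted $N_\lambda$-equivariant'' modules and thereby reduce to Romanov's integral result applied to $(W_\lambda,\Pi_\lambda,\Theta(u,\lambda))$. The problem is that in general there is no group $N_\lambda$ and no flag variety for $W_\lambda$: the integral root system $\Sigma_\lambda$ need not be the root system of any Levi or reductive subgroup of $G$, so there is no smaller geometric setup on which Romanov's theorem could be invoked. Proposition~\ref{lem:int_Whittaker_model} is purely combinatorial and does not furnish such a geometric identification. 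Relatedly, your claim that ``the partial order $\le$ restricted to $W_\Theta u W_\lambda$ matches $\le_{u,\lambda}$'' is false in general; only the one-way implication $D'\le_{u,\lambda}C'\Rightarrow D\le C$ holds (Proposition~\ref{thm:int_Whittaker_model}), and the stronger condition $D<_{u,\lambda}C$ in the theorem is genuinely finer than $D<C$.

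The paper's actual argument is quite different and never leaves the flag variety $X$ of $G$. It proves a four-part algorithm (Theorem~\ref{thm:KL_alg}) by induction on $\ell(w^C)$, working simultaneously over all $\lambda$ in a $W$-orbit. For an integral simple root $\alpha\in\Pi\cap\Pi_\lambda$ one uses the push-pull functor $U_\alpha$ along $X\to X_\alpha$ exactly as in Romanov; but for a \emph{non-integral} simple root $\beta\in\Pi-\Pi_\lambda$ no such $U$-functor exists (there is no sheaf of twisted differential operators on $X_\beta$ pulling back to $\cD_\lambda$). The replacement is the \emph{intertwining functor} $I_{s_\beta}:\Mod_{qc}(\cD_\lambda)\to\Mod_{qc}(\cD_{s_\beta\lambda})$, which is an equivalence when $\beta$ is non-integral (Theorem~\ref{lem:non-int_I}) and sends standards and irreducibles to standards and irreducibles indexed by $Cs_\beta$ (Corollary~\ref{lem:I_on_std}, Proposition~\ref{thm:Is_pullback}). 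A combinatorial lemma (Proposition~\ref{lem:decrease_of_length}) guarantees that one can always string together enough non-integral intertwining functors to reach a situation where an integral simple root in $\Pi_\lambda$ becomes simple in $\Pi$, at which point the $U$-functor and the induction hypothesis apply. Once the algorithm is established, the character formula follows by taking parts (1) and (4), specializing at $q=-1$, and passing through Beilinson--Bernstein and the character map. The intertwining functors, not a direct geometric reduction to a smaller group, are the essential new ingredient.
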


This appears as Theorem \ref{thm:multiplicity} below. We also extend this to singular $\lambda$ in Theorem \ref{thm:multiplicity_singular}.  At the special case $\eta = 0$, we recover the non-integral Kazhdan-Lusztig conjecture for Verma modules (Corollary \ref{thm:Verma_multiplicity}). 

The above formula follows from an algorithm (so called Kazhdan-Lusztig algorithm), namely Theorem \ref{thm:KL_alg}. The proof of the algorithm is done by studying (weakly) equivariant $\cD$-modules.
We postpone the statement of the algorithm to the next subsection.

\subsection{The Kazhdan-Lusztig algorithms and an outline of proof}\label{subsec:geom_idea}

Before discussing the extension to non-integral infinitesimal characters, let us first discuss Romanov's work in the integral case. Her argument is in the same spirit as the algorithm for highest weight modules which we now recall. For the backgrounds of localization theory relevant to Whittaker modules, we refer the readers to \textsection\ref{subsec:geom_prelim}.

For a fixed integral regular infinitesimal character $\chi_\theta$, the Grothendieck group $K\cN_{\theta,0}$ of the highest weight category $\cN_{\theta,0}$ is free abelian with two natural bases labeled by $W$: one is given by irreducible objects $L_w$, and the other given by Verma modules $M_w$. The original Kazhdan-Lusztig conjecture is a description of the change of basis matrix. The strategy of the proof of the conjecture is to study $\cD$-modules on the flag variety $X$ corresponding to highest weight modules. To say more detail, recall that any infinitesimal character $\chi_\theta: \cZ(\fg) \to \BC$ is determined by a Weyl group orbit $\theta$ in $\fh^*$ (we recall the definition of $\chi_\theta$ in \textsection\ref{sec:prelim}). Let $\lambda \in \theta$ be an antidominant element with respect to roots in $\Sigma^+$. Beilinson-Bernstein's localization theory \cite{Beilinson-Bernstein:Localization} gives us an equivalence of categories
\begin{equation*}
	\cN_{\theta,0} \cong \Mod_{coh}(\cD_\lambda,N).
\end{equation*}
Here, $\cD_\lambda$ is a certain twisted sheaf of differential operators on the flag variety $X$ of $\fg$, and the category $\Mod_{coh}(\cD_\lambda,N)$ is the category of $N$-equivariant coherent $\cD_\lambda$-modules on $X$. The images $\cL_w$ and $\cM_w$ of $L_w$ and $M_w$ have natural geometric meanings. We call $\cM_w$ the \textit{costandard modules}. If we write $[\cL_w]$ and $[\cM_w]$ for their classes in the Grothendieck group $K \Mod_{coh}(\cD_\lambda,N)$, the Kazhdan-Lusztig conjecture is now the problem of expressing $[\cL_w]$ as a linear combination of $[\cM_v]$ in the Grothendieck group.

To relate this problem with the combinatorics in \cite{Kazhdan-Lusztig:Hecke_Alg}, one aims to build a comparision map $\nu'$ that fits into the commutative diagram
\begin{equation*}
	\begin{tikzcd}
		\Mod_{coh}(\cD_\lambda,N) \ar[d, "{[-]}"'] \ar[r, "\nu'"]
		& \cH \ar[d, "q=-1"]\\
		K\Mod_{coh}(\cD_\lambda,N) \ar[r, "\cong"] 
		& \BZ[W]
	\end{tikzcd}.
\end{equation*}
In this diagram, the objects $\cH$ and $\BZ[W]$ are the Hecke algebra and the group algebra of $W$, respectively, and the bottom map sends $[\cM_w]$ to the basis in $\BZ[W]$ labeled by $w$. Moreover, the right regular action of $\cH$ on the top right corner should lift to an $\cH$ ``action'' on the $\cM_w$ and $\cL_w$ in $\Mod_{coh}(\cD_\lambda,N)$. Once this diagram is constructed, then $[\cL_w] = \nu'(\cL_w)|_{q=-1}$ by commutativity of the diagram, and $\nu'(\cL_w)$ can be computed by studying the $\cH$-action.

In further detail, recall that the Hecke algebra $\cH$ has an underlying free $\BZ[q^{\pm1}]$-module structure with two bases labeled by $W$: the defining basis $\{\delta_w\}$ and the Kazhdan-Lusztig basis $\{C_w\}$ \cite{Kazhdan-Lusztig:Hecke_Alg}. The Kazhdan-Lusztig basis is characterized by three conditions: 
\begin{enumerate}[label=(KL.\arabic*)]
	\item the expansion of $C_w$ in terms of the $\delta_v$ basis elements involve only those with $v \le w$, the coefficient of $\delta_w$ is $1$, and the coefficient of $\delta_v$ for $v < w$ is a polynomial $P_{wv}(q)$ with no constant term; \label{enum:KL_basis_expansion}
	\item the product $C_w C_s$, where $s$ is a simple reflection so that $ws > w$, is a $\BZ$-linear combination of $C_v$ with $v \le ws$;	\label{enum:KL_basis_U}
	\item $C_s = \delta_s + q$
\end{enumerate}
(after some normalizations, the first two conditions are (1.1.b) and (2.3.b) of \cite{Kazhdan-Lusztig:Hecke_Alg}, respectively). Here $<$ and $\le$ are the Bruhat order on $W$. These conditions inductively determine the Kazhdan-Lusztig basis and provide a recursive algorithm for computing it. The coefficients $P_{wv}$ of $\delta_v$ are the famous \textit{Kazhdan-Lusztig polynomials}. The Kazhdan-Lusztig conjecture predicts that the coefficients of the Verma modules in the irreducible modules in the Grothendieck group are given by Kazhdan-Lusztig polynomials evaluated at $-1$ (or at $1$, depending on the normalization). In view of the above diagram, proving the conjecture amounts to constructing $\nu'$ so that $\nu'(\cM_w) = \delta_w$ and $\nu'(\cL_w) = C_w$. 

To this end, we define the map $\nu'$ by sending a $\cD_\lambda$-module $\cF$ to a linear combination of $\delta_v$ where the coefficient of $\delta_v$ is the generating function (in variable $q$) of the pullback of $\BD \cF$ to the Schubert cell $C(v)$:
\begin{equation*}
	\nu'(\cF) = \sum_{w \in W} \big( \chi_q i_w^! \BD \cF \big) \delta_w
\end{equation*}
(the map $\chi_q$ is defined in (\ref{eqn:defn_of_chi_q})). Here $\BD$ is the duality functor of holonomic $\cD$-modules. With this definition, the map $\nu'$ sends $\cM_v$ to $\delta_v$, and $\nu'(\cL_w)$ automatically satisfies condition \ref{enum:KL_basis_expansion} for support reason. Moreover, multiplication by $C_s$ on $\delta_w$ for a simple reflection $s$ lifts on $\cM_w$ to the ``push-pull'' operation along the natural map $X \to X_s$ to the type-$s$ partial flag variety (we call this operation the \textit{$U$-functor} since it agrees with the functor $U$ defined by Vogan \cite[Definition 3.8]{Vogan:irred_char_I}). The condition \ref{enum:KL_basis_U} is proven by an induction on $\ell(w)$ by showing the same lifting for irreducibles, using the Decomposition Theorem of Beilinson-Bernstein-Deligne \cite{Beilinson-Bernstein-Deligne:Decomposition} for regular holonomic $\cD$-modules (or perverse sheaves). This proves $\nu'(\cL_w) = C_w$ and hence the Kazhdan-Lusztig conjecture. A detailed argument following these lines can be found in Mili\v ci\'c's unpublished notes \cite[Chapter 5]{Milicic:Localization}. Since the character map on highest weight modules factors through the Grothendieck group, one can write down characters of irreducible modules in terms of characters of Verma modules, and the latter can be easily computed.  

The argument we just described naturally extends to parabolic highest weight categories corresponding to a subset $\Theta$ of simple roots and with regular integral infinitesimal characters. Two bases of the Grothendieck group are now given by parabolic Verma modules and their irreducible quotients, both labeled by right $W_\Theta$-cosets. The map $\nu'$ is now defined by pulling back to orbits of a parabolic subgroup $P_\Theta$ of type $\Theta$, and the image of the comparison map $\nu'$ is now replaced by a smaller $\cH$-module. The Kazhdan-Lusztig polynomials are then replaced by \textit{parabolic Kazhdan-Lusztig polynomials}, which form a subset of the ordinary Kazhdan-Lusztig polynomials.

In the case of Whittaker modules with integral regular infinitesimal characters, we still have two bases of the Grothendieck group labeled by right $W_\Theta$-cosets: the standard Whittaker modules defined by McDowell and their irreducible quotients. Localizations of Whittaker modules now land into the category $\Mod_{coh}(\cD_\lambda,N,\eta)$ of \textit{twisted Harish-Chandra sheaves} (defined in \textsection\ref{subsec:geom_prelim}). By the work of Mili\v ci\'c-Soergel \cite{Milicic-Soergel:Whittaker_algebraic}, the category of Whittaker modules is equivalent to the highest weight category with a singular infinitesimal character. The latter is known to be Koszul dual to a parabolic highest weight category with an integral regular infinitesimal character by the work of Beilinson-Ginzburg-Soergel \cite{Beilinson-Ginzburg-Soergel:Koszul_duality}. Therefore, the Kazhdan-Lusztig polynomials of Whittaker modules (what Romanov called \textit{Whittaker Kazhdan-Lusztig polynomials}) are expected to be dual to parabolic Kazhdan-Lusztig polynomials. More precisely, if we define $\Theta$ as in (\ref{eqn:defn_of_Theta_intro}), then the Whittaker category $\cN_{\theta,\eta}$ is expected to be dual to the parabolic highest weight category determined by $\Theta$. A starting point towards proving this would be a Kazhdan-Lusztig algorithm Whittaker modules. However, the $\cD$-modules in this situation are no longer regular holonomic (merely holonomic). Therefore a decomposition theorem for general holonomic modules is needed in order for the same argument to work. This is proven by Mochizuki \cite{Mochizuki:Decomp}. Romanov then adapted the strategy for highest weight modules to the case of Whittaker modules in her thesis (later published as \cite{Romanov:Whittaker}) and obtained a Kazhdan-Lusztig algorithm. Together with the character theory she developed, her work implies a character formula for irreducible Whittaker modules. The comparison map $\nu'$ in the highest weight setting now becomes a map
\begin{equation*}
	\Mod_{coh}(\cD_\lambda,N,\eta) \xrightarrow{\nu'} \cH_\Theta
\end{equation*}
defined by pulling back $\BD\cF$ to Schubert cells of the form $C(w^C)$, where $\cH_\Theta$ is an $\cH$-module whose underlying $\BZ[q^{\pm1}]$-module structure is free with basis elements labeled by $W_\Theta \backslash W$. This $\cH$-module structure defines a Kazhdan-Lusztig basis of $\cH_\Theta$, whose elements coincide with the images of irreducible $\cD$-modules under $\nu'$. 

The work of this paper generalizes Romanov's algorithm to arbitrary infinitesimal characters. There are two extra complications compared to Romanov's situation. First, although (co)standard and irreducible Whittaker modules are still parameterized by $W_\Theta \backslash W$, now our category is a direct sum of smaller blocks, and different blocks have different sizes. On the other hand, the parabolic highest weight category can have fewer blocks, so the duality mentioned in the preceding paragraph fails. Nevertheless, one can expect the blocks to be parameterized by Weyl group data involving both $W_\Theta$ and $W_\lambda$. Indeed, it turns out that blocks are parameterized by double cosets $W_\Theta \backslash W / W_\lambda$, and the polynomials for each block turn out to be the same as (integral) Whittaker Kazhdan-Lusztig polynomials related to the integral Weyl group $W_\lambda$.

The second complication is that the ``push-pull'' operation along $X \to X_s$ does not exist when $\lambda$ is non-integral to $s$ \--- there is no sheaf of twisted differential operators on $X_s$ that pulls back to $\cD_\lambda$. As a result, induction on $\ell(w)$ cannot proceed as before. To remedy this, we use the \textit{intertwining functor} $I_s$ (defined in \textsection\ref{sec:geom}) for non-integral $s$ in place of the $U$-functor. It is an equivalence of categories between $\cD_\lambda$-modules and $\cD_{s\lambda}$-modules. This allows us to increase $\ell(w)$ and retain induction hypotheses. This idea of proof is suggested to the author by Mili\v ci\'c. 

We can now state our algorithm. We refer to \textsection\ref{subsec:KL_poly} for the precise definitions of the Hecke-theoretic objects appearing below. We fix a character $\eta: \fn \to \BC$ and define a subset $\Theta$ of simple roots from $\eta$ as in (\ref{eqn:defn_of_Theta_intro}). For each $\lambda$ (not necessarily antidominant) and each $C \in W_\Theta \backslash W$, we write $\cM(w^C,\lambda,\eta)$ for the costandard $\cD$-module and $\cL(w^C,\lambda,\eta)$ for its irreducible quotient (these are defined in \textsection \ref{subsec:geom_prelim}). In the case where $\lambda$ is antidominant regular, they are localizations of the standard Whittaker module $M(w^C \lambda,\eta)$ and the irreducible Whittaker module $L(w^C \lambda,\eta)$, respectively. We let  $\cH_\Theta$ be the free $\BZ[q^{\pm1}]$-module with basis $\{\delta_C\}_{C \in W_\Theta \backslash W}$ and define a map $\nu'$ similar to the highest weight case
\begin{equation*}
	\nu': \Mod(\cD_\lambda,N,\eta) \to \cH_\Theta,\quad
	\cF \mapsto \sum_{C \in W_\Theta \backslash W} \big( \chi_q i_{w^C}^! \BD \cF \big) \delta_C
\end{equation*}
($\chi_q$ is defined in (\ref{eqn:defn_of_chi_q}); in the body of the paper we instead work with $\nu = \nu' \circ \BD$ (\ref{eqn:defn_of_nu}) instead of $\nu'$ for technical simplicities). It fits into the commutative diagram
\begin{equation*}
	\begin{tikzcd}
		\Mod_{coh}(\cD_\lambda,N,\eta) \ar[d, "{[-]}"'] \ar[r, "\nu'"]
		& \cH_\Theta \ar[d, "q=-1"] \ar[r, "(-)|_\lambda"] 
		& {\displaystyle \bigoplus_{W_\Theta u W_\lambda} \cH_{\Theta(u,\lambda)} } \ar[d, "q=-1"]\\
		K\Mod_{coh}(\cD_\lambda,N,\eta) \ar[r, "\cong"] 
		& \BZ[W_\Theta \backslash W] \ar[r, "(-)|_\lambda"] 
		& {\displaystyle \bigoplus_{W_\Theta u W_\lambda} \BZ[W_{\lambda,\Theta(u,\lambda)} \backslash W_\lambda] }
	\end{tikzcd}.
\end{equation*}
Here $\BZ[W_\Theta \backslash W]$ is the $\BZ$-module with basis $\{\delta_C\}_{C \in W_\Theta \backslash W}$, and the first horizontal map at the bottom sends $[\cM(w^C,\lambda,\eta)]$ to $\delta_C$. The modules $\cH_{\Theta(u,\lambda)}$ and $\BZ[W_{\lambda,\Theta(u,\lambda)} \backslash W_\lambda]$ are defined similarly but their basis elements are instead labeled by $W_{\lambda,\Theta(u,\lambda)} \backslash W_\lambda$. The map $(-)|_\lambda$ is defined on basis elements analogous to (\ref{eqn:defn_of_(-)|_lambda_intro}). Each $\cH_{\Theta(u,\lambda)}$ is a module over the Hecke algebra $\cH_\lambda = \cH(W_\lambda)$ of the integral Weyl group $W_\lambda$ (as in Romanov's work in the integral case). Thus each $\alpha \in \Pi_\lambda$ defines an operator $T_\alpha^{u,\lambda}$ on $\cH_{\Theta(u,\lambda)}$ representing the multiplication of the Kazhdan-Lusztig basis element $C_{\lambda,s_\alpha} \in \cH_\lambda$ corresponding to the simple reflection $s_\alpha$. Romanov's main result \cite[Theorem 11]{Romanov:Whittaker}, interpreted combinatorially and applied to $\cH_{\Theta(u,\lambda)}$, says that the operators $T_\alpha^{u,\lambda}$ inductively define a Kazhdan-Lusztig basis of $\cH_{\Theta(u,\lambda)}$ in a similar fashion as the conditions \ref{enum:KL_basis_expansion} and \ref{enum:KL_basis_U}. More precisely, the Kazhdan-Lusztig basis $\{\psi_{u,\lambda}(F)\}_{F \in W_{\lambda,\Theta(u,\lambda)} \backslash W_\lambda}$ of $\cH_{\Theta(u,\lambda)}$ is the unique basis over $\BZ[q^{\pm1}]$ such that 
\begin{enumerate}[label=(W.\arabic*)]
	\item \label{enum:WKL_basis_expansion}
	$\psi_{u,\lambda}(F) = \delta_F + \sum_{G <_{u,\lambda} F} P_{FG}^{u,\lambda} \; \delta_G$ for some $P_{FG}^{u,\lambda} \in q \BZ[q]$; and 
	
	\item \label{enum:WKL_basis_U}
	if $F$ is not the shortest right coset, there exist $\alpha \in \Pi_\lambda$ and $c_G \in \BZ$ such that $F s_\alpha <_{u,\lambda} F$ and 
	\begin{equation*}
		T_\alpha^{u,\lambda}(\psi_{u,\lambda}(F s_\alpha)) = \sum_{G \le_{u,\lambda} F} c_G \; \psi_{u,\lambda}(G)
	\end{equation*}
\end{enumerate}
(Definition/Theorem \ref{def:parabolic_KL_poly}). We can still formally consider an $\cH$-module structure on $\cH_\Theta$ as in the integral case and define operators $T_\alpha: \cH_\Theta \to \cH_\Theta$ for simple roots $\alpha$. When $\alpha$ is integral to $\lambda$, $T_\alpha$ is the combinatorial incarnation of the $U$-functor. It preserves the decomposition $(-)|_\lambda$ and restricts to $T_\alpha^{u,\lambda}$ on each $\cH_{\Theta(u,\lambda)}$. When a simple root $\beta$ is non-integral, we will instead consider the endomorphism $(-) \cdot s_\beta$ on $\cH_\Theta$ given by $\delta_C \cdot s_\beta = \delta_{C s_\beta}$, which represents the action of the intertwining functor $I_{s_\beta}$.

Here is our (slightly rephrased) algorithm.

\begin{theorem}[Kazhdan-Lusztig algorithm]
	Fix a character $\eta: \fn \to \BC$. For any $\lambda$ and any $C \in W_\Theta \backslash W$, write $W_\Theta u W_\lambda$ for the double coset containing $C$, where $u$ is the unique shortest element in this double coset. Then
	\begin{enumerate}[label=(A.\arabic*)]
		\item \label{enum:WKL_expansion}
		There exist polynomials $P_{CD}^{u,\lambda} \in q\BZ[q]$ so that 
		\begin{equation*}
			\nu'(\cL(w^C,\lambda,\eta)) = \nu'(\cM(w^C,\lambda,\eta))
			+ \sum_{\substack{D \in W_\Theta \backslash W_\Theta u W_\lambda\\%
					D <_{u,\lambda} C}} 
			P_{CD}^{u,\lambda} \; \nu' (\cM(w^D, \lambda,\eta)).
		\end{equation*}
		
		\item \label{enum:WKL_U}
		For any integral simple root $\alpha$ such that $C s_\alpha < C$, there exist integers $c_D$ depending on $C$, $D$, and $s_\alpha$, such that
		\begin{equation*}
			T_\alpha(\nu'(\cL(w^{Cs_\alpha},\lambda,\eta))) = 
			\sum_{\substack{D \in W_\Theta \backslash W_\Theta u W_\lambda \\ D \le_{u,\lambda} C}}
			c_D \;\nu'(\cL(w^D,\lambda,\eta)).
		\end{equation*}
		
		\item \label{enum:WKL_I}
		For any non-integral simple root $\beta$ such that $C s_\beta < C$, 
		\begin{equation*}
			\nu'(\cL(w^C,\lambda,\eta)) \cdot s_\beta = \nu'(\cL(w^{C s_\beta}, s_\beta \lambda, \eta)).
		\end{equation*}
		
		\item \label{enum:WKL_integral_model} 
		$\nu'(\cL(w^C,\lambda,\eta))|_\lambda$ is a Kazhdan-Lusztig basis element of $\cH_{\Theta(u,\lambda)}$.
	\end{enumerate}
\end{theorem}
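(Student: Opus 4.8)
The plan is to prove the four statements together by a single induction on the length of the longest element $w^C$ in the coset $C$ (equivalently, on the dimension of the support of the irreducible $\cD$-module $\cL(w^C,\lambda,\eta)$), treating \ref{enum:WKL_expansion}--\ref{enum:WKL_integral_model} as a package at each stage. The base case is $C = W_\Theta$, where $\cM(w^C,\lambda,\eta) = \cL(w^C,\lambda,\eta)$ is the (twisted) structure sheaf on the closed orbit, so every formula is trivial. For the inductive step, fix $C$ with $\ell(w^C) > \ell(w_\Theta)$ and let $W_\Theta u W_\lambda$ be the double coset containing it. Since $w^C$ is not of minimal length in its coset, one can find a simple root $\gamma$ with $C s_\gamma < C$; I would split into the two cases according to whether $\gamma$ is integral or non-integral to $\lambda$.

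\emph{Non-integral case (proving \ref{enum:WKL_I} first).} Here $s_\gamma \in W$ but $s_\gamma \notin W_\lambda$, so right-multiplication by $s_\gamma$ permutes the cosets in the double coset $W_\Theta u W_\lambda$ and, crucially, the intertwining functor $I_{s_\gamma}\colon \Mod_{coh}(\cD_\lambda,N,\eta) \to \Mod_{coh}(\cD_{s_\gamma\lambda},N,\eta)$ is an equivalence of categories that sends $\cM(w^{Cs_\gamma},s_\gamma\lambda,\eta)$ to $\cM(w^C,\lambda,\eta)$ and $\cL(w^{Cs_\gamma},s_\gamma\lambda,\eta)$ to $\cL(w^C,\lambda,\eta)$ (this is the content of the geometric input from \textsection\ref{sec:geom}). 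On the combinatorial side, $I_{s_\gamma}$ is represented by $(-)\cdot s_\gamma$, and $\nu'$ intertwines the two (this is the compatibility of $\nu'$ with duality and $!$-restriction under the isomorphism of Schubert cells induced by $s_\gamma$). Thus \ref{enum:WKL_I} follows, and since $\ell(w^{Cs_\gamma}) = \ell(w^C) - 1$, the induction hypothesis gives \ref{enum:WKL_expansion} for $Cs_\gamma$; transporting it through $(-)\cdot s_\gamma$ yields \ref{enum:WKL_expansion} for $C$ (the set of $D <_{u,\lambda} C$ is carried to the set of $D' <_{u',s_\gamma\lambda} Cs_\gamma$ since the partition $(-)|_\lambda$ and its order are preserved by $s_\gamma$, cf. Proposition \ref{lem:int_Whittaker_model}). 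Statement \ref{enum:WKL_U} is vacuous for this $C$ only if no integral $\alpha$ with $Cs_\alpha < C$ exists; in general one still needs it, but one re-derives it from the integral case below applied after possibly further intertwining — more precisely, \ref{enum:WKL_integral_model} for $C$ follows because $\nu'(\cL(w^C,\lambda,\eta))|_\lambda = \nu'(\cL(w^{Cs_\gamma},s_\gamma\lambda,\eta))|_{s_\gamma\lambda}$ under the canonical identification $\cH_{\Theta(u,\lambda)} \cong \cH_{\Theta(u',s_\gamma\lambda)}$ of the two blocks (same $u$-shortest-element data up to the $s_\gamma$-twist), and the right side is a Kazhdan-Lusztig basis element by induction.

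\emph{Integral case.} Now $s_\alpha \in W_\lambda$ with $Cs_\alpha < C$. Applying the $U$-functor along $X \to X_{s_\alpha}$ to $\cL(w^{Cs_\alpha},\lambda,\eta)$ and decomposing via Mochizuki's decomposition theorem \cite{Mochizuki:Decomp}, one gets that $U_{s_\alpha}\cL(w^{Cs_\alpha},\lambda,\eta)$ is a direct sum $\bigoplus_D \cL(w^D,\lambda,\eta)^{\oplus m_D}$ with $D \le C$ in the parabolic Bruhat order and $\cL(w^C,\lambda,\eta)$ occurring exactly once; the support condition forces all such $D$ to lie in the same double coset, and in fact the refined support analysis (pullback to $P_\Theta$-orbits) forces $D \le_{u,\lambda} C$. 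Applying $\nu'$ and using that $\nu'$ intertwines $U_{s_\alpha}$ with the operator $T_\alpha$ gives \ref{enum:WKL_U}. Restricting along $(-)|_\lambda$ and using that $T_\alpha$ preserves the decomposition and restricts to $T_\alpha^{u,\lambda}$, we see that $\nu'(\cL(w^C,\lambda,\eta))|_\lambda$ satisfies exactly the recursive characterizations \ref{enum:WKL_basis_expansion}--\ref{enum:WKL_basis_U} defining the Kazhdan-Lusztig basis of $\cH_{\Theta(u,\lambda)}$ (with $F = C|_\lambda$, $F s_\alpha = (Cs_\alpha)|_\lambda$): condition \ref{enum:WKL_basis_expansion} holds by the support/degree estimate from the definition of $\nu'$ together with \ref{enum:WKL_expansion} for $C$ (itself obtained by transporting \ref{enum:WKL_expansion} for $Cs_\alpha$ through $T_\alpha$ and re-expanding irreducibles into costandards using the induction hypothesis on all $D <_{u,\lambda} C$), and condition \ref{enum:WKL_basis_U} is precisely \ref{enum:WKL_U}. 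By the uniqueness in Definition/Theorem \ref{def:parabolic_KL_poly} this proves \ref{enum:WKL_integral_model}, and the polynomials $P_{CD}^{u,\lambda}$ appearing in \ref{enum:WKL_expansion} are thereby identified with the Whittaker Kazhdan-Lusztig polynomials $P_{C|_\lambda,D|_\lambda}^{u,\lambda}$, so in particular they lie in $q\BZ[q]$.

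\emph{Main obstacle.} The delicate point is the support analysis in the integral case: knowing that $U_{s_\alpha}\cL(w^{Cs_\alpha},\lambda,\eta)$ decomposes (Mochizuki) is not enough — one must show the summands are indexed only by cosets $D$ with $D|_\lambda \le_{u,\lambda} C|_\lambda$ inside the \emph{single} double coset, rather than merely $D \le C$ in the full parabolic Bruhat order. This requires understanding how the $P_\Theta$-orbit stratification of $X$ interacts with the $N$-orbit (Schubert cell) stratification through the fibers of $X \to X_{s_\alpha}$, i.e. the combinatorics packaged in Proposition \ref{lem:int_Whittaker_model} and Notation \ref{not:right_coset_partition}, and a vanishing statement for $!$-restrictions of $\cL$ to cells outside the block. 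Correctly bookkeeping the twist by $\eta$ throughout (which shifts which cells carry nonzero cohomology) is where the argument is most likely to need care, and is precisely where the hypothesis that $\Theta$ is defined from $\eta$ as in \eqref{eqn:defn_of_Theta_intro} is used.
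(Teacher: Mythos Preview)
Your overall architecture---induction on $\ell(w^C)$, with the $U$-functor handling integral simple reflections and the intertwining functor handling non-integral ones---is exactly the paper's approach, and your base case and inductive outline are correct. Two points deserve comment, one a genuine gap and one a genuine difference from the paper.

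\textbf{The gap: the restricted sum in \ref{enum:WKL_U}.} Your ``Main obstacle'' paragraph proposes to obtain the restriction $D \le_{u,\lambda} C$ (rather than merely $D \le C$) via ``the $P_\Theta$-orbit stratification of $X$'' and how it interacts with Schubert cells through the fibers of $X \to X_{s_\alpha}$. This is not the mechanism, and no $P_\Theta$-orbits enter anywhere in the argument; $\nu'$ is defined by pulling back to the $N$-orbits $C(w^D)$, not to $P_\Theta$-orbits. The actual reason the decomposition $U_\alpha \cL(w^{Cs_\alpha},\lambda,\eta) \cong \bigoplus_D \cL(w^D,\lambda,\eta)^{\oplus c_D}$ is confined to $D$ in the same double coset with $D \le_{u,\lambda} C$ is purely inductive: by the induction hypothesis \ref{enum:WKL_expansion} for $Cs_\alpha$, the element $\nu'(\cL(w^{Cs_\alpha},\lambda,\eta))$ is a combination of $\delta_E$ with $E$ in $W_\Theta \backslash W_\Theta u W_\lambda$ and $E \le_{u,\lambda} Cs_\alpha$; since $s_\alpha \in W_\lambda$, the operator $T_\alpha$ preserves this span. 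Now Mochizuki gives a decomposition into irreducibles, $\nu'$ intertwines $U_\alpha$ with $T_\alpha$, and the $\nu'(\cL(w^D,\lambda,\eta))$ have linearly independent leading terms $\delta_D$, so each summand must already lie in that span. Your parenthetical ``vanishing statement for $!$-restrictions of $\cL$ to cells outside the block'' is essentially \ref{enum:WKL_expansion}, so you are close---but the $P_\Theta$-orbit framing should be dropped entirely.

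\textbf{The difference: your argument for \ref{enum:WKL_integral_model} in the non-integral case is simpler than the paper's.} The paper proves \ref{enum:WKL_integral_model} by directly verifying the recursive characterization (\ref{eqn:parabolic_KL_poly}) of the Kazhdan--Lusztig basis, and when no $\alpha \in \Pi \cap \Pi_\lambda$ with $Cs_\alpha < C$ is available it invokes the technical Lemma~\ref{lem:decrease_of_length} to construct an explicit chain $s_{\beta_1},\ldots,s_{\beta_s}$ of non-integral reflections leading to a coset where such an $\alpha$ does exist, then applies \ref{enum:WKL_U} there and transports the identity back. Your approach---apply a single non-integral $s_\gamma$, observe that the conjugation isomorphism $s_\gamma(-)s_\gamma \colon \cH_{\Theta(r,s_\gamma\lambda)} \to \cH_{\Theta(u,\lambda)}$ is an isomorphism of Hecke-module setups (by Proposition~\ref{lem:Is_right_coset} and Corollary~\ref{lem:Bruhat_order_conj}) and hence carries Kazhdan--Lusztig basis elements to Kazhdan--Lusztig basis elements, then invoke the induction hypothesis \ref{enum:WKL_integral_model} for $Cs_\gamma$---is correct and bypasses Lemma~\ref{lem:decrease_of_length} entirely: the chain builds itself through the induction rather than being constructed in advance. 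What the paper's explicit chain buys is a direct verification of (\ref{eqn:parabolic_KL_poly}) at each $C$, which is slightly more information but not needed for the theorem as stated.

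One organizational remark: \ref{enum:WKL_U} and \ref{enum:WKL_I} are ``for all'' statements (over all applicable $\alpha$ and $\beta$), so they should be verified for every such root, not just for the single $\gamma$ you chose for the case split. In practice this is harmless---the $U$-functor argument for \ref{enum:WKL_U} and Proposition~\ref{thm:Is_pullback} for \ref{enum:WKL_I} work for any applicable simple root independently of the case split---but your sentence ``re-derives it from the integral case below applied after possibly further intertwining'' is too vague as written.
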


This appears as Theorem \ref{thm:KL_alg} below.  The character formula \ref{thm:multiplicity_intro} follows by taking \ref{enum:WKL_expansion} and \ref{enum:WKL_integral_model} for antidominant regular $\lambda$, descending to the Grothendieck group by specializing at $q=-1$, passing through Beilinson-Bernstein localization, and applying the character map. 

The proof of the algorithm is an induction on the length $\ell(w^C)$. The proofs of \ref{enum:WKL_expansion} and \ref{enum:WKL_U} are analogous to the proofs of \ref{enum:KL_basis_expansion} and \ref{enum:KL_basis_U}, respectively. \ref{enum:WKL_I} reflects the action of non-integral intertwining functor $I_{s_\beta}$. In fact, the following diagram commutes
\begin{equation}
	\begin{tikzcd}
		\Mod_{coh}(\cD_\lambda,N,\eta) \ar[d, "I_{s_\beta}"'] \ar[r, "\nu'"]
		& \cH_\Theta \ar[d, "(-) \cdot s_\beta"'] \ar[r, "(-)|_\lambda"] 
		& {\displaystyle \bigoplus \cH_{\Theta(u,\lambda)} } \ar[d, "s_\beta \cdot (-) \cdot s_\beta"]\\
		\Mod_{coh}(\cD_{s_\beta\lambda},N,\eta)  \ar[r, "\nu'"]
		& \cH_\Theta  \ar[r, "(-)|_\lambda"] 
		& {\displaystyle \bigoplus \cH_{\Theta(r,s_\beta \lambda)} } 
	\end{tikzcd}
\end{equation}
(Proposition \ref{lem:Is_right_coset}, Corollary \ref{lem:I_on_std}, and Proposition \ref{thm:Is_pullback}; we only prove the commutativity of this diagram for irreducible $\cD$-modules, but extension to other modules is straightforward). The push-pull operation together with non-integral intertwining functors allows the induction argument to run. In the actual proof, one prove \ref{enum:WKL_U} and \ref{enum:WKL_I} first at each inductive step and use them two prove the remaining statements.

The remaining technical difficulty lies in the proof of \ref{enum:WKL_integral_model}. It requires us to find $\alpha \in \Pi_\lambda$ so that $C s_\alpha <_{u,\lambda} C$ and \ref{enum:WKL_basis_U} holds. If $\alpha$ can be chosen to be also simple in $\Sigma^+$, then \ref{enum:WKL_basis_U} simply follows from \ref{enum:WKL_U}. But there are examples where this cannot be done. The strategy then is to apply non-integral intertwining functors so that $\alpha$ becomes simple in both the integral Weyl group and in $W$, and that $C$ is translated to a coset of smaller length so that \ref{enum:WKL_U} holds by induction assumption. \ref{enum:WKL_basis_U} is obtained by translating \ref{enum:WKL_U} back via inverse intertwining functors. The existence of such a chain of intertwining functors is guaranteed by Lemma \ref{lem:decrease_of_length}.

In the special case $\eta = 0$, our argument gives a new proof of the non-integral Kazhdan-Lusztig conjecture for Verma modules. The non-integral intertwining functors we use do not seem to have analogue in existing approaches using perverse sheaves (for example Lusztig's proof \cite[Chapter 1]{Lusztig:Char_finite_field}) and Soergel modules (\cite{Soergel:V}). Since these functors are equivalences on the category of quasi-coherent $\cD_\lambda$-modules, we believe they should have applications outside of the current context. We will expand on this in Remark \ref{rmk:Verma_case_old_proofs}.


\subsection{Outline of the paper}

The paper is organized as follows. In \textsection \ref{sec:prelim} we present preliminaries on Whittaker modules and their localizations. The following section \textsection\ref{sec:db_coset} is devoted to studying the structure of left $W_\lambda$-cosets and double $(W_\Theta,W_\lambda)$-cosets in the Weyl group. In \textsection \ref{sec:geom} we study the effect of non-integral intertwining functors on irreducible $\cD$-modules. In \textsection \ref{sec:KL} we state and prove the main algorithm. The character formula is established in \textsection \ref{sec:character_formula}. Lastly, in \textsection \ref{sec:examples}, we provide an example on the $A_3$ root system.

\subsection{Acknowledgements}

I would like to thank Dragan Mili\v ci\'c for suggesting on the algorithm, as well as for his mentorship, guidance, and many helpful discussions. I thank Anna Romanov for some enlightening conversations. I thank the referee for their valuable suggestions and for pointing out a mistake in \textsection\ref{sec:examples}. The results in this paper are based on the results of the author's Ph.D. thesis \cite{Zhao:thesis} at the University of Utah.
\section{Preliminarlies}\label{sec:prelim}

In this section we fix some notations and present necessary facts on Whittaker modules and their localizations without proof. 

Let us start by recalling some notations. We have fixed in \textsection\ref{subsec:main_results} a complex semisimple Lie algebra $\fg$ over $\BC$, a maximal nilpotent subalgebra $\fn$, and a Cartan subalgebra $\fh$ normalizing $\fn$. The capital letters $G$, $N$, and $H$ denote the corresponding algebraic groups. The sets $\Sigma \supset \Sigma^+ \supseteq \Pi \supseteq \Theta$ denote the root system of $(\fg,\fh)$, the set of positive roots as roots in $\fn$, and the set of simple roots, respectively. We let $\eta: \fn \to \BC$ be a Lie algebra character and let
\begin{equation*}
	\Theta = \{ \alpha \in \Pi \mid \eta \text{ is nonzero on the $\alpha$-root space in } \fn \}.
\end{equation*}
The half sum of positive roots is denoted by $\rho$. The Weyl group is denoted by $W$. Let $\lambda \in \fh^*$ and $\theta$ is the $W$-orbit of $\lambda$. A subscript $\Theta$ or $\lambda$ denotes corresponding subobjects defined by $\Theta$ or are integral to $\lambda$. The capital letters $C,D,E,F$ will denote right $W_\Theta$-cosets in $W$ or in $W_\lambda$ (except $C(w)$ will denote a Schubert cell, and $C_w$, $C_D$, etc. will denote the Kazhdan-Lusztig basis elements in various Hecke algebra modules). We will write $w^C$ for the unique longest element in $C$.

Let $\cU(\fg)$ be the enveloping algebra of $\fg$ and write $\cZ(\fg)$ for the center of $\cU(\fg)$. We write $\xi: \cZ(\fg) \to \Sym(\fh)^W$ for the Harish-Chandra isomorphism (this is the map $\gamma \circ \varphi|_{\cZ(\fg)}$ in \cite[Theorem 7.4.5]{Dixmier:Enveloping_Alg}). The composition
\begin{equation*}
	\chi_\theta: \cZ(\fg) \xrightarrow{\xi} \Sym(\fh)^W \inj \Sym(\fh) \xrightarrow{\lambda} \BC
\end{equation*}
only depends on the Weyl group orbit $\theta$ of $\lambda$. We will refer to $\chi_\theta$ as an \textit{infinitesimal character}. We let $\cU(\fg)_\theta := \cU(\fg)/\langle \ker \chi_\theta \rangle$. When the Lie algebra is understood, we often write $\cU_\theta$ for $\cU(\fg)_\theta$. The weight $\lambda$ is said to be \textit{regular} if $\alpha^\vee(\lambda) \neq 0$ for all roots $\alpha$, \textit{antidominant} if $\alpha^\vee(\lambda)$ is not a non-negative integer for all $\alpha \in \Sigma^+$, and \textit{integral} if $\alpha^\vee(\lambda) \in \BZ$ for all $\alpha \in \Sigma$. We say $\chi_\theta$ is regular if $\lambda$ is.

\subsection{Preliminaries on Whittaker modules}\label{subsec:Wh_prelim}

The category of \textbf{Whittaker modules}, denoted by $\cN$, is the full subcategory of all $\fg$-modules consisting of those that are finitely generated over $\fg$, locally finite over $\fn$, and locally finite over $\cZ(\fg)$. Here we say a module over a $\BC$-algebra is locally finite if every element generates a finite dimensional subspace. We write $\cN_\theta$ (resp. $\cN_\eta$) for the full subcategory of $\cN$ consisting of objects with infinitesimal character $\chi_\theta$ (resp. on which $\xi - \eta(\xi)$ acts locally nilpotently for all $\xi \in \fn$). Set $\cN_{\theta,\eta} = \cN_\theta \cap \cN_\eta$. Every object of $\cN$ has finite length (\cite[Theorem 2.7(c)]{McDowell:Whittaker}, \cite[Theorem 2.6(1)]{Milicic-Soergel:Whittaker_algebraic}). By local finiteness over $\cZ(\fg)$ and $\cU(\fn)$, $\cN$ decomposes as a direct sum of various subcategories $\cN_{\theta,\eta}$. So each irreducible object land in a single $\cN_{\theta,\eta}$. 

To give more description of the category $\cN_{\theta,\eta}$, let us first consider the non-degenerate case. So assume $\eta$ is non-degenerate, i.e. $\Theta = \Pi$. Consider the cyclic module
\begin{equation*}
	Y_\fg(\lambda,\eta) := \cU(\fg)_\theta \dotimes_{\cU(\fn)} \BC_\eta,
\end{equation*}
i.e. a module generated by a single vector on which $\fn$ acts by $\eta$ and $\cZ(\fg)$ acts by $\chi_\theta$. Kostant showed that $Y_\fg(\lambda,\eta)$ is irreducible, and is the unique irreducible object in the semisimple category $\cN_{\theta,\eta}$ \cite[Theorem A]{Kostant:Whittaker} (see \cite[Theorem 5.6]{Milicic-Soergel:Whittaker_geometric} for a geometric proof). The category $\cN_\eta$ is equivalent to the category of finite dimensional $\cZ(\fg)$-modules \cite[Theorem 5.9]{Milicic-Soergel:Whittaker_geometric}. 

Now suppose $\eta$ is general. We can define $Y_\fg(\lambda,\eta)$ in the same way as above, but it will be potentially reducible. Instead, we look at the parabolic subalgebra $\fp_\Theta \supset \fh + \fn$ defined by $\Theta$. We take its $\ad \fh$-stable Levi decomposition $\fp_\Theta = \fl_\Theta + \fu_\Theta$, write $\fn_\Theta = \fl_\Theta \cap \fn$ (so that $\fn = \fn_\Theta + \fu_\Theta$), and let $\rho_\Theta$ for the half sum of $\fh$-roots in $\fn_\Theta$. Then the restriction $\eta|_{\fn_\Theta}$ is non-degenerate by construction, so the cyclic $\fl$-module
\begin{equation*}
	Y_\fl(\lambda-\rho+\rho_\Theta,\eta) = \cU(\fl)_{\lambda-\rho+\rho_\Theta} \dotimes_{\cU(\fn_\Theta)} \BC_\eta
\end{equation*}
is irreducible. The following definition is due to McDowell \cite[Proposition 2.4]{McDowell:Whittaker} (see also \cite[\textsection 2]{Milicic-Soergel:Whittaker_algebraic}; our notation is the closest to the one in \cite[Definition 2]{Romanov:Whittaker})

\begin{definition}\label{def:std_Wh_mods}
	The \textbf{standard Whittaker module} is the module parabolically induced from $Y_\fl(\lambda-\rho+\rho_\Theta,\eta)$:
	\begin{equation*}
		M(\lambda,\eta) = \cU(\fg) \dotimes_{\cU(\fp_\Theta)} Y_\fl(\lambda-\rho+\rho_\Theta,\eta).
	\end{equation*}
\end{definition}
When $\eta$ is non-degenerate, $M(\lambda,\eta) = Y_\fg(\lambda,\eta)$. When $\eta = 0$, these are just Verma modules. 

The standard Whittaker modules share similar properties with Verma modules. McDowell showed that each $M(\lambda,\eta)$ is in $\cN_{\theta,\eta}$ and admits a unique irreducible quotient $L(\lambda,\eta)$. Moreover, $M(\lambda,\eta) = M(\lambda',\eta)$ if and only if $W_\Theta \lambda = W_\Theta \lambda'$, and the same holds for irreducibles. These facts are contained in \cite{McDowell:Whittaker} Proposition 2.4, Theorem 2.5, and Theorem 2.9 and are reproved in \cite[\textsection 2]{Milicic-Soergel:Whittaker_algebraic}. In particular, if we fix an antidominant $\lambda$ and write $W^\lambda$ for the stabilizer of $\lambda$ in $W$, then standard objects and irreducible objects in $\cN_{\theta,\eta}$ are parameterized by double cosets $W_\Theta \backslash W / W^\lambda$ where $W_\Theta z W^\lambda$ corresponds to $M(z \lambda,\eta)$ and $L(z \lambda,\eta)$. If $\lambda$ is regular, then standards and irreducibles are parameterized by $W_\Theta \backslash W$. In accordance with the geometric setup in \textsection\ref{subsec:geom_prelim}, we will write $M(w^C \lambda,\eta)$ and $L(w^C \lambda,\eta)$ for the modules parameterized by $C \in W_\Theta \backslash W$, where $w^C$ is the unique longest element in $C$.

Using the standard modules, McDowell showed that each cyclic module $Y_\fg$ also lands inside a single $\cN_{\theta,\eta}$ \cite[Theorem 2.5]{McDowell:Whittaker}. Later Mili\v ci\'c-Soergel showed that in fact cyclic modules are filtered by standard modules (and is a direct sum if the infinitesimal character is regular) \cite[Corollary 2.5]{Milicic-Soergel:Whittaker_algebraic}.

By mimicking the construction for Verma modules, Romanov developed in \cite[\textsection 2.2]{Romanov:Whittaker} a character theory for $\cN_{\theta,\eta}$. She defines a map $\ch$ on objects of $\cN_{\theta,\eta}$ that factors through and is injective on the Grothendieck group $K \cN_{\theta,\eta}$. The characters of standard Whittaker modules are computed explicitly in \cite[Equation (2)]{Romanov:Whittaker}. In Romanov's paper, the character theory is mainly used to match global sections of costandard $\cD$-modules (which will be defined in \textsection\ref{subsec:geom_prelim}) with standard Whittaker modules. Although our main results are stated in terms of the character map, they are in fact statements of the Grothendieck group, and we will not use any other property of the character map. 

Nevertheless, let us briefly describe the shape of this character theory. Let $\fh^\Theta$ be the center of $\fl_\Theta$, let $\fs_\Theta = [\fl_\Theta,\fl_\Theta]$ be the semisimple part of $\fl_\Theta$, and let $\fh_\Theta = \fs_\Theta \cap \fh$ be a Cartan in $\fs_\Theta$, so that $\fh = \fh_\Theta \oplus \fh^\Theta$. Since $\eta$ is non-degenerate on $\fn_\Theta = \fs_\Theta \cap \fn$, the category $\cN(\fs_\Theta)_\eta$ of Whittaker modules of $\fs_\Theta$ with generalized $\fn_\Theta$-character $\eta$ is equivalent to the category of finite dimensional $\cZ(\fs_\Theta)$-modules. The Grothendieck group $K \cN(\fs_\Theta)_\eta$ is therefore free abelian with a basis given by maximal ideals in $\cZ(\fs_\Theta)$, which is in bijection with the set of $\fn_\Theta$-regular dominant integral weights of $\fh_\Theta$. For an object $U \in \cN(\fs_\Theta)_\eta$, we write $[U]$ for its class in $K \cN(\fs_\Theta)_\eta$.

Any object $V$ in $\cN_\eta$ is necessarily locally $\fh^\Theta$-finite. Hence $V$ can be decomposed into a direct sum of generalized $\fh^\Theta$-weight spaces $V^{{\mu}}$, $ \mu \in (\fh^\Theta)^*$. It can be shown that each one of these is an $\fs_\Theta$-module living in $\cN(\fs_\Theta)_\eta$ \cite[Theorem 4]{Romanov:Whittaker}. The character map is defined by
\begin{equation}
	\ch: \operatorname{Obj} \cN_{\theta,\eta} \aro K \cN(\fs_\Theta)_\eta \dotimes_\BZ \BZ[[(\fh^\Theta)^*]],\quad
	V \mapsto \sum_{  \mu \in (\fh^\Theta)^*} [V^{  \mu}|_{\fs_\Theta}] e^{  \mu},
\end{equation}
where $\BZ[[(\fh^\Theta)^*]]$ is the group of power series in $e^{  \mu}$, $  \mu \in (\fh^\Theta)^*$. The characters of standard modules are easily computed, and is a linear combination with partition functions as coefficients, similar to Verma modules. See \cite[Equation (2)]{Romanov:Whittaker} for details.

\subsection{Localization of Whittaker modules}\label{subsec:geom_prelim}

In this subsection we describe the localization framework related to Whittaker modules. References for facts below include \cite{Beilinson-Bernstein:Localization}, \cite{Jantzen_Conj}, \cite{Milicic-Soergel:Whittaker_geometric}, \cite{Milicic:Localization}, \cite{Romanov:Whittaker}.

Let $X$ be the flag variety of $\fg$, the variety of Borel subalgebras of $\fg$, with the natural $G$-action. The sheaf of ordinary (algebraic) differential operators $\cD_X$ is the subsheaf of $\cHom_\BC(\cO_X,\cO_X)$ generated by multiplications of functions and actions of vector fields. The natural action of $G$ on the space of functions on $X$ can be differentiated, which assigns each element in $\fg$ a vector field on $X$, whence a map $\fg \to \cD_X$.

More generally, for each $\lambda \in \fh^*$, Beilinson-Bernstein constructed in \cite{Beilinson-Bernstein:Localization} a twisted sheaf of differential operators $\cD_\lambda$ on $X$ together with a map $\fg \to \cD_\lambda$ that induces an isomorphism $\cU_\theta \cong \Gamma(X,\cD_\lambda)$ (recall that $\theta$ is the Weyl group orbit of $\lambda$). Here $\cD_\lambda$ is a sheaf of $\BC$-algebras that is locally isomorphic to $\cD_X$. We use the parametrization of these sheaves as in \cite[Chapter 2 \textsection 1]{Milicic:Localization}, under which $\cD_X = \cD_{-\rho}$. If $\lambda$ is antidominant and regular, Beilinson and Bernstein showed that taking global sections on $X$ is an equivalence of categories
\begin{equation}
	\Gamma(X,-): \Mod_{qc}(\cD_\lambda) \cong \Mod(\cU_\theta)
\end{equation}
between the category of quasi-coherent $\cD_\lambda$-modules and the category of $\cU_\theta$-modules, and a quasi-inverse is given by the \textit{localization} functor $\cD_\lambda \dotimes_{\cU_\theta} -$ \cite{Beilinson-Bernstein:Localization}. If $\lambda$ is only antidominant but not regular, $\Gamma(X,-)$ is still exact, but some $\cD_\lambda$-modules can have zero global section, and $\Gamma(X,-)$ factors through an equivalence between $\Mod(\cU_\theta)$ and a quotient of $\Mod_{qc}(\cD_\lambda)$. The subcategory $\cN_{\theta,\eta}$ of $\Mod(\cU_\theta)$ corresponds, under the above equivalence of categories, to the subcategory $\Mod_{coh}(\cD_\lambda,N,\eta)$ consisting of \textbf{$\eta$-twisted Harish-Chandra sheaves} (or $\eta$-twisted sheaves for short). This is the full subcategory of all coherent $\cD_\lambda$-modules consisting of those $\cV$ such that 
\begin{itemize}
	\item $\cV$ is an $N$-equivariant $\cO_X$-module,
	\item the action map $\cD_\lambda \dotimes \cV \to \cV$ of $\cD_\lambda$ on $\cV$ is $N$-equivariant, and
	\item for all $n \in \fn$, the equation $\pi(\xi) = \mu(\xi) + \eta(\xi)$ holds in $\End_\BC(\cV)$, where $\pi$ is the action of $\fn$ induced by $\fn \subset \fg \to \cD_\lambda \acts \cV$, and $\mu$ is the action given by the differential of the $N$-equivariant structure on $\cV$.
\end{itemize}

Any $\eta$-twisted Harish-Chandra sheaf is automatically holonomic \cite[Lemma 1.1]{Milicic-Soergel:Whittaker_geometric}. Holonomic modules share very nice properties (see \cite[Definition 2.3.6]{HTT} for the definition of holonomicity and \cite[Chapter 3]{HTT} for its properties). They have finite length. They are preserved by direct images and inverse images along morphisms of smooth algebraic varieties. They admit a duality operation 
\begin{equation*}
	\BD: \Mod_{hol}(\cD_\lambda) \cong \Mod_{hol}(\cD_\lambda{}^{op}) \cong \Mod_{hol}(\cD_{-\lambda})
\end{equation*}
where $\cD_\lambda{}^{op}$ denotes the opposite algebra of $\cD_\lambda$ (the last equality follows from the identification $\cD_\lambda{}^{op} \cong \cD_{-\lambda}$, see \cite[\textsection A.2 pp.325]{Hecht-Milicic-Schmid-Wolf:Localization1} or \cite[pp.44, No.9 Example 3]{Beilinson-Bernstein:Subrep}). For a morphism $f$ between smooth varieties, we denote direct images of holonomic $\cD$-modules by $f_+$, $f_!$ and inverse images by $f^+$, $f^!$. Here $f_+$ agrees with the definition in \cite[VI.5]{Borel:D-mods} and with $\int_f$ in \cite[\textsection 1.5]{HTT}. The functor $f_!$ is obtained by conjugating $f_+$ by holonomic duality $\BD$, which agrees with $\int_{f!}$ in \cite[Definition 3.2.3]{HTT}. The pullback $f^!$ agrees with the one defined in \cite[VI.4]{Borel:D-mods} and with $f^\dagger$ in \cite[\textsection 1.5]{HTT}. When $f$ is a closed immersion of a smooth subvariety, $H^0 f^! \cV$ consists of sections of $\cV$ supported in the subvariety. The functor $Lf^+$ is a shift of $f^!$ by the relative dimension ($Lf^*$ in \cite[\textsection 1.5]{HTT}); forgetting the $\cD$-module structures, $f^+ := H^0 Lf^+$ agrees with the usual $\cO$-module inverse image $f^*$. All $\eta$-twisted Harish-Chandra sheaves are functorial with respect to all these operations.

Let $C(w)$, $w \in W$ be the Schubert cells (i.e. $N$-orbits) on $X$, with inclusion maps $i_w: C(w) \to X$. There exist nonzero $\eta$-twisted Harish-Chandra sheaves on $C(w)$ if and only if $w = w^C$ is the longest element in the right $W_\Theta$-coset that contains it. If this is the case, the category $\Mod_{coh}(\cD_{C(w^C)},N,\eta)$ is semisimple, in which the unique irreducible object, denoted by $\cO_{C(w^C)}^\eta$, has $\cO_{C(w^C)}$ as the underlying structure of an $N$-equivariant $\cO_{C(w^C)}$-module, but with an $\eta$-twisted $\cD_{C(w^C)}$-action (these results are contained in \cite[\textsection 3 and \textsection 4]{Milicic-Soergel:Whittaker_geometric}). We call the $\cD$-module direct images
\begin{equation*}
	\cI(w^C,\lambda,\eta) = i_{w^C+} \cO_{C(w^C)}^\eta,\qquad
	\cM(w^C,\lambda,\eta) = i_{w^C!} \cO_{C(w^C)}^\eta
\end{equation*}
the \textbf{standard module} and the \textbf{costandard module}, respectively. The standard module $\cI(w^C,\lambda,\eta)$ contains a unique irreducible submodule
\begin{equation*}
	\cL(w^C,\lambda,\eta),
\end{equation*}
and $\cL(w^C,\lambda,\eta)$ is the unique irreducible quotient of $\cM(w^C,\lambda,\eta)$. These exhaust all irreducible objects in $\Mod_{coh}(\cD_\lambda,N,\eta)$ (\cite[\textsection 3.4]{HTT}, \cite[\textsection 4]{Milicic-Soergel:Whittaker_geometric}). Romanov \cite[Theorem 9]{Romanov:Whittaker} showed (using the character theory she developed) that if $\lambda$ is antidominant, Beilinson-Bernstein's equivalence sends $\cM(w^C,\lambda,\eta)$ to $M(w^C\lambda,\eta)$ and $\cL(w^C,\lambda,\eta)$ to either $L(w^C\lambda,\eta)$ or $0$. If $\lambda$ is furthermore regular, $\cL(w^C,\lambda,\eta)$ is always sent to $L(w^C\lambda,\eta)$. This allows us to study Whittaker modules using geometry on $X$.

In practice, we work with $\cI(w^C,\lambda,\eta)$ rather than $\cM(w^C,\lambda,\eta)$ because $f_+$ is more natural in the $\cD$-module theory than $f_!$. The holonomic duality $\BD$ sends $\cM(w^C,\lambda,\eta)$ to $\cI(w^C,-\lambda,\eta)$ (because $f_! = \BD \circ f_+ \circ \BD$) and hence sends the unique irreducible quotient $\cL(w^C,\lambda,\eta)$ of $\cM(w^C,\lambda,\eta)$ to the unique irreducible submodule $\cL(w^C,-\lambda,\eta)$ of $\cI(w^C,-\lambda,\eta)$ \cite[Proposition 3.4.3]{HTT}. So we have the following flowchart
\begin{equation*}
	\cN_{\theta,\eta} \xrightarrow{\cD_\lambda\dotimes_{\cU_\theta}-} \Mod_{coh}(\cD_\lambda,N,\eta) \xrightarrow{\BD} \Mod_{coh}(\cD_{-\lambda},N,\eta),
\end{equation*}
\begin{equation*} 
	L(w^C\lambda,\eta) \mapsto \cL(w^C,\lambda,\eta) \mapsto \cL(w^C,-\lambda,\eta),
\end{equation*}
\begin{equation*}
	M(w^C\lambda,\eta) \mapsto \cM(w^C,\lambda,\eta) \mapsto \cI(w^C,-\lambda,\eta).
\end{equation*}
Because of the finite length property, the set of irreducible objects form a basis for the Grothendieck group $K \Mod_{coh}(\cD_{-\lambda},N,\eta)$. A standard argument using pullback-pushforward adjunctions shows that the set of standard modules also form a basis for $K \Mod_{coh}(\cD_{-\lambda},N,\eta)$. Therefore, our goal of finding coefficients of $\ch M(w^D\lambda,\eta)$ in $\ch L(w^C\lambda,\eta)$ is the same as finding the change of bases matrix from the $\cL$ basis to the $\cI$ basis.
\section{Double cosets in the Weyl group}\label{sec:db_coset}

In this section, we collect some known results on the integral root subsystem, and examine the structure of double $(W_\Theta, W_\lambda)$-cosets in $W$. Most results on here are either known or not hard. We include most of the proofs for completeness. We will refer to the example in \textsection\ref{sec:examples} when combinatorial objects are introduced. 

In \textsection\ref{subsec:Bruhat_W/Wlambda} we define a cross-section of $W/W_\lambda$ and examine the restriction of Bruhat order to each coset. The next subsection \textsection\ref{subsec:WTheta_prelim} sets notations and collects some known facts on $W_\Theta \backslash W$. In \textsection\ref{subsec:db_coset_xsec}, we construct a cross-section $A_{\Theta,\lambda}$ of $W_\Theta \backslash W / W_\lambda$ consisting of the unique shortest elements in each double coset (Corollary \ref{thm:cross-section_db_coset}). Next, we show in \textsection\ref{subsec:int_model} that, if one looks at the partition of $W_\Theta \backslash W$ given by double cosets $W_\Theta \backslash W / W_\lambda$, then each block in this partition corresponds to a right coset in $W_\lambda$ of a parabolic subgroup of $W_\lambda$, called the ``integral model'' for this double coset. As mentioned in \textsection\ref{sec:intro}, the Whittaker Kazhdan-Lusztig polynomials for $(W_\lambda,\Pi_\lambda)$ with respect to this parabolic subgroup describe the multiplicities of Whittaker modules indexed by right $W_\Theta$-cosets in this double coset. Lastly, in \textsection\ref{subsec:lem_induction}, we prove a lemma which enables a key induction step in \textsection\ref{subsec:(4)}.

We refer the readers to \textsection\ref{sec:prelim} for the definitions of the root system theoretic objects $\Sigma$, $\Sigma^+$, $\Pi$, $W$, and their variants defined by $\Theta$ and $\lambda$.

\subsection{Left $W_\lambda$-cosets and Bruhat order}\label{subsec:Bruhat_W/Wlambda}

For any $u \in W$, define the set
\begin{equation*}
	\Sigma_u^+ = \{ \alpha \in \Sigma^+ \mid u \alpha \in - \Sigma^+\} = \Sigma^+ \cap (-u\inv \Sigma^+),
\end{equation*}
i.e. the set of positive roots $\alpha$ so that $u\alpha$ is not positive. Write
\begin{equation*}
	A_\lambda = \{ u \in W \mid \Sigma_u^+ \cap \Sigma_\lambda = \varnothing\}.
\end{equation*}
The following is well-known.

\begin{lemma}
	The set $A_\lambda$ is a cross-section of $W/W_\lambda$. 
\end{lemma}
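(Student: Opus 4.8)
The plan is to show that each left coset $uW_\lambda$ meets $A_\lambda$ in exactly one element, namely the unique element of minimal length in that coset. The key observation is that $W_\lambda$ is the Weyl group of the root subsystem $\Sigma_\lambda$, with its own set of positive roots $\Sigma_\lambda^+ = \Sigma_\lambda \cap \Sigma^+$ and simple roots $\Pi_\lambda$; so the standard combinatorics of parabolic-type coset representatives applies, once we translate between "inversions in $W$" and "inversions in $W_\lambda$".

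First I would recall the basic length/inversion dictionary: for $w \in W_\lambda$, the set of roots of $\Sigma_\lambda^+$ sent to $-\Sigma_\lambda^+$ by $w$ is exactly $\Sigma_w^+ \cap \Sigma_\lambda$ (this uses that $\Sigma_\lambda$ is stable under $W_\lambda$, so $w$ permutes $\Sigma_\lambda$, and a root of $\Sigma_\lambda^+$ goes to $-\Sigma^+$ iff it goes to $-\Sigma_\lambda^+$). Hence $u \in A_\lambda$ says precisely that, writing $u = u' w$ with $w \in W_\lambda$ and $u'$ anything, there are no inversions of $u$ lying in $\Sigma_\lambda$. Second, I would prove existence: given any left coset $uW_\lambda$, pick the element $u_0$ of smallest length in it; I claim $u_0 \in A_\lambda$. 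Indeed if some $\alpha \in \Sigma_\lambda^+$ satisfied $u_0\alpha \in -\Sigma^+$, then since $\alpha \in \Sigma_\lambda^+$ we may choose a simple reflection $s_\beta$ ($\beta \in \Pi_\lambda$) of $W_\lambda$ with $\ell_\lambda(s_\beta \cdot \text{(something)})$... more directly: $u_0 s_\beta$ for a suitable $\beta \in \Pi_\lambda$ with $s_\beta\alpha' \dots$; the cleanest route is the classical one — if $\Sigma_{u_0}^+ \cap \Sigma_\lambda \neq \varnothing$ then it contains a simple root $\beta$ of $\Pi_\lambda$ (because $\Sigma_{u_0}^+ \cap \Sigma_\lambda$ is "closed" in $\Sigma_\lambda^+$ in the appropriate sense, or simply: among the inversions in $\Sigma_\lambda$, a $\le_\lambda$-minimal one is $\Pi_\lambda$-simple), and then $u_0 s_\beta$ lies in the same left $W_\lambda$-coset with $\Sigma_{u_0 s_\beta}^+ = (\Sigma_{u_0}^+ \setminus \{\beta\})$ intersected appropriately, hence strictly fewer inversions, contradicting minimality of $\ell(u_0)$. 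Third, uniqueness: if $u, u' = uw \in A_\lambda$ with $w \in W_\lambda$, $w \neq e$, pick $\beta \in \Pi_\lambda$ with $\ell_\lambda(w^{-1}s_\beta) < \ell_\lambda(w^{-1})$, equivalently $w^{-1}\beta \in -\Sigma_\lambda^+$; then $\beta \in \Sigma_\lambda^+$ and $u'\cdot(w^{-1}\beta)$... one shows $w^{-1}\beta \in \Sigma_{u}^+ \cap \Sigma_\lambda$ or $\beta \in \Sigma_{u'}^+ \cap \Sigma_\lambda$, contradicting $u$ or $u' \in A_\lambda$. So the coset meets $A_\lambda$ in the unique minimal-length element.

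The main obstacle, and the step I would be most careful about, is the claim that a nonempty inversion set $\Sigma_{u_0}^+ \cap \Sigma_\lambda$ contains a simple root of $\Pi_\lambda$ — one must use that $\Pi_\lambda$ is the simple system of $\Sigma_\lambda^+$ (which need not consist of $\Sigma^+$-simple roots, as the paper explicitly warns) together with the fact that $\Sigma_{u_0}^+ \cap \Sigma_\lambda = \Sigma_\lambda^+ \cap (-u_0^{-1}\Sigma^+)$, and that for $w \in W_\lambda$ the condition $w^{-1}\Sigma^+ \cap \Sigma_\lambda^+$ behaves like an ordinary inversion set in $W_\lambda$. This is exactly the kind of statement proved by induction on $\ell(u_0)$ or by the standard "minimal inversion is simple" lemma applied inside $\Sigma_\lambda$; it is routine but is where the subtlety (non-$\Sigma^+$-simplicity of $\Pi_\lambda$) is concentrated. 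Everything else is a direct translation of the classical fact that minimal-length coset representatives for $W/W_J$ are characterized by having no inversions among the roots of the parabolic.
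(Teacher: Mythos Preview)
Your approach is correct but different from the paper's. The paper does not use length at all: for existence it observes that $\Sigma_\lambda \cap w^{-1}\Sigma^+$ is a positive system of $\Sigma_\lambda$, then uses the simply transitive action of $W_\lambda$ on positive systems of $\Sigma_\lambda$ to find $v \in W_\lambda$ with $v(\Sigma_\lambda \cap w^{-1}\Sigma^+) = \Sigma_\lambda^+$, giving $wv^{-1} \in A_\lambda$; for uniqueness it shows that $u_1 = u_2 v$ with $u_1,u_2 \in A_\lambda$ forces $v^{-1}\Sigma_\lambda^+ = \Sigma_\lambda^+$, hence $v=1$. Your minimal-length argument has the advantage that it simultaneously proves the statement the paper establishes separately later (their Corollary~\ref{lem:u_smallest}), namely that $A_\lambda$ consists precisely of the shortest coset representatives.

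One remark: the step you flag as the main obstacle --- that $\Sigma_{u_0}^+ \cap \Sigma_\lambda$, if nonempty, contains a $\Pi_\lambda$-simple root --- is true (write any $\alpha$ in this set as a $\BZ_{\ge 0}$-combination of $\Pi_\lambda$ and observe that $u_0\alpha \in -\Sigma^+$ forces some summand to be sent to $-\Sigma^+$), but it is actually unnecessary. For \emph{any} $\alpha \in \Sigma_{u_0}^+ \cap \Sigma_\lambda$ one has $s_\alpha \in W_\lambda$ (since $W_\lambda$ is the Weyl group of $\Sigma_\lambda$) and $\ell(u_0 s_\alpha) < \ell(u_0)$ (since $\alpha \in \Sigma^+$ and $u_0\alpha \in -\Sigma^+$, by the strong exchange property), already contradicting minimality. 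Relatedly, the formula $\Sigma_{u_0 s_\beta}^+ = \Sigma_{u_0}^+ \setminus \{\beta\}$ that you wrote holds only when $\beta$ is simple in $\Pi$, which it need not be here; but you only need the length inequality, which holds regardless.
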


\begin{proof}
	This proof is copied verbatim from Mili\v ci\'c's unpublished notes. Observe that
	\begin{align*}
		\Sigma_u^+ \cap \Sigma_\lambda 
		&= \Sigma^+ \cap (-u\inv \Sigma^+) \cap \Sigma_\lambda && (\text{by definition of } \Sigma_u^+)\\
		&= (\Sigma^+ \cap \Sigma_\lambda) \cap (-u\inv \Sigma^+) && (\text{rearranging terms})\\
		&= \Sigma_\lambda^+ \cap (-u\inv \Sigma^+) && (\text{by definition of } \Sigma_\lambda^+).
	\end{align*}
	Hence $\Sigma_u^+ \cap \Sigma_\lambda = \varnothing \iff \Sigma_\lambda^+ \subseteq u\inv \Sigma^+$, and
	\begin{equation}\label{eqn:A_lambda_alt}
		A_\lambda = \{u \in W \mid \Sigma_\lambda^+ \subseteq u\inv \Sigma^+\}.
	\end{equation}
	
	We first show that any left $W_\lambda$-coset has a representative in $A_\lambda$. Take any $w \in W$. Then $w\inv \Sigma^+$ is a set of positive roots in $\Sigma$. Hence $\Sigma_\lambda \cap w\inv \Sigma^+$ is a set of positive roots in $\Sigma_\lambda$. So there is an element $v \in W_\lambda$ such that $v(\Sigma_\lambda \cap w\inv \Sigma^+) = \Sigma_\lambda^+$, or equivalently $\Sigma_\lambda \cap vw\inv \Sigma^+ = \Sigma_\lambda^+$ (because $v \Sigma_\lambda = \Sigma_\lambda$). In particular $\Sigma_\lambda^+ \subseteq vw\inv \Sigma^+$, and hence $(v w\inv) = wv\inv \in A_\lambda$ by the above alternative description of $A_\lambda$. As a result $w \in A_\lambda v \subseteq A_\lambda W_\lambda$. This shows $W = A_\lambda W_\lambda$, and any left $W_\lambda$-coset has a representative in $A_\lambda$.
	
	Now suppose $u_1,u_2 \in A_\lambda$ are in the same left $W_\lambda$-coset, i.e. there is $v \in W_\lambda$ with $u_1 = u_2 v$. This implies
	\begin{align*}
		\Sigma_\lambda^+ 
		&= \Sigma_\lambda \cap u_1\inv \Sigma^+ && (\text{since } u_1 \in A_\lambda \text{ and because of (\ref{eqn:A_lambda_alt})})\\
		&= \Sigma_\lambda \cap v\inv u_2\inv \Sigma^+  && (\text{since } u_1 = u_2 v)\\
		&= v\inv ( \Sigma_\lambda \cap u_2\inv \Sigma^+) && (\text{using } v\inv \Sigma_\lambda = \Sigma_\lambda \text{ and factoring $v\inv$ out})\\
		&= v\inv \Sigma_\lambda^+ && (\text{since } u_2 \in A_\lambda \text{ and because of (\ref{eqn:A_lambda_alt})}).
	\end{align*}
	Since $W_\lambda$ acts simply transitively on the set of sets of positive roots of $\Sigma_\lambda$, we have $v = 1$ and $u_1 = u_2$. Thus $A_\lambda$ is a cross-section of $W/W_\lambda$.
\end{proof}

The set $\Sigma_\lambda$, $W_\lambda$ and $A_\lambda$ satisfy the following elementary properties. 

\begin{lemma}\label{lem:non-int_refl_subsys}
	Let $\beta$ be a simple root and let $u \in W$. Write $s_\beta$ for the reflection of $\beta$. Then
	\begin{enumerate}[label=(\alph*)]
		\item $u \Sigma_\lambda = \Sigma_{u \lambda}$;
		\item if $u \in A_\lambda$, $u \Sigma_\lambda^+ = \Sigma_{u \lambda}^+$;
		\item if $u \in A_\lambda$, $u \Pi_\lambda = \Pi_{u \lambda}$;
		\item $u W_\lambda u\inv = W_{u \lambda}$;
		\item if $s_\beta \in A_\lambda$ and $u \in A_\lambda$, $us_\beta \in A_{s_\beta\lambda}$.
		\item $s_\beta \in A_\lambda$ if and only if $\beta \in \Pi-\Pi_\lambda$.
	\end{enumerate}
\end{lemma}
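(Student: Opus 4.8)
\textbf{Proof plan for Lemma \ref{lem:non-int_refl_subsys}.}

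The plan is to prove the six statements in the order (a), (d), (b), (c), (f), (e), since each relies on the previous ones and on the alternative description \eqref{eqn:A_lambda_alt} of $A_\lambda$. First I would prove (a): since $\Sigma_\lambda = \{\alpha \in \Sigma \mid \alpha^\vee(\lambda) \in \BZ\}$, and $W$ acts on coroots compatibly with its action on roots via $(u\alpha)^\vee = u(\alpha^\vee)$, we have $(u\alpha)^\vee(u\lambda) = \alpha^\vee(\lambda)$, so $\alpha \in \Sigma_\lambda \iff u\alpha \in \Sigma_{u\lambda}$; this is immediate. Statement (d) follows formally from (a): $W_\lambda$ is generated by the reflections $s_\alpha$ for $\alpha \in \Sigma_\lambda$, and $u s_\alpha u\inv = s_{u\alpha}$, so $uW_\lambda u\inv$ is generated by $\{s_{u\alpha} \mid \alpha \in \Sigma_\lambda\} = \{s_\beta \mid \beta \in \Sigma_{u\lambda}\} = W_{u\lambda}$ by (a).

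Next, (b): by \eqref{eqn:A_lambda_alt}, $u \in A_\lambda$ means $\Sigma_\lambda^+ \subseteq u\inv\Sigma^+$, i.e. $u\Sigma_\lambda^+ \subseteq \Sigma^+$. Since $u\Sigma_\lambda^+ \subseteq u\Sigma_\lambda = \Sigma_{u\lambda}$ by (a), and $u\Sigma_\lambda^+$ has exactly $|\Sigma_\lambda^+|$ elements which is half of $|\Sigma_\lambda| = |\Sigma_{u\lambda}|$, and $u\Sigma_\lambda^+ \cup (-u\Sigma_\lambda^+) = \Sigma_{u\lambda}$ with the two halves disjoint, the containment $u\Sigma_\lambda^+ \subseteq \Sigma^+ \cap \Sigma_{u\lambda} = \Sigma_{u\lambda}^+$ forces equality by a cardinality count. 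Then (c) follows because $\Pi_\lambda$ is the unique set of indecomposable elements of $\Sigma_\lambda^+$ (with respect to non-negative integer combinations), and $u$ carries indecomposable elements of $\Sigma_\lambda^+$ to indecomposable elements of $u\Sigma_\lambda^+ = \Sigma_{u\lambda}^+$, hence $u\Pi_\lambda = \Pi_{u\lambda}$.

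For (f): if $\beta \in \Pi - \Pi_\lambda$, then $\beta \notin \Sigma_\lambda$, and since $s_\beta$ permutes $\Sigma^+ \setminus \{\beta\}$ (standard fact about simple reflections), it maps $\Sigma_\lambda^+ \subseteq \Sigma^+ \setminus \{\beta\}$ into $\Sigma^+$, so $s_\beta\inv \Sigma^+ = s_\beta\Sigma^+ \supseteq \Sigma_\lambda^+$, i.e. $s_\beta \in A_\lambda$ by \eqref{eqn:A_lambda_alt}. Conversely, if $\beta \in \Pi_\lambda$ — more precisely if $\beta \in \Sigma_\lambda$ — then $\beta \in \Sigma_\lambda^+$ but $s_\beta\beta = -\beta \notin \Sigma^+$, so $\Sigma_\lambda^+ \not\subseteq s_\beta\inv\Sigma^+$ and $s_\beta \notin A_\lambda$. (Here one uses that $\beta \in \Pi \cap \Sigma_\lambda$ implies $\beta \in \Pi_\lambda$, since a simple root lying in $\Sigma_\lambda^+$ cannot be a non-trivial non-negative integer combination of other positive integral roots; this should be recorded in \textsection\ref{subsec:WTheta_prelim} or is immediate.) Finally (e): assume $s_\beta \in A_\lambda$ and $u \in A_\lambda$; we must show $us_\beta \in A_{s_\beta\lambda}$, i.e. by \eqref{eqn:A_lambda_alt} that $\Sigma_{s_\beta\lambda}^+ \subseteq (us_\beta)\inv\Sigma^+ = s_\beta u\inv\Sigma^+$, equivalently $s_\beta\Sigma_{s_\beta\lambda}^+ \subseteq u\inv\Sigma^+$. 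By (b) applied to $s_\beta \in A_\lambda$, we have $s_\beta\Sigma_\lambda^+ = \Sigma_{s_\beta\lambda}^+$, so $s_\beta\Sigma_{s_\beta\lambda}^+ = s_\beta^2\Sigma_\lambda^+ = \Sigma_\lambda^+ \subseteq u\inv\Sigma^+$, the last containment because $u \in A_\lambda$. This completes (e).

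The main obstacle is the cardinality/indecomposability bookkeeping in (b) and (c): one must be careful that $\Pi_\lambda$ is genuinely the base of the root system $\Sigma_\lambda$ (so that "indecomposable in $\Sigma_\lambda^+$" is the right notion and is preserved by the linear map $u$), and that the counting argument for $u\Sigma_\lambda^+ = \Sigma_{u\lambda}^+$ is valid — this uses that $\Sigma_\lambda$ is itself a root system with Weyl group $W_\lambda$, which is a standard fact about integral root subsystems recalled at the start of \textsection\ref{sec:db_coset}. Everything else is a direct manipulation of \eqref{eqn:A_lambda_alt} together with the elementary identity $u s_\alpha u\inv = s_{u\alpha}$.
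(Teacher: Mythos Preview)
Your proposal is correct and follows essentially the same route as the paper. The only cosmetic differences are that the paper proves (d) using the characterization $W_\lambda = \{w \in W \mid w\lambda - \lambda \in \BZ\cdot\Sigma\}$ rather than generation by reflections, and proves (e) by a direct chain of equivalences rather than by first invoking (b); your version of (e), which applies (b) to $s_\beta \in A_\lambda$ to get $s_\beta\Sigma_\lambda^+ = \Sigma_{s_\beta\lambda}^+$, is arguably a bit cleaner.
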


\begin{proof}
	(a): for any $\alpha \in \Sigma_\lambda$, $(u \alpha)^\vee (u\lambda) = \alpha^\vee( u\inv u \lambda) = \alpha^\vee(\lambda) \in \BZ$. Hence $u \alpha \in \Sigma_{u\lambda}$ and $u \Sigma_\lambda \subseteq \Sigma_{u\lambda}$ by the definition of $\Sigma_{u\lambda}$. Since both sets have the same size, equality holds. 
	
	(b): from (\ref{eqn:A_lambda_alt}), we know $u \Sigma_\lambda^+ \subseteq \Sigma^+$. Hence 
	\begin{equation*}
		u \Sigma_\lambda^+ 
		= u \Sigma_\lambda \cap \Sigma^+ 
		= \Sigma_{u \lambda} \cap \Sigma^+ 
		= \Sigma_{u \lambda}^+.
	\end{equation*}
	
	(c): elements in $\Pi_\lambda$ and $\Pi_{u\lambda}$ can be characterized by not being sums of other elements of $\Sigma_\lambda^+$ and $\Sigma_{u \lambda}^+$, respectively. Since $u: \Sigma_\lambda^+ \to \Sigma_{u \lambda}^+$ commutes with sums, it must send $\Pi_\lambda$ to $\Pi_{u \lambda}$.
	
	(d): for any $w \in W_\lambda$, 
	\begin{equation*}
		(u w u\inv) u \lambda - u \lambda 
		= u (w \lambda - \lambda)
		\in u (\BZ \cdot \Sigma) = \BZ \cdot \Sigma.
	\end{equation*}
	Hence $u W_\lambda u\inv \subseteq W_{u \lambda}$ by definition of $W_{u\lambda}$. Since both sides have the same size, equality holds.
	
	(e): observe
	\begin{align*}
		\Sigma_{us_\beta}^+ \cap \Sigma_{s_\beta\lambda}\
		&= (\Sigma^+ \cap (-(us_\beta)\inv \Sigma^+)) \cap \Sigma_{s_\beta\lambda} &&(\text{by definition of } \Sigma_{us_\beta}^+)\\
		&= (\Sigma^+ \cap \Sigma_{s_\beta \lambda}) \cap (-(us_\beta)\inv \Sigma^+) && (\text{rearranging terms})\\
		&= \Sigma_{s_\beta\lambda}^+ \cap (-(us_\beta)\inv \Sigma^+) && (\text{by definition of } \Sigma_{s_\beta \lambda}^+)\\
		&= s_\beta \Sigma_\lambda^+ \cap (-(us_\beta)\inv \Sigma^+) && (\text{by part (b)}).
	\end{align*}
	Hence
	\begin{align*}
		us_\beta \in A_{s_\beta \lambda}
		&\iff \Sigma_{us_\beta}^+ \cap \Sigma_{s_\beta\lambda}  = \varnothing && (\text{by definition of } A_{s_\beta\lambda})\\
		&\iff s_\beta \Sigma_\lambda^+ \cap (-(us_\beta)\inv \Sigma^+) = \varnothing && (\text{by the above observation})\\
		&\iff s_\beta \Sigma_\lambda^+ \subseteq (us_\beta)\inv \Sigma^+ = s_\beta u\inv \Sigma^+\\
		&\iff u\Sigma_\lambda^+ \subseteq \Sigma^+ && (\text{multiplying both sides by } us_\beta)\\
		&\iff u \in A_\lambda && (\text{by (\ref{eqn:A_lambda_alt})})
	\end{align*}
	which is true by assumption.
	
	(f): if $s_\beta \in A_\lambda$, then since $A_\lambda$ is a cross-section of $W/W_\lambda$ and $1$ is already in $W_\lambda$, we must have $s_\beta \not\in W_\lambda$. Hence
	\begin{equation*}
		-\beta^\vee(\lambda)\beta = (\lambda - \beta^\vee(\lambda)\beta) - \lambda = s_\beta \lambda - \lambda \not\in \BZ \cdot \Sigma,
	\end{equation*}
	and $\beta^\vee(\lambda) \not\in \BZ$. Therefore $\beta \not\in \Pi_\lambda$. On the other direction, suppose $\beta \not\in \Pi_\lambda$. Since the only positive root moved out of $\Sigma^+$ by $s_\beta$ is $\beta$, and $\beta$ is not in $\Sigma_\lambda$, we see that $s_\beta \Sigma_\lambda^+ \subseteq \Sigma^+$. This implies $s_\beta \in A_\lambda$ by (\ref{eqn:A_lambda_alt}).
\end{proof}

In particular, (c) and (d) imply that conjugation by $u \in A_\lambda$ sends simple reflections in $W_\lambda$ to simple reflections in $W_{u \lambda}$. This implies:

\begin{corollary}\label{lem:Bruhat_order_conj}
	Let $u \in A_\lambda$. Then conjugation by $u$ is an isomorphism of posets
	\begin{equation*}
		(W_\lambda,\le_\lambda) \bijects (W_{u\lambda},\le_{u\lambda}).
	\end{equation*}
\end{corollary}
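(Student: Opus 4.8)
The plan is to upgrade the statement of the sentence preceding the corollary: conjugation by $u$ is not merely a group isomorphism $W_\lambda \to W_{u\lambda}$ but an isomorphism of \emph{Coxeter systems}, and any isomorphism of Coxeter systems automatically preserves the Bruhat order. So the proof is a soft one, built entirely on Lemma \ref{lem:non-int_refl_subsys}(c),(d).

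First I would record the group-theoretic input: by Lemma \ref{lem:non-int_refl_subsys}(d) the map $c_u\colon w \mapsto u w u\inv$ is an isomorphism $W_\lambda \bijects W_{u\lambda}$. Next, for $\alpha \in \Pi_\lambda$ one has $c_u(s_\alpha) = s_{u\alpha}$, and Lemma \ref{lem:non-int_refl_subsys}(c) gives $u\alpha \in \Pi_{u\lambda}$; conversely, since $u\colon \Pi_\lambda \to \Pi_{u\lambda}$ is a bijection, every simple reflection of $W_{u\lambda}$ arises this way. Hence $c_u$ carries the Coxeter generating set $\{s_\alpha : \alpha \in \Pi_\lambda\}$ of $W_\lambda$ bijectively onto the Coxeter generating set $\{s_\beta : \beta \in \Pi_{u\lambda}\}$ of $W_{u\lambda}$, i.e. $c_u$ is an isomorphism of the Coxeter systems $(W_\lambda,\Pi_\lambda)$ and $(W_{u\lambda},\Pi_{u\lambda})$.

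It then remains to note that the length function $\ell_\lambda$ and the Bruhat order $\le_\lambda$ on $W_\lambda$ are defined intrinsically from the Coxeter system $(W_\lambda,\Pi_\lambda)$: length is word length in the generators $\{s_\alpha : \alpha \in \Pi_\lambda\}$, and $w_1 \le_\lambda w_2$ iff some reduced word for $w_2$ admits a subword that is a reduced word for $w_1$; likewise for $(W_{u\lambda},\Pi_{u\lambda})$. Since $c_u$ sends generators to generators bijectively, it sends reduced words to reduced words, hence preserves length and the subword characterization of Bruhat order. Therefore $w_1 \le_\lambda w_2 \iff c_u(w_1) \le_{u\lambda} c_u(w_2)$ for all $w_1,w_2 \in W_\lambda$, and as $c_u$ is bijective this is exactly the assertion that $c_u$ is an isomorphism of posets.

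There is no genuine obstacle here; the only point requiring attention is to read $\le_\lambda$ and $\le_{u\lambda}$ as the \emph{intrinsic} Bruhat orders coming from $\Pi_\lambda$ and $\Pi_{u\lambda}$ respectively (the simple roots in $\Pi_\lambda$ need not be simple in $\Sigma^+$, so this is not the order induced from $W$), after which the argument above applies verbatim.
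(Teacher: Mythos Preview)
Your proof is correct and follows exactly the approach the paper takes: the paper simply remarks that parts (c) and (d) of the preceding lemma show conjugation by $u$ sends simple reflections in $W_\lambda$ to simple reflections in $W_{u\lambda}$, and then states the corollary as an immediate consequence. Your write-up just makes explicit the underlying fact that an isomorphism of Coxeter systems preserves the intrinsic Bruhat order.
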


We want to show that $A_\lambda$ consists of unique shortest elements in left cosets. We in fact have a stronger statement: left multiplication by an element in $A_\lambda$ is a map from $W_\lambda$ to $W$ that preserves the Bruhat orders. 

\begin{lemma}\label{lem:Bruhat_order_pre}
	Let $w$, $s_\alpha \in W_\lambda$ with $\alpha \in \Sigma_\lambda^+$, and let $u \in A_\lambda$. Suppose $s_\alpha w <_\lambda w$. Then $u s_\alpha w < u w$.
\end{lemma}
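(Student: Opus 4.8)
The statement is that for $u \in A_\lambda$ and $s_\alpha w <_\lambda w$ in $W_\lambda$ (with $\alpha \in \Sigma_\lambda^+$), we have $us_\alpha w < uw$ in the Bruhat order on $W$. The plan is to use the standard characterization of Bruhat order via root systems: for $x < xs$ with $s = s_\gamma$ the reflection in a positive root $\gamma$, one has $x < x' := xs_\gamma$ if and only if $\ell(x') > \ell(x)$, equivalently $x\gamma \in \Sigma^+$. More useful here is the ``subword''/reflection characterization, but the cleanest route is: writing $w = s_\alpha \cdot (s_\alpha w)$ and setting $v = s_\alpha w$, the hypothesis says $v <_\lambda s_\alpha v$ with $\ell_\lambda(s_\alpha v) = \ell_\lambda(v) + 1$; I want to deduce $uv < us_\alpha v = (us_\alpha u\inv)(uv)$. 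Now $us_\alpha u\inv = s_{u\alpha}$, and since $u \in A_\lambda$ and $\alpha \in \Sigma_\lambda^+$, Lemma \ref{lem:non-int_refl_subsys}(b) gives $u\alpha \in \Sigma_{u\lambda}^+ \subseteq \Sigma^+$. So I must show $uv < s_{u\alpha}(uv)$ in $W$, i.e. that $(uv)\inv(u\alpha) \in \Sigma^+$, equivalently $v\inv u\inv u \alpha = v\inv\alpha \in \Sigma^+$ — wait, that is not quite it; the correct criterion is $x < s_\gamma x$ iff $x\inv \gamma \in \Sigma^+$. Here $x = uv$ and $\gamma = u\alpha$, so the condition becomes $(uv)\inv (u\alpha) = v\inv \alpha \in \Sigma^+$.

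So the lemma reduces to: \emph{$s_\alpha v >_\lambda v$ (in $W_\lambda$, $\alpha \in \Sigma_\lambda^+$) $\implies$ $v\inv \alpha \in \Sigma^+$ (in $W$)}. But $s_\alpha v >_\lambda v$ is equivalent to $v\inv \alpha \in \Sigma_\lambda^+$ by the root-system criterion \emph{inside} $W_\lambda$ (using $\Sigma_\lambda$, $\Sigma_\lambda^+$). And $\Sigma_\lambda^+ = \Sigma_\lambda \cap \Sigma^+ \subseteq \Sigma^+$ by the definition of $\Sigma_\lambda^+$ recalled in \textsection\ref{subsec:main_results}. Hence $v\inv\alpha \in \Sigma^+$, which is exactly what is needed. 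Therefore $uv < s_{u\alpha}(uv) = us_\alpha v = us_\alpha w$, i.e. $uw < us_\alpha w$... but we want $us_\alpha w < uw$. Let me recheck the bookkeeping: with $v = s_\alpha w$ we have $s_\alpha v = w$, and the hypothesis $s_\alpha w <_\lambda w$ reads $v <_\lambda s_\alpha v$, so indeed $\ell_\lambda(s_\alpha v) > \ell_\lambda(v)$, giving $v\inv\alpha \in \Sigma_\lambda^+ \subseteq \Sigma^+$, hence $uv < s_{u\alpha}(uv) = u s_\alpha v$, that is $u(s_\alpha w) < u w$ — which is the desired conclusion $us_\alpha w < uw$. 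Good, the signs work out.

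The genuine content — and the only step requiring a little care — is justifying that the Bruhat-order comparison criterion ``$x < s_\gamma x \iff x\inv\gamma \in \Sigma^+$'' for the full group $W$, applied with $\gamma = u\alpha$, is fed the \emph{correct} positive root: this is where Lemma \ref{lem:non-int_refl_subsys}(b) (giving $u\alpha \in \Sigma^+$, so that $s_{u\alpha}$ is a genuine ``positive'' reflection and the length-increase criterion applies in its standard form) and the identity $us_\alpha u\inv = s_{u\alpha}$ are used. I expect the main obstacle to be purely notational: matching the paper's normalization of the criterion (some sources state it as $x < xs_\gamma \iff x\gamma \in \Sigma^+$, others with inverses), and making sure the comparison for $W_\lambda$ is phrased with $\Sigma_\lambda^+$ rather than $\Sigma^+$ so that the inclusion $\Sigma_\lambda^+ \subseteq \Sigma^+$ is what bridges the two. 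Once the criterion is pinned down, the proof is the three-line computation above; alternatively, one could argue via reduced expressions and the Subword Property, lifting a reduced word for $w$ in $W_\lambda$ through the length-additive map $W_\lambda \to W$, $v \mapsto uv$ (length-additivity of this map being essentially the content of $A_\lambda$ consisting of shortest coset representatives), but the root-theoretic argument is shorter and self-contained given Lemma \ref{lem:non-int_refl_subsys}.
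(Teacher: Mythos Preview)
Your argument is correct and takes a genuinely different route from the paper's proof. The paper argues via weights: it chooses a regular antidominant integral weight $\mu$, projects to $\operatorname{span}\Sigma_\lambda$, translates $s_\alpha w <_\lambda w$ into the statement that $w\mu - s_\alpha w\mu$ is a nonzero nonnegative integer combination of roots in $\Pi_\lambda$, then applies $u$ and uses $u\Pi_\lambda \subseteq \Sigma^+$ to conclude. You instead use the standard root-theoretic criterion $x < s_\gamma x \iff x^{-1}\gamma \in \Sigma^+$ (for $\gamma \in \Sigma^+$) twice: once in $(W_\lambda,\Sigma_\lambda^+)$ to extract $v^{-1}\alpha \in \Sigma_\lambda^+$ from the hypothesis, and once in $(W,\Sigma^+)$ with $\gamma = u\alpha$ to obtain the conclusion, the bridge being the inclusion $\Sigma_\lambda^+ \subseteq \Sigma^+$ together with $u\alpha \in \Sigma^+$ from $u \in A_\lambda$. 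Your approach is shorter and more conceptual, and it isolates exactly which facts about $A_\lambda$ are needed (namely $u\Sigma_\lambda^+ \subseteq \Sigma^+$, which is the content of \eqref{eqn:A_lambda_alt} or Lemma~\ref{lem:non-int_refl_subsys}(b)); the paper's weight argument is more self-contained in that it does not invoke the reflection criterion as a black box, but it is otherwise doing the same thing in different coordinates.
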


\begin{proof}
	Consider the projection $\fh^* \surj \operatorname{span} \Sigma_\lambda$ along the subspace $\bigcap_{\alpha \in \Sigma_\lambda} \ker \alpha$. For an element $\mu \in \fh^*$, we write $\bar \mu$ for its image under this projection.
	
	An inequality in $W$ with respect to Bruhat order can be checked by a regular antidominant integral weight. That is, if $\mu$ is such a weight in $\fh^*$, then $u s_\alpha w < u w$ if and only if $u s_\alpha w \mu < u w \mu$, where the second inequality means that $u w \mu - u s_\alpha w \mu$ is nonzero and is a non-negative sum of simple roots. Similarly, $s_\alpha w <_\lambda w$ if and only if $s_\alpha w \bar \mu <_\lambda w \bar \mu$. 
	
	Therefore, if we write $\nu = \mu-\bar\mu$,
	\begin{align*}
		s_\alpha w <_\lambda w
		&\iff s_\alpha w \bar\mu <_\lambda w \bar\mu\\
		&\iff s_\alpha w \bar\mu + \sum_{\alpha_i \in \Pi_\lambda} a_i \alpha_i = w \bar\mu \text{ for some } a_i \in \BZ_{\ge 0} \text{ not all zero}\\
		&\iff s_\alpha w \bar\mu + \nu + \sum_{\alpha_i \in \Pi_\lambda} a_i \alpha_i = w \bar\mu + \nu \text{ for some } a_i \in \BZ_{\ge 0} \text{ not all zero}\\
		&\iff s_\alpha w \mu + \sum_{\alpha_i \in \Pi_\lambda} a_i \alpha_i = w \mu \text{ for some } a_i \in \BZ_{\ge 0} \text{ not all zero}
	\end{align*} 
	where the last step is because $\nu$ is annihilated by all coroots in $\Sigma_\lambda^\vee$.
	Applying $u$ to both sides we get
	\begin{equation*}
		u s_\alpha w \mu + \sum_{\alpha_i \in \Pi_\lambda} a_i u \alpha_i = u w \mu \text{ for some } a_i \ge 0 \text{ not all zero}
	\end{equation*}
	which implies $u s_\alpha w \mu < u w \mu$, by the fact that $u \alpha_i \in u \Sigma_\lambda^+ \subseteq \Sigma^+$. Thus $u s_\alpha w < uw$ as desired.
\end{proof}

\begin{corollary}\label{lem:Bruhat_order}
	Let $v,w \in W_\lambda$ and $v \le_\lambda w$. Then for any $u \in A_\lambda$, $uv \le uw$.
\end{corollary}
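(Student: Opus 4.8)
The plan is to deduce this from Lemma~\ref{lem:Bruhat_order_pre} together with the chain (subword) property of the Bruhat order on the Coxeter group $(W_\lambda,\{s_\alpha : \alpha\in\Pi_\lambda\})$. Recall that the reflections of this Coxeter group are precisely the $s_\alpha$ with $\alpha\in\Sigma_\lambda^+$, and that if $v<_\lambda w$ then there is a saturated chain $v=x_0<_\lambda x_1<_\lambda\cdots<_\lambda x_n=w$ in which each step satisfies $\ell_\lambda(x_i)=\ell_\lambda(x_{i-1})+1$; such a step is necessarily of the form $x_{i-1}=s_{\alpha_i}x_i$ for some $\alpha_i\in\Sigma_\lambda^+$.

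First I would handle a single step of such a chain. Fixing $i$ and applying Lemma~\ref{lem:Bruhat_order_pre} with the roles of its $w$, $s_\alpha$, $u$ played by $x_i$, $s_{\alpha_i}$, $u$: since $s_{\alpha_i}x_i=x_{i-1}<_\lambda x_i$, the lemma yields $ux_{i-1}=us_{\alpha_i}x_i<ux_i$ in $W$. Then I would concatenate these inequalities: letting $i$ run from $1$ to $n$ gives $uv=ux_0<ux_1<\cdots<ux_n=uw$, hence $uv\le uw$ (indeed $uv<uw$ when $v\neq w$); the case $v=w$ is trivial. If one prefers, the same argument can be phrased as an induction on $\ell_\lambda(w)-\ell_\lambda(v)$, peeling off one covering relation at a time using the chain property.

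There is essentially no obstacle here: all of the substantive content is in Lemma~\ref{lem:Bruhat_order_pre}, and this corollary is merely its routine globalization along Bruhat chains. The only point requiring a bit of care is the bookkeeping about left versus right reflections — the covering relations must be written as left multiplications $x_{i-1}=s_{\alpha_i}x_i$ in order to match the hypothesis ``$s_\alpha w<_\lambda w$'' of Lemma~\ref{lem:Bruhat_order_pre} — but this is harmless, since $x_{i-1}=s_{\alpha_i}x_i$ is equivalent to $x_i=s_{\alpha_i}x_{i-1}$ and $\Sigma_\lambda^+$ indexes all reflections of $W_\lambda$ in either description.
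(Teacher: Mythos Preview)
Your proof is correct and takes essentially the same approach as the paper: build a chain of reflections from $v$ up to $w$ in $W_\lambda$ and apply Lemma~\ref{lem:Bruhat_order_pre} to each step. The only cosmetic difference is that you insist on a saturated chain (length-one covers), whereas the paper simply invokes the defining chain property of Bruhat order without requiring saturation; either works.
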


\begin{proof}
	If equality holds, then the statement is trivial. Otherwise, by the definition of Bruhat order, there exist $\alpha_1,\ldots,\alpha_k \in \Sigma_\lambda^+$ such that 
	\begin{equation*}
		v = s_{\alpha_k} \cdots s_{\alpha_1} w 
		<_\lambda \cdots <_\lambda
		s_{\alpha_1} w <_\lambda w.
	\end{equation*} 
	Apply \ref{lem:Bruhat_order_pre} to each inequality, we see
	\begin{equation*}
		uv = u s_{\alpha_k} \cdots s_{\alpha_1} w 
		< \cdots <
		u s_{\alpha_1} w < u w
	\end{equation*}
	as desired.
\end{proof}

\begin{corollary}\label{lem:u_smallest}
	For any $u \in A_\lambda$, $u$ is the unique shortest element in $u W_\lambda$ with respect to the restriction of Bruhat order to $u A_\lambda$.	
\end{corollary}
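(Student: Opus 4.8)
The plan is to read this off directly from Corollary \ref{lem:Bruhat_order}, together with one elementary fact: the identity element $1$ is the unique minimum of the Coxeter group $(W_\lambda,\le_\lambda)$ (with simple reflections $\Pi_\lambda$), so $1 \le_\lambda w$ for every $w \in W_\lambda$. That is really the only input not already established in the excerpt.

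Concretely, I would argue as follows. An arbitrary element of the left coset $uW_\lambda$ can be written $uw$ with $w \in W_\lambda$. Applying Corollary \ref{lem:Bruhat_order} to the inequality $1 \le_\lambda w$ (take $v = 1$, which lies in $W_\lambda$) gives $u = u\cdot 1 \le uw$ in the Bruhat order on $W$. Since Bruhat order on $W$ refines the length order, this forces $\ell(u) \le \ell(uw)$; as $uw$ ranged over all of $uW_\lambda$, the element $u$ is a shortest element of the coset, and indeed a minimal element for the restriction of Bruhat order to $uW_\lambda$.

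For uniqueness, suppose $uw$ with $w \in W_\lambda$ is also of minimal length in $uW_\lambda$. Then $\ell(uw) = \ell(u)$; combining this with $u \le uw$ and the fact that Bruhat order on $W$ is graded by length (so $x \le y$ and $\ell(x) = \ell(y)$ imply $x = y$), we conclude $u = uw$, i.e. $w = 1$. There is no genuine obstacle in this corollary: all the work was done in Lemma \ref{lem:Bruhat_order_pre} and Corollary \ref{lem:Bruhat_order}, and what remains is just the observation that $1$ is the bottom of $(W_\lambda,\le_\lambda)$ and that Bruhat order is compatible with length.
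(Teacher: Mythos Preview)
Your proof is correct and is exactly the deduction the paper intends: the corollary is stated without proof as an immediate consequence of Corollary~\ref{lem:Bruhat_order}, and applying that result with $v=1$ (using that $1$ is the minimum of $(W_\lambda,\le_\lambda)$) together with the length-grading of Bruhat order is precisely the argument. There is nothing to add.
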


\begin{example}
	In the example in \textsection\ref{sec:examples}, $A_\lambda = \{1, s_\alpha, s_\beta, s_\gamma s_\beta\}$. The left $W_\lambda$-cosets are $W_\lambda$ (whose elements are crossed-out in (\ref{diag:W(A3)})), $s_\alpha W_\lambda$, $s_\beta W_\lambda$, and $s_\gamma s_\beta W_\lambda$ (whose elements are underlined in (\ref{diag:W(A3)})).
\end{example}

The next lemma is analogous to a similar statement for parabolic subgroups (Lemma \ref{lem:Theta_prelim}), which we will need in a few occasions. The proof is a standard argument using the lifting property \cite[2.2.7]{Bjorner-Brenti:Coxeter}.

\begin{lemma}\label{lem:u_in_db_coset}
	Let $\alpha \in \Pi$, and $u \in A_\lambda$. Then either $s_\alpha u \in A_\lambda$, or $s_\alpha u \in u W_\lambda$.
\end{lemma}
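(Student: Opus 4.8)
The plan is to use the lifting property of the Bruhat order to control what happens when we left-multiply $u \in A_\lambda$ by a simple reflection $s_\alpha$. Recall from \eqref{eqn:A_lambda_alt} that $u \in A_\lambda$ is equivalent to $\Sigma_\lambda^+ \subseteq u\inv \Sigma^+$. Since left multiplication by $s_\alpha$ changes the set of positive roots sent to negatives in a controlled way (the only positive root $s_\alpha$ moves out of $\Sigma^+$ is $\alpha$ itself), I expect the dichotomy to hinge on whether $u\inv\alpha$ lies in $\Sigma_\lambda$ or not. Concretely: compute $\Sigma_{s_\alpha u}^+ \cap \Sigma_\lambda$ and compare it with $\Sigma_u^+ \cap \Sigma_\lambda = \varnothing$.

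First I would observe that $\Sigma_{s_\alpha u}^+$ and $\Sigma_u^+$ differ by at most one root. Using $\Sigma_{s_\alpha u}^+ = \Sigma^+ \cap (-(s_\alpha u)\inv \Sigma^+) = \Sigma^+ \cap (-u\inv s_\alpha \Sigma^+)$ and the fact that $s_\alpha \Sigma^+ = (\Sigma^+ \setminus \{\alpha\}) \cup \{-\alpha\}$, one sees that $\Sigma_{s_\alpha u}^+$ is obtained from $\Sigma_u^+$ by either adding or removing the single root $u\inv \alpha$ (whichever side of $\Sigma^+$ it sits on relative to $u$), i.e. $\Sigma_{s_\alpha u}^+ \triangle \Sigma_u^+ \subseteq \{u\inv\alpha\}$ when $u\inv \alpha \in \Sigma^+$, and one argues symmetrically otherwise. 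Hence $\Sigma_{s_\alpha u}^+ \cap \Sigma_\lambda \subseteq \{u\inv \alpha\} \cap \Sigma_\lambda$. Therefore, if $u\inv \alpha \notin \Sigma_\lambda$, then $\Sigma_{s_\alpha u}^+ \cap \Sigma_\lambda = \varnothing$, which gives $s_\alpha u \in A_\lambda$ by definition. It remains to treat the case $u\inv\alpha \in \Sigma_\lambda$: then I claim $s_\alpha u \in u W_\lambda$. Indeed, $s_\alpha u = u (u\inv s_\alpha u) = u\, s_{u\inv \alpha}$, and $s_{u\inv\alpha} \in W_\lambda$ precisely because $u\inv\alpha \in \Sigma_\lambda$ (the reflection in a root of $\Sigma_\lambda$ lies in $W_\lambda$). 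So $s_\alpha u \in u W_\lambda$, as required.

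The main obstacle, and the only place where genuine care is needed, is the bookkeeping of whether $u\inv\alpha$ is positive or negative, since $\Sigma_u^+$ is defined in terms of $\Sigma^+$ and I want to avoid sign errors when intersecting with $\Sigma_\lambda = -\Sigma_\lambda$. This is a standard computation — essentially the lifting property \cite[2.2.7]{Bjorner-Brenti:Coxeter} phrased in terms of root subsets — but it is worth being explicit: if $u\inv\alpha \in \Sigma^+$ then $\ell(s_\alpha u) = \ell(u) + 1$ and $\Sigma_{s_\alpha u}^+ = \Sigma_u^+ \cup \{u\inv\alpha\}$, whereas if $u\inv\alpha \in -\Sigma^+$ then $\ell(s_\alpha u) = \ell(u) - 1$ and $\Sigma_{s_\alpha u}^+ = \Sigma_u^+ \setminus \{-u\inv\alpha\}$; in either subcase the extra or missing root is $\pm u\inv\alpha$, and since $\Sigma_\lambda$ is stable under negation, the intersection with $\Sigma_\lambda$ is governed solely by whether $u\inv\alpha \in \Sigma_\lambda$. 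Once this is pinned down, the two alternatives in the statement correspond exactly to $u\inv\alpha \notin \Sigma_\lambda$ (giving $s_\alpha u \in A_\lambda$) and $u\inv\alpha \in \Sigma_\lambda$ (giving $s_\alpha u \in u W_\lambda$), and these are visibly exhaustive.
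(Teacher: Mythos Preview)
Your proof is correct and takes a genuinely different route from the paper's. The paper argues by contradiction: assuming $s_\alpha u \notin u W_\lambda$, it writes $s_\alpha u = rv$ with $r \in A_\lambda$, $r \neq u$, $v \in W_\lambda$, and then runs a four-case length analysis (using the lifting property \cite[2.2.7]{Bjorner-Brenti:Coxeter} and Corollary~\ref{lem:u_smallest}) to force $v = 1$. Your argument is instead a direct root computation: you track how $\Sigma_{s_\alpha u}^+$ differs from $\Sigma_u^+$ by exactly the root $\pm u\inv\alpha$, and the dichotomy in the statement becomes precisely the dichotomy $u\inv\alpha \notin \Sigma_\lambda$ versus $u\inv\alpha \in \Sigma_\lambda$.

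Your approach is more elementary and more informative---it tells you exactly \emph{which} alternative occurs in terms of a concrete root condition, and avoids the case-by-case Bruhat-order reasoning entirely. The paper's approach, being purely Coxeter-theoretic (lengths and the lifting property), would transport verbatim to any reflection subgroup of an abstract Coxeter group, whereas yours uses the identity $s_\alpha u = u\, s_{u\inv\alpha}$ and the fact that $W_\lambda$ contains all reflections in roots of $\Sigma_\lambda$, which is specific to the root-system setting (though that is of course the only setting the paper cares about). One small quibble: your opening sentence promises the lifting property, but you never actually use it; the identity $\Sigma_{s_\alpha u}^+ = \Sigma_u^+ \cup \{u\inv\alpha\}$ (or $\setminus\{-u\inv\alpha\}$) is the standard description of inversion sets under left multiplication by a simple reflection, not the lifting property proper.
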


\begin{proof}
	Suppose $s_\alpha u \not\in u W_\lambda$. Then $s_\alpha u$ is in a different left $W_\lambda$-coset, i.e. $s_\alpha u \in r W_\lambda$ for some $r \in A_\lambda$ with $r \neq u$. So there exists some $v \in W_\lambda$ such that $s_\alpha u = r v$. We need to show that $v = 1$. 
	
	Write $w_1 \vartriangleleft w_2$ when $w_1 < w_2$ and $\ell(w_1) = \ell(w_2) -1$. From the relation $s_\alpha u = r v$, either $rv \vartriangleleft u$ or $rv \vartriangleright u$. Also $s_\alpha uv\inv = r$, so either $r \vartriangleleft uv\inv$ or $r \vartriangleright uv\inv$. From \ref{lem:u_smallest}, we also know $r \le rv$ and $u \le uv\inv$. We have the following four possibilities.
	
	\begin{enumerate}[label=(\alph*)]
		\item 
		$\begin{tikzcd}[row sep = small, column sep = small]
			r \ar[r, phantom, "\vartriangleright" description] & uv\inv \ar[d, phantom,  "\rotatebox{-90}{$\ge$}" description]\\
			rv \ar[u, phantom,  "\rotatebox{90}{$\ge$}" description] & u \ar[l, phantom,  "\vartriangleleft" description]
		\end{tikzcd}$
		is impossible since it implies $u > u$.
		
		\item 
		$\begin{tikzcd}[row sep = small, column sep = small]
			r \ar[r, phantom, "\vartriangleright" description] & uv\inv \ar[d, phantom,  "\rotatebox{-90}{$\ge$}" description]\\
			rv \ar[u, phantom,  "\rotatebox{90}{$\ge$}" description] & u \ar[l, phantom,  "\vartriangleright" description]
		\end{tikzcd}$.
		If $rv > r$, then from $rv > r \vartriangleright uv\inv \ge u$ we see that $\ell(rv) \ge \ell(u) + 2$, which violates $rv \vartriangleright u$. Therefore we must have $rv = r$ and hence $v = 1$.
		
		\item 
		$\begin{tikzcd}[row sep = small, column sep = small]
			r \ar[r, phantom, "\vartriangleleft" description] & uv\inv \ar[d, phantom,  "\rotatebox{-90}{$\ge$}" description]\\
			rv \ar[u, phantom,  "\rotatebox{90}{$\ge$}" description] & u \ar[l, phantom,  "\vartriangleleft" description]
		\end{tikzcd}$.
		Same argument as in (b) shows that $v = 1$.
		
		\item 
		$\begin{tikzcd}[row sep = small, column sep = small]
			r \ar[r, phantom, "\vartriangleleft" description] & uv\inv \ar[d, phantom,  "\rotatebox{-90}{$\ge$}" description]\\
			rv \ar[u, phantom,  "\rotatebox{90}{$\ge$}" description] & u \ar[l, phantom,  "\vartriangleright" description]
		\end{tikzcd}$.
		Let $k = \ell(uv\inv) - \ell(u)$. Then 
		\begin{align*}
			\ell(rv) 
			&\ge\ell(r) \\
			&= \ell(uv\inv) -1\\
			&= \ell(u) + k-1\\
			&= \ell(rv) -1 + k-1\\
			&= \ell(rv) + k-2
		\end{align*}
		and $0 \le k \le 2$. If $k = 2$, then $\ell(r) = \ell(rv)$ and $v = 1$. If $k = 0$, then $\ell(u) = \ell(uv\inv)$ and $v = 1$. Suppose $k = 1$. Applying the lifting property twice, we see $r \le u$ and $u \le r$. So $r = u$, contradicting our assumption for $r$. Therefore we must have $v = 1$.
	\end{enumerate}
	Thus $v = 1$ in all cases, as desired.
\end{proof}

\subsection{Notations and preliminaries on $W_\Theta \backslash W$}\label{subsec:WTheta_prelim}

We recall some well-known facts of right $W_\Theta$-cosets and partial orders. Details of these facts can be found in \cite[Chapter 6]{Milicic:Localization}. See also \cite[\textsection 2.4 and \textsection 2.5]{Bjorner-Brenti:Coxeter} for proofs of analogous results for left $W_\Theta$-cosets.

Write $w_\Theta \in W_\Theta$ for the longest element. The set
\begin{equation*}
	{}^\Theta W = \{ w \in W \mid w\inv \Theta \subseteq - \Sigma^+\}
\end{equation*}
is a cross-section of $W_\Theta \backslash W$ consisting of the longest elements in each coset, and
\begin{equation}\label{eqn:w_Theta_Theta_W}
	w_\Theta {}^\Theta W = \{w \in W \mid w\inv \Theta \subseteq \Sigma^+\}
\end{equation}
is a cross-section consisting of the shortest elements in each coset. For a right $W_\Theta$-coset $C$, we write $w^C$ for the corresponding element in ${}^\Theta W$. The restriction of Bruhat order on the set ${}^\Theta W$ defines a partial order $\le$ on $W_\Theta \backslash W$. We will use the phrase ``the length of $C$'' to refer to the length of the element $w^C$. If ${\Theta(u,\lambda)}$ is a subset of $\Pi_\lambda$ defining the parabolic subgroup $W_{\lambda,\Theta(u,\lambda)} \subseteq W_\lambda$, we write ``$\le_{u,\lambda}$'' for the partial order on $W_{\lambda,{\Theta(u,\lambda)}} \backslash W_\lambda$.

\begin{example}
	In the example in \textsection\ref{sec:examples} (specifically in (\ref{diag:W(A3)})), Weyl group elements are grouped together based on the partition given by $W_\Theta \backslash W$. Elements that are contained in ${}^\Theta W$ are surrounded by a shape.
\end{example}

The following facts will be used throughout this section.

\begin{lemma}\label{lem:W_Theta_permutes_roots}
	Any element in $W_\Theta$ permutes positive roots outside $\Sigma_\Theta^+$, that is, it permutes the set $\Sigma^+ - \Sigma_\Theta^+$.
\end{lemma}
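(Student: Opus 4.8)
The statement to prove is Lemma \ref{lem:W_Theta_permutes_roots}: any element of $W_\Theta$ permutes $\Sigma^+ - \Sigma_\Theta^+$.

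\textbf{Plan of proof.} The plan is to reduce to the case of a simple reflection $s_\alpha$ with $\alpha \in \Theta$ and then invoke the classical fact that a simple reflection permutes the positive roots other than the simple root it negates. So first I would recall that $W_\Theta$ is generated by the reflections $s_\alpha$, $\alpha \in \Theta$, and that the subset $\Sigma^+ - \Sigma_\Theta^+$ is closed under the relevant operations; since a product of maps each permuting a fixed set also permutes that set, it suffices to treat a single generator $s_\alpha$.

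\textbf{Key step.} For $\alpha \in \Theta \subseteq \Pi$, the reflection $s_\alpha$ satisfies $s_\alpha(\Sigma^+ - \{\alpha\}) = \Sigma^+ - \{\alpha\}$ (this is the standard fact, e.g.\ \cite[Proposition 1.4.1]{Bjorner-Brenti:Coxeter}). Now I would argue that $s_\alpha$ preserves $\Sigma_\Theta$: since $\Sigma_\Theta$ is the root subsystem generated by $\Theta$ and $\alpha \in \Theta$, we have $s_\alpha \Sigma_\Theta = \Sigma_\Theta$, hence $s_\alpha$ permutes $\Sigma_\Theta^+ = \Sigma_\Theta \cap \Sigma^+$ as well (because $s_\alpha$ also permutes $\Sigma^+ - \{\alpha\}$ and $\alpha \in \Sigma_\Theta^+$, one checks directly that $s_\alpha \Sigma_\Theta^+ = \Sigma_\Theta^+$). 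Taking complements inside $\Sigma^+$: since $s_\alpha$ permutes $\Sigma^+ - \{\alpha\}$, sends $\alpha$ to $-\alpha \notin \Sigma^+$, and permutes $\Sigma_\Theta^+$ (with $\alpha \in \Sigma_\Theta^+$), it follows that $s_\alpha$ permutes $(\Sigma^+ - \{\alpha\}) - (\Sigma_\Theta^+ - \{\alpha\}) = \Sigma^+ - \Sigma_\Theta^+$.

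\textbf{Conclusion and the main point.} Finally, for an arbitrary $w \in W_\Theta$, write $w = s_{\alpha_1} \cdots s_{\alpha_k}$ with each $\alpha_i \in \Theta$; each factor permutes $\Sigma^+ - \Sigma_\Theta^+$ by the previous paragraph, so their composite $w$ does too. There is no real obstacle here — the only thing to be slightly careful about is that one is taking complements inside $\Sigma^+$ rather than inside all of $\Sigma$, but since the root $\alpha$ that $s_\alpha$ negates lies in $\Sigma_\Theta^+$, removing it from both $\Sigma^+$ and $\Sigma_\Theta^+$ does no harm, and the bookkeeping goes through cleanly.
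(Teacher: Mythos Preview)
Your approach is essentially the same as the paper's: reduce to a simple reflection $s_\alpha$ with $\alpha\in\Theta$, use the classical fact that $s_\alpha$ permutes $\Sigma^+-\{\alpha\}$, and do a small set-theoretic computation. The paper phrases the last step as intersecting $\Sigma^+-\{\alpha\}$ with $\Sigma-\Sigma_\Theta$ (both sets are $s_\alpha$-stable), while you take a complement inside $\Sigma^+-\{\alpha\}$.

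There is, however, a literal slip in your write-up: the claim ``$s_\alpha$ permutes $\Sigma_\Theta^+$'' is false, since $s_\alpha(\alpha)=-\alpha\notin\Sigma_\Theta^+$. What is true (and what you actually need) is that $s_\alpha$ permutes $\Sigma_\Theta^+-\{\alpha\}$: this follows because $s_\alpha$ permutes both $\Sigma^+-\{\alpha\}$ and $\Sigma_\Theta$, hence their intersection $\Sigma_\Theta^+-\{\alpha\}$. With that correction, your complement argument goes through, and the final equality $(\Sigma^+-\{\alpha\})-(\Sigma_\Theta^+-\{\alpha\})=\Sigma^+-\Sigma_\Theta^+$ is exactly what you want.
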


\begin{proof}
	It suffices to prove the statement for a simple reflection $s_\beta \in W_\Theta$. Since $s_\beta$ permutes $\Sigma^+ - \{\pm\beta\}$ and also permutes $\Sigma - \Sigma_\Theta$, it permutes $(\Sigma^+ - \{\pm\beta\}) \cap (\Sigma - \Sigma_\Theta)$ which equals $\Sigma^+ - \Sigma_\Theta^+$.
\end{proof}

\begin{lemma}\label{lem:Theta_prelim}
	Let $C$ be a right $W_\Theta$-coset and $\alpha \in \Pi$. Then exactly one of the following happens.
	\begin{enumerate}[label=(\alph*)]
		\item $C s_\alpha > C$. In this case $w^{C s_\alpha} = w^C s_\alpha$, and for any $w \in C$, $w s_\alpha > w$.
		\item $C s_\alpha = C$.
		\item $C s_\alpha < C$. In this case $w^{C s_\alpha} = w^C s_\alpha$, and for any $w \in C$, $w s_\alpha < w$.
	\end{enumerate}
	Moreover, the identity coset $W_\Theta$ is the only right $W_\Theta$-coset $C$ such that $C s_\alpha \ge C$ for all $\alpha \in \Pi$.
\end{lemma}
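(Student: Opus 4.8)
\emph{Approach.} The plan is to pick, for the coset $C$, its unique longest representative $w=w^C\in{}^\Theta W$ and to read everything off from the single root $w\alpha$. The trichotomy (a)/(b)/(c) will correspond to the trichotomy $w\alpha\in\Sigma^+$, $w\alpha\in-\Theta$, $w\alpha\in-(\Sigma^+-\Sigma_\Theta^+)$, and the point of departure is the remark that $w\in{}^\Theta W$, i.e. $w\inv\Theta\subseteq-\Sigma^+$, forces $w\inv\Sigma_\Theta^+\subseteq-\Sigma^+$: a non-negative integral combination of negative roots that is itself a root is negative. Equivalently, $w(\Sigma^+)\cap\Sigma_\Theta^+=\varnothing$, equivalently $\Sigma_\Theta^+\subseteq w(-\Sigma^+)$. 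First I would record this once and for all.

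\emph{The trichotomy.} The basic computation to run in each case is $(ws_\alpha)\inv\Theta=s_\alpha w\inv\Theta$: since $s_\alpha$ fixes $-\Sigma^+$ setwise apart from exchanging $\pm\alpha$, this set lies in $-\Sigma^+$ unless $-\alpha\in w\inv\Theta$, i.e. unless $w\alpha\in-\Theta$. Hence, if $w\alpha>0$, then $w\alpha\in\Sigma^+-\Sigma_\Theta^+$ by the first step and $w\alpha\notin-\Theta$, so $ws_\alpha\in{}^\Theta W$; as $\ell(ws_\alpha)=\ell(w)+1$, it is the longest element of a coset with $w^{Cs_\alpha}=w^Cs_\alpha>w^C$, hence $Cs_\alpha>C$; and for any $w'=uw\in C$ with $u\in W_\Theta$, $w'\alpha=u(w\alpha)\in\Sigma^+-\Sigma_\Theta^+$ by Lemma \ref{lem:W_Theta_permutes_roots}, so $w's_\alpha>w'$. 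This is (a). If $w\alpha<0$ and $w\alpha\in-\Sigma_\Theta^+$, write $w\alpha=-\gamma$ with $\gamma\in\Sigma_\Theta^+$; were $\gamma\notin\Theta$ it would split as $\gamma'+\gamma''$ with $\gamma',\gamma''\in\Sigma_\Theta^+$, so $-\alpha=w\inv\gamma'+w\inv\gamma''$ would be a sum of two negative roots, impossible for $\alpha$ simple; hence $\gamma\in\Theta$, and then $ws_\alpha w\inv=s_{w\alpha}=s_\gamma\in W_\Theta$, so $ws_\alpha=s_\gamma w\in C$, i.e. $Cs_\alpha=C$. This is (b). Otherwise $w\alpha\in-(\Sigma^+-\Sigma_\Theta^+)$; then $w\alpha\notin-\Theta$, so again $ws_\alpha\in{}^\Theta W$, now with $\ell(ws_\alpha)=\ell(w)-1$, so $w^{Cs_\alpha}=w^Cs_\alpha<w^C$; and $w'\alpha=u(w\alpha)\in-(\Sigma^+-\Sigma_\Theta^+)$ by Lemma \ref{lem:W_Theta_permutes_roots}, so $w's_\alpha<w'$. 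This is (c). The three alternatives for $w\alpha$ are exhaustive and the three conclusions on $Cs_\alpha$ versus $C$ are mutually exclusive, so exactly one holds.

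\emph{The ``moreover''.} For $C=W_\Theta$ we have $w^C=w_\Theta$; if $\alpha\in\Theta$ then $w_\Theta\alpha\in-\Theta$ (case (b)) and if $\alpha\in\Pi-\Theta$ then $w_\Theta\alpha\in\Sigma^+-\Sigma_\Theta^+$ by Lemma \ref{lem:W_Theta_permutes_roots} (case (a)), so $W_\Theta s_\alpha\ge W_\Theta$ for every $\alpha$. Conversely, if $C\neq W_\Theta$, write $w^C=w_\Theta x$ with $x$ the shortest element of $C$, so $x\neq1$ and $x\inv\Theta\subseteq\Sigma^+$, hence $x\inv\Sigma_\Theta^+\subseteq\Sigma^+$. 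Choosing $\alpha\in\Pi$ with $x\alpha<0$ (possible since $\ell(x)\ge1$), we get $-x\alpha=x(-\alpha)\notin\Sigma_\Theta^+$, so $x\alpha\in-(\Sigma^+-\Sigma_\Theta^+)$, and applying $w_\Theta$ keeps it there; thus $w^C\alpha=w_\Theta(x\alpha)\in-(\Sigma^+-\Sigma_\Theta^+)$ and $Cs_\alpha<C$ by case (c). Hence $W_\Theta$ is the only coset with $Cs_\alpha\ge C$ for all $\alpha\in\Pi$.

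\emph{Main obstacle.} I expect the only delicate point to be case (b): showing that $w^C$, being longest in its coset, cannot send a simple root to the negative of a \emph{non-simple} positive root of $\Sigma_\Theta$. The height argument above settles it, and it is the one place where the parabolic structure (rather than just the sign of $w^C\alpha$) is genuinely used; everything else is bookkeeping with Lemma \ref{lem:W_Theta_permutes_roots} and the definition of ${}^\Theta W$.
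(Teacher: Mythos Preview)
Your proof is correct and takes a genuinely different route from the paper. The paper proceeds by translating the results of \S\ref{subsec:Bruhat_W/Wlambda} (specifically the analogues of Corollary~\ref{lem:Bruhat_order} and Lemma~\ref{lem:u_in_db_coset}) to the $W_\Theta$-setting, obtaining a dichotomy ``either $w^Cs_\alpha\in{}^\Theta W$ or $w^Cs_\alpha\in C$'', and then proves the clause ``for any $w\in C$, $ws_\alpha>w$'' by induction on the length of $v$ in the factorization $w=vw^C$, using a length-counting diamond argument. You instead read off the entire trichotomy from the position of the single root $w^C\alpha$ in $\Sigma^+\sqcup(-\Sigma_\Theta^+)\sqcup(-(\Sigma^+-\Sigma_\Theta^+))$, and handle all elements of $C$ in one stroke via Lemma~\ref{lem:W_Theta_permutes_roots} rather than by induction. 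Your argument is more self-contained and closer in spirit to standard treatments of parabolic coset representatives (e.g.\ \cite[\S2.4]{Bjorner-Brenti:Coxeter}); the paper's approach has the virtue of reusing the machinery it has just built for $W_\lambda$-cosets, making the parallel between the two situations explicit. The height argument you flag in case~(b) is indeed the only place where more than sign-tracking is needed, and your treatment of it is clean.
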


\begin{proof}
	With minor modifications, the results in \textsection\ref{subsec:Bruhat_W/Wlambda} can be translated to the case where we replace $\Sigma_\lambda$ by $\Sigma_\Theta$, left $W_\lambda$-cosets by right $W_\Theta$-cosets, and $A_\lambda$ by ${}^\Theta W$. Under these replacements, Lemmas \ref{lem:Bruhat_order} and \ref{lem:u_in_db_coset} say the following: 
	\begin{itemize}
		\item[(i)] Let $v$, $w \in W_\Theta$ and $v \le w$. Then for any $C \in W_\Theta \backslash W$, $v w^C \ge w w^C$.
		\item[(ii)] Let $\alpha \in \Pi$ and $C \in W_\Theta \backslash W$. Then either $w^C s_\alpha = w^{C s_\alpha}$ or $w^C s_\alpha \in C$.
	\end{itemize} 
	The second case of (ii) ($w^C s_\alpha \in C$) corresponds to (b). Suppose we are in the first case of (ii), that is $w^C s_\alpha = w^{C s_\alpha}$. Then $C s_\alpha \neq C$, otherwise $w^C s_\alpha = w^{C s_\alpha} = w^C$ is impossible. Suppose $Cs_\alpha > C$, i.e. $w^{C s_\alpha} > w^C$. We want to show that $w s_\alpha > w$ for any $w \in C$.
	
	Since $C = W_\Theta w^C$, we can write $w = v w^C$ for some unique $v \in W_\Theta$. We will do induction on $\ell(v)$. For the base case $\ell(v) = 1$, $v = s_\beta$ is simple. (i) implies $s_\beta w^{C s_\alpha} > w^{C s_\alpha}$ and $s_\beta w^C > w^C$. Combined with the assumption $w^{Cs_\alpha} > w^C$, we obtain
	\begin{equation*}
		\begin{tikzcd}[row sep = small, column sep = small]
			s_\beta w^{C s_\alpha} \ar[r, phantom, "<" description] & w^{C s_\alpha} \ar[d, phantom,  "\rotatebox{-90}{$>$}" description]\\
			s_\beta w^C  & w^C \ar[l, phantom,  "<" description]
		\end{tikzcd}.
	\end{equation*}
	If $s_\beta w^C s_\alpha < s_\beta w^C$, then the chain of inequalities 
	\begin{equation*}
		\begin{tikzcd}[row sep = small, column sep = small]
			s_\beta w^{C s_\alpha} & w^{C s_\alpha} \ar[d, phantom,  "\rotatebox{-90}{$>$}" description]\\
			s_\beta w^C \ar[u, phantom,  "\rotatebox{90}{$>$}" description] & w^C \ar[l, phantom,  "<" description]
		\end{tikzcd}
	\end{equation*}
	would imply that $s_\beta w^{Cs_\alpha}$ and $w^{Cs_\alpha}$ have length difference $\ge 3$, which is impossible since their lengths only differ by $1$. Therefore $w s_\alpha = s_\beta w^C s_\alpha > s_\beta w^C = w$. This establishes the base case.
	
	Now suppose $v = s_\beta r>r$ for some $s_\beta,r \in W_\Theta$, with $s_\beta$ simple. Induction hypothesis says $r w^{Cs_\alpha} > r w^C$. Invoking (i) again, we obtain the following inequalities
	\begin{equation*}
		\begin{tikzcd}[row sep = small, column sep = small]
			s_\beta r w^{C s_\alpha} \ar[r, phantom, "<" description] & r w^{C s_\alpha} \ar[d, phantom,  "\rotatebox{-90}{$>$}" description]\\
			s_\beta r w^C  & r w^C \ar[l, phantom,  "<" description]
		\end{tikzcd}.
	\end{equation*}
	Arguing similarly as in the base case, it is impossible to have $s_\beta r w^{C s_\alpha} < s_\beta r w^C$. Therefore $w s_\alpha = s_\beta r w^{C s_\alpha} > s_\beta r w^C = w$. This proves the additional claim in case (a). An identical argument establishes the claim in case (c). 
	
	It remains to prove the last statement. Suppose $C$ satisfies $C s_\alpha \ge C$ for all $\alpha \in \Pi$. Let $w \in C$ be an element of minimal length. If $w \neq 1$, then $w>1$, and there is a simple reflection $s_\alpha$ with $w s_\alpha < w$. By minimality of $w$, $w s_\alpha$ is in a different coset. This forces us to be in case (c), which contradicts $C s_\alpha \ge C$. So $w = 1$ and $C = W_\Theta$.
\end{proof}

This immediately implies

\begin{corollary}\label{lem:Bruhat_order_rcoset}
	Let $C$, $D$ be two right $W_\Theta$-cosets. Let $v \in D$, $w \in C$. If $v \le w$, then $D \le C$.
\end{corollary}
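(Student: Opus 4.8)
Since the partial order on $W_\Theta\backslash W$ is by definition the Bruhat order restricted to the longest coset representatives ${}^\Theta W$, the statement $D\le C$ unwinds to $w^D\le w^C$. My plan is to first reduce to the case $v=w^C$, and then run a short maximality argument powered by the lifting property of the Bruhat order.

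First I would reduce: because $w^C$ is the longest element of $C$, it is in fact the Bruhat-largest element of $C$. This is immediate from statement (i) in the proof of Lemma~\ref{lem:Theta_prelim} (take the identity element on the left, so that $w^C\ge w\cdot w^C$ for every $w\in W_\Theta$), or one can simply cite \cite[Chapter~6]{Milicic:Localization}. Hence $w\le w^C$, so $v\le w^C$, and it remains to prove: whenever $v\le w^C$ and $v$ lies in the coset $D$, one has $w^D\le w^C$.

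For the core argument I would look at the nonempty finite set $\{h\in D : h\le w^C\}$, choose an element $h^\ast$ of maximal length in it, and claim $h^\ast=w^D$; this suffices, since then $w^D=h^\ast\le w^C$. To prove the claim, suppose $h^\ast\neq w^D$. Then $h^\ast$ is not the longest element of $D$; since $w^D$ is the unique element of $D$ lying in ${}^\Theta W=\{x:\ell(s_\beta x)<\ell(x)\text{ for all }\beta\in\Theta\}$, there must be some $\beta\in\Theta$ with $\ell(s_\beta h^\ast)=\ell(h^\ast)+1$. Observe $s_\beta h^\ast\in D$ (as $s_\beta\in W_\Theta$), and $h^\ast\neq w^C$ (since $w^C\in{}^\Theta W$ but $h^\ast\notin{}^\Theta W$), so $h^\ast<w^C$ strictly. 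As $w^C\in{}^\Theta W$ we also have $\ell(s_\beta w^C)<\ell(w^C)$, so the lifting property \cite[2.2.7]{Bjorner-Brenti:Coxeter} applies and gives $s_\beta h^\ast\le w^C$. But $s_\beta h^\ast\in D$ is longer than $h^\ast$, contradicting the choice of $h^\ast$. So $h^\ast=w^D$, and we are done.

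The one delicate point is matching the hypotheses of the lifting property: one needs $s_\beta$ to be a left descent of the larger element $w^C$ but not of $h^\ast$, and this is precisely why the argument must be run with ${}^\Theta W$ (the longest coset representatives) rather than the shortest ones — with shortest representatives the relevant descent condition would fail. Everything else is routine bookkeeping, which explains why this is recorded as an immediate corollary of Lemma~\ref{lem:Theta_prelim}.
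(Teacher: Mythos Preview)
Your proof is correct, but it takes a different route from the paper's.

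The paper's argument works on the right: given $v \le w$, write $v = w s_{\alpha_1}\cdots s_{\alpha_k}$ as a chain of length-decreasing right multiplications by simple reflections, and then invoke Lemma~\ref{lem:Theta_prelim} at each step (the contrapositive of case~(a)) to conclude $C \ge C s_{\alpha_1} \ge \cdots \ge C s_{\alpha_1}\cdots s_{\alpha_k} = D$. This uses the full trichotomy of Lemma~\ref{lem:Theta_prelim} as a black box.

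Your argument instead works on the left: after reducing to $w = w^C$, you grow an element of $D$ upward inside $\{h \in D : h \le w^C\}$ by left-multiplying by simple reflections $s_\beta$ with $\beta \in \Theta$, using the lifting property directly to stay below $w^C$. This bypasses Lemma~\ref{lem:Theta_prelim} almost entirely --- you only need the single fact that $w^C$ is Bruhat-maximal in its coset (statement~(i) in that proof), plus the lifting property \cite[2.2.7]{Bjorner-Brenti:Coxeter}. In that sense your proof is slightly more elementary and closer to the standard argument that the Bruhat order on $W_\Theta \backslash W$ is well-behaved. The paper's approach, on the other hand, fits more naturally with the surrounding narrative, since Lemma~\ref{lem:Theta_prelim} is already available and is used repeatedly elsewhere.
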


\begin{proof}
	We can choose simple roots $\alpha_i$ so that 
	\begin{equation*}
		w = v s_{\alpha_k} \cdots s_{\alpha_2} s_{\alpha_1} \ge 
		v s_{\alpha_k} \cdots s_{\alpha_2}  \ge  \cdots \ge v.
	\end{equation*}
	By the lemma, this implies
	\begin{equation*}
		C \ge C s_{\alpha_1} \ge \cdots \ge C s_{\alpha_1} \cdots s_{\alpha_k} = D. \qedhere
	\end{equation*}
\end{proof}

\subsection{A cross-section of $W_\Theta \backslash W / W_\lambda$}\label{subsec:db_coset_xsec}

Define the set 
\begin{equation*}
	A_{\Theta,\lambda} = A_\lambda \cap (w_\Theta {}^\Theta W).
\end{equation*} 
We will show (in Corollary \ref{thm:cross-section_db_coset}) that this is a cross-section of $W_\Theta \backslash W / W_\lambda$ consisting of the unique shortest elements in each double coset. Later results, as well as the main theorem of the paper, will often be formulated using this set.

\begin{lemma}\label{lem:u_in_same_right_coset}
	Let $u$, $r \in A_\lambda$. Suppose $u$ and $r$ are in the same $(W_\Theta,W_\lambda)$-coset. Then $u$ and $r$ are contained in the same right $W_\Theta$-coset.
\end{lemma}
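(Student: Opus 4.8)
The plan is to recast the assertion using the projection onto the cross-section $A_\lambda$, and then to run a short induction on the length of an element of $W_\Theta$; the only nontrivial input will be Lemma \ref{lem:u_in_db_coset}. Since $A_\lambda$ is a cross-section of $W/W_\lambda$, let $\pi\colon W\to A_\lambda$ be the map sending $x$ to the unique element of $A_\lambda\cap xW_\lambda$. If $u,r\in A_\lambda$ lie in the same $(W_\Theta,W_\lambda)$-coset, write $r=wuv$ with $w\in W_\Theta$ and $v\in W_\lambda$; then $rW_\lambda=wuW_\lambda$, and because $r\in A_\lambda$ this means $r=\pi(wu)$. Hence it suffices to prove that $\pi(wu)\in W_\Theta u$ for every $u\in A_\lambda$ and every $w\in W_\Theta$.

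I would prove this last statement by induction on $\ell(w)$ (equivalently, on the length of a reduced word for $w$ in the generators $\{s_\beta:\beta\in\Theta\}$ of $W_\Theta$). The base case $w=1$ is immediate since $\pi(u)=u$. For the inductive step, write $w=s_\beta w'$ with $\beta\in\Theta$ and $\ell(w')=\ell(w)-1$, and put $u''=\pi(w'u)$. By the inductive hypothesis $u''\in W_\Theta u$, and $u''W_\lambda=w'uW_\lambda$, so $wuW_\lambda=s_\beta u''W_\lambda$ and therefore $\pi(wu)=\pi(s_\beta u'')$. Now apply Lemma \ref{lem:u_in_db_coset} to $u''\in A_\lambda$ and the simple root $\beta\in\Pi$: either $s_\beta u''\in A_\lambda$, in which case $\pi(s_\beta u'')=s_\beta u''\in s_\beta W_\Theta u=W_\Theta u$; or $s_\beta u''\in u''W_\lambda$, in which case $\pi(s_\beta u'')=\pi(u'')=u''\in W_\Theta u$. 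In both cases $\pi(wu)\in W_\Theta u$, which closes the induction and, applied to $r=\pi(wu)$, shows that $u$ and $r$ lie in the same right $W_\Theta$-coset.

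I do not expect a genuine obstacle here: once Lemma \ref{lem:u_in_db_coset} is available, the argument is purely formal, the real content (the dichotomy $s_\alpha u\in A_\lambda$ versus $s_\alpha u\in uW_\lambda$, established there via the lifting property) having already been isolated. One could also avoid the projection $\pi$ and phrase the same induction directly in terms of left-multiplying $u$ by simple reflections of $W_\Theta$ while tracking whether one remains in $W_\Theta u$, but introducing $\pi$ makes the bookkeeping cleanest.
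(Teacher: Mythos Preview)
Your proof is correct and takes essentially the same approach as the paper: induction on $\ell(w)$ using the dichotomy of Lemma~\ref{lem:u_in_db_coset}. The only cosmetic difference is that you package the bookkeeping via the projection $\pi$ and apply the dichotomy on the $u$-side (to $u''=\pi(w'u)$), whereas the paper applies it on the $r$-side (to $s_\alpha r$); these are symmetric formulations of the same argument.
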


\begin{proof}
	The case $u = r$ is trivial. Assume $u \neq r$. By assumption, $r = wuv\inv$ for some $w \in W_\Theta$ and $v \in W_\lambda$. We will do induction on $\ell(w)$. 
	
	Consider the case $\ell(w) = 1$. Then $w = s_\alpha$ for some $\alpha \in \Theta$, and $s_\alpha u = r v$. By \ref{lem:u_in_db_coset} $v = 1$. Hence $r = s_\alpha u \in W_\Theta u$ which is in the same right $W_\Theta$-coset as $u$. 
	
	Consider $\ell(w) = k > 1$. Write $w = s_\alpha w'$ for some $\alpha \in \Theta$ and some $w' \in W_\Theta$ so that $\ell(w) = \ell(w') + 1$. Then $w' u = (s_\alpha r) v$. We have two possibilities.
	
	\begin{enumerate}[label=(\alph*)]
		\item $s_\alpha r \in r W_\lambda$. Then there exists $v' \in W_\lambda$ such that $s_\alpha r = r v'$, so $w' u (v' v)\inv = r$. Since $\ell(w') \le k-1$, by the induction assumption, $u$ and $r$ are in the same right $W_\Theta$-coset.
		
		\item $s_\alpha r \not\in r W_\lambda$. Then by \ref{lem:u_in_db_coset}, $s_\alpha r \in A_\lambda$. From the equation $w' u v\inv = s_\alpha r$, $\ell(w') \le k-1$ and the induction assumption, we see that $u$ and $s_\alpha r$ are in the same right $W_\Theta$-coset. Since $s_\alpha r$ and $r$ are in the same right $W_\Theta$-coset, so are $u$ and $r$. \qedhere
	\end{enumerate}
\end{proof}

\begin{proposition}\label{lem:unique_smallest_right_coset}
	Consider any double coset $W_\Theta w W_\lambda$ in $W$.
	\begin{enumerate}[label=(\alph*)]
		\item $W_\Theta w W_\lambda$ contains a unique smallest right $W_\Theta$-coset $C$, in the sense that $C \le C'$ for any $C' \in W_\Theta \backslash W_\Theta w W_\lambda$.
		\item $A_\lambda \cap (W_\Theta w W_\lambda) \subseteq C$.
		\item The unique shortest element in $C$ is in $A_\lambda$.
	\end{enumerate}
\end{proposition}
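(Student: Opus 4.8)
The plan is to produce the candidate coset $C$ from the cross-section $A_\lambda$ of $W/W_\lambda$, and then to verify (b), (a), (c) in that order, since (b) is what makes $C$ well defined and (a), (c) are deduced from it. Write $\Omega = W_\Theta w W_\lambda$. The one preliminary remark is that $\Omega$ is a union of left $W_\lambda$-cosets, so $A_\lambda \cap \Omega \neq \varnothing$: it contains the unique $A_\lambda$-representative of $w W_\lambda$. By Lemma~\ref{lem:u_in_same_right_coset}, all elements of $A_\lambda \cap \Omega$ lie in a single right $W_\Theta$-coset; this coset is our $C$, and (b) holds by construction.

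For (a) I would argue as follows. Let $C'$ be any right $W_\Theta$-coset contained in $\Omega$, and pick any $y \in C'$ (there is no need to take the shortest element). Then $y \in \Omega$, so the left coset $y W_\lambda$ lies in $\Omega$; let $u \in A_\lambda$ be its representative. Corollary~\ref{lem:u_smallest} (equivalently, Corollary~\ref{lem:Bruhat_order} with $v = 1$, which is $\le_\lambda$-minimal in $W_\lambda$) gives $u \le y$, and $u \in A_\lambda \cap \Omega \subseteq C$ by (b). Corollary~\ref{lem:Bruhat_order_rcoset} then upgrades $u \le y$ to $C \le C'$. As $C'$ was arbitrary, $C$ lies below every right $W_\Theta$-coset of $\Omega$, hence is the unique smallest one.

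For (c), let $x_0 = w_\Theta w^C$ be the shortest element of $C$, and suppose for contradiction that $x_0 \notin A_\lambda$. Let $u_0 \in A_\lambda$ be the representative of the left coset $x_0 W_\lambda$; then $u_0 \neq x_0$, and $u_0 \le x_0$ by Corollary~\ref{lem:Bruhat_order}, so $\ell(u_0) < \ell(x_0)$. But $u_0 \in x_0 W_\lambda \subseteq \Omega$, so $u_0 \in A_\lambda \cap \Omega \subseteq C$ by (b); since $x_0$ is the shortest element of $C$ this forces $\ell(u_0) \ge \ell(x_0)$, a contradiction. Hence $x_0 \in A_\lambda$.

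I do not expect a genuine obstacle here: the substantive inputs — Lemma~\ref{lem:u_in_same_right_coset}, together with the fact that left multiplication by an element of $A_\lambda$ is order-preserving (Corollaries~\ref{lem:Bruhat_order} and~\ref{lem:Bruhat_order_rcoset}) — are already in hand. The only points that need care are bookkeeping ones: that $A_\lambda$ singles out the Bruhat-minimal representative of each left $W_\lambda$-coset while ${}^\Theta W$ singles out the Bruhat-maximal representative of each right $W_\Theta$-coset (so that the shortest element of $C$ is $w_\Theta w^C$, not $w^C$), and that $A_\lambda \cap \Omega$ must be observed to be nonempty before $C$ can be named.
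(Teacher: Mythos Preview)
Your proof is correct and follows essentially the same approach as the paper's: both use Lemma~\ref{lem:u_in_same_right_coset} to define $C$, then use that every element $y$ in the double coset satisfies $u \le y$ for its $A_\lambda$-representative $u \in C$ (via Corollary~\ref{lem:u_smallest}), together with Corollary~\ref{lem:Bruhat_order_rcoset}, to deduce both the minimality of $C$ and that its shortest element lies in $A_\lambda$. The only differences are cosmetic: the paper proves (c) before (a) and in (a) chooses the shortest element of $C'$ rather than an arbitrary one, but the logic is the same.
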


\begin{proof}
	By \ref{lem:u_in_same_right_coset}, there exists a right $W_\Theta$-coset $C$, contained in $W_\Theta w W_\lambda$, such that $A_\lambda \cap (W_\Theta w W_\lambda) \subseteq C$. Let $y$ be the unique shortest element in $C$. Say $y \in u W_\lambda$ for some $u \in A_\lambda$. Then $u \le y$ by \ref{lem:u_smallest}. If $y \neq u$, we will have $u < y$, and hence by minimality of $y$, $u$ is in a different right $W_\Theta$-coset than $y$, contradicting the construction of $C$. Hence we must have $y = u$, i.e. the unique shortest element in $C$ is in $A_\lambda$. Lastly, for any other right $W_\Theta$-coset $C'$ in $W_\Theta w W_\lambda$, let $y' $ be its unique shortest element. $y'$ is contained in one of the left $W_\lambda$-cosets, say $y' \in u' W_\lambda$ for some $u' \in A_\lambda$. Then $u' \le y'$ by \ref{lem:u_smallest}. Also $u' \neq y'$ (otherwise $C' \ni y' = u' \in C$ which would imply $C = C'$). Hence $u' < y'$. Therefore $C < C'$ by \ref{lem:Bruhat_order_rcoset}. Thus $C$ is the unique smallest right $W_\Theta$-coset in $W_\Theta w W_\lambda$.
\end{proof}

\begin{corollary}\label{thm:cross-section_db_coset}
	The set $A_{\Theta,\lambda} = A_\lambda \cap (w_\Theta {}^\Theta W)$ is a cross-section of $W_\Theta \backslash W / W_\lambda$ consisting of the unique shortest elements in each double coset. For each $u \in A_{\Theta,\lambda}$, $W_\Theta u$ is the unique smallest right $W_\Theta$-coset  in $W_\Theta u W_\lambda$.
\end{corollary}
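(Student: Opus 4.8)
The plan is to read the corollary off from Proposition \ref{lem:unique_smallest_right_coset}, Corollary \ref{lem:u_smallest}, and the description (\ref{eqn:w_Theta_Theta_W}) of $w_\Theta {}^\Theta W$ as the set of shortest elements of the right $W_\Theta$-cosets. First I would establish existence: given a double coset $D = W_\Theta w W_\lambda$, Proposition \ref{lem:unique_smallest_right_coset}(a) supplies a unique smallest right $W_\Theta$-coset $C$ inside $D$, and part (c) tells us its unique shortest element $u$ lies in $A_\lambda$. Since $u$ is the shortest element of its right $W_\Theta$-coset, $u \in w_\Theta {}^\Theta W$ by (\ref{eqn:w_Theta_Theta_W}), hence $u \in A_\lambda \cap (w_\Theta {}^\Theta W) = A_{\Theta,\lambda}$, and obviously $u \in C \subseteq D$. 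This already records the last assertion as well: for this $u$ we have $W_\Theta u = C$, the unique smallest right $W_\Theta$-coset of $W_\Theta u W_\lambda$.

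Next I would prove uniqueness of the cross-section element together with the ``shortest element'' claim. If $u, u' \in A_{\Theta,\lambda}$ both lie in $D$, then both lie in $A_\lambda \cap D$, so Proposition \ref{lem:unique_smallest_right_coset}(b) forces them into the unique smallest right $W_\Theta$-coset $C$; as both lie in $w_\Theta {}^\Theta W$, each is \emph{the} shortest element of $C$, whence $u = u'$. For the length statement, take $u \in A_{\Theta,\lambda} \cap D$ and an arbitrary $w \in D$; let $r \in A_\lambda$ represent the left $W_\lambda$-coset of $w$. By Corollary \ref{lem:u_smallest}, $r \le w$, so $\ell(r) \le \ell(w)$ with equality only if $r = w$. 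Since $r \in A_\lambda \cap D$, Proposition \ref{lem:unique_smallest_right_coset}(b) puts $r$ in the same right $W_\Theta$-coset $W_\Theta u$ as $u$, and $u$ being its shortest element gives $\ell(u) \le \ell(r)$ with equality only if $u = r$. Chaining $\ell(u) \le \ell(r) \le \ell(w)$ shows $\ell(u) = \ell(w)$ forces $u = r = w$, so $u$ is the unique shortest element of $D$.

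I do not expect a genuine obstacle here: Proposition \ref{lem:unique_smallest_right_coset} already carries the combinatorial weight, and what remains is bookkeeping. The one point to be careful about is the final length comparison — one should not try to compare lengths of shortest coset representatives directly against the order $\le$ on $W_\Theta \backslash W$ (which is defined via the \emph{longest} elements ${}^\Theta W$), but instead route through the left-$W_\lambda$-coset representative $r \in A_\lambda$, applying Corollary \ref{lem:u_smallest} on the $W_\lambda$-side and the shortest-element property of $w_\Theta {}^\Theta W$ on the $W_\Theta$-side.
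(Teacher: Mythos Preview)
Your proposal is correct and follows essentially the same route as the paper: both deduce existence and uniqueness of the cross-section element directly from Proposition \ref{lem:unique_smallest_right_coset}(b)(c) together with the characterization (\ref{eqn:w_Theta_Theta_W}) of $w_\Theta{}^\Theta W$. Your treatment is in fact slightly more thorough, since you explicitly verify that $u$ is the unique shortest element of the entire double coset (via the chain $\ell(u)\le\ell(r)\le\ell(w)$ through Corollary \ref{lem:u_smallest} and Proposition \ref{lem:unique_smallest_right_coset}(b)), whereas the paper's proof leaves this claim implicit.
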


\begin{proof}
	Take any double coset $W_\Theta w W_\lambda$. By \ref{lem:unique_smallest_right_coset}(c), if we take the shortest element $u$ in the smallest right $W_\Theta$-coset in this double coset, then $u \in A_\lambda$. Hence $u \in A_\lambda \cap (w_\Theta {}^\Theta W)$. On the other hand, by \ref{lem:unique_smallest_right_coset}(b), any other right $W_\Theta$-coset in $W_\Theta w W_\lambda$ has empty intersection with $A_\lambda$. Therefore $A_\lambda \cap (w_\Theta {}^\Theta W) \cap W_\Theta w W_\lambda = \{u\}$. This shows that $A_\lambda \cap (w_\Theta {}^\Theta W)$ is a cross-section.
\end{proof}

\begin{example}
	In the example in \textsection\ref{sec:examples}, $A_{\Theta,\lambda} = \{1, s_\gamma s_\beta\}$. There are two $(W_\Theta,W_\lambda)$-cosets: $W_\Theta W_\lambda$ and $W_\Theta s_\gamma s_\beta W_\lambda$. Elements in $W_\Theta s_\gamma s_\beta W_\lambda$ are underlined in (\ref{diag:W(A3)}).
\end{example}

\subsection{Integral models}\label{subsec:int_model}

By results of the previous subsection, for each double coset $W_\Theta u W_\lambda$ one can choose $u$ to be in $A_{\Theta,\lambda}$. Then $u W_\lambda$ is contained in $W_\Theta u W_\lambda$ and it intersects with different right $W_\Theta$-cosets. It will turn out that these intersections produce a parabolic subgroup in $W_\lambda$, and the Whittaker Kazhdan-Lusztig polynomials that arise determine the coefficients in the character formula.

\begin{lemma}
	Let $u \in A_{\Theta,\lambda}$. Then $\Sigma_\Theta \cap \Pi_{u\lambda}$ is a set of simple roots for the root system $\Sigma_\Theta \cap \Sigma_{u\lambda}$.
\end{lemma}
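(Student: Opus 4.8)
The plan is to identify $\Sigma_\Theta \cap \Pi_{u\lambda}$ as the base (set of simple roots) of the positive system $\Sigma_\Theta^+ \cap \Sigma_{u\lambda}^+$ of the root subsystem $\Sigma_\Theta \cap \Sigma_{u\lambda}$. First I would record two standard facts. The intersection of two root subsystems is again a root subsystem, since each is stable under the reflections it contains; in particular $\Sigma_\Theta \cap \Sigma_{u\lambda}$ is one, and for any root subsystem $\Psi \subseteq \Sigma$ the set $\Psi \cap \Sigma^+$ is a positive system of $\Psi$ whose simple roots are exactly the \emph{indecomposable} elements, i.e. those $\alpha \in \Psi \cap \Sigma^+$ that cannot be written $\alpha = \gamma + \delta$ with $\gamma, \delta \in \Psi \cap \Sigma^+$. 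Applying this to $\Psi = \Sigma_{u\lambda}$ recovers that $\Pi_{u\lambda}$ is precisely the set of indecomposable elements of $\Sigma_{u\lambda}^+$; applying it to $\Psi = \Sigma_\Theta \cap \Sigma_{u\lambda}$ reduces the lemma to showing that the indecomposable elements of $\Sigma_\Theta^+ \cap \Sigma_{u\lambda}^+$ are exactly those of $\Sigma_{u\lambda}^+$ that lie in $\Sigma_\Theta$.

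The one substantive input is a ``$\Theta$-saturation'' property of the standard parabolic subsystem $\Sigma_\Theta$: if $\alpha \in \Sigma_\Theta$ and $\alpha = \gamma + \delta$ with $\gamma, \delta \in \Sigma^+$, then $\gamma, \delta \in \Sigma_\Theta^+$. This follows by expressing $\gamma$ and $\delta$ in the basis $\Pi$ with non-negative integer coefficients and noting that, since $\alpha$ has coefficient $0$ on every simple root outside $\Theta$, so must $\gamma$ and $\delta$. Granting this, the two inclusions are immediate. If $\alpha \in \Sigma_\Theta \cap \Pi_{u\lambda}$ then $\alpha \in \Sigma_\Theta^+ \cap \Sigma_{u\lambda}^+$, and any decomposition $\alpha = \gamma + \delta$ inside $\Sigma_\Theta^+ \cap \Sigma_{u\lambda}^+$ would in particular be one inside $\Sigma_{u\lambda}^+$, contradicting indecomposability of $\alpha$ there; so $\alpha$ is a simple root of $\Sigma_\Theta^+ \cap \Sigma_{u\lambda}^+$. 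Conversely, a simple root $\alpha$ of $\Sigma_\Theta^+ \cap \Sigma_{u\lambda}^+$ lies in $\Sigma_\Theta$; if it were not in $\Pi_{u\lambda}$ it would decompose as $\alpha = \gamma + \delta$ with $\gamma, \delta \in \Sigma_{u\lambda}^+ \subseteq \Sigma^+$, and $\Theta$-saturation would put $\gamma, \delta$ in $\Sigma_\Theta^+ \cap \Sigma_{u\lambda}^+$, contradicting simplicity of $\alpha$. Hence the base of $\Sigma_\Theta^+ \cap \Sigma_{u\lambda}^+$ equals $\Sigma_\Theta \cap \Pi_{u\lambda}$, proving the lemma.

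I do not expect any real obstacle: the only points demanding attention are checking that $\Psi \cap \Sigma^+$ is an honest positive system of $\Psi$ (so that ``simple root'' $=$ ``indecomposable element'' is legitimate) and the coefficient bookkeeping in the $\Theta$-saturation step, which relies on $\Sigma_\Theta$ being generated by a \emph{subset of} $\Pi$. As a side remark, the hypothesis $u \in A_{\Theta,\lambda}$ is not logically needed for this statement itself; it becomes relevant only afterward, when one rewrites $\Sigma_\Theta \cap \Pi_{u\lambda} = u\,(u^{-1}\Sigma_\Theta \cap \Pi_\lambda) = u\,\Theta(u,\lambda)$ via Lemma \ref{lem:non-int_refl_subsys}(c), which does use $u \in A_\lambda$.
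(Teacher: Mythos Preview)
Your proof is correct and follows essentially the same route as the paper's: both hinge on the ``$\Theta$-saturation'' observation that a positive root lying in $\Sigma_\Theta$ cannot involve any simple root outside $\Theta$, hence any decomposition of it inside $\Sigma^+$ stays in $\Sigma_\Theta^+$. The paper phrases this as ``every element of $\Sigma_\Theta \cap \Sigma_{u\lambda}^+$ is a non-negative integer combination of $\Sigma_\Theta \cap \Pi_{u\lambda}$'' while you phrase it via indecomposable elements of the positive system $\Sigma_\Theta^+ \cap \Sigma_{u\lambda}^+$; these are equivalent formulations of the same argument. Your side remark that the hypothesis $u \in A_{\Theta,\lambda}$ is not used in the lemma itself is also correct.
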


The intersection $\Sigma_\Theta \cap \Sigma_{u\lambda}$ is a root system because both $\Sigma_\Theta$ and $\Sigma_{u\lambda}$ are root systems.

\begin{proof}
	Let $\beta \in \Sigma_\Theta \cap \Sigma_{u\lambda}$. Write $\beta$ as a $\BZ_{\ge 0}$-linear combination in terms of reflections of roots in $\Pi_{u\lambda}$. If one of the summands is from $\Pi_{u\lambda} - \Sigma_\Theta$, then writing $\beta$ as a sum of reflections of roots in $\Pi$, there is a summand that comes from $\Pi - \Theta$. This implies $\beta \not\in \Sigma_\Theta$, a contradiction. Hence $\beta$ is a sum of reflections of roots from $\Sigma_\Theta \cap \Pi_{u\lambda}$. Therefore $\Sigma_\Theta \cap \Pi_{u \lambda}$ spans $\Sigma_\Theta \cap \Sigma_{u\lambda}$. Since $\Sigma_\Theta \cap \Pi_{u\lambda}$ is a subset of simple roots in $\Sigma_{u\lambda}$, roots contained in $\Sigma_\Theta \cap \Pi_{u \lambda}$ remain simple in $\Sigma_\Theta \cap \Sigma_{u\lambda}^+$. Thus $\Sigma_\Theta \cap \Pi_{u \lambda}$ is a set of simple roots for $\Sigma_\Theta \cap \Sigma_{u\lambda}$.
\end{proof}

\sloppy Write $W_{u\lambda, \Sigma_\Theta \cap \Pi_{u \lambda}}$ for the parabolic subgroup of $W_{u\lambda}$ corresponding to $\Sigma_\Theta \cap \Pi_{u \lambda}$. Then $W_{u\lambda, \Sigma_\Theta \cap \Pi_{u \lambda}}$ is the Weyl group of $\Sigma_\Theta \cap \Sigma_{u\lambda}$ and is a subgroup of $W_\Theta \cap W_{u\lambda}$.

\begin{proposition}\label{lem:int_Whittaker_model}
	For any $u \in A_{\Theta,\lambda}$, $W_\Theta \cap W_{u\lambda} = W_{u \lambda, \Sigma_\Theta \cap \Pi_{u \lambda}}$. In particular, $W_\Theta \cap W_{u\lambda}$ is a parabolic subgroup of $W_{u \lambda}$.
\end{proposition}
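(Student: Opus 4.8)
The plan is to establish the nontrivial inclusion $W_\Theta\cap W_{u\lambda}\subseteq W_{u\lambda,\,\Sigma_\Theta\cap\Pi_{u\lambda}}$; the reverse inclusion has already been recorded just before the statement, since the Weyl group $W_{u\lambda,\,\Sigma_\Theta\cap\Pi_{u\lambda}}$ of $\Sigma_\Theta\cap\Sigma_{u\lambda}$ is generated by reflections $s_\beta$ with $\beta\in\Sigma_\Theta\cap\Sigma_{u\lambda}$, each of which lies in $W_\Theta$ (because $\beta\in\Sigma_\Theta$) and in $W_{u\lambda}$ (because $\beta\in\Sigma_{u\lambda}$). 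By the preceding lemma, $\Sigma_\Theta\cap\Pi_{u\lambda}$ is a simple system for $\Sigma_\Theta\cap\Sigma_{u\lambda}$, hence in particular a subset of $\Pi_{u\lambda}$; so $W_{u\lambda,\,\Sigma_\Theta\cap\Pi_{u\lambda}}$ is a \emph{standard} parabolic subgroup of the Coxeter system $(W_{u\lambda},\{s_\beta:\beta\in\Pi_{u\lambda}\})$, and it will suffice to show that every $w\in W_\Theta\cap W_{u\lambda}$, viewed inside $W_{u\lambda}$, lies in this standard parabolic. The ``in particular'' clause of the proposition is then immediate.

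The first step is to compare inversion sets. For $w\in W$ write, as in \textsection\ref{subsec:Bruhat_W/Wlambda}, $\Sigma^+_w=\Sigma^+\cap w\inv(-\Sigma^+)$ for the inversion set of $w$ in $W$; and for $w\in W_{u\lambda}$ let $N(w)=\Sigma^+_{u\lambda}\cap w\inv(-\Sigma^+_{u\lambda})$ be its inversion set inside $W_{u\lambda}$. Since $\Sigma^+_{u\lambda}=\Sigma_{u\lambda}\cap\Sigma^+$ by definition and $w\in W_{u\lambda}$ stabilizes $\Sigma_{u\lambda}$, a direct check gives $N(w)=\Sigma_{u\lambda}\cap\Sigma^+_w$. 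On the other hand, if $w\in W_\Theta$ then Lemma \ref{lem:W_Theta_permutes_roots} tells us that $w$ permutes $\Sigma^+-\Sigma^+_\Theta$, so no root of $\Sigma^+-\Sigma^+_\Theta$ can be an inversion of $w$, i.e. $\Sigma^+_w\subseteq\Sigma^+_\Theta$. Combining the two facts, every $w\in W_\Theta\cap W_{u\lambda}$ satisfies $N(w)\subseteq\Sigma_\Theta\cap\Sigma_{u\lambda}$.

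To conclude I would invoke the standard fact that an element of a Coxeter group whose inversion set is contained in the positive roots spanned by a subset of the simple roots belongs to the corresponding standard parabolic subgroup; applied inside $W_{u\lambda}$ with $S=\Sigma_\Theta\cap\Pi_{u\lambda}$ and using the preceding lemma (so that $\Sigma_\Theta\cap\Sigma_{u\lambda}$ is exactly the set of roots of $\Sigma_{u\lambda}$ spanned by $S$), the inclusion $N(w)\subseteq\Sigma_\Theta\cap\Sigma_{u\lambda}$ forces $w\in W_{u\lambda,\,\Sigma_\Theta\cap\Pi_{u\lambda}}$. If one prefers to avoid citing that fact, the same conclusion is obtained by induction on the $W_{u\lambda}$-length of $w$: if $w\neq 1$, choose a simple $\beta\in\Pi_{u\lambda}$ with $w\beta\in-\Sigma^+_{u\lambda}$, so $\beta\in N(w)\subseteq\Sigma_\Theta\cap\Pi_{u\lambda}$; then $s_\beta\in W_\Theta\cap W_{u\lambda}$, the element $ws_\beta$ again lies in $W_\Theta\cap W_{u\lambda}$ and has strictly smaller $W_{u\lambda}$-length, so $ws_\beta\in W_{u\lambda,\,\Sigma_\Theta\cap\Pi_{u\lambda}}$ by induction, and hence so does $w$.

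I do not expect a genuine obstacle here. The two points needing care are bookkeeping: (i) that inversions measured inside $W_{u\lambda}$ coincide with $W$-inversions lying in $\Sigma_{u\lambda}$, which relies on the definition $\Sigma^+_{u\lambda}=\Sigma_{u\lambda}\cap\Sigma^+$ and is where the hypothesis $u\in A_\lambda$ (already used in the preceding lemma) is implicitly in force; and (ii) the identification of $\Sigma_\Theta\cap\Sigma_{u\lambda}$ as a standard sub-root-system of $\Sigma_{u\lambda}$, which is exactly the content of the preceding lemma. Everything else is either the key application of Lemma \ref{lem:W_Theta_permutes_roots} or a routine Coxeter-group argument.
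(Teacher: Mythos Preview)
Your proof is correct. Both your argument and the paper's hinge on the same key input, Lemma~\ref{lem:W_Theta_permutes_roots}, but the wrapping is different. The paper first picks $v\in W_{u\lambda,\Sigma_\Theta\cap\Pi_{u\lambda}}$ so that $vw$ fixes $\Sigma_{u\lambda}^+\cap\Sigma_\Theta^+$, then uses Lemma~\ref{lem:W_Theta_permutes_roots} (applied to $vw\in W_\Theta$) together with $vw\in W_{u\lambda}$ to show that $vw$ also permutes $\Sigma_{u\lambda}^+-\Sigma_\Theta^+$; hence $vw$ fixes all of $\Sigma_{u\lambda}^+$ and must equal $1$. You instead go straight to the inversion set: Lemma~\ref{lem:W_Theta_permutes_roots} gives $\Sigma_w^+\subseteq\Sigma_\Theta^+$, intersecting with $\Sigma_{u\lambda}$ gives $N(w)\subseteq\Sigma_\Theta\cap\Sigma_{u\lambda}$, and then the standard fact (or the clean length induction you sketch) finishes. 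Your route is arguably a bit more direct since it avoids the auxiliary correction by $v$; the paper's route avoids invoking the inversion-set characterization of parabolics. One small remark: the identity $N(w)=\Sigma_{u\lambda}\cap\Sigma_w^+$ only uses $\Sigma_{u\lambda}^+=\Sigma_{u\lambda}\cap\Sigma^+$ and $w\in W_{u\lambda}$, so the hypothesis $u\in A_{\Theta,\lambda}$ is not actually in play at that step (it enters only through the preceding lemma, and your induction variant does not even need that).
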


\begin{proof}
	The subgroup $W_{u \lambda, \Sigma_\Theta \cap \Pi_{u \lambda}}$ is certainly contained in $W_\Theta \cap W_{u\lambda}$. Let $w \in W_\Theta \cap W_{u \lambda}$. Being in $W_\Theta$, $w$ permutes roots in $\Sigma_\Theta$; being in $W_{u\lambda}$, $w$ permutes roots in $\Sigma_{u\lambda}$. Hence $w$ permutes roots in $\Sigma_\Theta \cap \Sigma_{u\lambda}$, and it sends the set $\Sigma^+ \cap (\Sigma_\Theta \cap \Sigma_{u \lambda})$ of positive roots in $\Sigma_\Theta \cap \Sigma_{u \lambda}$ to another set of positive roots $w \Sigma^+ \cap (\Sigma_\Theta \cap \Sigma_{u \lambda})$. Since $W_{u\lambda, \Sigma_\Theta \cap \Pi_{u \lambda}}$ is the Weyl group of $\Sigma_\Theta \cap \Sigma_{u\lambda}$, there exists a unique element $v \in W_{u\lambda, \Sigma_\Theta \cap \Pi_{u \lambda}}$ that sends $w \Sigma^+ \cap (\Sigma_\Theta \cap \Sigma_{u \lambda})$ back to $\Sigma^+ \cap (\Sigma_\Theta \cap \Sigma_{u \lambda})$. Hence $vw$ permutes $\Sigma^+ \cap (\Sigma_\Theta \cap \Sigma_{u \lambda}) = \Sigma_{u\lambda}^+ \cap \Sigma_\Theta^+$. On the other hand, since $vw \in W_\Theta$, by \ref{lem:W_Theta_permutes_roots} it permutes $\Sigma^+ - \Sigma_\Theta^+$; $vw$ is also in $W_{u \lambda}$, so it permutes $\Sigma_{u\lambda}$. Hence, it permutes $(\Sigma^+ - \Sigma_\Theta^+) \cap \Sigma_{u\lambda} = \Sigma_{u\lambda}^+ - \Sigma_\Theta^+$. As a result, $vw$ permutes
	\begin{equation*}
		\big( \Sigma_{u\lambda}^+ \cap \Sigma_\Theta^+ \big) \cup \big( \Sigma_{u\lambda}^+ - \Sigma_\Theta^+ \big) 
		= \Sigma_{u\lambda}^+.
	\end{equation*}
	Since $W_{u\lambda}$ acts simply transitively on the set of sets of positive roots in $\Sigma_{u\lambda}$, we must have $vw = 1$. Therefore $w = v\inv \in W_{u\lambda,\Sigma_\Theta \cap \Pi_{u \lambda}}$. Thus $W_\Theta \cap W_{u\lambda} = W_{u \lambda, \Sigma_\Theta \cap \Pi_{u \lambda}}$, as desired.
\end{proof}

For $u \in A_{\Theta,\lambda}$, write 
\begin{equation*}
	\Theta(u,\lambda) = u\inv (\Sigma_\Theta \cap \Pi_{u\lambda}) = u\inv \Sigma_\Theta \cap \Pi_\lambda.
\end{equation*}
Since $\Sigma_\Theta \cap \Pi_{u\lambda}$ is a subset of simple roots in $\Sigma_{u\lambda}$, ${\Theta(u,\lambda)}$ is a subset of simple roots in $u\inv \Sigma_{u \lambda} = \Sigma_\lambda$. Write $W_{\lambda,{\Theta(u,\lambda)}}$ for the parabolic subgroup of $W_\lambda$ corresponding to ${\Theta(u,\lambda)}$. 

\begin{example}
	Recall that $A_{\Theta,\lambda} = \{1,s_\gamma s_\beta\}$ in the example in \textsection\ref{sec:examples}. There, 
	\begin{align*}
		\Theta(1,\lambda) &= \{\alpha+\beta\}, & W_{\lambda,\Theta(1,\lambda)} &= \{1, s_{\alpha+\beta}\},\\
		\Theta(s_\gamma s_\beta,\lambda) &= \Pi_\lambda,& 
		W_{\lambda,\Theta(s_\gamma s_\beta,\lambda)} &= W_\lambda,
	\end{align*}
	where $W_\lambda$ is the type $A_2$ Weyl group generated by $s_{\alpha+\beta}$ and $s_\gamma$.
\end{example}

\begin{proposition}\label{thm:int_Whittaker_model}
	Let $u \in A_{\Theta,\lambda}$. The left-multiplication-by-$u$ map
	\begin{equation*}
		W_\lambda \bijects u W_\lambda
	\end{equation*}
	induces bijections
	\begin{equation*}
		\begin{array}{ccccc}
			W_{\lambda,\Theta(u,\lambda)} \backslash W_\lambda &\bijects& \big\{C \cap u W_\lambda \mid C \in W_\Theta \backslash W_\Theta u W_\lambda \big\} &\bijects& W_\Theta \backslash W_\Theta u W_\lambda\\
			W_{\lambda,\Theta(u,\lambda)} v &\mapsto& u W_{\lambda,\Theta(u,\lambda)} v = W_\Theta uv \cap u W_\lambda &\mapsto& W_\Theta uv.
		\end{array}		
	\end{equation*}
	
	Moreover, this map preserves the partial orders on cosets: if $C',D' \in W_{\lambda,{\Theta(u,\lambda)}} \backslash W_\lambda$ are sent to $C \cap u W_\lambda$ and $D \cap u W_\lambda$, respectively, then $D' \le_{u,\lambda} C'$ implies $D \le C$.
\end{proposition}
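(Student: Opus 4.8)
The plan is to isolate one conjugation identity, deduce the displayed chain of bijections from it by elementary coset manipulations, and then obtain the order-preservation statement for free from the Bruhat-order lemmas of \textsection\ref{subsec:Bruhat_W/Wlambda}. The identity I would establish first is
\begin{equation*}
	u\, W_{\lambda,\Theta(u,\lambda)}\, u\inv = W_\Theta \cap W_{u\lambda}.
\end{equation*}
Indeed, $\Theta(u,\lambda) = u\inv(\Sigma_\Theta \cap \Pi_{u\lambda})$ by definition, and since $u \in A_{\Theta,\lambda} \subseteq A_\lambda$, Lemma \ref{lem:non-int_refl_subsys}(c) gives $u \Pi_\lambda = \Pi_{u\lambda}$; hence conjugation by $u$ carries the standard parabolic $W_{\lambda,\Theta(u,\lambda)} \subseteq W_\lambda$ onto the standard parabolic of $W_{u\lambda}$ generated by $\Sigma_\Theta \cap \Pi_{u\lambda}$, which by Proposition \ref{lem:int_Whittaker_model} is exactly $W_\Theta \cap W_{u\lambda}$.

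From this, the middle equality $u W_{\lambda,\Theta(u,\lambda)} v = W_\Theta uv \cap uW_\lambda$ is immediate: the left-hand side equals $(W_\Theta \cap W_{u\lambda})uv$, while an element $wuv$ with $w \in W_\Theta$ lies in $uW_\lambda$ precisely when $w \in uW_\lambda u\inv = W_{u\lambda}$, i.e. when $w \in W_\Theta \cap W_{u\lambda}$. Next I would check that the composite $W_{\lambda,\Theta(u,\lambda)} v \mapsto W_\Theta uv$ is a well-defined bijection onto $W_\Theta \backslash W_\Theta u W_\lambda$: it is surjective because every element of $W_\Theta u W_\lambda$ is $W_\Theta$-equivalent to one of the form $uv$ with $v \in W_\lambda$; and it is well-defined and injective because $W_\Theta uv = W_\Theta uv'$ forces $uv(v')\inv u\inv \in W_\Theta \cap W_{u\lambda} = uW_{\lambda,\Theta(u,\lambda)}u\inv$, hence $v(v')\inv \in W_{\lambda,\Theta(u,\lambda)}$, and the converse implication is the same computation run backwards. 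The factorization through $\{C \cap uW_\lambda\}$ is then formal: each such intersection is nonempty (if $C = W_\Theta uv$ then $uv \in C \cap uW_\lambda$), two right $W_\Theta$-cosets sharing an element coincide, and the middle equality identifies $C \cap uW_\lambda$ with $uW_{\lambda,\Theta(u,\lambda)}v$.

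For the order-preservation, let $v^{C'}, v^{D'} \in W_\lambda$ be the longest elements of $C'$ and $D'$, so that $D' \le_{u,\lambda} C'$ means $v^{D'} \le_\lambda v^{C'}$. Since $u \in A_\lambda$, Corollary \ref{lem:Bruhat_order} upgrades this to $u v^{D'} \le u v^{C'}$ in $W$; by the bijection just established, $u v^{C'} \in C$ and $u v^{D'} \in D$, so Corollary \ref{lem:Bruhat_order_rcoset} gives $D \le C$. I expect the only genuinely delicate point to be the passage through the conjugation identity together with the translation between ``intersecting $uW_\lambda$'' and ``right $W_{\lambda,\Theta(u,\lambda)}$-cosets of $W_\lambda$''; once these are in hand the bijections are bookkeeping and the order claim is a two-line appeal to the lemmas of \textsection\ref{subsec:Bruhat_W/Wlambda}.
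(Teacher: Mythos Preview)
Your proposal is correct and follows essentially the same route as the paper: the key step is the identity $uW_{\lambda,\Theta(u,\lambda)}u\inv = W_\Theta \cap W_{u\lambda}$ (which the paper writes as the chain $W_\Theta u \cap uW_\lambda = \cdots = uW_{\lambda,\Theta(u,\lambda)}$), invoking Proposition~\ref{lem:int_Whittaker_model}, and the order statement is handled identically via Corollary~\ref{lem:Bruhat_order} and Corollary~\ref{lem:Bruhat_order_rcoset}. Your bijectivity check is slightly more explicit than the paper's, but there is no substantive difference.
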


\begin{proof}
	Consider the smallest right $W_\Theta$-coset $W_\Theta u$ of $W_\Theta u W_\lambda$. 
	\begin{align*}
		W_\Theta u \cap u W_\lambda 
		&= (W_\Theta \cap u W_\lambda u\inv) u\\
		&= (W_\Theta \cap W_{u\lambda} )u\\
		&= W_{u\lambda, \Sigma_\Theta \cap \Pi_{u \lambda}} u\\
		&= W_{u\lambda, u{\Theta(u,\lambda)}} u\\
		&= (u W_{\lambda,{\Theta(u,\lambda)}} u\inv) u\\
		&= u W_{\lambda ,{\Theta(u,\lambda)}}.
	\end{align*}
	Hence left multiplication by $u$ sends $W_{\lambda ,{\Theta(u,\lambda)}}$ to $W_\Theta u \cap u W_\lambda$. Since left multiplication by $u$ commutes with right multiplication by elements of $W_\lambda$, it sends right $W_{\lambda,{\Theta(u,\lambda)}}$-cosets in $W_\lambda$ to right $W_\lambda$-translates of $W_\Theta u \cap u W_\lambda$, which gives us $C \cap u W_\lambda$ for various right $W_\Theta$-cosets $C$ in $W_\Theta u W_\lambda$. Moreover, any right $W_\Theta$-coset $C$ in $W_\Theta u W_\lambda$ is obtained as a right $W_\lambda$-translation of $W_\Theta u$, hence the intersection $C \cap u W_\lambda$ is necessarily the image of a right $W_{\lambda,{\Theta(u,\lambda)}}$-coset.
	
	To show that this map is order preserving, take two right $W_{\lambda,{\Theta(u,\lambda)}}$-cosets $C'$ and $D'$ such that $D' \le_{u,\lambda} C'$. This means that the $\le_\lambda$-longest elements $v^{D'}$, $v^{C'}$ of $D'$ and $C'$ satisfy $v^{D'} \le_\lambda v^{C'}$. Since left multiplication by $u$ preserves Bruhat orders (Corollary \ref{lem:Bruhat_order}), $u v^{D'} \le u v^{C'}$. Therefore $D \le C$ by \ref{lem:Bruhat_order_rcoset}.
\end{proof}

If we take the union of the maps in Proposition \ref{thm:int_Whittaker_model} for each $u$, we obtain the following bijection.

\begin{corollary}\label{lem:right_coset_partition}
	As $u$ ranges over $A_{\Theta,\lambda}$, left multiplication by $W_\Theta u$ defines a bijection
	\begin{equation*}
		\ind_\lambda: \bigcup_{u \in A_{\Theta,\lambda}} W_{\lambda, {\Theta(u,\lambda)}} \backslash W_\lambda \bijects W_\Theta \backslash W,\quad
		W_{\lambda, {\Theta(u,\lambda)}} v  \mapsto W_\Theta uv
	\end{equation*}
	which is order-preserving when restricted to each $W_{\lambda, {\Theta(u,\lambda)}} \backslash W_\lambda$ and commutes with right multiplication by $W_\lambda$. The image of $W_{\lambda, {\Theta(u,\lambda)}} \backslash W_\lambda$ is equal to $W_\Theta \backslash W_\Theta u W_\lambda$.
\end{corollary}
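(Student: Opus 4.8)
The plan is to assemble the statement directly from what has already been proved, the only real work having been done in \ref{lem:unique_smallest_right_coset} and \ref{thm:int_Whittaker_model}. First I would invoke Corollary \ref{thm:cross-section_db_coset}: since $A_{\Theta,\lambda}$ is a cross-section of $W_\Theta \backslash W / W_\lambda$, the double cosets $\{W_\Theta u W_\lambda\}_{u \in A_{\Theta,\lambda}}$ partition $W$, and hence $\{W_\Theta \backslash W_\Theta u W_\lambda\}_{u \in A_{\Theta,\lambda}}$ is a partition of $W_\Theta \backslash W$ into pairwise disjoint subsets. This is what makes the disjoint union on the left-hand side match up with $W_\Theta \backslash W$ on the right.

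Next, for each fixed $u \in A_{\Theta,\lambda}$, Proposition \ref{thm:int_Whittaker_model} already provides the bijection $W_{\lambda,\Theta(u,\lambda)} \backslash W_\lambda \bijects W_\Theta \backslash W_\Theta u W_\lambda$ sending $W_{\lambda,\Theta(u,\lambda)} v \mapsto W_\Theta uv$, together with the fact that it is order-preserving from $\le_{u,\lambda}$ to $\le$. Gluing these bijections over the partition of the target produced in the previous step yields the map $\ind_\lambda$ and shows it is a bijection; the identification of the image of each summand $W_{\lambda,\Theta(u,\lambda)} \backslash W_\lambda$ with $W_\Theta \backslash W_\Theta u W_\lambda$, and the order-preservation when restricted to each summand, are then immediate restatements of Proposition \ref{thm:int_Whittaker_model}.

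It remains to check compatibility with right multiplication by $W_\lambda$. The right action of $W$ on $W_\Theta \backslash W$ restricts to an action of $W_\lambda$, and for $w \in W_\lambda$ one has $(W_\Theta u W_\lambda)\cdot w = W_\Theta u W_\lambda$, so this action preserves the partition into double cosets; moreover for $v, w \in W_\lambda$,
\begin{equation*}
	\ind_\lambda\big(W_{\lambda,\Theta(u,\lambda)} v w\big) = W_\Theta u v w = (W_\Theta u v)\cdot w = \ind_\lambda\big(W_{\lambda,\Theta(u,\lambda)} v\big)\cdot w,
\end{equation*}
which is exactly the asserted equivariance. I do not anticipate any genuine obstacle here: the proof is bookkeeping, and the two substantive inputs --- that $A_{\Theta,\lambda}$ really parameterizes the double cosets (Corollary \ref{thm:cross-section_db_coset}) and that each left-multiplication-by-$u$ map is an order-preserving bijection onto $W_\Theta \backslash W_\Theta u W_\lambda$ (Proposition \ref{thm:int_Whittaker_model}) --- are already in hand.
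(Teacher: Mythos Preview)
Your proposal is correct and follows exactly the approach the paper takes: the paper simply states that the corollary is obtained by taking the union of the maps in Proposition \ref{thm:int_Whittaker_model} over $u \in A_{\Theta,\lambda}$, which is precisely what you do. Your explicit verification of the $W_\lambda$-equivariance and your invocation of Corollary \ref{thm:cross-section_db_coset} to justify the partition are reasonable elaborations of what the paper leaves implicit.
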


\begin{notation}\label{not:right_coset_partition}
	We will write 
	\begin{equation*}
		(-)|_\lambda: W_\Theta \backslash W \to \bigcup_{u \in A_{\Theta,\lambda}} W_{\lambda, {\Theta(u,\lambda)}} \backslash W_\lambda	
	\end{equation*}
	for the inverse map. If $C$ and $D$ are both sent to $W_{\lambda,\Theta(u,\lambda)} \backslash W_\lambda$, we will write $C \le_{u,\lambda} D$ for $C|_\lambda \le_{u,\lambda} D|_\lambda$.	
\end{notation}

The map $(-)|_\lambda$ plays an important role towards our goal. As explained in the introduction, standard and irreducible Whittaker modules in $\cN_{\theta,\eta}$ are parameterized by $W_\Theta \backslash W$, but compared to the integral case, $\cN_{\theta,\eta}$ is divided into smaller blocks. The map $(-)|_\lambda$ reflects this division: on the level of standard/irreducible modules, modules that correspond to various cosets $C$ in the same $(W_\Theta,W_\lambda)$-coset are in the same block, and each block looks like an integral Whittaker category (at least on the level of standard and irreducible modules) modeled by $W_{\lambda, {\Theta(u,\lambda)}} \backslash W_\lambda$.

\begin{example}
	Let us look at the double coset $W_\Theta W_\lambda$ in the example in \textsection\ref{sec:examples}. There are three right $W_\Theta$-cosets in $W_\Theta W_\lambda$. The longest elements in these cosets are $s_\alpha s_\beta s_\alpha$, $s_\alpha s_\beta s_\alpha s_\gamma$, and $w_0$, respectively. In $W_\lambda$, there are also three right $W_{\lambda,\Theta(1,\lambda)}$-cosets whose longest elements are $s_{\alpha+\beta}$, $s_{\alpha+\beta}s_\gamma$, and $s_{\alpha+\beta+\gamma}$, respectively. The map 
	\begin{equation*}
		(-)|_\lambda: W_\Theta \backslash W_\Theta W_\lambda \bijects W_{\lambda,\Theta(1,\lambda)} \backslash W_\lambda
	\end{equation*}
	is visualized in (\ref{diag:(-)|_lambda_A3}).
\end{example}

We also need to understand how $(-)_\lambda$ behaves under right multiplication by a non-integral simple reflection. This reflects the effect of non-integral intertwining functors which will be defined in \textsection\ref{sec:geom} and will be used in the algorithm. Roughly speaking, right multiplication by a non-integral simple reflection translates $(W_\Theta,W_\lambda)$-coset structures to $(W_\Theta,W_{s_\beta\lambda})$-coset structures, while conjugation by the same reflection translates right $W_{\lambda,\Theta(u,\lambda)}$-coset structures in $W_\lambda$ to $W_{s_\beta \lambda, \Theta(r,s_\beta \lambda)}$-coset structures in $W_{s_\beta \lambda}$.

\begin{lemma}\label{lem:Is_pres_lowest_db_coset}
	Let $u \in A_{\Theta,\lambda}$, $\beta \in \Pi - \Pi_\lambda$. Then $W_\Theta u s_\beta$ is the smallest right $W_\Theta$-coset in $W_\Theta u s_\beta W_{s_\beta \lambda} = W_\Theta u W_\lambda s_\beta$.
\end{lemma}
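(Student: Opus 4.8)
The plan is to reduce everything to the cross-section results already established: I will show that $u s_\beta$ is the shortest element of its $(W_\Theta, W_{s_\beta\lambda})$-double coset by checking $u s_\beta \in A_{s_\beta\lambda}$, and then invoke Proposition \ref{lem:unique_smallest_right_coset}(b), which says precisely that $A_{s_\beta\lambda}$ meets each double coset only inside that coset's smallest right $W_\Theta$-coset. Since all the combinatorial lemmas of \textsection\ref{subsec:Bruhat_W/Wlambda}--\textsection\ref{subsec:db_coset_xsec} are stated for an arbitrary weight, they may be freely applied with $\lambda$ replaced by $s_\beta\lambda$.

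First I would dispose of the asserted equality of sets. Applying Lemma \ref{lem:non-int_refl_subsys}(d) to the element $s_\beta$ and using $s_\beta^2 = 1$ gives $W_{s_\beta\lambda} = s_\beta W_\lambda s_\beta$, hence $s_\beta W_{s_\beta\lambda} = W_\lambda s_\beta$; left-multiplying by $u$ and then enlarging on the left by $W_\Theta$ yields $W_\Theta u s_\beta W_{s_\beta\lambda} = W_\Theta u W_\lambda s_\beta$. The substantive step is the claim $u s_\beta \in A_{s_\beta\lambda}$: because $\beta \in \Pi - \Pi_\lambda$, Lemma \ref{lem:non-int_refl_subsys}(f) gives $s_\beta \in A_\lambda$, and since moreover $u \in A_{\Theta,\lambda} \subseteq A_\lambda$, Lemma \ref{lem:non-int_refl_subsys}(e) immediately yields $u s_\beta \in A_{s_\beta\lambda}$.

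To conclude, apply Proposition \ref{lem:unique_smallest_right_coset}(b) to the weight $s_\beta\lambda$ and the double coset $W_\Theta u s_\beta W_{s_\beta\lambda}$: its unique smallest right $W_\Theta$-coset $C$ contains all of $A_{s_\beta\lambda} \cap (W_\Theta u s_\beta W_{s_\beta\lambda})$. The element $u s_\beta$ lies in this intersection (it is $1 \cdot u s_\beta \cdot 1$ and it lies in $A_{s_\beta\lambda}$ by the previous step), so $u s_\beta \in C$; as two right $W_\Theta$-cosets sharing an element coincide, $W_\Theta u s_\beta = C$, which is exactly the smallest right $W_\Theta$-coset in the double coset.

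I do not anticipate a genuine obstacle here: the only thing requiring a moment's care is confirming that the machinery of \textsection\ref{sec:db_coset} transports verbatim from $\lambda$ to $s_\beta\lambda$ (it does, since those statements are uniform in the weight), after which the argument is a two-line application of Lemma \ref{lem:non-int_refl_subsys} and Proposition \ref{lem:unique_smallest_right_coset}.
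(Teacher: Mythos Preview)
Your proposal is correct and follows essentially the same approach as the paper's own proof: both use Lemma \ref{lem:non-int_refl_subsys}(f) and (e) to obtain $u s_\beta \in A_{s_\beta\lambda}$, then invoke Proposition \ref{lem:unique_smallest_right_coset} to conclude that $W_\Theta u s_\beta$ is the smallest right $W_\Theta$-coset, with the set equality handled via Lemma \ref{lem:non-int_refl_subsys}(d). Your write-up is a bit more detailed in spelling out each step, but the argument is identical.
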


\begin{proof}
	By Lemma \ref{lem:non-int_refl_subsys}(e)(f), $u s_\beta \in A_{s_\beta \lambda}$. Proposition \ref{lem:unique_smallest_right_coset} says that elements in $A_{s_\beta\lambda}$ are concentrated on the smallest right $W_\Theta$-cosets. So the right coset $W_\Theta (u s_\beta)$ containing $u s_\beta$ must be the smallest in the double coset $W_\Theta( u s_\beta) W_{s_\beta \lambda}$ containing $u s_\beta$. This proves the lemma. The final identification simply follows from $s_\beta W_{s_\beta \lambda} s_\beta = W_\lambda$ by \ref{lem:non-int_refl_subsys}(d).
\end{proof}

Rephrasing slightly and using \ref{lem:unique_smallest_right_coset} again, we get

\begin{corollary}
	Let $u \in A_{\Theta,\lambda}$, $\beta \in \Pi - \Pi_\lambda$. If $r$ denotes the unique element in $ A_{\Theta,s_\beta \lambda} \cap W_\Theta us_\beta W_{s_\beta \lambda}$, then $W_\Theta r = W_\Theta u s_\beta$.
\end{corollary}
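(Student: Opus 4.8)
The plan is to deduce this directly from Lemma \ref{lem:Is_pres_lowest_db_coset} together with Corollary \ref{thm:cross-section_db_coset}, applied with $\lambda$ replaced by $s_\beta\lambda$. First I would recall that, by that corollary, $A_{\Theta,s_\beta\lambda} = A_{s_\beta\lambda} \cap (w_\Theta {}^\Theta W)$ is a cross-section of $W_\Theta \backslash W / W_{s_\beta\lambda}$, and that for every $r \in A_{\Theta,s_\beta\lambda}$ the right coset $W_\Theta r$ is the \emph{unique} smallest right $W_\Theta$-coset in the double coset $W_\Theta r W_{s_\beta\lambda}$. In particular, the element $r$ appearing in the statement is genuinely unique: there is exactly one element of $A_{\Theta,s_\beta\lambda}$ lying in the double coset $W_\Theta u s_\beta W_{s_\beta\lambda}$, so the hypothesis ``$r$ denotes the unique element in $A_{\Theta,s_\beta\lambda} \cap W_\Theta u s_\beta W_{s_\beta\lambda}$'' is well posed. (Here one should note $\beta \in \Pi - \Pi_\lambda$ guarantees, via Lemma \ref{lem:non-int_refl_subsys}(e)(f), that $us_\beta \in A_{s_\beta\lambda}$, which is what makes $W_\Theta u s_\beta W_{s_\beta\lambda}$ an honest double coset to work in.)

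Next I would observe that since $r \in W_\Theta u s_\beta W_{s_\beta\lambda}$, these two double cosets coincide: $W_\Theta r W_{s_\beta\lambda} = W_\Theta u s_\beta W_{s_\beta\lambda}$. Hence $W_\Theta r$ is the unique smallest right $W_\Theta$-coset inside $W_\Theta u s_\beta W_{s_\beta\lambda}$. On the other hand, Lemma \ref{lem:Is_pres_lowest_db_coset} tells us that $W_\Theta u s_\beta$ is also a smallest right $W_\Theta$-coset in exactly this same double coset $W_\Theta u s_\beta W_{s_\beta\lambda} = W_\Theta u W_\lambda s_\beta$. Comparing the two and invoking uniqueness of the smallest right $W_\Theta$-coset (part (a) of Proposition \ref{lem:unique_smallest_right_coset}, used inside Corollary \ref{thm:cross-section_db_coset}), we conclude $W_\Theta r = W_\Theta u s_\beta$.

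This is essentially immediate once the cited results are in place, so I do not anticipate a genuine obstacle; the only point requiring a moment's care is bookkeeping the replacement $\lambda \rightsquigarrow s_\beta\lambda$ and confirming that the relevant ``smallest right coset'' statements apply verbatim to the double coset $W_\Theta u s_\beta W_{s_\beta\lambda}$. I would keep the written proof to two or three sentences, citing Lemma \ref{lem:Is_pres_lowest_db_coset} for the minimality of $W_\Theta u s_\beta$ and Corollary \ref{thm:cross-section_db_coset} for the minimality and uniqueness attached to the element $r$.
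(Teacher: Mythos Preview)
Your proposal is correct and follows essentially the same approach as the paper: the paper's proof is the single line ``Rephrasing slightly and using \ref{lem:unique_smallest_right_coset} again,'' which is exactly your argument of combining Lemma \ref{lem:Is_pres_lowest_db_coset} (minimality of $W_\Theta u s_\beta$) with the uniqueness of the smallest right $W_\Theta$-coset from Proposition \ref{lem:unique_smallest_right_coset}/Corollary \ref{thm:cross-section_db_coset}.
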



\begin{proposition}\label{lem:Is_right_coset}
	Let $u \in A_{\Theta,\lambda}$, $\beta \in \Pi - \Pi_\lambda$. Let $r$ be the unique element in $ A_{\Theta,s_\beta \lambda} \cap W_\Theta u s_\beta W_{s_\beta \lambda}$. Then conjugation by $s_\beta$ is a bijection
	\begin{equation*}
		s_\beta (-) s_\beta : W_{s_\beta \lambda, \Theta(r,s_\beta \lambda)} \backslash W_{s_\beta \lambda} \bijects
		W_{\lambda,\Theta(u,\lambda)} \backslash W_\lambda
	\end{equation*}
	that preserves the partial orders on right cosets. Moreover, the following diagram commutes
	\begin{equation}\label{eqn:Is_right_coset}
		\begin{tikzcd}
			W_{s_\beta \lambda, \Theta(r,s_\beta \lambda)} \backslash W_{s_\beta \lambda} \ar[d,"\ind_{s_\beta\lambda}"'] \ar[r,"s_\beta (-) s_\beta"] &
			W_{\lambda,\Theta(u,\lambda)} \backslash W_\lambda \ar[d,"\ind_\lambda"]\\
			W_\Theta \backslash W \ar[r," (-) s_\beta"] & W_\Theta \backslash W
		\end{tikzcd}.
	\end{equation}
	In particular, for any $C$, $D \in W_\Theta \backslash W_\Theta r W_{s_\beta \lambda}$, 
	\begin{equation*}
		D \le_{r,s_\beta \lambda}  C \iff D s_\beta \le_{u,\lambda} C s_\beta.
	\end{equation*}
\end{proposition}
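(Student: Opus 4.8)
The plan is to reduce the whole statement to one fact: conjugation by $s_\beta$ is a poset isomorphism $(W_{s_\beta\lambda},\le_{s_\beta\lambda}) \bijects (W_\lambda,\le_\lambda)$ that carries the parabolic subgroup $W_{s_\beta\lambda,\Theta(r,s_\beta\lambda)}$ onto $W_{\lambda,\Theta(u,\lambda)}$. Granting this, the induced bijection on the right coset spaces together with its compatibility with $\le_{r,s_\beta\lambda}$ and $\le_{u,\lambda}$ is formal, the commutativity of (\ref{eqn:Is_right_coset}) is a one-line coset computation, and the final ``in particular'' clause comes out by reading the diagram backwards.

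First I would verify $s_\beta \in A_{s_\beta\lambda}$. Since $\beta \in \Pi - \Pi_\lambda$, Lemma \ref{lem:non-int_refl_subsys}(f) gives $s_\beta \in A_\lambda$, hence $\Sigma_{s_\beta\lambda}^+ = s_\beta\Sigma_\lambda^+$ by part (b); as $\Sigma_\lambda^+ \subseteq \Sigma^+$ we get $\Sigma_{s_\beta\lambda}^+ \subseteq s_\beta\Sigma^+$, which means $s_\beta \in A_{s_\beta\lambda}$ by the characterization (\ref{eqn:A_lambda_alt}). Corollary \ref{lem:Bruhat_order_conj} then gives the poset isomorphism $s_\beta(-)s_\beta\colon (W_{s_\beta\lambda},\le_{s_\beta\lambda}) \bijects (W_\lambda,\le_\lambda)$, and parts (c),(d) of Lemma \ref{lem:non-int_refl_subsys} show it sends $\Pi_{s_\beta\lambda}$ to $\Pi_\lambda$ and parabolic subgroups of $W_{s_\beta\lambda}$ to parabolic subgroups of $W_\lambda$.

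The substantive step is to identify $s_\beta\Theta(r,s_\beta\lambda)$ with $\Theta(u,\lambda)$. By the Corollary immediately preceding the Proposition, $W_\Theta r = W_\Theta u s_\beta$, so write $r = wus_\beta$ with $w \in W_\Theta$; then $s_\beta r\inv = u\inv w\inv$, and since $w\inv$ stabilizes $\Sigma_\Theta$ this gives $s_\beta r\inv\Sigma_\Theta = u\inv\Sigma_\Theta$. Using also $s_\beta\Pi_{s_\beta\lambda} = \Pi_\lambda$ and the description $\Theta(r,s_\beta\lambda) = r\inv\Sigma_\Theta \cap \Pi_{s_\beta\lambda}$,
\[
s_\beta\Theta(r,s_\beta\lambda) = (s_\beta r\inv\Sigma_\Theta) \cap (s_\beta\Pi_{s_\beta\lambda}) = u\inv\Sigma_\Theta \cap \Pi_\lambda = \Theta(u,\lambda).
\]
Since conjugation by $s_\beta$ turns $s_\gamma$ into $s_{s_\beta\gamma}$, it follows that $s_\beta W_{s_\beta\lambda,\Theta(r,s_\beta\lambda)}s_\beta = W_{\lambda,\Theta(u,\lambda)}$. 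Hence conjugation by $s_\beta$, being a group automorphism carrying one parabolic onto the other, sends right $W_{s_\beta\lambda,\Theta(r,s_\beta\lambda)}$-cosets to right $W_{\lambda,\Theta(u,\lambda)}$-cosets; and being a poset isomorphism of the ambient groups, it sends the $\le_{s_\beta\lambda}$-longest element of each coset to the $\le_\lambda$-longest element of its image, so the induced bijection respects $\le_{r,s_\beta\lambda}$ and $\le_{u,\lambda}$.

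Finally I would chase the diagram: for $v \in W_{s_\beta\lambda}$ one has $s_\beta v s_\beta \in W_\lambda$ (Lemma \ref{lem:non-int_refl_subsys}(d)), and both composites send $W_{s_\beta\lambda,\Theta(r,s_\beta\lambda)}v$ to $W_\Theta u s_\beta v s_\beta$ --- down-then-right gives $(W_\Theta r v)s_\beta = W_\Theta u s_\beta v s_\beta$ via $W_\Theta r = W_\Theta u s_\beta$, while right-then-down gives $\ind_\lambda(W_{\lambda,\Theta(u,\lambda)} s_\beta v s_\beta) = W_\Theta u s_\beta v s_\beta$ by the definition of $\ind_\lambda$ in Corollary \ref{lem:right_coset_partition}. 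For the last assertion, right multiplication by $s_\beta$ carries $W_\Theta r W_{s_\beta\lambda} = W_\Theta u s_\beta W_{s_\beta\lambda}$ onto $W_\Theta u W_\lambda$, so $Cs_\beta$ and $Ds_\beta$ lie in $W_\Theta\backslash W_\Theta u W_\lambda$; reading (\ref{eqn:Is_right_coset}) backwards yields $(Cs_\beta)|_\lambda = s_\beta(C|_{s_\beta\lambda})s_\beta$ and likewise for $D$, so $Ds_\beta \le_{u,\lambda} Cs_\beta$ is equivalent, through the order-preserving conjugation, to $D \le_{r,s_\beta\lambda} C$. The only genuinely delicate step is the identity $s_\beta\Theta(r,s_\beta\lambda) = \Theta(u,\lambda)$, where the coset relation $W_\Theta r = W_\Theta u s_\beta$ and the $s_\beta$-transformation of $\Pi_{s_\beta\lambda}$ have to be combined; everything else is bookkeeping with \textsection\ref{subsec:Bruhat_W/Wlambda}--\textsection\ref{subsec:int_model}.
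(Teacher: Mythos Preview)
Your proof is correct and follows essentially the same approach as the paper: reduce everything to the identity $s_\beta\Theta(r,s_\beta\lambda) = \Theta(u,\lambda)$ (the paper computes the equivalent $s_\beta\Theta(u,\lambda) = \Theta(r,s_\beta\lambda)$) via the coset relation $W_\Theta r = W_\Theta u s_\beta$, then invoke Corollary~\ref{lem:Bruhat_order_conj} for the poset isomorphism and chase the diagram. You are a bit more explicit than the paper in verifying $s_\beta \in A_{s_\beta\lambda}$ and in spelling out the final ``in particular'' clause, but the substance is identical.
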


\begin{proof}
	By \ref{lem:Bruhat_order_conj}, conjugation by $s_\beta$ is an isomorphism of groups and posets between $(W_{s_\beta \lambda},\le_{s_\beta\lambda})$ and $(W_\lambda,\le_\lambda)$.	By the preceding corollary, there exists $w \in W_\Theta$ such that $w r = u s_\beta$. Therefore
	\begin{align*}
		s_\beta \Theta(u,\lambda)
		&= s_\beta (u\inv \Sigma_\Theta \cap \Pi_\lambda)\\
		&= (us_\beta)\inv \Sigma_\Theta \cap s_\beta \Pi_\lambda\\
		&= (wr)\inv \Sigma_\Theta \cap \Pi_{s_\beta \lambda}\\
		&= r{}\inv (w\inv \Sigma_\Theta) \cap \Pi_{s_\beta \lambda}\\
		&= r{}\inv \Sigma_\Theta \cap \Pi_{s_\beta \lambda}\\
		&= \Theta(r,s_\beta \lambda).
	\end{align*}
	Hence conjugation by $s_\beta$ sends $W_{s_\beta \lambda, \Theta(r,s_\beta \lambda)}$ to $W_{\lambda,\Theta(u,\lambda)}$ and therefore induces a bijection from $W_{s_\beta \lambda,\Theta(r,s_\beta \lambda)} \backslash W_{s_\beta \lambda}$ to $W_{\lambda,\Theta(u,\lambda)} \backslash W_\lambda$. Furthermore, since conjugation by $s_\beta$ preserves Bruhat orders, it also preserves the partial orders on right cosets.
	
	To check that the diagram commutes, take any $D' \in W_{s_\beta \lambda, \Theta(r,s_\beta \lambda)} \backslash W_{s_\beta \lambda}$. Along the top-right path, $D'$ is sent to 
	\begin{equation*} 
		W_\Theta u \cdot s_\beta D' s_\beta = W_\Theta w r D' s_\beta = W_\Theta r D' s_\beta,
	\end{equation*}
	which agrees with the image along the bottom-left path.
\end{proof}

\subsection{A technical lemma}\label{subsec:lem_induction}

In the last part of this section, we prove a technical lemma that will be used in \textsection\ref{subsec:(4)} in induction process. 

\begin{proposition}\label{lem:decrease_of_length}
	Let $u \in A_{\Theta,\lambda}$ and $C \in W_\Theta \backslash W_\Theta u W_\lambda$. Suppose $C \neq W_\Theta u$. Then there exist $\alpha \in \Pi_\lambda$, $s \ge 0$ and $\beta_1,\ldots,\beta_s \in \Pi$ such that, writing $z_0 = 1$, $z_i = s_{\beta_1} \cdots s_{\beta_i}$ and $z = z_s$, the following conditions hold:
	\begin{enumerate}[label=(\alph*)]
		\item for any $0 \le i \le s-1$, $\beta_{i+1}$ is non-integral to $z_i\inv \lambda$;
		\item $z\inv \alpha \in \Pi \cap \Pi_{z\inv \lambda}$;
		\item $C s_\alpha <_{u,\lambda} C$;
		\item if $s > 0$, $Cz < C$;
		\item $C s_\alpha z = C z s_{z\inv \alpha} < C z$.
	\end{enumerate}
\end{proposition}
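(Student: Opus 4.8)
The plan is to first pin down the root $\alpha$, and then build the word $s_{\beta_1}\cdots s_{\beta_s}$ by induction on the height of $\alpha$ as a root of $\Sigma$, peeling off one non-integral simple reflection at each step and using Proposition \ref{lem:Is_right_coset} to transport the hypotheses (together with Lemma \ref{lem:non-int_refl_subsys}). I expect conditions (a), (b), (c), (e) to propagate formally, and condition (d) --- that the coset length genuinely drops --- to be the real obstacle.

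\textbf{Step 1 (producing $\alpha$).} Since $C\neq W_\Theta u$ and, by Corollary \ref{thm:cross-section_db_coset}, $W_\Theta u$ is the unique smallest right $W_\Theta$-coset in $W_\Theta u W_\lambda$, the coset $C|_\lambda$ is not the identity coset in $W_{\lambda,\Theta(u,\lambda)}\backslash W_\lambda$. Applying the analogue of the last assertion of Lemma \ref{lem:Theta_prelim} to the pair $(W_\lambda,\Pi_\lambda)$ relative to the parabolic $W_{\lambda,\Theta(u,\lambda)}$ produces $\alpha\in\Pi_\lambda$ with $Cs_\alpha<_{u,\lambda}C$, which is (c); by order-preservation of $\ind_\lambda$ (Corollary \ref{lem:right_coset_partition}) this also gives $Cs_\alpha<C$. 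We fix this $\alpha$ and prove the proposition with $\alpha$ regarded as part of the data, by induction on $\mathrm{ht}(\alpha)$ (height relative to $\Pi$, so $\mathrm{ht}(\alpha)=1$ iff $\alpha\in\Pi$).

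\textbf{Step 2 (the key combinatorial lemma).} The heart of the argument is the claim: if $\mu\in\fh^*$, $\alpha\in\Pi_\mu$ is not simple in $\Sigma$, and $D$ is a right $W_\Theta$-coset with $Ds_\alpha<_{u',\mu}D$ for the relevant $u'\in A_{\Theta,\mu}$, then there is $\beta\in\Pi-\Pi_\mu$ with $\langle\alpha,\beta^\vee\rangle>0$ and $Ds_\beta<D$. The existence of \emph{some} $\beta\in\Pi-\Pi_\mu$ with $\langle\alpha,\beta^\vee\rangle>0$ is elementary: writing $\alpha=\sum_{\gamma\in\Pi}c_\gamma\gamma$ with $c_\gamma\ge 0$, from $(\alpha,\alpha)>0$ some simple $\gamma$ has $(\alpha,\gamma)>0$; if every such $\gamma$ lay in $\Pi_\mu$ (hence were integral to $\mu$), then for one of them $\alpha-\gamma=s_\gamma\alpha+(\langle\alpha,\gamma^\vee\rangle-1)\gamma$ would lie in the nonnegative span of $\Pi_\mu$ (since $s_\gamma\alpha$ is a positive root of $\Sigma$ lying in $\Sigma_\mu$, hence in $\Sigma_\mu^+$), which is impossible for the simple root $\alpha$ of $\Sigma_\mu$ unless $\gamma=\alpha$, contradicting $\alpha\notin\Pi$. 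For such a $\beta$ one has $s_\beta\alpha>0$ (as $\alpha>0$, $\alpha\neq\beta$), $s_\beta\in A_\mu$ and $s_\beta\alpha\in\Pi_{s_\beta\mu}$ by Lemma \ref{lem:non-int_refl_subsys}(c),(f), and $\mathrm{ht}(s_\beta\alpha)=\mathrm{ht}(\alpha)-\langle\alpha,\beta^\vee\rangle<\mathrm{ht}(\alpha)$. Arranging $\beta$ to additionally satisfy the \emph{strict} descent $Ds_\beta<D$ (not merely $w^Ds_\beta<w^C$) is the delicate point, treated in Step 4.

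\textbf{Step 3 (the induction).} If $\mathrm{ht}(\alpha)=1$, take $s=0$, $z=1$: then (a) and (d) are vacuous, (b) reads $\alpha\in\Pi\cap\Pi_\lambda$ which holds, and (e) reduces to $Cs_\alpha<C$, which is (c). If $\mathrm{ht}(\alpha)\ge 2$, choose $\beta_1\in\Pi-\Pi_\lambda$ as in Step 2 with $\langle\alpha,\beta_1^\vee\rangle>0$ and $Cs_{\beta_1}<C$. Put $\lambda_1=s_{\beta_1}\lambda$, $\alpha_1=s_{\beta_1}\alpha\in\Pi_{\lambda_1}$, $C_1=Cs_{\beta_1}$, and let $u_1$ be the unique element of $A_{\Theta,\lambda_1}\cap W_\Theta u s_{\beta_1}W_{\lambda_1}$ (the corollary to Lemma \ref{lem:Is_pres_lowest_db_coset}). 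Since $s_{\beta_1}s_{\alpha_1}s_{\beta_1}=s_{s_{\beta_1}\alpha_1}=s_\alpha$, Proposition \ref{lem:Is_right_coset} (with $\beta=\beta_1$, $r=u_1$) gives $C_1s_{\alpha_1}<_{u_1,\lambda_1}C_1\iff Cs_\alpha<_{u,\lambda}C$, so the hypotheses of the proposition hold for $(u_1,\lambda_1,C_1,\alpha_1)$, and in particular $C_1\neq W_\Theta u_1$. As $\mathrm{ht}(\alpha_1)<\mathrm{ht}(\alpha)$, the inductive hypothesis yields $s_1\ge 0$ and $\beta_1',\dots,\beta_{s_1}'\in\Pi$ for the datum $(u_1,\lambda_1,C_1,\alpha_1)$. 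Set $s=s_1+1$ and $\beta_{i+1}=\beta_i'$; then $z_i=s_{\beta_1}z_{i-1}'$ (with $z_j'=s_{\beta_1'}\cdots s_{\beta_j'}$) and $z=s_{\beta_1}z'$, $z'=z_{s_1}'$. Condition (a) for $i=0$ is the choice of $\beta_1$, and for $1\le i\le s-1$ follows from the analogue $(\mathrm{a}')$ via $z_i^{-1}\lambda=z_{i-1}'^{-1}\lambda_1$; (b) follows from $(\mathrm{b}')$ via $z^{-1}\alpha=z'^{-1}\alpha_1$ and $z^{-1}\lambda=z'^{-1}\lambda_1$; (c) is already in hand; (e) follows from $(\mathrm{e}')$ because $Cs_\alpha z=Cs_{\beta_1}(s_{\beta_1}s_\alpha s_{\beta_1})z'=C_1s_{\alpha_1}z'=Cz\,s_{z^{-1}\alpha}$ and $(\mathrm{e}')$ asserts $C_1s_{\alpha_1}z'<C_1z'=Cz$; and (d) follows since $Cz=C_1z'\le C_1=Cs_{\beta_1}<C$, using $(\mathrm{d}')$ (or $z'=1$ when $s_1=0$) for the first inequality and the choice of $\beta_1$ for the last. (One small point: $Cs_{\beta_1}<C$ is a genuine inequality, which rules out the ``$=$'' case of Lemma \ref{lem:Theta_prelim} and is why $C_1$ is strictly below $C$; a weaker descent $w^Cs_{\beta_1}<w^C$ would not suffice for (d).)

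\textbf{Step 4 (the main obstacle).} The one nontrivial input is the strict-descent refinement in Step 2. The plan is to use $\ind_\mu$ to rewrite $Ds_\alpha<_{u',\mu}D$ as the statement that the $\le_\mu$-longest element $\bar v$ of $D|_\mu$ sends $\alpha$ to a negative root, so that $\bar v\gamma<0$ for some $\gamma\in\Pi$ with $c_\gamma>0$; one then combines this with the non-integral ``positive direction'' produced in Step 2 and with the compatibility of $\ind_\mu$ with longest elements of cosets (and with the lifting property, in the manner of Lemma \ref{lem:u_in_db_coset}) to locate an honest right descent $\beta$ of $w^D$ lying in $\Pi-\Pi_\mu$ with $\langle\alpha,\beta^\vee\rangle>0$ and $Ds_\beta<D$. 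Reconciling the three constraints on $\beta$ --- non-integrality, height reduction, and strict descent of $w^D$ --- is where the real work lies and is the step I expect to be hardest; everything else is formal transport along Proposition \ref{lem:Is_right_coset} together with the height induction above.
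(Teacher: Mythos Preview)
Your height-induction approach has a genuine gap: the claim in Step~4 is false. Concretely, take $\Sigma$ of type $A_3$ with simple roots $\alpha_1,\alpha_2,\alpha_3$, let $\Theta=\varnothing$, and choose $\lambda$ with $\alpha_i^\vee(\lambda)=\tfrac13$ for $i=1,2,3$; then $\Sigma_\lambda=\{\pm(\alpha_1+\alpha_2+\alpha_3)\}$ and $\Pi_\lambda=\{\alpha\}$ with $\alpha=\alpha_1+\alpha_2+\alpha_3$. In the permutation model let $w=(13)(24)$ (one-line $3412$), $C=\{w\}$, and $u=w\,s_\alpha$ (one-line $2413$); one checks $u\in A_{\Theta,\lambda}$, $C\in W_\Theta\backslash W_\Theta u W_\lambda$, $C\neq W_\Theta u$, and $Cs_\alpha<_{u,\lambda}C$. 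Now $\langle\alpha,\alpha_1^\vee\rangle=\langle\alpha,\alpha_3^\vee\rangle=1$ while $\langle\alpha,\alpha_2^\vee\rangle=0$, so the only candidates for your $\beta_1$ are $\alpha_1$ and $\alpha_3$. But $w(1)<w(2)$ and $w(3)<w(4)$, so $Cs_{\alpha_1}>C$ and $Cs_{\alpha_3}>C$. Hence no non-integral $\beta\in\Pi$ with $\langle\alpha,\beta^\vee\rangle>0$ satisfies $Cs_\beta<C$, and the induction on $\operatorname{ht}(\alpha)$ cannot proceed. (Indeed, the only descent of $C$ is $s_{\alpha_2}$, which is orthogonal to $\alpha$ and leaves $\operatorname{ht}(\alpha)$ unchanged.)

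The paper avoids this obstruction by reversing the order of discovery: it does \emph{not} fix $\alpha$ first. Instead it greedily multiplies $C$ on the right by \emph{any} non-integral simple reflection that decreases the coset length, with no angle constraint, until an integral simple descent appears; only then is $\alpha$ defined by transporting that integral simple root back through $z$. Termination is argued separately: once $\ell(w^{Cz})=\ell(w_\Theta)+1$ one has $Cz=W_\Theta s_\gamma$, and if $\gamma$ were still non-integral then $s_\gamma\in A_{\Theta,z^{-1}\lambda}$ would force (via Lemma~\ref{lem:Is_pres_lowest_db_coset}) $C=W_\Theta u$, contradicting the hypothesis. In the counterexample above, the paper's procedure picks $\beta_1=\alpha_2$ first (even though $s_{\alpha_2}\alpha=\alpha$), then $\beta_2=\alpha_1$, $\beta_3=\alpha_3$, and finally finds $\alpha_2\in\Pi\cap\Pi_{z^{-1}\lambda}$ as the integral descent, recovering $\alpha=z\alpha_2=\alpha_1+\alpha_2+\alpha_3$ at the end.
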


This proposition is used in showing that the $q$-polynomials defined geometrically (by taking higher inverse images to Schubert cells; see \textsection\ref{subsec:geom_idea} for an explanation of the idea) agree with the Whittaker Kazhdan-Lusztig polynomials for the triple $(W_\lambda, \Pi_\lambda, \Theta(u,\lambda))$. This is a proof by induction in the length of $C$. As mentioned in \textsection\ref{subsec:geom_idea} (specifically the condition \ref{enum:WKL_basis_U}), the Kazhdan-Lusztig basis elements $C_C = \psi_{u,\lambda}(C|_\lambda)$ of $\cH_{\Theta(u,\lambda)}$ are partly characterized by properties of the product $C_C C_s = T_\alpha^{u,\lambda}(\psi_{u,\lambda}(C|_\lambda))$. If the simple reflection $s \in W_\lambda$ happens to be simple in $W$, then multiplication by $C_s$ on $C_C$ lifts to the geometric $U$-functor (push-pull along $X \to X_s$). However, if $s$ is not simple in $W$, no such $U$-functor exists. The strategy in this situation is to use non-integral intertwining functors to translate everything so that $s$ becomes simple in both the integral Weyl group and in $W$. On the $W_\lambda$ level, these non-integral intertwining functors correspond to applying conjugations $s_{\beta_i}(-)s_{\beta_i}$ by non-integral simple reflections so that $s \in W_\lambda$ is translated to $(s_{\beta_1} \cdots s_{\beta_s})\inv s (s_{\beta_1} \cdots s_{\beta_s})$ which is simple in $W_{s_{\beta_s} \cdots s_{\beta_1} \lambda}$. On the $W$ level, they correspond to right multiplication on $C$ by $s_{\beta_1} \cdots s_{\beta_s}$. Also, one needs to ensures that the length of $C$ decreases after these non-integral reflections in order to apply the induction hypothesis on $C$. The existence of such a chain of non-integral reflections is guaranteed by the proposition.

\begin{proof}
	Since $C \neq W_\Theta u$, in particular $C \neq W_\Theta$, there exists a simple reflection $s_\gamma$ such that $C s_\gamma < C$.
	
	If there exists $\alpha \in \Pi \cap \Pi_\lambda$ such that $C s_\alpha < C$, then this $\alpha$ together with $s = 0$ satisfies the requirement: (a) and (d) are void, while (b) and (e) are true by construction. We need to verify (c). Since $s_\alpha$ is simple in $(W_\lambda, \Pi_\lambda)$, we have three mutually exclusive possibilities: $C s_\alpha <_{u,\lambda} C$, $C s_\alpha = C$, or $C s_\alpha >_{u,\lambda} C$. Since the map $\ind_\lambda$ preserves the partial order, they imply $C s_\alpha < C$, $C s_\alpha = C$ and $C s_\alpha > C$, respectively. By our choice of $\alpha$, the last two possibilities cannot happen. Hence we must have $C s_\alpha <_{u,\lambda} C$ and (c) holds.
	
	Suppose such $\alpha$ does not exist. Then any simple reflection that decreases the length of $C$ via right multiplication must be non-integral to $\lambda$. Let $s_{\beta_1}$, $\beta_1 \in \Pi - \Pi_\lambda$, be one of those. If there exists $\alpha' \in \Pi \cap \Pi_{s_{\beta_1} \lambda}$ with $C s_{\beta_1} s_{\alpha'} < C s_{\beta_1}$, we claim that $\alpha := s_{\beta_1} \alpha' \in s_{\beta_1} \Pi_{s_{\beta_1} \lambda} = \Pi_\lambda$, $s = 1$ and $\beta_1$ satisfy our requirements. (a) and (d) follows by our choice of $s_{\beta_1}$, (e) follows from the conditions on $\alpha'$. For (b),
	\begin{equation*}
		z\inv \alpha = s_{\beta_1} s_{\beta_1} \alpha' = \alpha' \in \Pi \cap \Pi_{s_{\beta_1} \lambda}
	\end{equation*}
	by definition of $z$ and $\alpha'$. For (c), arguing in the same way, we only need to rule out $C s_\alpha \ge C$, which would imply $\ell(C) - \ell(C s_\alpha s_{\beta_1}) \in \{-2,-1,0,1\}$. On the other hand,
	\begin{align*}
		C > C s_{\beta_1} > C s_{\beta_1} s_{\alpha'} 
		= C s_{\beta_1} s_{(s_{\beta_1} \alpha)}
		= C s_{\beta_1} s_{\beta_1} s_\alpha s_{\beta_1}
		= C s_\alpha s_{\beta_1}.
	\end{align*}
	So $\ell(C) - \ell(C s_\alpha s_{\beta_1}) \ge 2$ and (c) holds.
	
	If such $\alpha'$ does not exist, then we can find $\beta_2,\ldots,\beta_s \in \Pi$ such that $C z_{i+1} < C z_i$ for all $1 \le i \le s-1$ until we get to a point where there exists $\alpha'' \in \Pi \cap \Pi_{z\inv \lambda}$ with $C z s_{\alpha''} < C z$ (termination of this process is proven in the next paragraph). We claim that $\alpha := z\alpha'' \in z \Pi_{z\inv \lambda} = \Pi_\lambda$, $s$ and $\beta_1,\ldots,\beta_s$ satisfy our requirements. The verification is essentially the same as in the previous case. (a), (b), (d) and (e) are satisfied by our choice of $\beta_i$ and $\alpha''$. For (c), we have an inequality
	\begin{equation}\label{eqn:decrease_of_length_step_a}
		C z > C z s_{\alpha''} = C z s_{z\inv \alpha} = C z z\inv s_\alpha z = C s_\alpha z
	\end{equation}
	where $\ell(w^{Cz}) = \ell(w^C z) = \ell(w^C) - s$. Also $w^{C z s_{\alpha''}} = w^C z s_{\alpha''} = w^C s_\alpha z$. Hence
	\begin{equation*}
		\ell(w^C s_\alpha) 
		= \ell(w^C s_\alpha z z\inv) 
		= \ell(w^{C s_\alpha z} z\inv) 
		\le \ell(w^{C s_\alpha z}) +s
		= \ell(w^{Cz}) -1 +s
		= \ell(w^C) -1 < \ell(w^C).
	\end{equation*}
	This rules out $C s_\alpha \ge C$ and (c) is thus verified.
	
	Lastly, let us show that this process of finding $\alpha''$ must terminate no later than when we get to $\ell(w^{Cz}) = \ell(w_\Theta)+1$. That is, we show that when $\ell(w^{Cz}) = \ell(w_\Theta)+1$, such an $\alpha''$ must exist. The condition $\ell(w^{Cz}) = \ell(w_\Theta)+1$ implies $C z = W_\Theta s_\gamma > W_\Theta$ for some simple reflection $s_\gamma$. If $\gamma \in \Pi - \Pi_{z\inv \lambda}$, then $s_\gamma \in A_{z\inv \lambda}$. Also, since $W_\Theta s_\gamma > W_\Theta$, any element of $W_\Theta s_\gamma$ must have length $\ge 1$. Hence $s_\gamma$ is the shortest element of $W_\Theta s_\gamma$, i.e. $s_\gamma \in w_\Theta {}^\Theta W$. Therefore $s_\gamma \in A_{z\inv \lambda} \cap (w_\Theta {}^\Theta W) = A_{\Theta,z\inv \lambda}$. Since $C = W_\Theta s_\gamma z\inv$, by (repeatedly applying) \ref{lem:Is_pres_lowest_db_coset}, we see that $C$ is the smallest right $W_\Theta$-coset in the $(W_\Theta,W_\lambda)$-coset containing it, that is, $C = W_\Theta u$. This contradicts our assumption on $C$. Therefore $\gamma \in \Pi \cap \Pi_\lambda$, and $\alpha'' = \gamma$ satisfies our requirement for $\alpha''$. Thus the process terminates.
\end{proof}
\section{Non-integral intertwining functors}\label{sec:geom}

In this section, we give the definition of non-integral intertwining functors and show that they translate the Kazhdan-Lusztig polynomials for our Whittaker modules. Readers can review \textsection\ref{subsec:geom_prelim} for the basic geometric setup and related notations. In the rest of the paper, we will use facts about $\cD$-modules without citing references, including the distinguished triangle for immersions of a smooth closed subvariety and its complement (also known as the distinguished triangle for local cohomology), the base change theorem for $\cD$-modules, and Kashiwara's equivalence of categories for closed immersions. These facts are contained in \cite{Borel:D-mods}, IV.8.3, 8.4 and 7.11, respectively.

For any $w \in W$, let $Z_w$ denote the subset of $X \times X$ consists of pairs $(x,y)$ such that $\fb_x$ and $\fb_y$ are in relative position $w$. This means that for any common Cartan subalgebra $\fc$ and any representative of $w$ in $N_G(\fc)$ (also denoted by $w$), $\fb_x = \Ad w(\fb_y)$. If $w$ is fixed, we write
\begin{equation*}
	X \xleftarrow{p_1} Z_w \xrightarrow{p_2} X
\end{equation*}
for the two projections. For an integral weight $\mu \in \fh^*$, write $\cO_X(\mu)$ for the $G$-equivariant line bundle on $X$ where the $\fb$-action on the geometric fiber at $x_\fb \in X$ (the point on $X$ that corresponds to $\fb$) is given by $\mu$.

\begin{definition}
	For $w \in W$ and $\lambda \in \fh^*$, the \textbf{intertwining functor} $LI_w$ is defined to be 
	\begin{align*}
		LI_w : D^b(\cD_\lambda) &\to D^b(\cD_{w\lambda}),\notag\\
		\cF^\bullet &\mapsto p_{1+} \big( p_1^* \cO_X(\rho-w\rho) \dotimes_{\cO_{Z_w}} p_2^+ \cF^\bullet \big)\notag\\
		&\qquad \cong \cO_X(\rho-w\rho) \dotimes_{\cO_X} p_{1+} p_2^+ \cF^\bullet.
	\end{align*}
	Write $I_w$ for $H^0 LI_w$. It is shown in \cite[L.3]{Milicic:Localization} that $LI_w$ is the left derived functor of $I_w$.
\end{definition}

For properties of intertwining functors readers can refer to \textit{loc. cit.} The main property we will use is 

\begin{theorem}[{\cite[Chapter 3 Corollary 3.22]{Milicic:Localization}}]\label{lem:non-int_I}
	If $\beta \in \Pi - \Pi_\lambda$, then $I_{s_\beta}$ is an equivalence of categories
	\begin{equation*}
		I_{s_\beta} : \Mod_{qc}(\cD_\lambda) \cong \Mod_{qc}(\cD_{s_\beta\lambda})
	\end{equation*}
	whose quasi-inverse is $I_{s_\beta}$. 
\end{theorem}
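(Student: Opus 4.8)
Although the statement is quoted from \cite{Milicic:Localization}, let me indicate how a proof goes. The plan is to reduce it to two facts about the \emph{derived} functor $LI_{s_\beta}$ --- that it is $t$-exact, and that $LI_{s_\beta}\circ LI_{s_\beta}\cong\mathrm{id}$ --- and to prove these by a reduction to a rank-one computation on $\BP^1$.

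\emph{Amplitude and reduction.} Since $Z_{s_\beta}$ is the complement of the diagonal inside $X\times_{X_\beta}X$, the two projections $p_1,p_2\colon Z_{s_\beta}\to X$ are affine fibrations of relative dimension one (each fibre is a $\BP^1$ with one point removed). Hence $p_2^+$ is exact, $p_{1+}$ has cohomological amplitude $[-1,0]$, and twisting by $\cO_X(\rho-s_\beta\rho)=\cO_X(\beta)$ is exact; therefore $LI_{s_\beta}$ has amplitude $[-1,0]$, so $LI_{s_\beta}\cF$ sits in degrees $-1$ and $0$ for $\cF\in\Mod_{qc}(\cD_\lambda)$. I would then reduce the theorem to: (i) $H^{-1}LI_{s_\beta}\cF=0$ for all $\cF$, whence $LI_{s_\beta}$ is $t$-exact and $I_{s_\beta}=H^0LI_{s_\beta}=LI_{s_\beta}$ is exact; and (ii) $LI_{s_\beta}\circ LI_{s_\beta}\cong\mathrm{id}_{D^b(\cD_\lambda)}$. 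Granting these, note that $\beta$ is also non-integral to $s_\beta\lambda$ (as $\beta^\vee(s_\beta\lambda)=-\beta^\vee(\lambda)$), so (i) holds for $\lambda$ and for $s_\beta\lambda$; applying $H^0$ to (ii) and using the $t$-exactness of both copies of $LI_{s_\beta}$ yields $I_{s_\beta}\circ I_{s_\beta}\cong\mathrm{id}$ on $\Mod_{qc}(\cD_\lambda)$, and symmetrically on $\Mod_{qc}(\cD_{s_\beta\lambda})$ --- which is exactly the assertion.

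\emph{The rank-one computation.} Both (i) and (ii) are local over $X_\beta$. By smooth base change along $\pi_\beta\colon X\to X_\beta$, the functor $LI_{s_\beta}$ restricts over each fibre $P\cong\BP^1$ of $\pi_\beta$ to the rank-one intertwining functor on the flag variety $\BP^1$ of $\SL_2$ for the twisted sheaf $\cD_\lambda|_P$, whose parameter $\mu$ is non-integral because $\beta^\vee(\lambda)\notin\BZ$. On $\BP^1$ with a non-integral twist, global sections give exact equivalences both on $\cD_\mu$-modules and on $\cD_{s\mu}$-modules onto $\cU(\fsl_2)_\chi$-modules, $\cD_\mu$ and $\cD_{s\mu}$ having the same ring of global sections; using the standard fact that $R\Gamma\circ LI_s\cong R\Gamma$, the rank-one intertwining functor is identified with $\mathrm{loc}_{s\mu}\circ\Gamma_\mu$, which is exact, an equivalence, and involutive since $\Gamma_{s\mu}\circ\mathrm{loc}_{s\mu}\cong\mathrm{id}$. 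Transporting these fibrewise statements back up $\pi_\beta$ gives (i) and (ii). One may also argue directly on $X$: the local cohomology triangle for $\Delta\subset X\times_{X_\beta}X$ presents $LI_{s_\beta}\cF$ as the cone of $i^!\big(\bar{p}_2^{+}\cF\otimes\cO_X(\beta)\big)$, and this term vanishes because the connection induced by $\cF$ along each $\pi_\beta$-fibre has non-resonant monodromy at the diagonal point when $\beta^\vee(\lambda)\notin\BZ$; what then remains is push-pull along the $\BP^1$-bundle $X\times_{X_\beta}X\to X$, an involutive exact equivalence, and (ii) follows by base change over $Z_{s_\beta}\times_X Z_{s_\beta}$ stratified by the relative position (either $e$ or $s_\beta$) of the outer pair, the non-diagonal stratum again contributing zero by non-integrality.

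\emph{Main obstacle.} The crux is (i): the vanishing of $H^{-1}LI_{s_\beta}$. This is exactly where the hypothesis $\beta\in\Pi-\Pi_\lambda$ is indispensable --- in the integral case $LI_{s_\beta}$ genuinely has cohomology in degrees $-1$ and $0$, fitting into a distinguished triangle with the ``push--pull'' $U$-functor through $X_\beta$, and it is not an equivalence of abelian categories. Everything else is bookkeeping: making the base change to $\BP^1$ precise and tracking the twisting bundle $\cO_X(\beta)$ and its restriction to the fibres; once on $\BP^1$ the argument is the classical $\SL_2$ computation.
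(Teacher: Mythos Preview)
The paper does not give its own proof of this theorem; it is stated as a citation to Mili\v ci\'c's unpublished notes and used as a black box. So there is nothing in the paper to compare your argument against, and the question is simply whether your sketch is a valid outline of a proof.

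Your primary route is sound. The amplitude estimate for $LI_{s_\beta}$ is correct, and the reduction to (i) $t$-exactness and (ii) $LI_{s_\beta}\circ LI_{s_\beta}\cong\mathrm{id}$ is exactly how the argument in \cite{Milicic:Localization} is organized. The $\SL_2$ computation you give is clean: for a non-integral parameter on $\BP^1$ both $\mu$ and $s\mu$ are antidominant regular, the identity $R\Gamma\circ LI_s\cong R\Gamma$ then forces $I_s\cong\mathrm{loc}_{s\mu}\circ\Gamma_\mu$, which is exact, an equivalence, and involutive. One terminological point: ``smooth base change along $\pi_\beta$'' is not quite what you want, since you are not pulling back along a smooth morphism but rather restricting to fibers. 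What makes the reduction work is that $\pi_\beta$ is Zariski-locally trivial and that over a trivialization $\pi_\beta^{-1}(U)\cong U\times\BP^1$ (with $U$ affine) every twisted sheaf of differential operators splits as an external product $\cD_U\boxtimes\cD_{\BP^1,\mu}$, and likewise $Z_{s_\beta}|_U\cong U\times Z_s^{\BP^1}$, so $LI_{s_\beta}$ becomes $\mathrm{id}_U\boxtimes LI_s^{\BP^1}$. This is not hard, but it is the actual content of ``local over $X_\beta$'' and should be said rather than subsumed under ``bookkeeping.''

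Your alternative ``direct on $X$'' argument is where I would push back. The sentence ``the local cohomology triangle \ldots presents $LI_{s_\beta}\cF$ as the cone of $i^!(\bar p_2^+\cF\otimes\cO_X(\beta))$, and this term vanishes'' is not right as written: for the diagonal $i_\Delta\colon\Delta\hookrightarrow X\times_{X_\beta}X$ one has $i_\Delta^!\,\bar p_2^+\cF\cong\cF[-1]$ (since $\bar p_2\circ i_\Delta=\mathrm{id}$ and $\Delta$ has codimension one), which certainly does not vanish. The non-integrality of $\beta$ does not kill this term. What one can say is that non-integrality controls how $j_+$ and $j_!$ from $Z_{s_\beta}$ into $X\times_{X_\beta}X$ compare (there is no nontrivial extension across $\Delta$ when the monodromy is non-resonant), and this is what ultimately gives both the $t$-exactness and the coincidence of $LI_{s_\beta}$ with its would-be inverse; but the triangle you wrote down does not encode that directly. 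I would drop this paragraph or rewrite it carefully --- your first argument already does the job.
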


To use these functors for our purpose, we need to compute the action of intertwining functors on standard and irreducible modules. Romanov computed the following result for $Cs_\beta > C$. The main ingredients of the proof there are base change formula and projection formula for $\cD$-modules.

\begin{proposition}[{\cite[\textsection 3.4 Proposition 5]{Romanov:Whittaker}}]\label{lem:I_moves_std_general}
	Let $\beta \in \Pi$ and $C \in W_\Theta \backslash W$ such that $C s_\beta > C$. Then for any $\lambda \in \fh^*$,
	\begin{equation*}
		LI_{s_\beta} \cI(w^C,\lambda,\eta) = \cI(w^C s_\beta, s_\beta \lambda,\eta).
	\end{equation*}
\end{proposition}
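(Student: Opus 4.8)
The plan is to compute the intertwining functor directly from its definition on the correspondence variety $Z_{s_\beta}$, via base change. Since $C s_\beta > C$, Lemma \ref{lem:Theta_prelim} gives $w^{C s_\beta} = w^C s_\beta$ with $\ell(w^C s_\beta) = \ell(w^C) + 1$; write $w = w^C$. By definition $LI_{s_\beta}\cI(w,\lambda,\eta) = \cO_X(\rho - s_\beta\rho) \otimes_{\cO_X} p_{1+} p_2^+ \cI(w,\lambda,\eta)$, where $p_1, p_2 : Z_{s_\beta} \to X$ are the two projections and $\cI(w,\lambda,\eta) = i_{w+}\cO^\eta_{C(w)}$. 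First I would compute $p_2^+\cI(w,\lambda,\eta)$ by base change along $p_2$; then identify the restriction of $p_1$ to the support of that pullback as an $N$-equivariant isomorphism onto the Schubert cell $C(w s_\beta)$; then apply $p_{1+}$, perform the line-bundle twist, and recognize the output as the costandard sheaf $\cI(w s_\beta, s_\beta\lambda,\eta)$. Note that $\rho - s_\beta\rho = \beta$ is integral, so $\cO_X(\rho - s_\beta\rho)$ is a genuine $G$-equivariant line bundle and the whole correspondence is insensitive to the integrality of $\lambda$; hence the argument should run uniformly for all $\lambda \in \fh^*$.

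For the base-change step, the preimage $p_2\inv(C(w)) = \{(x,y) \in Z_{s_\beta} \mid y \in C(w)\}$ fits into the Cartesian square whose vertical maps are $p_2$ and $i_w$ and whose horizontal maps are the locally closed immersion $j : p_2\inv(C(w)) \hookrightarrow Z_{s_\beta}$ and the projection $p_2' : p_2\inv(C(w)) \to C(w)$. As $p_2$ is smooth, $p_2^+$ is exact, and base change yields $p_2^+\cI(w,\lambda,\eta) = j_+ (p_2')^+\cO^\eta_{C(w)}$, concentrated in degree $0$, with $(p_2')^+\cO^\eta_{C(w)}$ an $N$-equivariant $\eta$-twisted sheaf whose underlying $\cO$-module is $\cO_{p_2\inv(C(w))}$ (the $\cO$-module inverse image of $\cO_{C(w)}$ along the affine bundle $p_2'$). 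For the geometric step I would use the projection $\pi_\beta : X \to X_\beta$ to the type-$\beta$ partial flag variety: since $w s_\beta > w$, the element $w$ is the minimal representative of $w W_{\{\beta\}}$, so $\pi_\beta$ restricts to an isomorphism $C(w) \bijects \pi_\beta(C(w))$ while $\pi_\beta\inv(\pi_\beta(C(w))) = C(w) \sqcup C(w s_\beta)$. Combining this with the facts that $(x,y) \in Z_{s_\beta}$ forces $\pi_\beta(x) = \pi_\beta(y)$ and $x \neq y$, one checks that $p_1$ carries $p_2\inv(C(w))$ bijectively onto $C(w s_\beta)$, with inverse the morphism $x \mapsto \big(x,\, (\pi_\beta|_{C(w)})\inv(\pi_\beta(x))\big)$; thus $p_1 \circ j$ is, up to this $N$-equivariant isomorphism, the locally closed affine immersion $i_{w s_\beta} : C(w s_\beta) \hookrightarrow X$. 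This is the one step where the hypothesis $C s_\beta > C$ is essential.

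Assembling, $p_{1+} p_2^+\cI(w,\lambda,\eta) = (p_1 \circ j)_+ (p_2')^+\cO^\eta_{C(w)} = i_{w s_\beta +}\cG$, where $\cG$ is the transport of $(p_2')^+\cO^\eta_{C(w)}$ along the isomorphism above; this is concentrated in degree $0$ because $p_1 \circ j$ is an affine immersion, so the derived functor $LI_{s_\beta}$ reduces here to $I_{s_\beta}$. Now $\cG$ is a rank-one $N$-equivariant $\eta$-twisted Harish-Chandra sheaf on $C(w s_\beta)$ (rank one because its underlying $\cO$-module is $\cO_{C(w s_\beta)}$), and $w s_\beta = w^{C s_\beta}$ is the longest element of its right $W_\Theta$-coset, so $\Mod_{coh}(\cD_{C(w s_\beta)}, N, \eta)$ is semisimple with a unique, rank-one, irreducible object $\cO^\eta_{C(w s_\beta)}$; hence $\cG = \cO^\eta_{C(w s_\beta)}$. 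Finally the built-in twist by $\cO_X(\rho - s_\beta\rho)$ lands the answer in $\Mod_{coh}(\cD_{s_\beta\lambda}, N, \eta)$, where by the same support and degree considerations it is the direct image $i_{w s_\beta +}$ of the irreducible $\eta$-twisted sheaf on $C(w s_\beta)$, i.e. $\cI(w s_\beta, s_\beta\lambda, \eta)$.

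The main obstacle will not be any single idea but the simultaneous bookkeeping that makes the base-change step and the final twist rigorous: one has to pin down the sheaf of twisted differential operators that $Z_{s_\beta}$ carries in the definition of $LI_{s_\beta}$ (it differs from the pullbacks of $\cD_{s_\beta\lambda}$ along $p_1$ and of $\cD_\lambda$ along $p_2$ precisely by the twist by $\cO_X(\rho - s_\beta\rho)$), track the dimension shifts relating $f^+$ to $f^!$ and $f_+$ to $f_!$ for $p_1$, $p_2$ and the Schubert-cell immersions, and confirm that the $N$-equivariant $\eta$-twisted structure is genuinely preserved under pullback along the affine bundle $p_2'$ --- this is exactly where the base-change and projection formulas for $\cD$-modules are invoked. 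The geometric input of the second step is clean once the standard behaviour of Schubert cells under $\pi_\beta$ is in hand, but it fails for $C s_\beta < C$: there $p_2\inv(C(w))$ meets both $C(w)$ and $C(w s_\beta)$ and $p_1$ is no longer an isomorphism onto a single cell, which is why the proposition is restricted to $C s_\beta > C$.
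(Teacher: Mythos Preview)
Your proposal is correct and follows essentially the same approach as the one the paper attributes to Romanov: base change along $p_2$ to reduce to the fiber $p_2^{-1}(C(w))$, identification of $p_1$ restricted there as an isomorphism onto $C(w s_\beta)$ (using that $w$ is minimal in $\{w, w s_\beta\}$ so $\pi_\beta|_{C(w)}$ is an isomorphism), and then projection formula for the line-bundle twist. The paper does not reprove this and simply cites \cite[\textsection 3.4 Proposition 5]{Romanov:Whittaker}, noting that base change and the projection formula are the main ingredients; your write-up supplies exactly those details.
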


Combined with \ref{lem:non-int_I} we get

\begin{corollary}\label{lem:I_moves_std}
	Let $\beta \in \Pi - \Pi_\lambda$ and $C \in W_\Theta \backslash W$ such that $C s_\beta \neq C$. Then 
	\begin{equation*}
		I_{s_\beta} \cI(w^C,\lambda,\eta) = \cI(w^C s_\beta, s_\beta \lambda, \eta).
	\end{equation*}
\end{corollary}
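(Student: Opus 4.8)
The plan is to reduce everything to Proposition~\ref{lem:I_moves_std_general}, which already handles the case $C s_\beta > C$, and then to invert the functor in the remaining case by invoking Theorem~\ref{lem:non-int_I}. Since $C s_\beta \neq C$, Lemma~\ref{lem:Theta_prelim} tells us that exactly one of $C s_\beta > C$, $C s_\beta < C$ holds, so I would split the argument into these two cases.

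In the case $C s_\beta > C$, I would start from the isomorphism $LI_{s_\beta}\cI(w^C,\lambda,\eta) \cong \cI(w^C s_\beta, s_\beta\lambda,\eta)$ in $D^b(\cD_{s_\beta\lambda})$ furnished by Proposition~\ref{lem:I_moves_std_general}. Since $\cI(w^C s_\beta, s_\beta\lambda,\eta)$ is a single $\cD$-module concentrated in cohomological degree $0$, applying $H^0$ to both sides and using the defining relation $I_{s_\beta} = H^0 LI_{s_\beta}$ would give
\begin{equation*}
	I_{s_\beta}\cI(w^C,\lambda,\eta) = \cI(w^C s_\beta, s_\beta\lambda,\eta).
\end{equation*}
Note that this step uses only $\beta \in \Pi$, not its non-integrality.

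For the case $C s_\beta < C$, I would set $D = C s_\beta$, so that $w^D = w^C s_\beta$ and $D s_\beta = C > D$ by Lemma~\ref{lem:Theta_prelim}. First one checks that $\beta$ remains non-integral for the new weight, i.e.\ $\beta \in \Pi - \Pi_{s_\beta\lambda}$; this is immediate from $\Sigma_{s_\beta\lambda} = s_\beta\Sigma_\lambda$ (Lemma~\ref{lem:non-int_refl_subsys}(a)) together with $\beta \notin \Sigma_\lambda$ (which holds since $\beta \in \Pi-\Pi_\lambda$). Applying the case already treated to $D$ and $s_\beta\lambda$ (legitimate because $D s_\beta > D$) then gives
\begin{equation*}
	I_{s_\beta}\cI(w^D, s_\beta\lambda, \eta) = \cI(w^D s_\beta, s_\beta s_\beta \lambda, \eta) = \cI(w^C, \lambda, \eta).
\end{equation*}
Finally, by Theorem~\ref{lem:non-int_I} the functor $I_{s_\beta}\colon \Mod_{qc}(\cD_{s_\beta\lambda}) \to \Mod_{qc}(\cD_\lambda)$ is an equivalence with quasi-inverse $I_{s_\beta}$; applying this quasi-inverse to the last display and recalling $w^D = w^C s_\beta$ and $s_\beta s_\beta \lambda = \lambda$ would yield $\cI(w^C s_\beta, s_\beta\lambda, \eta) = I_{s_\beta}\cI(w^C, \lambda, \eta)$, which is the asserted identity.

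I do not expect any genuine obstacle here: the two substantive inputs --- Proposition~\ref{lem:I_moves_std_general} and the equivalence of Theorem~\ref{lem:non-int_I} --- are already available, and the only care needed is the harmless passage from $LI_{s_\beta}$ to $I_{s_\beta}$ (since $\cI(w^C,\lambda,\eta)$ is a sheaf, not a genuine complex) and the bookkeeping of which twisted sheaf $\cD_{\bullet}$ each instance of $I_{s_\beta}$ acts on.
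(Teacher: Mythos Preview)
Your proposal is correct and follows essentially the same approach as the paper: handle the case $Cs_\beta > C$ directly via Proposition~\ref{lem:I_moves_std_general}, then obtain the case $Cs_\beta < C$ by applying the first case to $Cs_\beta$ with weight $s_\beta\lambda$ and inverting using Theorem~\ref{lem:non-int_I}. You supply a bit more detail than the paper (the passage from $LI_{s_\beta}$ to $I_{s_\beta}$ and the check that $\beta \in \Pi - \Pi_{s_\beta\lambda}$), but these are routine and the core argument is identical.
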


\begin{proof}
	Suppose $C s_\beta > C$, then the statement follows from \ref{lem:I_moves_std_general}. But since $I_{s_\beta}$ is an equivalence of categories with inverse $I_{s_\beta}$,
	\begin{equation*}
		\cI(w^C,\lambda,\eta) = I_{s_\beta} \cI(w^C s_\beta, s_\beta \lambda, \eta).
	\end{equation*}
\end{proof}

It remains to consider the case $C s_\beta = C$. 

For a simple root $\beta$, write $X_\beta$ for the partial flag variety of type $\beta$, and write $p_\beta: X \to X_\beta$ for the natural projection. This is a Zariski-local $\BP^1$-fibration. $x$ and $y$ are contained in the same $p_\beta$-fiber (i.e. $p_\beta(x) = p_\beta(y)$) if and only if $\fb_x$ and $\fb_y$ are in relative position $1$ or $s_\beta$.

\begin{lemma}\label{lem:S}
	Let $C \in W_\Theta \backslash W$ and $\beta \in \Pi$ such that $C s_\beta = C$. Set
	\begin{equation*}
		S = \{ (x,y) \in C(w^C) \times C(w^C) \mid \fb_x \text{ and } \fb_y \text{ are in relative position } s_\beta\} \subset Z_{s_\beta}.
	\end{equation*}
	Write $C(w^C) \xleftarrow{p_1|_S} S \xrightarrow{p_2|_S} C(w^C)$	for the projections. Then
	\begin{equation*}
		(p_1|_S)_+ (p_2|_S)^+ \cO_{C(w^C)}^\eta = \cO_{C(w^C)}^\eta.
	\end{equation*}
\end{lemma}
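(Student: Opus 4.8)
The plan is to identify $S$ with the complement of the diagonal in the fibre product $T := C(w^C)\times_{X_\beta} C(w^C)$ and then to run the local--cohomology triangle for this closed/open pair after pushing forward to $C(w^C)$. Because $w^C$ is the longest element of $C$ and $Cs_\beta = C$, we have $w^C s_\beta < w^C$; hence $C(w^Cs_\beta)$ is the small cell inside each relevant $p_\beta$-fibre, $f := p_\beta|_{C(w^C)}\colon C(w^C)\to X_\beta$ is affine, and each fibre $F_x := p_\beta^{-1}(p_\beta(x))\cap C(w^C)$ is isomorphic to $\BA^1$ (the $\BP^1$-fibre of $p_\beta$ through $x$ minus a point of $C(w^Cs_\beta)$). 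Form $T$ with its two projections $\pi_1,\pi_2\colon T\to C(w^C)$; both are affine-line bundles, being base changes of $f$. Two points of $X$ in one $p_\beta$-fibre are in relative position $1$ or $s_\beta$ according as they are equal or not, so $T = \Delta\sqcup S$, where $\Delta$ is the closed diagonal (carried isomorphically onto $C(w^C)$ by either $\pi_i$) and $S$ is its open complement --- precisely the set in the statement. Write $i\colon\Delta\hookrightarrow T$ and $j\colon S\hookrightarrow T$; then $\pi_1\circ j = p_1|_S$, $\pi_2\circ j = p_2|_S$, while $\pi_1\circ i = \pi_2\circ i$ is an isomorphism onto $C(w^C)$.

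Next I would apply the distinguished triangle $i_+i^!(-)\to(-)\to j_+j^+(-)\xrightarrow{+1}$ to $\pi_2^+\cO^\eta_{C(w^C)}$ and push forward along $\pi_1$. On the right-hand term this yields $(p_1|_S)_+(p_2|_S)^+\cO^\eta_{C(w^C)}$, using $\pi_1 j = p_1|_S$ and $j^+\pi_2^+ = (p_2|_S)^+$. On the left-hand term, since $\pi_2$ is smooth of relative dimension $1$ we have $\pi_2^+\cO^\eta\cong\pi_2^!\cO^\eta[-1]$, and $\pi_2\circ i$ being an isomorphism gives $i^!\pi_2^+\cO^\eta\cong(\pi_2 i)^!\cO^\eta[-1]\cong\cO^\eta_\Delta[-1]$; pushing forward by $\pi_1$ and using that $\pi_1\circ i = \pi_2\circ i$ is an isomorphism turns the left-hand term into $\cO^\eta_{C(w^C)}[-1]$. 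We therefore obtain a triangle
\begin{equation*}
	\cO^\eta_{C(w^C)}[-1]\;\longrightarrow\;\pi_{1+}\pi_2^+\cO^\eta_{C(w^C)}\;\longrightarrow\;(p_1|_S)_+(p_2|_S)^+\cO^\eta_{C(w^C)}\;\xrightarrow{\;+1\;},
\end{equation*}
so the lemma reduces to the vanishing of the middle term, after which rotating the triangle gives $(p_1|_S)_+(p_2|_S)^+\cO^\eta_{C(w^C)}\cong\cO^\eta_{C(w^C)}$.

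The key vanishing $\pi_{1+}\pi_2^+\cO^\eta_{C(w^C)} = 0$ --- which is the main obstacle --- I would prove by restricting to a closed point $x\in C(w^C)$: by the base-change theorem this restriction is the de Rham direct image of $\pi_2^+\cO^\eta$ along $F_x\to\mathrm{pt}$, and since $\pi_2$ identifies the $\pi_1$-fibre over $x$ with $F_x\hookrightarrow C(w^C)$, it equals $R\Gamma_{\mathrm{dR}}(F_x,\cO^\eta|_{F_x})$. The crucial input is that $\cO^\eta|_{F_x}$ is a \emph{non-trivial} exponential connection on $F_x\cong\BA^1$: the fibre $F_x$ is an orbit of the root subgroup $U_{-w^C\beta}$, and the relation $Cs_\beta = C$ together with $w^C$ being longest in $C$ forces $-w^C\beta\in\Theta$. (Write $w^C = w_\Theta u$ with $u$ the shortest element of $C$; then $u\beta$ is a positive root of $\Sigma_\Theta$, and it cannot be written as a sum of two positive roots --- otherwise applying $u^{-1}$, which preserves positivity since $u$ is shortest, would exhibit the simple root $\beta$ as such a sum --- so $u\beta\in\Theta$ and $-w^C\beta = -w_\Theta(u\beta)\in\Theta$.) Since $\eta$ is a Lie algebra character it is non-zero on $\fg_{-w^C\beta}$, so by $N$-equivariance $\cO^\eta|_{F_x}$ has non-zero constant exponential part, and a non-trivial exponential $\cD$-module on $\BA^1$ has vanishing de Rham cohomology (the operator $\partial_t + c$ is a bijection of $\BC[t]$ when $c\neq 0$). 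As this holds at every closed point, the complex $\pi_{1+}\pi_2^+\cO^\eta_{C(w^C)}$, whose cohomology sheaves are holonomic, must vanish, completing the argument. I expect the delicate step to be exactly this identification of the fibre direction with a root in $\Theta$ and the accompanying exponential vanishing; everything else is formal manipulation of the standard $\cD$-module functors.
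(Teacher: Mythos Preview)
Your argument is correct and structurally identical to the paper's: both form the fibre product $T = C(w^C)\times_{X_\beta}C(w^C) = \Delta \sqcup S$, run the local-cohomology triangle, push forward along $\pi_1$, identify the $\Delta$-term with $\cO^\eta_{C(w^C)}[-1]$, and reduce everything to the vanishing of the middle term $\pi_{1+}\pi_2^+\cO^\eta_{C(w^C)}$. The difference lies only in how that vanishing is obtained. The paper applies base change to the Cartesian square defining $T$ to rewrite $\pi_{1+}\pi_2^+\cO^\eta$ as $p_\beta^+ p_{\beta+}\cO^\eta$, and then observes that $p_\beta(C(w^C))$ is $N$-equivariantly isomorphic to $C(w^Cs_\beta)$; since $w^Cs_\beta$ is not the longest element in its right $W_\Theta$-coset, the category of $\eta$-twisted sheaves there is zero, hence $p_{\beta+}\cO^\eta = 0$. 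Your fibrewise de Rham computation, together with the root-theoretic verification that $-w^C\beta\in\Theta$, establishes the same vanishing by hand --- indeed, it unpacks exactly what is behind the categorical statement that no $\eta$-twisted sheaf exists on the smaller cell. The paper's route is shorter and reuses machinery already in place; yours is more explicit and exposes the mechanism (a nontrivial exponential $\cD$-module on $\BA^1$ has no de Rham cohomology).
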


\begin{proof}
	For convenience, write $w = w^C$, $p_1 = p_1|_S$ and $p_2 = p_2|_S$. Set
	\begin{equation*}
		S' = C(w) \times_{p_\beta(C(w))} C(w) = \{ (x,y) \in C(w) \times C(w) \mid p_\beta(x) = p_\beta(y) \}.
	\end{equation*}
	Then $S \subset S \cup \Delta_{C(w)} = S' \subset Z_{s_\beta}$, where $\Delta_{C(w)}$ denotes the diagonal. Write $C(w) \xleftarrow{q_1} S' \xrightarrow{q_2} C(w)$	for the projections, and $\Delta_{C(w)} \xrightarrow{i_\Delta} S' \xleftarrow{i_S} S$ for the inclusions. Then $i_\Delta$ is a closed immersion with relative dimension $1$, and $i_S$ is open. We have the following diagram
	\begin{equation}\label{diag:S}
		\begin{tikzcd}
			S \ar[dr, "i_S", crossing over] \ar[ddr, bend right, "p_1"'] \ar[rrd, bend left, "p_2"] &[-5ex] \\[-2ex]
			& S' \ar[d, "q_1"] \ar[r, "q_2"] & C(w) \ar[d, "p_\beta"]\\
			& C(w) \ar[r, "p_\beta"] & p_\beta(C(w))
		\end{tikzcd}
	\end{equation}
	where the bottom-right square is Cartesian.
	
	Applying the triangle for local cohomology to $q_2^+ \cO_{C(w)}^\eta$, we get
	\begin{equation*}
		\dtri{ i_{\Delta+} i_\Delta^! q_2^+ \cO_{C(w)}^\eta }%
		{ q_2^+ \cO_{C(w)}^\eta }%
		{ i_{S+} i_S^+ q_2^+ \cO_{C(w)}^\eta}.
	\end{equation*}
	Applying $q_{1+}$, we get 
	\begin{equation*}
		\dtri{ q_{1+} i_{\Delta+} i_\Delta^! q_2^+ \cO_{C(w)}^\eta }%
		{ q_{1+} q_2^+ \cO_{C(w)}^\eta }%
		{ q_{1+} i_{S+} i_S^+ q_2^+ \cO_{C(w)}^\eta}.
	\end{equation*}
	Applying base change to the bottom-right square in (\ref{diag:S}), $q_{1+} q_2^+ \cO_{C(w)} \cong p_\beta^+ p_{\beta+} \cO_{C(w)}^\eta$. Here $p_{\beta+} \cO_{C(w)}^\eta$ is an $\eta$-twisted Harish-Chandra sheaf on $p_\beta(C(w))$. But $p_\beta(C(w))$ is isomorphic to $C(w s_\beta)$ as an $N$-variety via $p_\beta$, and since $w s_\beta$ is not the longest element in $W_\Theta w s_\beta = W_\Theta w$, we know there is no $\eta$-twisted Harish-Chandra sheaf on $C(w s_\beta)$ except $0$. Hence $p_{\beta+} \cO_{C(w)}^\eta = 0$ and thus $q_{1+} q_2^+ \cO_{C(w)} = 0$. As a result,
	\begin{equation*}
		q_{1+} i_{S+} i_S^+ q_2^+ \cO_{C(w)}^\eta = q_{1+} i_{\Delta+} i_\Delta^! q_2^+ \cO_{C(w)}^\eta [1].
	\end{equation*}
	The left side simplifies to $p_{1+} p_2^+ \cO_{C(w)}^\eta$. For the right side, $q_{1+} i_{\Delta+} = (q_1 \circ i_\Delta)_+$ and $q_1 \circ i_\Delta$ is the projection $\Delta_{C(w)} \to C(w)$ along the first coordinate which is an $N$-equivariant isomorphism. Moreover, 
	\begin{align*}
		i_\Delta^! q_2^+ \cO_{C(w)}^\eta [1]
		&= i_\Delta^+ q_2^+ \cO_{C(w)}^\eta [1] [-1]\\
		&= (q_2 \circ i_\Delta)^+ \cO_{C(w)}^\eta,
	\end{align*}
	and $q_2 \circ i_\Delta$ is the projection $\Delta_{C(w)} \to C(w)$ along the second coordinate, also an $N$-equivariant isomorphism. Thus
	\begin{equation*}
		p_{1+} p_2^+ \cO_{C(w)}^\eta
		= (q_1 \circ i_\Delta)_+ (q_2 \circ i_\Delta)^+ \cO_{C(w)}^\eta 
		= \cO_{C(w)}^\eta. \qedhere
	\end{equation*}
\end{proof}

\begin{lemma}\label{lem:two_cells}
	Let $s_\beta \in \Pi$ and $C \in W_\Theta \backslash W$ such that $C s_\beta = C$. Write $\iota: C(w^C) \inj C(w^C) \cup C(w^C s_\beta)$ for the inclusion. Then for any $\cF \in \Mod_{coh}(\cD_{C(w^C) \cup C(w^C s_\beta)},N,\eta)$, 
	\begin{equation*}
		\cF = \iota_+ \iota^! \cF = ( \iota_{+} \cO_{C(w^C)}^\eta )^{\oplus \rank \iota^! \cF}
	\end{equation*}
	where $\rank$ stands for the rank as a free $\cO$-module.
\end{lemma}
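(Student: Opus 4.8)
The plan is to run the standard local-cohomology (recollement) argument for the stratification of $Y := C(w^C) \cup C(w^C s_\beta)$ into the open cell $C(w^C)$ and the closed cell $C(w^C s_\beta)$. First I would verify that this is the right picture: since $C s_\beta = C$ and $w^C$ is the longest element of $C$, the element $w^C s_\beta$ lies in $C$ and is strictly shorter than $w^C$, so $C(w^C s_\beta) \subseteq \overline{C(w^C)}$; hence $\iota$ is the inclusion of the open stratum, and I write $i \colon C(w^C s_\beta) \inj Y$ for the complementary closed immersion. (In passing one notes that $Y = p_\beta\inv(p_\beta(C(w^C s_\beta)))$ is a $\BP^1$-subbundle over $p_\beta(C(w^C s_\beta)) \cong C(w^C s_\beta)$, hence smooth, so that Kashiwara's equivalence and the $\cD$-module six-functor formalism apply on $Y$; and that $\iota$, being an open immersion of relative dimension $0$, satisfies $\iota^! = \iota^+$ with no shift.) Applying the distinguished triangle for the closed immersion $i$ and its open complement to $\cF$ yields the local cohomology triangle $i_+ i^! \cF \to \cF \to \iota_+ \iota^+ \cF \xrightarrow{+1}$.

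The heart of the argument is the vanishing $i^! \cF = 0$. By functoriality of $\eta$-twisted Harish-Chandra sheaves under $i^!$, every cohomology sheaf of the complex $i^! \cF$ is an $\eta$-twisted Harish-Chandra sheaf on the cell $C(w^C s_\beta)$. But $w^C s_\beta$ is \emph{not} the longest element of the right $W_\Theta$-coset $C s_\beta = C$ that contains it, so by the classification of $\eta$-twisted Harish-Chandra sheaves on Schubert cells recalled in \textsection\ref{subsec:geom_prelim}, there are no nonzero $\eta$-twisted Harish-Chandra sheaves supported on $C(w^C s_\beta)$; hence $i^! \cF = 0$. The triangle then collapses to an isomorphism $\cF \xrightarrow{\sim} \iota_+ \iota^+ \cF = \iota_+ \iota^! \cF$ (which in particular forces $\iota_+ \iota^! \cF$ to be concentrated in degree $0$, so no separate affineness check for $\iota$ is needed).

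It remains to describe $\iota^! \cF = \iota^+ \cF$ explicitly. Restriction to the open cell preserves coherence, $N$-equivariance, and the $\eta$-twist, so $\iota^+ \cF \in \Mod_{coh}(\cD_{C(w^C)}, N, \eta)$; as $w^C$ is the longest element of $C$, this category is semisimple with unique simple object $\cO_{C(w^C)}^\eta$, whose underlying $\cO_{C(w^C)}$-module is $\cO_{C(w^C)}$, free of rank $1$. Being holonomic, hence of finite length, $\iota^+ \cF$ is therefore a finite direct sum $(\cO_{C(w^C)}^\eta)^{\oplus m}$, where $m$ equals the rank of $\iota^! \cF$ as a free $\cO$-module; applying the additive functor $\iota_+$ gives $\cF = \iota_+ \iota^! \cF \cong (\iota_+ \cO_{C(w^C)}^\eta)^{\oplus m}$, as claimed. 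The only genuinely delicate point, I expect, is the vanishing $i^! \cF = 0$: one must invoke $i^!$ (the functor extracting sections supported on $C(w^C s_\beta)$) rather than $i^+$, and this is exactly where the hypothesis $C s_\beta = C$ is used; everything else is routine bookkeeping with Kashiwara's equivalence and the standard inverse/direct image functors.
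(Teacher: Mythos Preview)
Your proposal is correct and follows essentially the same approach as the paper: both use the open/closed stratification of $C(w^C) \cup C(w^C s_\beta)$ and the key observation that no nonzero $\eta$-twisted Harish-Chandra sheaves exist on $C(w^C s_\beta)$ because $w^C s_\beta$ is not longest in its coset. The only cosmetic difference is that the paper phrases the first isomorphism via the adjunction map $\cF \to \iota_+ \iota^! \cF$ having kernel and cokernel supported on the closed cell (hence zero by Kashiwara), whereas you use the equivalent local-cohomology triangle and show $i^! \cF = 0$ directly.
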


\begin{proof}
	Write $w = w^C$. The assumption implies that $w s_\beta \in C$, $w s_\beta < w$, and that $C(w)$ and $C(w s_\beta)$ are open and closed in $C(w) \cup C(w s_\beta)$, respectively.
	
	Since the category of $\eta$-twisted Harish-Chandra sheaves on $C(w)$ is semisimple, $\iota^! \cF$ is a direct sum of copies of $\cO_{C(w)}^\eta$. This implies the second equality. For the first equality, adjunction gives a map
	\begin{equation}\label{eqn:iota_w_adjunction}
		\cF \to \iota_{+} \iota^! \cF
	\end{equation}
	whose kernel and cokernel are supported on $C(w s_\beta)$, which are equal to direct images of $\eta$-twisted Harish-Chandra sheaves on $C(w s_\beta)$ by Kashiwara's equivalence. But $w s_\beta$ is not the longest element in $C$, so there is no such module on $C(w s_\beta)$ except zero. Hence (\ref{eqn:iota_w_adjunction}) is an isomorphism, which establishes the first equality.
\end{proof}

\begin{proposition}\label{lem:I_fixes_std}
	Let $C \in W_\Theta \backslash W$, $\beta \in \Pi$ such that $C s_\beta = C$. Then
	\begin{equation*}
		LI_{s_\beta} \cI(w^C,\lambda,\eta) = \cI(w^C, s_\beta \lambda,\eta).
	\end{equation*}
\end{proposition}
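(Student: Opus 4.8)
The plan is to reduce the statement to a purely geometric identity, and then prove that identity by repeating the argument of Lemma~\ref{lem:S} one level up --- on $X$ and on the partial flag variety $X_\beta$, rather than on the Schubert cells. Write $w = w^C$. The hypothesis $Cs_\beta = C$ forces $w s_\beta \in C$ and $w s_\beta < w$, so $w s_\beta$ is not the longest element of its $W_\Theta$-coset and there is no nonzero $\eta$-twisted Harish-Chandra sheaf on the cell $C(w s_\beta)$. Since
\[
	LI_{s_\beta}\,\cI(w^C,\lambda,\eta) \;\cong\; \cO_X(\rho - s_\beta\rho) \dotimes_{\cO_X} p_{1+} p_2^+\, \cI(w^C,\lambda,\eta),
\]
and the bookkeeping relating the twist by $\cO_X(\rho - s_\beta\rho)$ to the passage from $\cD_\lambda$ to $\cD_{s_\beta\lambda}$ is exactly the one already carried out in Proposition~\ref{lem:I_moves_std_general}, it suffices to prove that $p_{1+} p_2^+\, \cI(w^C,\lambda,\eta) \cong \cI(w^C,\lambda,\eta)$ as $\cD$-modules on $X$ --- i.e. that feeding the costandard module through the correspondence $Z_{s_\beta}$ returns the costandard module indexed by the \emph{same} coset $C$.

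To prove this, put $\widetilde Z = X \times_{X_\beta} X$, with the two projections $\widetilde q_1, \widetilde q_2\colon \widetilde Z \to X$. Two Borel subalgebras lie in a common $p_\beta$-fibre precisely when they are in relative position $1$ or $s_\beta$, so $\widetilde Z = \Delta(X) \sqcup Z_{s_\beta}$, where $\Delta\colon X \hookrightarrow \widetilde Z$ is the diagonal (a smooth closed subvariety of codimension one) and $Z_{s_\beta}$ is its open complement; moreover $\widetilde q_i \circ \Delta = \mathrm{id}_X$ and $\widetilde q_i|_{Z_{s_\beta}} = p_i$. Applying the distinguished triangle for local cohomology along $\Delta(X) \hookrightarrow \widetilde Z \hookleftarrow Z_{s_\beta}$ to $\widetilde q_2^+\, \cI(w^C,\lambda,\eta)$ and then applying $\widetilde q_{1+}$ gives a distinguished triangle
\[
	\Delta^!\, \widetilde q_2^+\, \cI(w^C,\lambda,\eta) \;\to\; p_\beta^+ p_{\beta+}\, \cI(w^C,\lambda,\eta) \;\to\; p_{1+} p_2^+\, \cI(w^C,\lambda,\eta) \;\to\; \Delta^!\, \widetilde q_2^+\, \cI(w^C,\lambda,\eta)[1],
\]
where I used $\widetilde q_{1+}\Delta_+ = (\widetilde q_1 \circ \Delta)_+ = \mathrm{id}$ on the first term, $\widetilde q_{1+} k_+ k^+ \widetilde q_2^+ = p_{1+} p_2^+$ on the third, and base change along the Cartesian square defining $\widetilde Z$ over $X_\beta$ to rewrite the middle term. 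The middle term vanishes: $p_{\beta+}\, \cI(w^C,\lambda,\eta) = (p_\beta \circ i_{w})_+\, \cO^\eta_{C(w)}$ is the direct image of $\cO^\eta_{C(w)}$ along $C(w) \to p_\beta(C(w))$ followed by a closed immersion, and $p_\beta(C(w))$ is $N$-equivariantly isomorphic to the non-maximal cell $C(w s_\beta)$, on which there is no nonzero $\eta$-twisted Harish-Chandra sheaf; this is precisely the vanishing $p_{\beta+}\, \cO^\eta_{C(w)} = 0$ established inside the proof of Lemma~\ref{lem:S}. Hence $p_{1+} p_2^+\, \cI(w^C,\lambda,\eta) \cong \Delta^!\, \widetilde q_2^+\, \cI(w^C,\lambda,\eta)[1]$, and since $\Delta$ is a closed immersion of codimension one with $\widetilde q_2 \circ \Delta = \mathrm{id}_X$, the right-hand side is $\cI(w^C,\lambda,\eta)$ --- the shift computation being identical to the final step of the proof of Lemma~\ref{lem:S}. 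This gives the geometric identity, hence the proposition.

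I expect the only genuine subtlety to lie in the twisted-sheaf-of-differential-operators bookkeeping invoked in the first paragraph: matching the $\cD$-module structures (not merely the underlying $\cO$-module structures) on the two sides and tracking the twist by $\cO_X(\rho - s_\beta\rho)$. This, however, is routine and is handled exactly as in Proposition~\ref{lem:I_moves_std_general}; in particular the non-properness of $p_1$ and $p_2$ causes no difficulty, since holonomicity and the $\eta$-twisted $N$-equivariant structure are preserved by all the $\cD$-module operations used. As an alternative to the triangle computation above, one can first base-change the correspondence $Z_{s_\beta}$ to $Y = C(w^C) \cup C(w^C s_\beta) = p_\beta^{-1}(p_\beta(C(w^C)))$ --- observing that relative position $s_\beta$ puts both Borel subalgebras into a common $p_\beta$-fibre, so that $Z_{s_\beta} \cap (Y \times X) = Z_{s_\beta} \cap (Y \times Y) = Z_{s_\beta} \cap (X \times Y)$ and the relevant squares are Cartesian --- and then read off the answer from Lemma~\ref{lem:S} together with Lemma~\ref{lem:two_cells}, the latter guaranteeing that the resulting $\eta$-twisted sheaf on $Y$ is $\cI(w^C,\lambda,\eta)$ itself and not a nontrivial multiple or a shift of it.
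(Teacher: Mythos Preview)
Your main argument has a real gap in exactly the place you flag as ``routine'': forming the middle term $p_\beta^+ p_{\beta+}\,\cI(w^C,\lambda,\eta)$ of your triangle requires a twisted sheaf of differential operators on $X_\beta$ whose pullback along $p_\beta$ is $\cD_\lambda$. As the paper remarks in \S\ref{subsec:alg_(2)}, such a sheaf exists only when $\beta^\vee(\lambda)\in\BZ$; but the proposition allows arbitrary $\beta\in\Pi$, and its principal application (via Corollary~\ref{lem:I_on_std}) is precisely to $\beta$ non-integral to $\lambda$, where no such descent exists. Concretely, the identification $p_2^*\cD_\lambda \cong p_1^*\cD_{s_\beta\lambda}\otimes p_1^*\cO_X(\rho-s_\beta\rho)$ that underlies the intertwining functor on $Z_{s_\beta}$ does not extend across the diagonal to $\widetilde Z = X\times_{X_\beta}X$ for generic $\lambda$, so $\widetilde q_{1+}\widetilde q_2^+$ is not even defined as a functor between the relevant module categories, and the base-change step to $p_\beta^+ p_{\beta+}$ cannot be made.

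The paper sidesteps this by never working over $X_\beta$ globally: it base-changes $p_2$ along $i_{w^C}$ to get $F = Z_{s_\beta}\times_{p_2,X}C(w^C)$, observes $p_1(F)\subseteq C(w^C)\cup C(w^Cs_\beta)$, and then reduces via Lemma~\ref{lem:two_cells} and a further base change to the open cell to Lemma~\ref{lem:S}. All $\cD$-module manipulations in that reduction happen on affine cells with untwisted $\cD$; the twist by $\cO_X(\rho-s_\beta\rho)$ is applied only at the very last step, where it causes no trouble. Your alternative sketch in the final paragraph is essentially this argument and is the correct route; your main argument is not salvageable without the integrality hypothesis on $\beta$.
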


\begin{proof}
	Write $w = w^C$. Let
	\begin{equation*}
		F = Z_{s_\beta} \times_{p_2,X,i_w} C(w) = \{ (x,y) \in X \times C(w) \mid \fb_x \text{ and } \fb_y \text{ are in relative position } s_\beta \}.
	\end{equation*}
	and let $S$ be as in \ref{lem:S}. Then $S$ is a subvariety of $F$. It's easy to see that
	\begin{equation*}
		p_1(F) = \{x \in X \mid \thereis y \in C(w) \text{ such that } \fb_x \text{ and } \fb_y \text{ are in relative position } s_\beta \} = C(w) \cup C(w s_\beta).
	\end{equation*}	
	So we have the following diagram
	\begin{equation}\label{diag:SF}
		\begin{tikzcd}
			&[-5ex] S \ar[dl,"p_1|_S"'] \ar[dr, "a_S"] & [-4ex]\\
			C(w) \ar[dr, "\iota_w"] \ar[ddrr, bend right, distance=3cm, "i_w"'] & & F \ar[dl, "p_1|_F"'] \ar[dr, "i_F"'] \ar[rr, equal] & & F \ar[dl, "i_F"] \ar[dr, "p_2|_F"]\\
			& C(w) \cup C(w s_\beta) \ar[dr, "j_w"] && Z_{s_\beta} \ar[dl, "p_1"'] \ar[dr, "p_2"] && C(w) \ar[dl, "i_w"']\\
			&& X && X
		\end{tikzcd}
	\end{equation}
	The right-most square is Cartesian by definition of $F$. The top-left square is also Cartesian, i.e. $S$ is the preimage of $C(w)$ along $p_1: F \to C(w) \cup C(w s_\beta)$. By definition of intertwining functors and base change,
	\begin{align}
		LI_{s_\beta} \cI(w,\lambda,\eta)
		&= \cO_X(\rho - s_\beta \rho) \dotimes_{\cO_X} p_{1+} p_2^+ i_{w+} \cO_{C(w)}^\eta \nonumber\\
		&= \cO_X(\rho - s_\beta \rho) \dotimes_{\cO_X} p_{1+} i_{F+} (p_2|_F)^+ \cO_{C(w)}^\eta \nonumber\\
		&= \cO_X(\rho - s_\beta \rho) \dotimes_{\cO_X} j_{w+} (p_1|_F)_+ (p_2|_F)^+ \cO_{C(w)}^\eta. \label{eqn:LI(std)_step_a}
	\end{align}	
	We claim that $(p_1|_F)_+ (p_2|_F)^+ \cO_{C(w)}^\eta = \iota_{w+} \cO_{C(w)}^\eta$.
	By \ref{lem:two_cells}, 
	\begin{equation*}
		(p_1|_F)_+ (p_2|_F)^+ \cO_{C(w)}^\eta = \iota_{w+} \iota_w^! (p_1|_F)_+ (p_2|_F)^+ \cO_{C(w)}^\eta.
	\end{equation*}		
	Apply base change using the top-left square in (\ref{diag:SF}), 
	\begin{equation*}
		\iota_w^! (p_1|_F)_+ (p_2|_F)^+ \cO_{C(w)}^\eta 
		= (p_1|_S)_+ a_S^! (p_2|_F)^+ \cO_{C(w)}^\eta
		= (p_1|_S)_+ a_S^+ (p_2|_F)^+ \cO_{C(w)}^\eta
	\end{equation*}
	Note that $p_2|_F \circ a_S = p_2|_S$. Hence, by \ref{lem:S}, the sheaf in the above equation is isomorphic to $\cO_{C(w)}^\eta$. Therefore
	\begin{equation*}
		(p_1|_F)_+ (p_2|_F)^+ \cO_{C(w)}^\eta = \iota_{w+} \iota_w^! (p_1|_F)_+ (p_2|_F)^+ \cO_{C(w)}^\eta = \iota_{w+} \cO_{C(w)}^\eta
	\end{equation*}
	as claimed. As a result,
	\begin{align*}
		(\ref{eqn:LI(std)_step_a}) 
		&= \cO_X(\rho - s_\beta \rho) \dotimes_{\cO_X} j_{w+} \iota_{w+} \cO_{C(w)}^\eta\\
		&= \cO_X(\rho - s_\beta \rho) \dotimes_{\cO_X} i_{w+} \cO_{C(w)}^\eta\\
		&= \cI(w, s_\beta \lambda, \eta)	
	\end{align*}
	which proves the proposition.
\end{proof}

\begin{corollary}\label{lem:I_on_std}
	Let $\beta \in \Pi - \Pi_\lambda$. Let $C \in W_\Theta \backslash W$. Then
	\begin{align*}
		I_{s_\beta} \cI(w^C,\lambda,\eta) &= \cI(w^{C s_\beta}, s_\beta\lambda, \eta),\\
		I_{s_\beta} \cL(w^C,\lambda,\eta) &= \cL(w^{C s_\beta}, s_\beta\lambda, \eta)
	\end{align*}
	(note that we have $w^{C s_\beta}$ instead of $w^C s_\beta$ on the right hand sides).
\end{corollary}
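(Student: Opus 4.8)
The plan is to split into two cases according to whether $C s_\beta = C$ or $C s_\beta \neq C$, and in each case reduce to a result already proved in this section, taking care of the bookkeeping between $w^{C s_\beta}$ and $w^C s_\beta$.

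In the case $C s_\beta \neq C$, I would first invoke Lemma \ref{lem:Theta_prelim}: since $C s_\beta \neq C$ we are in its case (a) or (c), so $w^{C s_\beta} = w^C s_\beta$. The first identity is then exactly Corollary \ref{lem:I_moves_std}, rewritten with $w^{C s_\beta}$ in place of $w^C s_\beta$. In the case $C s_\beta = C$ (so $w^{C s_\beta} = w^C$), I would use that, by Theorem \ref{lem:non-int_I}, $I_{s_\beta}$ is an equivalence of categories on quasi-coherent $\cD$-modules for $\beta \in \Pi - \Pi_\lambda$; being an equivalence it is exact, so the higher cohomologies of $LI_{s_\beta}$ vanish and $LI_{s_\beta}$ agrees with $I_{s_\beta}$ on every module. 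Proposition \ref{lem:I_fixes_std} then gives $I_{s_\beta}\cI(w^C,\lambda,\eta) = \cI(w^C, s_\beta\lambda,\eta) = \cI(w^{C s_\beta}, s_\beta\lambda,\eta)$, establishing the first identity in this case as well.

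For the second identity I would argue uniformly for all $C$: since $I_{s_\beta}$ is an exact equivalence it preserves irreducibility and carries submodules to submodules, so applying it to the inclusion of the unique irreducible submodule $\cL(w^C,\lambda,\eta) \inj \cI(w^C,\lambda,\eta)$ produces a nonzero irreducible submodule of $I_{s_\beta}\cI(w^C,\lambda,\eta) = \cI(w^{C s_\beta}, s_\beta\lambda,\eta)$; since the latter has a unique irreducible submodule $\cL(w^{C s_\beta}, s_\beta\lambda,\eta)$, the two must coincide.

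I do not expect a genuine obstacle here: everything has essentially been set up in Proposition \ref{lem:I_moves_std_general}, Proposition \ref{lem:I_fixes_std} and Theorem \ref{lem:non-int_I}. The one point to be careful about is precisely the identification $LI_{s_\beta} = I_{s_\beta}$ in the case $C s_\beta = C$ (so that Proposition \ref{lem:I_fixes_std}, stated for the derived functor, may be applied), together with keeping straight that $w^{C s_\beta}$ equals $w^C s_\beta$ when $C s_\beta \neq C$ but equals $w^C$ when $C s_\beta = C$, which is what makes the right-hand sides uniform across the two cases.
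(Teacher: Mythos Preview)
Your proposal is correct and is essentially the same argument as the paper's: the first identity is obtained by combining Corollary \ref{lem:I_moves_std} (for $Cs_\beta \neq C$) with Proposition \ref{lem:I_fixes_std} (for $Cs_\beta = C$), and the second follows since the equivalence $I_{s_\beta}$ must carry the unique irreducible submodule of $\cI(w^C,\lambda,\eta)$ to that of $\cI(w^{Cs_\beta},s_\beta\lambda,\eta)$. Your added bookkeeping (the identification $w^{Cs_\beta}=w^Cs_\beta$ via Lemma \ref{lem:Theta_prelim} and the passage from $LI_{s_\beta}$ to $I_{s_\beta}$ via exactness of the equivalence) makes explicit what the paper leaves implicit.
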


\begin{proof}
	The statement about standard modules is the combination of \ref{lem:I_moves_std} and \ref{lem:I_fixes_std}. Since $I_{s_\beta}$ is an equivalence of categories, it must send the unique irreducible submodule of $\cI(w^C,\lambda,\eta)$ to the unique irreducible submodule of $\cI(w^{C s_\beta}, s_\beta\lambda, \eta)$, i.e. it must send $\cL(w^C,\lambda,\eta)$ to $\cL(w^{C s_\beta}, s_\beta\lambda, \eta)$.
\end{proof}

Next, we show that non-integral intertwining functors also preserves pullback of irreducible modules to strata. 

\begin{proposition}\label{thm:Is_pullback}
	Let $\beta \in \Pi - \Pi_\lambda$, $C,D \in W_\Theta \backslash W$ and $p \in \BZ$. Then
	\begin{equation*}
		\rank H^p i_{w^D}^! \cL(w^C, \lambda,\eta) = \rank H^p i_{w^{D s_\beta}}^! \cL(w^{C s_\beta}, s_\beta \lambda, \eta).
	\end{equation*}
\end{proposition}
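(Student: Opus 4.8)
The plan is to relate the two sides of the equality by transporting the pullback computation through the non-integral intertwining functor $I_{s_\beta}$, which by Theorem~\ref{lem:non-int_I} is an equivalence of the quasi-coherent $\cD$-module categories, and by Corollary~\ref{lem:I_on_std} sends $\cL(w^C,\lambda,\eta)$ to $\cL(w^{C s_\beta}, s_\beta\lambda,\eta)$. So it suffices to compare $i_{w^D}^! \cL(w^C,\lambda,\eta)$ with $i_{w^{D s_\beta}}^! I_{s_\beta}\cL(w^C,\lambda,\eta)$. First I would use the geometric description of $I_{s_\beta}$ as a push-pull along the correspondence $Z_{s_\beta}$, or better, factor it through the $\BP^1$-fibration $p_\beta: X \to X_\beta$ on which $\cD_\lambda$ descends (precisely because $\beta$ is non-integral to $\lambda$, there is a twisted sheaf $\cD_{\lambda,\beta}$ on $X_\beta$ pulling back to $\cD_\lambda$, which is the whole point of Theorem~\ref{lem:non-int_I}). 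Then the intertwining functor, up to a line bundle twist that does not affect ranks of pullbacks, is essentially $p_\beta^+ p_{\beta+}$, and one can compute $i_{w^{D s_\beta}}^!$ of this by base change.

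The key steps, in order, would be: (1) Reduce via Corollary~\ref{lem:I_on_std} to showing $\rank H^p i_{w^D}^! \cF = \rank H^p i_{w^{D s_\beta}}^! I_{s_\beta}\cF$ for $\cF = \cL(w^C,\lambda,\eta)$ (or any $\eta$-twisted Harish-Chandra sheaf). (2) Distinguish the cases $D s_\beta > D$ and $D s_\beta < D$; by Lemma~\ref{lem:Theta_prelim}, exactly one of these holds unless $D s_\beta = D$. In the case $Ds_\beta \ne D$, the cells $C(w^D)$ and $C(w^{D s_\beta}) = C(w^D s_\beta)$ both map to the same cell in $X_\beta$, and $p_\beta$ restricted to either is an isomorphism onto its image; moreover $p_\beta^{-1}(p_\beta(C(w^D)))$ is the union $C(w^D) \cup C(w^D s_\beta)$, one open and one closed in it. (3) Use the Cartesian square relating $i_{w^D}$ (resp.\ $i_{w^{D s_\beta}}$) to $p_\beta$ and the correspondence $Z_{s_\beta}$, exactly as in the diagrams (\ref{diag:S}) and (\ref{diag:SF}) in the proofs of Lemmas~\ref{lem:S} and \ref{lem:I_fixes_std}, together with the base change theorem for $\cD$-modules, to express $i_{w^{D s_\beta}}^! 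I_{s_\beta}\cF$ in terms of $p_{\beta+}$ applied to $\cF$ along $C(w^D) \cup C(w^D s_\beta)$ and then restricted. (4) Conclude that since $p_\beta$ is an isomorphism on the relevant cell, the ranks of the cohomology sheaves of these pullbacks agree. (5) Handle the degenerate case $D s_\beta = D$ separately, where $w^{D s_\beta} = w^D$ and one applies Proposition~\ref{lem:I_fixes_std} together with Lemma~\ref{lem:two_cells} in the same spirit: $I_{s_\beta}$ fixes the relevant standard module up to changing $\lambda$ to $s_\beta\lambda$, and the $\BP^1$-fibration argument shows the pullback to $C(w^D)$ is unchanged.

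The main obstacle I expect is step (3)–(4): carefully tracking the base change and the line-bundle twist $\cO_X(\rho - s_\beta\rho)$ through the correspondence so that one genuinely gets an equality of \emph{ranks of all cohomology sheaves} $H^p$, not merely of Euler characteristics or of the top cohomology. The twist by $\cO_X(\rho-s_\beta\rho)$ is an invertible $\cO_X$-module, hence does not change $\cO$-module ranks, so it is harmless, but one must be sure it is applied before the pullback $i_{w^{D s_\beta}}^!$ and commutes appropriately; the shift conventions relating $Lf^+$ and $f^!$ (the relative-dimension shift) also need to be reconciled so the index $p$ matches on both sides. A secondary subtlety is that $I_{s_\beta}$ is a priori only exact on the quasi-coherent category in the non-integral case because $LI_{s_\beta} = I_{s_\beta}$ there (equivalence), so one should invoke Theorem~\ref{lem:non-int_I} to know $LI_{s_\beta}\cF$ is concentrated in degree $0$ before identifying it with the push-pull expression; this is exactly what makes the non-integral hypothesis $\beta \in \Pi - \Pi_\lambda$ essential. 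Once these bookkeeping points are settled, the geometric heart of the argument is just that $p_\beta$ is an isomorphism on each relevant Schubert cell, which forces the pullbacks to have the same ranks.
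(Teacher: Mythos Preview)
Your overall case-split structure ($D s_\beta \neq D$ versus $D s_\beta = D$) and the intent to use base change along the correspondence $Z_{s_\beta}$ match the paper's approach, and your idea for the case $D s_\beta = D$ via Lemma~\ref{lem:two_cells} (and implicitly Lemma~\ref{lem:S}) is on target. However, there is a genuine conceptual error: you claim that ``precisely because $\beta$ is non-integral to $\lambda$, there is a twisted sheaf $\cD_{\lambda,\beta}$ on $X_\beta$ pulling back to $\cD_\lambda$, which is the whole point of Theorem~\ref{lem:non-int_I}.'' This is exactly backwards. The existence of such a descended sheaf requires $\beta^\vee(\lambda) \in \BZ$ (after a line-bundle twist one then arranges $\beta^\vee(\lambda) = -1$); see the discussion of the $U$-functor in \textsection\ref{subsec:alg_(2)}. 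For non-integral $\beta$ no such descent exists, and this is precisely why the paper replaces the push-pull $p_\beta^+ p_{\beta+}$ by the intertwining functor in the first place. Theorem~\ref{lem:non-int_I} neither asserts nor uses any descent to $X_\beta$; the equivalence there comes from a different mechanism. So you cannot factor $I_{s_\beta}$ through $X_\beta$, and steps (3)--(4) as written do not go through.

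A second, related error: you assert that $p_\beta$ restricted to \emph{either} of $C(w^D)$, $C(w^D s_\beta)$ is an isomorphism onto its image. Only the shorter cell maps isomorphically; the longer one is an $\BA^1$-bundle over the same image. The paper works directly with the correspondence: in the case $D s_\beta \neq D$ (say $D s_\beta < D$, $w = w^D$), one forms $F = C(w s_\beta) \times_{X,p_1} Z_{s_\beta}$ and finds that $p_2|_F: F \to C(w)$ is an isomorphism while $p_1|_F: F \to C(w s_\beta)$ is an $\BA^1$-fibration. The nontrivial step is the explicit computation $(p_1|_F)_+ \cO_F^\eta = \cO_{C(w s_\beta)}^\eta[1]$, carried out by identifying $p_1|_F$ with a coordinate projection of affine spaces and using $p_+ \cO_{\BA^1} = \BC[1]$; this shift $[1]$ exactly cancels the $[-1]$ arising from converting $p_2^+$ to $p_2^!$, and that cancellation is what gives equality of ranks in each cohomological degree $p$, not merely of Euler characteristics. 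Your step (4) misses this.
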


The proof we give below uses the same tools as in the previous proposition. There is an alternative proof which we briefly mention. One shows that $\rank H^p i_{w^D}^! \cL(w^C, \lambda,\eta)$ is the same as the dimension of the $p$-th $\cD_\lambda$-module $\Ext$ group of $\cM(w^C,\lambda,\eta)$ and $\cL(w^C,\lambda,\eta)$ using facts on derived categories of highest weight categories (Brown-Romanov \cite[Theorem 7.2]{Brown-Romanov:Whittaker-Verma-pairing} showed that $\Mod_{coh}(\cD_\lambda,N,\eta)$ is a highest weight category). The proposition follows from the fact that $I_{s_\beta}$ is an equivalence of categories and induces an isomorphism on $\Ext$-groups.

\begin{proof}
	Write $w = w^D$. 
	
	There are two cases, $D s_\beta \neq D$ or $D s_\beta = D$. Consider the first case. Assume $D s_\beta < D$. Then $w^{D s_\beta} = w^D s_\beta = w s_\beta$. Let
	\begin{equation*}
		F = C(w s_\beta) \times_{i_{w s_\beta}, X, p_1} Z_{s_\beta} = \{ (x,y) \in C(w s_\beta) \times X \mid \fb_x \text{ and } \fb_y \text{ are in relative position } s_\beta \}.
	\end{equation*}
	Then the second projection $p_2|_F: F \to X$ induces an isomorphism of $F$ onto $C(w)$, and we have the following commuting diagram
	\begin{equation*}
		\begin{tikzcd}
			&[-1ex] F \ar[dl, "p_1|_F"'] \ar[dr, "i_F"'] \ar[rr, equal] & & F \ar[dl,"i_F"] \ar[dr, "p_2|_F", "\cong"']\\
			C(w s_\beta) \ar[dr, "i_{w s_\beta}"'] && Z_{s_\beta} \ar[dl, "p_1"'] \ar[dr, "p_2"] && C(w) \ar[dl, "i_w"]\\
			& X && X
		\end{tikzcd}
	\end{equation*}
	where the left square is Cartesian. Using base change, 
	\begin{align}
		\rank H^p i_{w^{D s_\beta}}^! \cL(w^{C s_\beta} ,s_\beta \lambda, \eta) 
		&= \rank H^p i_{w s_\beta}^! I_{s_\beta} \cL(w^C, \lambda,\eta) \nonumber\\
		&= \rank H^p i_{w s_\beta}^! p_{1+} p_2^+ \cL(w^C, \lambda,\eta) \label{eqn:rk_pullback_step_a}\\
		&= \rank H^p (p_1|_F)_+ (p_2|_F)^! i_w^! \cL(w^C, \lambda,\eta) [-1].\label{eqn:rk_pullback_step_b}
	\end{align}
	\sloppy Here in (\ref{eqn:rk_pullback_step_a}) we did not write the twist by the line bundle $\cO_X(\rho-w\rho)$ because there is no twist on $C(w s_\beta)$. Since $\Mod_{coh}(\cD_{C(w)},N,\eta)$ is semisimple, $i_w^! \cL(w^C, \lambda,\eta)$ is a direct sum of $\cO_{C(w)}^\eta$ at different degrees. So $(p_2|_F)^! i_w^! \cL(w^C, \lambda,\eta)$ is a direct sum of $\cO_F^\eta$ at different degrees by the fact that $p_2|_F$ is an isomorphism, and the rank at degree $p$ being $\rank H^p i_w^! \cL(w^C, \lambda,\eta)$. Hence it is enough to compute $(p_1|_F)_+ \cO_F^\eta$, for which we use the fact that a map of homogeneous spaces of a unipotent group is isomorphic to a coordinate projection of affine spaces, that is, we have the following commutative diagram where all maps are $N$-equivariant, for some $N$-actions on $\BA^1 \times \BA^{\ell(w s_\beta)}$ and $\BA^{\ell(w s_\beta)}$:
	\begin{equation*}
		\begin{tikzcd}
			F \ar[d,"\cong"'] \ar[r, "p_1|_F"] & C(w s_\beta) \ar[d,"\cong"]\\
			\BA^1 \times \BA^{\ell(w s_\beta)} \ar[r,"pr_1"] & \BA^{\ell(w s_\beta)}
		\end{tikzcd}.
	\end{equation*}
	So it suffices to compute $pr_{1+} \cO_{\BA^1 \times \BA^{\ell(w s_\beta)}}^\eta$. Since $pr_1$ is a coordinate projection, $pr_1^+ \cO_{\BA^{\ell(w s_\beta)}}^\eta = \cO_{\BA^1} \boxtimes \cO_{\BA^{\ell(w s_\beta)}}^\eta$ (we remark that, without the assumption of $D s_\beta \neq D$, $w s_\beta$ and $w$ can be in the same right $W_\Theta$-coset, in which case $\cO_{\BA^{\ell(w s_\beta)}}^\eta$ does not exist). On the other hand, by functoriality of $\eta$-twisted Harish-Chandra sheaves, we must have $pr_1^+ \cO_{\BA^{\ell(w s_\beta)}}^\eta = \cO_{\BA^1 \times \BA^{\ell(w s_\beta)}}^\eta$. We conclude that
	\begin{equation*}
		\cO_{\BA^1 \times \BA^{\ell(w s_\beta)}}^\eta = \cO_{\BA^1} \boxtimes \cO_{\BA^{\ell(w s_\beta)}}^\eta.
	\end{equation*}
	As a result, writing $p: \BA^1 \to \{*\}$ for the unique morphism to a point, 
	\begin{align*}
		pr_{1+} \cO_{\BA^1 \times \BA^{\ell(w s_\beta)}}^\eta
		&= (p_+ \cO_{\BA^1}) \boxtimes \big( (\operatorname{Id}_{\BA^{\ell(w s_\beta)}})_+ \cO_{\BA^{\ell(w s_\beta)}}^\eta \big) \\
		&= \BC[1] \boxtimes \cO_{\BA^{\ell(w s_\beta)}}^\eta\\
		&= \cO_{\BA^{\ell(w s_\beta)}}[1].
	\end{align*}
	Therefore $(p_1|_F)_+ \cO_F^\eta = \cO_{C(w s_\beta)}^\eta [1]$ and hence
	\begin{align*}
		\rank H^p i_{w^{D s_\beta}}^! \cL(w^{C s_\beta} ,s_\beta \lambda, \eta) 
		= (\ref{eqn:rk_pullback_step_b})
		= \rank H^p i_w^! \cL(w^C, \lambda,\eta).
	\end{align*}
	
	Now consider the case $D s_\beta = D$. In this case $w^{D s_\beta} = w^D = w$. Set
	\begin{equation*}
		F = C(w) \times_{i_w, X, p_1} Z_{s_\beta} 
		= \{ (x,y) \in C(w) \times X \mid \fb_x \text{ and } \fb_y \text{ are in relative position } s_\beta \}
	\end{equation*}
	and set $S$ as in \ref{lem:S}, viewed as a subvariety of $F$. Then the following diagram commutes
	\begin{equation*} 
		\begin{tikzcd}
			&&&&[-4ex] S \ar[dl, "b_S"'] \ar[dr, "p_2|_S"] &[-5ex]\\
			& F \ar[dl, "p_1|_F"'] \ar[dr, "i_F"'] \ar[rr,equal] && F \ar[dl, "i_F"] \ar[dr, "p_2|_F"] && C(w) \ar[dl, "\iota_w"'] \ar[ddll, bend left, distance=3cm,"i_w"]\\
			C(w) \ar[dr, "i_w"'] && Z_{s_\beta} \ar[dl, "p_1"'] \ar[dr, "p_2"] && C(w) \cup C(w s_\beta) \ar[dl, "j_w"]\\
			& X && X
		\end{tikzcd}
	\end{equation*} 
	where the left-most square and the top-right square are Cartesian. Using base change, 
	\begin{align}
		i_{w^{D s_\beta}}^! \cL(w^{C s_\beta} ,s_\beta \lambda, \eta)
		&= i_w^! I_{s_\beta} \cL(w^C, \lambda,\eta) \nonumber\\
		&= (p_1|_F)_+ (p_2|_F)^! j_w^! \cL(w^C, \lambda,\eta) [-1].\label{eqn:rk_pullback_step_d}
	\end{align}
	By \ref{lem:two_cells}, $j_w^! \cL(w^C, \lambda,\eta) = \iota_{w+} \iota_w^! j_w^! \cL(w^C, \lambda,\eta)$.	Hence
	\begin{align}
		(\ref{eqn:rk_pullback_step_d})
		&= (p_1|_F)_+ (p_2|_F)^! \iota_{w+} \iota_w^! j_w^! \cL(w^C, \lambda,\eta) [-1] \nonumber\\
		&= (p_1|_F)_+ b_{S+} (p_2|_S)^+ i_w^! \cL(w^C, \lambda,\eta). \label{eqn:rk_pullback_step_e}
	\end{align}
	Here $p_1|_F \circ b_S = p_1|_S$. Also $i_w^! \cL(w^C, \lambda,\eta)$ is a direct sum of $\cO_{C(w)}^\eta$ in various degrees. Hence by \ref{lem:S}, 
	\begin{equation*}
		\rank H^p i_{w^{D s_\beta}}^! \cL(w^{C s_\beta} ,s_\beta \lambda, \eta)
		= \rank H^p (\ref{eqn:rk_pullback_step_e})\\
		= \rank H^p i_w^! \cL(w^C, \lambda,\eta). \qedhere
	\end{equation*}
\end{proof}
\section{Main algorithm}\label{sec:KL}

In this section, we formulate an algorithm for computing a set of polynomials in $q$ indexed by pairs of right $W_\Theta$-cosets whose evaluation at $q = -1$ lead to the character formula for irreducible modules. This is in the same spirit as ordinary Kazhdan-Lusztig polynomials for the highest weight category. The algorithm we will prove is suggested by Mili\v ci\'c.

In \textsection\ref{subsec:KL_poly}, we define the Whittaker Kazhdan-Lusztig polynomials, the module $\cH_\Theta$, and related notations. The statement of the algorithm is contained in \textsection\ref{subsec:algorithm}. Proof of the algorithm is divided into subsections that follow.

\subsection{Whittaker Kazhdan-Lusztig polynomials}\label{subsec:KL_poly}

Recall the sets $A_{\Theta,\lambda} = A_\lambda \cap (w_\Theta {}^\Theta W)$ and ${\Theta(u,\lambda)} \subseteq \Pi_\lambda$ defined in \textsection\ref{subsec:db_coset_xsec} and \textsection\ref{subsec:int_model}. Recall also that we have a partial order on $W_\Theta \backslash W$ inherited from the Bruhat order on ${}^\Theta W$, denoted by $\le$ (\textsection \ref{subsec:WTheta_prelim}). Similarly, we have a partial order on $W_{\lambda,{\Theta(u,\lambda)}} \backslash W_\lambda$ which we denote by $\le_{u,\lambda}$. 

Let $\cH_\Theta$ be the free $\BZ[q,q\inv]$-modules with basis $\delta_C$, $C \in W_\Theta \backslash W$. For any $\alpha \in \Pi$, define a $\BZ[q,q\inv]$-linear operator on $\cH_\Theta$ by
\begin{equation*}
	T_\alpha (\delta_{C}) =
	\begin{cases}
		q \delta_{C} + \delta_{C s_\alpha} & \text{if } C s_\alpha > C;\\
		0 & \text{if } C s_\alpha = C;\\
		q\inv \delta_{C} + \delta_{C s_\alpha} & \text{if } C s_\alpha < C.
	\end{cases}
\end{equation*}
The operators turn $\cH_\Theta$ into a (right) module of the Hecke algebra $\cH$, isomorphic to the \textit{antispherical module}, where $T_\alpha$ encodes the action of the Kazhdan-Lusztig basis element $C_{s_\alpha} \in \cH$ \cite[\textsection 6.2]{Romanov:Whittaker}. This interpretation is not needed for us. We refer the reader to \cite[\textsection 6.1]{Romanov:Whittaker} for a precise definition of $\cH$, and to the introduction of this paper \textsection\ref{subsec:geom_idea} for an explanation of the role of $\cH$ in the Kazhdan-Lusztig algorithm.

For an element $u$ in $A_{\Theta,\lambda}$, let $\cH_{\Theta(u,\lambda)}$ be the free $\BZ[q,q\inv]$-module with basis $\delta_{E}$, $E \in W_{\lambda,{\Theta(u,\lambda)}} \backslash W_\lambda$. For any $\alpha \in \Pi_\lambda$ we define the operator $T_\alpha^{u,\lambda}$ in the same way as $T_\alpha$:
\begin{equation*}
	T_\alpha^{u,\lambda} (\delta_E) =
	\begin{cases}
		q \delta_E + \delta_{E s_\alpha} & \text{if } E s_\alpha >_{u,\lambda} E;\\
		0 & \text{if } E s_\alpha = E;\\
		q\inv \delta_E + \delta_{E s_\alpha} & \text{if } E s_\alpha <_{u,\lambda} E.
	\end{cases}
\end{equation*}
Then $\cH_{\Theta(u,\lambda)}$ becomes a right module of the Hecke algebra $\cH_\lambda = \cH(W_\lambda)$ of the integral Weyl group.

We will use a left action of $W$ on $\cH_\Theta$ defined by $w \cdot \delta_C = \delta_{wC}$. Similarly, a right action of $W$ on $\cH_\Theta$ is defined by $\delta_C \cdot w = \delta_{C w}$. We will simply write $w \delta_C$, $\delta_C w$ for the actions, omitting the dots. $w(-)w\inv$ then denotes the simultaneous action of $w$ on the left and $w\inv$ on the right. By \ref{lem:Is_right_coset}, $s_\beta (-) s_\beta$ defines a bijection
\begin{equation*}
	s_\beta(-) s_\beta : W_{\lambda,\Theta(u,\lambda)} \backslash W_\lambda \bijects W_{s_\beta \lambda, \Theta(r, s_\beta\lambda)} \backslash W_{s_\beta \lambda}
\end{equation*}
where $r$ is the unique element in $A_{\Theta,s_\beta \lambda} \cap W_\Theta u s_\beta W_{s_\beta \lambda}$. We extend this to an isomorphism
\begin{equation*}
	s_\beta (-) s_\beta : \cH_{\Theta(u,\lambda)} \bijects \cH_{\Theta(r, s_\beta\lambda)},\quad
	\delta_{E} \mapsto \delta_{s_\beta E s_\beta}.
\end{equation*}

Recall that we have a bijection
\begin{equation*}
	(-)|_\lambda : W_\Theta \backslash W \to \bigcup_{u \in A_{\Theta,\lambda}} W_{\lambda, {\Theta(u,\lambda)}} \backslash W_\lambda
\end{equation*}
defined in \ref{not:right_coset_partition}. We extend $(-)|_\lambda$ to a map
\begin{equation*}
	(-)|_\lambda: \cH_\Theta \bijects \bigoplus_{u \in A_{\Theta,\lambda}} \cH_{\Theta(u,\lambda)},\quad
	\delta_C \mapsto \delta_{C|_\lambda}.
\end{equation*}

The following theorem, proven in \cite[Theorem 11]{Romanov:Whittaker}, defines a set of polynomials indexed by pairs of right cosets called \textit{Whittaker Kazhdan-Lusztig polynomials}. 
For a right coset $E \in W_\Theta \backslash W$, we write $(W_\Theta \backslash W)_{\le E}$ for the set of those cosets $F$ such that $F \le E$.

\begin{deftheorem}[Whittaker Kazhdan-Lusztig polynomials for $(W,\Pi,\Theta)$]\label{def:parabolic_KL_poly_Theta}
	For any $E \in W_\Theta \backslash W$, there exists a unique set of polynomials $\{P_{CD}\} \subset q \BZ[q]$ indexed by
	\begin{equation*} 
		\{(C,D) \mid C,D \in (W_\Theta \backslash W)_{\le E} ; D < C\}
	\end{equation*}
	such that the function
	\begin{equation*}
		\psi: (W_\Theta \backslash W)_{\le E} \aro \cH_\Theta, \quad
		C \mapsto \delta_{C} + \sum_{D < C} P_{CD} \delta_{D}
	\end{equation*}
	satisfies the following property: for any $C \in W_\Theta \backslash W$ with $C \neq W_\Theta$, there exist $\alpha \in \Pi$ and $c_{D} \in \BZ$ such that $C s_\alpha < C$ and
	\begin{equation*}
		T_\alpha (\psi(C s_\alpha)) = \sum_{D \le C} c_{D} \psi(D).
	\end{equation*}	
	Moreover, the polynomials $P_{CD}$ do not depend on the choice of $E$. The polynomials $P_{CD}$ are called \textbf{Whittaker Kazhdan-Lusztig polynomials}, and the elements $\psi(F)$ are called \textbf{Kazhdan-Lusztig basis elements} of $\cH_\Theta$.\footnote{Romanov actually denotes the map by $\varphi$. We reserve the notation $\varphi$ to be used in the main algorithm \ref{thm:KL_alg}.}
\end{deftheorem}

It is verified in \cite[\textsection 6.3 Remark 4]{Romanov:Whittaker} that the Whittaker Kazhdan-Lusztig polynomials $P_{CD}$ agree with the parabolic Kazhdan-Lusztig polynomials $n_{y,x}$ in \cite[\textsection 3 Remark 3.2]{Soergel:KL_Tilting} for $x = w_\Theta w^C$ and $y = w_\Theta w^D$ (recall that $w_\Theta$ is the longest element in $W_\Theta$). \cite[\textsection 6]{Romanov:Whittaker} also contains comparisons of $P_{CD}$ with various polynomials defined in other sources.

We apply the same definition to $(W_\lambda,\Pi_\lambda,\Theta(u,\lambda))$:

\begin{deftheorem}[Whittaker Kazhdan-Lusztig polynomials for $(W_\lambda,\Pi_\lambda,\Theta(u,\lambda))$]\label{def:parabolic_KL_poly}
	For any $E \in W_{\lambda,{\Theta(u,\lambda)}} \backslash W_\lambda$, there exists a unique set of polynomials $\{P_{FG}^{u,\lambda}\} \subset q \BZ[q]$ indexed by
	\begin{equation*} 
		\{(F,G) \mid F,G \in (W_{\lambda,\Theta(u,\lambda)} \backslash W_\lambda)_{\le_{u,\lambda} E} ; G <_{u,\lambda} F\}
	\end{equation*}
	such that the function
	\begin{equation*}
		\psi_{u,\lambda}: (W_{\lambda,{\Theta(u,\lambda)}} \backslash W_\lambda)_{\le_{u,\lambda} E} \aro \cH_{\Theta(u,\lambda)},\quad
		F \mapsto \delta_{F} + \sum_{G <_{u,\lambda} F} P_{FG}^{u,\lambda} \delta_{G}
	\end{equation*}
	satisfies the following property: for any $F \in W_{\lambda,{\Theta(u,\lambda)}} \backslash W_\lambda$ with $F \neq W_{\lambda,\Theta(u,\lambda)}$, there exist $\alpha \in \Pi_\lambda$ and $c_{G} \in \BZ$ such that $F s_\alpha <_{u,\lambda} F$ and
	\begin{equation}\label{eqn:parabolic_KL_poly}
		T_\alpha^{u,\lambda} (\psi_{u,\lambda}(F s_\alpha)) = \sum_{G \le_{u,\lambda} F} c_{G} \psi_{u,\lambda}(G).
	\end{equation}	
	Moreover, the polynomials $P_{FG}^{u,\lambda}$ do not depend on the choice of $E$. The polynomials $P_{FG}^{u,\lambda}$ are called \textbf{Whittaker Kazhdan-Lusztig polynomials}, and the elements $\psi_{u,\lambda}(F)$ are called \textbf{Kazhdan-Lusztig basis elements} of $\cH_{\Theta(u,\lambda)}$.
\end{deftheorem}

We will write $P_{CD}^{u,\lambda}$ instead of $P_{C|_\lambda, D|_\lambda}^{u,\lambda}$ for convenience. We set $P_{EE}^{u,\lambda} = 1$ for all $E$.

If we apply the above definitions to the special case $\eta = 0$, we recover the ordinary Kazhdan-Lusztig polynomials $P_{wv}$ for $W$ and $P_{wv}^\lambda$ for $W_\lambda$, respectively. They are related to the polynomials $P_{v,w}$ in \cite{Kazhdan-Lusztig:Hecke_Alg} by $P_{wv}(q) = q^{\ell(w)-\ell(v)} P_{v,w}(q^{-2})$, and our Kazhdan-Lusztig basis element $\psi(w)$ is the same as $\overline{C_w}$ in \textit{op. cit.}


\subsection{Main algorithm}\label{subsec:algorithm}

Recall from \textsection \ref{subsec:geom_prelim} that $i_{w^D}: C(w^D) \to X$ is the inclusion map of the Schubert cell $C(w^D)$. Recall also that the category $\Mod_{coh}(\cD_{C(w^D)},N,\eta)$ is semisimple and $\cO_{C(w^D)}^\eta$ is the unique irreducible object. Therefore, any complex $\cV^\bullet$ of modules in this category is a direct sum of $\cO_{C(w^D)}^\eta$ at various degrees. We write $\chi_q \cV^\bullet$ for its generating function in variable $q$, i.e.
\begin{equation}\label{eqn:defn_of_chi_q}
	\chi_q \cV^\bullet = \sum_{p \in \BZ} \big( \rank H^p \cV^\bullet \big) q^p.
\end{equation}
Define the map
\begin{equation}\label{eqn:defn_of_nu}
	\nu: \operatorname{Obj} \Mod_{coh}(\cD_\lambda,N,\eta) \aro \cH_\Theta,\quad
	\cF \mapsto \sum_{D \in W_\Theta \backslash W} \big( \chi_q i_{w^D}^! \cF \big) \delta_D.
\end{equation}
Clearly, this map can be defined for any complexes of $\cD_\lambda$-modules with $\eta$-twisted $N$-equivariant cohomologies. The following easy property of $\nu$ is immediate:

\begin{lemma}\label{lem:nu_std}
	\begin{equation*}
		\nu(\cI(w^C,\lambda,\eta)) = \delta_C.
	\end{equation*}
\end{lemma}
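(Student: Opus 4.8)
The plan is to evaluate, for each $D \in W_\Theta \backslash W$, the coefficient $\chi_q\, i_{w^D}^! \cI(w^C,\lambda,\eta)$ of $\delta_D$ in $\nu(\cI(w^C,\lambda,\eta))$, starting from $\cI(w^C,\lambda,\eta) = i_{w^C+}\cO_{C(w^C)}^\eta$ and using the functoriality of holonomic $\cD$-modules recalled in \textsection\ref{subsec:geom_prelim}. The two things to prove are that this coefficient equals $1$ when $D=C$ and $0$ when $D\neq C$. A recurring tool will be that, since $\overline{C(v)} = \bigsqcup_{v'\le v} C(v')$, for any $v\in W$ the union $\bigcup_{v' < v}C(v')$ of the strictly smaller Schubert cells is closed in $X$, so $X\smallsetminus\bigcup_{v'<v}C(v')$ is an open subvariety in which $C(v)$ is closed.

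For $D=C$: take $V := X\smallsetminus\bigcup_{v<w^C}C(v)$ and factor $i_{w^C}$ as a closed immersion $k\colon C(w^C)\hookrightarrow V$ of smooth varieties followed by an open immersion $o\colon V\hookrightarrow X$. Then $i_{w^C}^! = k^! o^+$ and $i_{w^C+} = o_+ k_+$, so $i_{w^C}^! i_{w^C+} = k^! o^+ o_+ k_+ = k^! k_+ = \mathrm{id}$ by Kashiwara's equivalence (for $k$) and $o^+o_+ = \mathrm{id}$ (for the open immersion $o$); hence $i_{w^C}^!\cI(w^C,\lambda,\eta) = \cO_{C(w^C)}^\eta$. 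Since the underlying $\cO$-module of $\cO_{C(w^C)}^\eta$ is $\cO_{C(w^C)}$, free of rank $1$, this complex is concentrated in degree $0$ with rank $1$, so $\chi_q\, i_{w^C}^!\cI(w^C,\lambda,\eta)=1$.

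For $D\neq C$: I would show the vanishing $i_{w^D}^!\cI(w^C,\lambda,\eta)=0$, which is the only substantive point. Put $V := X\smallsetminus\bigcup_{v<w^D}C(v)$, so $C(w^D)$ is closed in $V$, and factor $i_{w^D} = o\circ\iota$ with $\iota\colon C(w^D)\hookrightarrow V$ a closed immersion of smooth varieties and $o\colon V\hookrightarrow X$ open. By base change along the open immersion $o$ one gets $o^+\cI(w^C,\lambda,\eta) = j'_+\cN$, where $j'\colon C(w^C)\cap V\hookrightarrow V$ is the (locally closed) immersion and $\cN = \cO_{C(w^C)}^\eta\big|_{C(w^C)\cap V}$ (this is $0$ when $C(w^C)\cap V=\varnothing$, in which case we are done). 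Because the cells $C(w^C)$ and $C(w^D)$ are disjoint, $j'$ factors through the open complement $k\colon V\smallsetminus C(w^D)\hookrightarrow V$, say $j' = k\circ m$, so $j'_+\cN = k_+ m_+\cN$. Applying the distinguished triangle for local cohomology along the smooth closed subvariety $C(w^D)\subset V$ to $j'_+\cN$, the map $j'_+\cN\to k_+k^+j'_+\cN$ in that triangle is the unit of $k^+\dashv k_+$ on an object in the essential image of $k_+$, hence an isomorphism; therefore $\iota_+\iota^! j'_+\cN = 0$, and $\iota^! j'_+\cN = 0$ by Kashiwara. Thus $i_{w^D}^!\cI(w^C,\lambda,\eta) = \iota^! o^+\cI(w^C,\lambda,\eta) = \iota^! j'_+\cN = 0$, so $\chi_q\, i_{w^D}^!\cI(w^C,\lambda,\eta) = 0$. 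Combining the two cases, $\nu(\cI(w^C,\lambda,\eta)) = \sum_{D}\big(\chi_q\, i_{w^D}^!\cI(w^C,\lambda,\eta)\big)\delta_D = \delta_C$. The hard part is precisely the $D\neq C$ vanishing: one has to choose the open set $V$ so that $C(w^D)$ becomes closed in it and then run the triangle‑identity argument carefully; once that framework is set up the rest is routine bookkeeping with the standard $\cD$-module operations.
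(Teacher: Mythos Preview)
Your proof is correct and follows the same approach as the paper: both compute $i_{w^D}^! i_{w^C+}\cO_{C(w^C)}^\eta$ and find it to be $\cO_{C(w^C)}^\eta$ when $D=C$ and $0$ otherwise. The paper dispatches both cases in one line by invoking the base change theorem for the Cartesian square $C(w^D)\times_X C(w^C)$ (which is $C(w^C)$ or empty), whereas you unpack this explicitly by factoring through an open set in which the relevant cell is closed and then using Kashiwara and the local cohomology triangle; your version is more detailed but not different in substance.
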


\begin{proof}
	Let $D \in W_\Theta \backslash W$. Then $i_{w^D}^! \cI(w^C,\lambda,\eta) = i_{w^D}^! i_{w^C+} \cO_{C(w^C)}^\eta$. By base change theorem, this is $\cO_{C(w^C)}^\eta$ if $C = D$ and is $0$ otherwise. Hence the claim follows by the definition of $\nu$.
\end{proof}

\begin{theorem}[Kazhdan-Lusztig Algorithm for Whittaker modules]\label{thm:KL_alg}
	Fix a character $\eta: \fn \to \BC$. For any $\lambda \in \fh^*$, there exists a unique map
	\begin{equation*}
		\varphi_\lambda: W_\Theta \backslash W \aro \cH_\Theta
	\end{equation*}
	such that for any $C \in W_\Theta \backslash W$, if we write $u$ for the unique element in $A_{\Theta,\lambda}$ such that $C$ is contained in $W_\Theta u W_\lambda$, the following conditions hold:
	\begin{enumerate}
		\item for some $P_{CD}^{u,\lambda} \in q \BZ[q]$, 
		\begin{equation*}
			\varphi_\lambda(C) = \delta_C + 
			\sum_{\substack{D \in W_\Theta \backslash W_\Theta u W_\lambda\\%
					D <_{u,\lambda} C}} P_{CD}^{u,\lambda} \delta_D.
		\end{equation*}
		
		\item for any $\alpha \in \Pi \cap \Pi_\lambda$ with $C s_\alpha < C$, there exist $c_D \in \BZ$ such that
		\begin{equation*}
			T_\alpha( \varphi_\lambda (C s_\alpha) ) =		
			\sum_{\substack{D \in W_\Theta \backslash W_\Theta u W_\lambda \\ D \le_{u,\lambda} C}} c_D \varphi_\lambda( D)
		\end{equation*}
		
		\item for any $\beta \in \Pi - \Pi_\lambda$ such that $C s_\beta < C$,
		\begin{equation*}
			\varphi_{s_\beta \lambda} (C s_\beta) = \varphi_\lambda (C) s_\beta
		\end{equation*}
		(recall that the action $\cH_\Theta \racts W$ is given by $\delta_C \cdot w = \delta_{C w}$).
		
		\item The polynomials $P_{CD}^{u,\lambda}$ are parabolic Kazhdan-Lusztig polynomials for $(W_\lambda,\Pi_\lambda, {\Theta(u,\lambda)})$.
	\end{enumerate}
	Moreover, the map $\varphi_\lambda$ is given by
	\begin{equation*}
		\varphi_\lambda(C) = \nu(\cL(w^C,\lambda,\eta)).
	\end{equation*}
\end{theorem}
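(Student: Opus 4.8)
The plan is to prove Theorem \ref{thm:KL_alg} by simultaneous induction on $\ell(w^C)$, establishing existence, uniqueness, and the explicit formula $\varphi_\lambda(C) = \nu(\cL(w^C,\lambda,\eta))$ all at once. The base case is $C = W_\Theta u$ for $u \in A_{\Theta,\lambda}$ of minimal length; here (1) should just say $\varphi_\lambda(W_\Theta u) = \delta_{W_\Theta u}$, which matches $\nu(\cL(w^{W_\Theta u},\lambda,\eta))$ because on the smallest cell the standard module $\cI(w^{W_\Theta u},\lambda,\eta)$ equals its irreducible submodule $\cL$ (the cell is closed in its closure up to stuff of smaller dimension, so $i_{w^D}^! \cL$ vanishes for $D \neq W_\Theta u$ and Lemma \ref{lem:nu_std} applies). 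For the inductive step I would first prove that $\nu(\cL(w^C,\lambda,\eta))$ satisfies properties (1)--(4), then show these properties determine $\varphi_\lambda$ uniquely.

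For the inductive step I would proceed as follows. Pick $C \neq W_\Theta u$. By Proposition \ref{lem:decrease_of_length} there is $\alpha \in \Pi_\lambda$, a sequence $\beta_1,\dots,\beta_s \in \Pi$ of non-integral simple reflections (with respect to successive twists of $\lambda$), and $z = s_{\beta_1}\cdots s_{\beta_s}$, so that $z^{-1}\alpha \in \Pi \cap \Pi_{z^{-1}\lambda}$, $Cs_\alpha <_{u,\lambda} C$, $Cz < C$ if $s > 0$, and $Cs_\alpha z = Cz s_{z^{-1}\alpha} < Cz$. The key mechanism: I apply the chain of non-integral intertwining functors $I_{s_{\beta_1}},\dots,I_{s_{\beta_s}}$, which by Corollary \ref{lem:I_on_std} send irreducible $\cD$-modules to irreducible $\cD$-modules with cosets right-multiplied by the $s_{\beta_i}$, and by Proposition \ref{thm:Is_pullback} preserve the ranks of the higher pullbacks to Schubert cells. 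This reduces the verification of property (2)-type relations for $C$ to the analogous relation for the translated coset $Cz$ and the now-genuinely-simple root $z^{-1}\alpha$, where $Cz$ has smaller length so the induction hypothesis applies. Concretely, property (3) for $\nu(\cL)$ is exactly the content of Corollary \ref{lem:I_on_std} combined with the fact (to be checked, using Proposition \ref{thm:Is_pullback}) that $\nu$ intertwines $I_{s_\beta}$ with the right action $(-)\cdot s_\beta$ on $\cH_\Theta$ — this is the commutative diagram advertised in \textsection\ref{subsec:geom_idea}. Property (2) for $\nu(\cL)$, when $\alpha$ is simple in both $W$ and $W_\lambda$, should follow from analyzing the $U$-functor (push-pull along $X \to X_{s_\alpha}$), the decomposition theorem for holonomic $\cD$-modules (Mochizuki), and the induction hypothesis — this is the direct analogue of condition \ref{enum:KL_basis_U}. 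When $\alpha$ is not simple in $W$, use the intertwining-functor reduction above to transport property (2) back from $Cz$.

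Once $\nu(\cL(w^C,\lambda,\eta))$ is shown to satisfy (1)--(4), I would derive properties (1) and (4). Property (1): the support of $\cL(w^C,\lambda,\eta)$ lies in $\overline{C(w^C)}$, so $i_{w^D}^!\cL = 0$ unless $w^D \le w^C$; moreover $i_{w^C}^!\cL(w^C,\lambda,\eta) = \cO_{C(w^C)}^\eta$ in degree $0$ with no higher cohomology (the irreducible restricts to the irreducible on the open cell), giving the $\delta_C$ coefficient $1$; and the support-and-block structure confines $D$ to $W_\Theta u W_\lambda$ with $D <_{u,\lambda} C$ by Corollary \ref{lem:Bruhat_order_rcoset} and the block decomposition of $\cN_{\theta,\eta}$. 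Positivity $P_{CD}^{u,\lambda} \in q\BZ[q]$ will come from the geometric interpretation of the coefficients as dimensions of cohomology in strictly positive degrees — the irreducible $\cL$, being the IC-type extension, has no cohomology in degree $0$ along smaller cells. Property (4) then follows by comparing the recursion (1)--(2) restricted to a single block $W_\Theta u W_\lambda$, pushed through the order-isomorphism $(-)|_\lambda$ of Corollary \ref{lem:right_coset_partition} and Proposition \ref{thm:int_Whittaker_model}, with the characterization of Whittaker Kazhdan-Lusztig polynomials for $(W_\lambda,\Pi_\lambda,\Theta(u,\lambda))$ in Definition/Theorem \ref{def:parabolic_KL_poly}: properties (1) and (2) here become exactly \ref{enum:WKL_basis_expansion} and \ref{enum:WKL_basis_U} there, so uniqueness in \ref{def:parabolic_KL_poly} forces $P_{CD}^{u,\lambda}$ to be the parabolic KL polynomials, and also forces uniqueness of $\varphi_\lambda$ itself. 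The main obstacle I anticipate is the bookkeeping in property (2) when $\alpha$ is not simple in $W$: one must verify that conjugating the whole relation through the chain of non-integral intertwining functors is compatible with the operators $T_{z^{-1}\alpha}^{z^{-1}u z,\, z^{-1}\lambda}$ and with the right $W$-action on $\cH_\Theta$, i.e. that the middle and right squares of the diagram in \textsection\ref{subsec:geom_idea} genuinely commute on irreducibles — and that the length-decrease guarantees from Proposition \ref{lem:decrease_of_length} are strong enough to keep the induction well-founded at every intermediate coset $Cz_i$.
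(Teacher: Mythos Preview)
Your overall architecture matches the paper's: induction on $\ell(w^C)$ simultaneously over all $\lambda$, defining $\varphi_\lambda(C) = \nu(\cL(w^C,\lambda,\eta))$ and verifying (3), (2), (1), (4) in that order at each step. Two points need correction.

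First, property (2) in the theorem is stated only for $\alpha \in \Pi \cap \Pi_\lambda$, so $\alpha$ is always simple in $W$ and the $U$-functor $p_\alpha^+ p_{\alpha+}$ exists directly (after a line-bundle twist to make $\alpha^\vee(\lambda)=-1$); there is no ``$\alpha$ not simple in $W$'' case to handle for (2). The intertwining-functor chain from Proposition \ref{lem:decrease_of_length} enters only in the proof of \emph{(4)}: to match the characterization \ref{enum:WKL_basis_U} in Definition/Theorem \ref{def:parabolic_KL_poly} one needs a relation involving $T_\alpha^{u,\lambda}$ for some $\alpha \in \Pi_\lambda$, and such an $\alpha$ need not lie in $\Pi$. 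The chain $z$ conjugates $\alpha$ into $\Pi \cap \Pi_{z^{-1}\lambda}$, where property (2) for $Cz$ at $z^{-1}\lambda$ holds by the induction hypothesis (since $Cz < C$), and that relation is then transported back through the diagram (\ref{eqn:Is_right_coset}). Your anticipated ``bookkeeping obstacle'' is precisely this, and it belongs to (4), not (2).

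Second, and more seriously, your argument for (1) invokes ``the block decomposition of $\cN_{\theta,\eta}$'' to confine $D$ to $\{D <_{u,\lambda} C\}$. This is not available: the theorem is for \emph{arbitrary} $\lambda$, where there is no equivalence between $\cD_\lambda$-modules and $\cN_{\theta,\eta}$, and even for antidominant regular $\lambda$ the block decomposition is something one would \emph{deduce} from the theorem rather than assume. The support argument alone gives only $D \le C$, which is strictly weaker than $D \le_{u,\lambda} C$. The paper obtains the finer confinement from the inductive step itself: if some $\beta \in \Pi - \Pi_\lambda$ satisfies $Cs_\beta < C$, then (3) (already verified for $C$) together with the induction hypothesis (1) for $Cs_\beta$ at $s_\beta\lambda$, translated via Proposition \ref{lem:Is_right_coset}, gives (1) for $C$; if no such $\beta$ exists, some $\alpha \in \Pi \cap \Pi_\lambda$ has $Cs_\alpha < C$, and since $\cL(w^C,\lambda,\eta)$ appears as a summand of $U_\alpha\cL(w^{Cs_\alpha},\lambda,\eta) = \bigoplus_{D \le_{u,\lambda} C} \cL(w^D,\lambda,\eta)^{\oplus c_D}$, the coefficients of the pullbacks $i_{w^D}^!\cL(w^C,\lambda,\eta)$ are dominated termwise and hence vanish for $D \not\le_{u,\lambda} C$. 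The leading-term and positive-degree parts of (1) are as you describe.
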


If $\lambda$ is integral, this reduces to the main theorem of Romanov \cite[Theorem 11]{Romanov:Whittaker}. Readers can go back to \textsection\ref{subsec:geom_idea} for an explanation of the meaning of the algorithm and an outline of the proof.

Let us begin the proof of the theorem. Uniqueness is determined by (1), (4), and the uniqueness of Whittaker Kazhdan-Lusztig polynomials. For existence, we will show that $\varphi_\lambda(C) = \nu(\cL(w^C,\lambda))$ satisfies the requirements (1)-(4) by induction on $\ell(w^C)$. 

Consider the base case $\ell(w^C) = \ell(w_\Theta)$, that is, $C = W_\Theta$, $w^C = w_\Theta$. The argument for this case in the same as in \cite{Romanov:Whittaker}. Any composition factor of the standard module $\cI(w_\Theta,\lambda,\eta)$ is supported on cells $C(w)$ in the closure of $C(w_\Theta)$. But any such $w$ are in $W_\Theta$ with $w \le w_\Theta$. In particular, $w$ is not the longest element in its right $W_\Theta$-coset unless $w = w_\Theta$. So there is no module supported on $C(w)$ unless $w = w_\Theta$. Hence the only composition factors are supported on $C(w_\Theta)$. By pulling back to $C(w_\Theta)$, we see that there is only one such factor, namely $\cL(w_\Theta,\lambda,\eta)$. Thus $\cI(w_\Theta,\lambda,\eta) = \cL(w_\Theta,\lambda,\eta)$. As a result 
\begin{equation*}
	\nu(\cL(w_\Theta,\lambda,\eta))
	= \nu(\cI(w_\Theta,\lambda,\eta))
	= \delta_{W_\Theta}
\end{equation*}
by \ref{lem:nu_std}. Therefore, the function $\varphi_\lambda(C)$ satisfies (1) for $C = W_\Theta$. The conditions (2) and (3) are void, and (4) is trivially true. This completes the base case.

Now consider the case $\ell(w^C) = k > \ell(w_\Theta)$. The verification of (1)-(4) for $C$ is divided into subsections.

\subsection{Verification of \ref{thm:KL_alg}(3) for $\ell(w^C) = k$}\label{subsec:(3)}

Assume $\beta \in \Pi - \Pi_\lambda$ is such that $C s_\beta < C$. By definition,
\begin{align*}
	\varphi_\lambda(C) s_\beta 
	&= \left( \sum_{D \in W_\Theta \backslash W}  \big( \chi_q i_{w^D}^! \cL(w^C,\lambda,\eta) \big)  \delta_D \right) s_\beta\\
	&= \sum_{D \in W_\Theta \backslash W} \big( \chi_q i_{w^D}^! \cL(w^C,\lambda,\eta) \big) \delta_{D s_\beta}
\end{align*}
and
\begin{align*}
	\varphi_{s_\beta \lambda} (C s_\beta) 
	&= \sum_{D \in W_\Theta \backslash W} \big( \chi_q i_{w^D}^! \cL(w^{Cs_\beta}, s_\beta\lambda,\eta) \big) \delta_D\\
	&= \sum_{D \in W_\Theta \backslash W} \big( \chi_q i_{w^{Ds_\beta}}^! \cL(w^{Cs_\beta}, s_\beta\lambda,\eta) \big)  \delta_{D s_\beta}
\end{align*}
where in the last equality we rearranged the sum by the bijection $W_\Theta \backslash W \bij W_\Theta \backslash W$, $D \mapsto D s_\beta$. Hence it suffices to show 
\begin{equation*}
	\chi_q i_{w^D}^! \cL(w^C,\lambda,\eta) = \chi_q i_{w^{Ds_\beta}}^! \cL(w^{Cs_\beta}, s_\beta\lambda,\eta)
\end{equation*}
for any $D \in W_\Theta \backslash W$, or equivalently
\begin{equation*}
	\rank H^p i_{w^D}^! \cL(w^C,\lambda,\eta)
	= \rank H^p i_{w^{D s_\beta}}^! \cL(w^{C s_\beta},s_\beta\lambda, \eta)
\end{equation*}
for any $D$ and any $p \in \BZ$. This follows by Proposition \ref{thm:Is_pullback}.

\subsection{Verification of \ref{thm:KL_alg}(2) for $\ell(w^C) = k$}\label{subsec:alg_(2)}

This part of the argument is modified from \cite[Chapter 5 \textsection 2]{Milicic:Localization} and is almost identical to \cite[\textsection 5]{Romanov:Whittaker} with slight modifications. Instead of reproving all the details, we briefly review Romanov's argument and point out the main change for our situation.

Let us assume for the moment that $\lambda$ is integral. Suppose $\alpha \in \Pi$ and $C s_\alpha < C$. Writing $\varphi_\lambda(C) = \nu(\cL(w^C,\lambda,\eta))$, \ref{thm:KL_alg}(2) reads
\begin{equation}\label{eqn:T_condition_rephrased_geom}
	(T_\alpha \circ \nu)(\cL(w^C s_\alpha,\lambda,\eta)) = \sum_{D \le C} c_D \; \nu(\cL(w^D,\lambda,\eta)).
\end{equation}
In order to prove this, let $X_\alpha$ be the partial flag variety with natural projection $p_\alpha: X \to X_\alpha$. Define the functor 
\begin{equation*}
	U_\alpha := p_\alpha^+ p_{\alpha_+}.
\end{equation*}
By the decomposition theorem \cite{Mochizuki:Decomp} and a careful study of (integral) intertwining functors, one can show that, for some integers $c_D$,
\begin{equation}\label{eqn:U_semisimple}
	U_\alpha \cL(w^C s_\alpha, \lambda,\eta) = \bigoplus_{D \le C} \cL(w^D,\lambda,\eta)^{\oplus c_D}
\end{equation}
(\cite[Lemma 17]{Romanov:Whittaker}; cf. \cite[Chapter 5 Lemma 2.7]{Milicic:Localization}). Then one shows that
\begin{equation}\label{eqn:U_lifts_T}
	(T_\alpha \circ \nu)(\cL(w^C s_\alpha,\lambda,\eta)) = (\nu \circ U_\alpha)(\cL(w^C s_\alpha,\lambda,\eta)).
\end{equation}
In other words, $U_\alpha$ is the ``geometric lift'' of the combinatorially defined operator $T_\alpha$. Once this equality is established, (\ref{eqn:U_semisimple}) and (\ref{eqn:U_lifts_T}) together lead to (\ref{eqn:T_condition_rephrased_geom}), which proves \ref{thm:KL_alg}(2). Therefore it remains to prove the equality (\ref{eqn:U_lifts_T}). To do this, first notice that $T_\alpha$ restricts to an operation on the $\BZ[q^{\pm1}]$-submodule spanned by $\delta_D$ and $\delta_{D s_\alpha}$ and $U_\alpha$ operate on the abelian subgroup of $K \Mod_{coh}(\cD_\lambda,N,\eta)$ spanned by the classes of $\cI(w^D,\lambda,\eta)$ and $\cI(w^D s_\alpha,\lambda,\eta)$ (in fact one can easily show that $T_\alpha \circ \nu$ and $\nu \circ U_\alpha$ agree on standard modules). Therefore it is natural to restrict $U_\alpha \cL(w^C s_\alpha,\lambda,\eta)$ to the subvariety $X_O := C(w^D) \cup C(w^D s_\alpha)$. Since $X_O$ is the union of two Schubert cells, one can apply the distinguished triangle for local cohomology to $U_\alpha \cL(w^C s_\alpha,\lambda,\eta)|_{X_O}$ and take the long exact sequence on cohomologies. Using a parity degree vanishing property of Whittaker Kazhdan-Lusztig polynomials 
\begin{equation*}
	P_{CD} \in \BZ[q^{\pm 2}] \cdot q^{\ell(w^C) - \ell(w^D)}
\end{equation*}
(\cite[Lemma 15]{Romanov:Whittaker}), this long exact sequence splits into short exact sequences, producing a description of $U_\alpha \cL(w^C s_\alpha,\lambda,\eta)|_{C(w^D)}$ in terms of the ranks of $\cL(w^C s_\alpha,\lambda,\eta)|_{C(w^D)}$. (\ref{eqn:U_lifts_T}) is proven by plugging these descriptions into the definition of $(\nu \circ U_\alpha)(\cL(w^C s_\alpha,\lambda,\eta))$ and comparing with the left side.

Now let $\lambda$ be general, and let $\alpha \in \Pi \cap \Pi_\lambda$ so that $C s_\alpha < C$. In this case, the $U_\alpha$ functor still exists. In more detail, the definition of $U_\alpha$ requires the existence of a twisted sheaf of differential operators $\cD_{X_\alpha,\lambda}$ on $X_\alpha$ whose pullback to $X$ is $\cD_\lambda$. Such existence is equivalent to $\alpha^\vee(\lambda) = -1$. Since $\alpha$ is assumed to be integral to $\lambda$, the condition $\alpha^\vee(\lambda) = -1$ can be achieved by twisting $\lambda$ by an integral weight, which can be done by twisting $\cD$-modules by a line bundle.

Roughly the same argument applies to this situation, except now \ref{thm:KL_alg}(2) becomes
\begin{equation*}
	(T_\alpha \circ \nu)(\cL(w^C s_\alpha,\lambda,\eta)) = \sum_{\substack{D \in W_\Theta \backslash W_\Theta u W_\lambda \\ D \le_{u,\lambda} C}} c_D \; \nu(\cL(w^D,\lambda,\eta))
\end{equation*}
which has fewer terms on the right side compared to (\ref{eqn:T_condition_rephrased_geom}). Namely, the summand $\nu(\cL(w^D,\lambda,\eta))$ does not appear if either $D$ is not in the same double coset as $C$, or $D \not\le_{u,\lambda} C$. To get this restricted sum, one uses the induction assumption \ref{thm:KL_alg}(1) on $\nu(\cL(w^C s_\alpha,\lambda,\eta))$ and shows that
\begin{equation}\label{eqn:U_semisimple_restricted}
	U_\alpha \cL(w^C s_\alpha, \lambda,\eta) = \bigoplus_{\substack{D \in W_\Theta \backslash W_\Theta u W_\lambda \\ D \le_{u,\lambda} C}} \cL(w^D,\lambda,\eta)^{\oplus c_D}
\end{equation}
during the course of proving the lifting (\ref{eqn:U_lifts_T}). Thus 
\begin{align*}
	T_\alpha( \varphi_\lambda(C s_\alpha))
	&= \nu( U_\alpha \cL(w^C s_\alpha,\lambda,\eta) )\\
	&= \nu\Big( \bigoplus_{\substack{D \in W_\Theta \backslash W_\Theta u W_\lambda\\ D \le_{u,\lambda} C}} \cL(w^D, \lambda,\eta)^{\oplus c_D} \Big) \\
	&= \sum_{\substack{D \in W_\Theta \backslash W_\Theta u W_\lambda\\ D \le_{u,\lambda} C}} c_D \nu (\cL(w^D, \lambda,\eta)) \\
	&= \sum_{\substack{D \in W_\Theta \backslash W_\Theta u W_\lambda\\ D \le_{u,\lambda} C}} c_D \varphi_\lambda(D).
\end{align*}
and \ref{thm:KL_alg}(2) is verified for $C$.

\subsection{Verification of \ref{thm:KL_alg}(1) for $\ell(w^C) = k$}\label{subsec:(1)}

The idea of this proof (and the proof of \ref{thm:KL_alg}(4)) is to find a simple reflection $s$ so that $C s< C$ and deduce information of $\cL(w^C,\lambda,\eta)$ from that of $\cL(w^{Cs},\lambda',\eta)$. If $s$ is non-integral, we can use non-integral intertwining functor $I_s$ to translate properties of $C s$ to $C$. If $s$ is integral, we then use information about the $U$-functor.

Suppose there exists $\beta \in \Pi - \Pi_\lambda$ such that $C s_\beta < C$. Then the non-integral intertwining functor $I_{s_\beta}$ sends $\cL(w^C,\lambda,\eta)$ to $\cL(w^{C s_\beta},s_\beta\lambda,\eta)$, allowing us to translate induction assumption for the latter module to the former. Since the induction hypothesis applies for $C s_\beta$, from \ref{thm:KL_alg}(1) for $C s_\beta$ and $s_\beta \lambda$, we obtain
\begin{equation*}
	\varphi_{s_\beta \lambda} (C s_\beta) 
	= \delta_{C s_\beta} 
	+ \sum_{\substack{	D \in W_\Theta \backslash W_\Theta r W_{s_\beta \lambda}\\  D  <_{r,s_\beta \lambda} C s_\beta}} 
	Q_{D} \delta_D,
\end{equation*}
for some polynomials $Q_{D} \in q \BZ[q]$, where $r$ is the unique element in $A_{\Theta,s_\beta \lambda}$ such that $C s_\beta \in W_\Theta \backslash W_\Theta r W_{s_\beta \lambda}$. Applying \ref{thm:KL_alg}(3) for $C$ (which was already proven), 
\begin{equation*}
	\varphi_\lambda(C) 
	= \varphi_{s_\beta \lambda} (C s_\beta) s_\beta
	= \delta_C 
	+ \sum_{\substack{	D \in W_\Theta \backslash W_\Theta r W_{s_\beta \lambda}\\ D  <_{r,s_\beta \lambda} C s_\beta}} 
	Q_{D} \delta_{D s_\beta}.
\end{equation*}
We want to rewrite the subscript of the sum. By Lemma \ref{lem:Is_pres_lowest_db_coset} and its corollary, there exists $w \in W_\Theta$ with $w r = u s_\beta$. Hence
\begin{align*}
	W_\Theta r W_{s_\beta \lambda} 
	&= W_\Theta w r s_\beta W_\lambda s_\beta\\
	&= W_\Theta u s_\beta s_\beta W_\lambda s_\beta\\
	&= W_\Theta u W_\lambda s_\beta,
\end{align*}
and we see that $D \in W_\Theta \backslash W_\Theta r W_{s_\beta \lambda}$ if and only if $D s_\beta \in W_\Theta \backslash W_\Theta u W_\lambda$. By Proposition \ref{lem:Is_right_coset},
\begin{equation*}
	D <_{r,s_\beta \lambda} C s_\beta \iff D s_\beta <_{u,\lambda} C.
\end{equation*}
Hence
\begin{align*}
	\varphi_\lambda(C) &= \delta_C 
	+ \sum_{\substack{	D s_\beta \in W_\Theta \backslash W_\Theta u W_\lambda \\ D s_\beta <_{u,\lambda}C}}
	Q_{D} \delta_{D s_\beta}\\
	&= \delta_C 
	+ \sum_{\substack{	E \in W_\Theta \backslash W_\Theta u W_\lambda \\ E <_{u,\lambda}C}}
	Q_{D} \delta_E
\end{align*}
for some $Q_{D} \in q \BZ[q]$, and \ref{thm:KL_alg}(1) holds for $C$ in this case.

If such $\beta$ does not exist, then there exists a simple integral root $\alpha$ with $C s_\alpha < C$. From (\ref{eqn:U_semisimple_restricted}), we know that  $U_\alpha \cL(C s_\alpha,\lambda,\eta)$ is a direct sum of various irreducible modules supported in the closure of $C(w^C)$. Moreover, since the support of $U_\alpha \cL(C s_\alpha,\lambda,\eta)$ is the closure of ${p_\alpha\inv(p_\alpha(C(w^{Cs_\alpha})))}$ which is \textit{equal} to the closure of $C(w^C)$, $\cL(w^C,\lambda,\eta)$ has to be a direct summand of $U_\alpha \cL(C s_\alpha,\lambda,\eta)$. So the coefficients of the polynomial $\chi_q i_{w^D}^! \cL(w^C,\lambda,\eta)$ (which are non-negative integers) must be dominated by those of $\chi_q i_{w^D}^! U_\alpha \cL(w^C s_\alpha,\lambda)$. On the other hand, we know from (\ref{eqn:U_semisimple_restricted}) that the latter polynomial vanishes if $D$ is not in $W_\Theta u W_\lambda$ or if $D \not\le_{u,\lambda} C$. So the polynomial $\chi_q i_{w^D}^! \cL(w^C,\lambda,\eta)$ also vanishes for those $D$. Hence
\begin{align*}
	\varphi_\lambda(C) 
	&= \sum_{D \in W_\Theta \backslash W} \big( \chi_q i_{w^D}^! \cL(w^C,\lambda,\eta) \big) \delta_D\\
	&= \sum_{\substack{	D \in W_\Theta \backslash W_\Theta u W_\lambda \\D \le_{u,\lambda} C}} \big( \chi_q i_{w^D}^! \cL(w^C,\lambda,\eta) \big) \delta_D.
\end{align*}

It suffices to compute the remaining coefficients. Consider $D = C$. Since $\cI(w^C,\lambda,\eta)$ is a direct image, it contains no section supported in $\partial C(w^C)$ except $0$. The same holds for $\cL(w^C,\lambda,\eta)$ being a submodule of $\cI(w^C,\lambda,\eta)$. Hence $\cL(w^C,\lambda,\eta)|_{X - \partial C(w^C)}$ is a nonzero submodule of $\cI(w^C,\lambda,\eta)|_{X - \partial C(w^C)}$. But $\cI(w^C,\lambda,\eta)|_{X - \partial C(w^C)}$ is irreducible by Kashiwara's equivalence of categories for the closed immersion $C(w^C) \inj X - \partial C(w^C)$, so $\cL(w^C,\lambda,\eta)|_{X - \partial C(w^C)} = \cI(w^C,\lambda,\eta)|_{X - \partial C(w^C)}$, and their further pullback to $C(w^C)$ is $\cO_{C(w^C)}^\eta$. Hence the coefficient of $\delta_C$ is $1$. For $D < C$, we know $H^0 i_{w^D}^!$ takes sections supported in $C(w^D)$. Since $\cL(w^C,\lambda,\eta)$ has no section supported in $\partial C(w^C) \supset C(w^D)$, $H^0 i_{w^D}^! \cL(w^C,\lambda,\eta) = 0$ and the coefficient of $\delta_D$ has no constant term. Thus \ref{thm:KL_alg}(1) holds for $C$.

\subsection{Verification of \ref{thm:KL_alg}(4) for $\ell(w^C) = k$}\label{subsec:(4)}

Based on our definition of Whittaker Kazhdan-Lusztig polynomials \ref{def:parabolic_KL_poly}, we need to find $\alpha \in \Pi_\lambda$ such that $C s_\alpha <_{u,\lambda} C$ and equation (\ref{eqn:parabolic_KL_poly}) holds for the function
\begin{equation*}
	\psi_{u,\lambda}(C|_\lambda) := \varphi_\lambda(C)|_\lambda.
\end{equation*}
See \textsection\ref{subsec:lem_induction} for an explanation of the geometric idea behind this proof.

If $\alpha$ can be chosen to be in $\Pi \cap \Pi_\lambda$, then by the following lemma, (\ref{eqn:parabolic_KL_poly}) follows from \ref{thm:KL_alg}(2) for $C$. 

\begin{lemma}
	Let $\alpha \in \Pi \cap \Pi_\lambda$. Then for each $u \in A_{\Theta,\lambda}$ 
	\begin{equation*}
		(-)|_\lambda \circ T_\alpha = T_\alpha^{u,\lambda} \circ (-)|_\lambda
	\end{equation*}
	as maps from $\ind_\lambda \cH_{\Theta(u,\lambda)} \subseteq \cH_\Theta$ to $\cH_{\Theta(u,\lambda)}$. In other words, the following diagram commutes
	\begin{equation*}
		\begin{tikzcd}[column sep=10ex]
			\cH_\Theta \ar[r, "T_\alpha"] \ar[d, "(-)|_\lambda"'] & \cH_\Theta \ar[d, "(-)|_\lambda"]\\
			{\displaystyle \bigoplus_{u \in A_{\Theta,\lambda}} \cH_{\Theta(u,\lambda)}} \ar[r, "\bigoplus_u T_\alpha^{u,\lambda}"] & {\displaystyle \bigoplus_{u \in A_{\Theta,\lambda}} \cH_{\Theta(u,\lambda)}}
		\end{tikzcd}
	\end{equation*}
	(recall the definitions of $T_\alpha^{u,\lambda}$ and $(-)|_\lambda$ in \textsection \ref{subsec:KL_poly}).
\end{lemma}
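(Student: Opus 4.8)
Since both composites are $\BZ[q^{\pm1}]$-linear, it suffices to verify the identity on the basis of $\ind_\lambda \cH_{\Theta(u,\lambda)}$, i.e. on the elements $\delta_C$ with $C \in W_\Theta \backslash W_\Theta u W_\lambda$ (by Corollary \ref{lem:right_coset_partition} these are exactly the $\delta_C$ lying in $\ind_\lambda \cH_{\Theta(u,\lambda)}$, and $C|_\lambda$ runs over all of $W_{\lambda,{\Theta(u,\lambda)}} \backslash W_\lambda$). Fix such a $C$ and set $F = C|_\lambda$. As $\alpha \in \Pi_\lambda$ we have $s_\alpha \in W_\lambda$, so $W_\Theta u W_\lambda s_\alpha = W_\Theta u W_\lambda$; hence $C s_\alpha$ lies in the same double coset and $T_\alpha$ indeed preserves $\ind_\lambda \cH_{\Theta(u,\lambda)}$, so the left-hand composite makes sense. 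Moreover, since $\ind_\lambda$ commutes with right multiplication by $W_\lambda$ (Corollary \ref{lem:right_coset_partition}), applying $(-)|_\lambda$ to $\ind_\lambda(F s_\alpha) = \ind_\lambda(F) s_\alpha = C s_\alpha$ gives $(C s_\alpha)|_\lambda = F s_\alpha$.

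The heart of the argument is to match the trichotomy $C s_\alpha > C$ / $C s_\alpha = C$ / $C s_\alpha < C$ that defines $T_\alpha(\delta_C)$ with the trichotomy $F s_\alpha >_{u,\lambda} F$ / $F s_\alpha = F$ / $F s_\alpha <_{u,\lambda} F$ that defines $T_\alpha^{u,\lambda}(\delta_F)$. The equality cases match because $\ind_\lambda$ is a bijection. For the strict cases: since $\alpha$ is simple in $(W_\lambda,\Pi_\lambda)$, Lemma \ref{lem:Theta_prelim} applied to the triple $(W_\lambda,\Pi_\lambda,\Theta(u,\lambda))$ says exactly one of $F s_\alpha >_{u,\lambda} F$, $F s_\alpha = F$, $F s_\alpha <_{u,\lambda} F$ holds. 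Because $\ind_\lambda$ restricts to an order-preserving bijection $W_{\lambda,{\Theta(u,\lambda)}}\backslash W_\lambda \to W_\Theta \backslash W_\Theta u W_\lambda$, the inequality $F s_\alpha <_{u,\lambda} F$ forces $C s_\alpha < C$ and $F s_\alpha >_{u,\lambda} F$ forces $C s_\alpha > C$; combined with the trichotomy on the $W_\lambda$-side this yields $C s_\alpha < C \iff F s_\alpha <_{u,\lambda} F$ and $C s_\alpha > C \iff F s_\alpha >_{u,\lambda} F$.

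Granting the matching of trichotomies, the identity $T_\alpha(\delta_C)|_\lambda = T_\alpha^{u,\lambda}(\delta_F)$ is a direct comparison of the defining formulas together with $(C s_\alpha)|_\lambda = F s_\alpha$: in the case $C s_\alpha > C$ one gets $T_\alpha(\delta_C)|_\lambda = q\delta_F + \delta_{F s_\alpha} = T_\alpha^{u,\lambda}(\delta_F)$; in the case $C s_\alpha < C$ one gets $q\inv \delta_F + \delta_{F s_\alpha}$ on both sides; and in the case $C s_\alpha = C$ both sides are $0$. This verifies the two composites agree on the basis, hence everywhere; taking the direct sum over $u \in A_{\Theta,\lambda}$ gives the displayed commutative square.

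\textbf{Main obstacle.} The only nonroutine point is the second paragraph: a priori $\ind_\lambda$ is asserted only to be order-\emph{preserving}, not an order isomorphism, so one cannot simply pull a strict Bruhat inequality back from $W_\Theta \backslash W$ to $W_{\lambda,{\Theta(u,\lambda)}}\backslash W_\lambda$. The remedy is to invoke the trichotomy on the $W_\lambda$-side (the parabolic analogue of Lemma \ref{lem:Theta_prelim}) to eliminate the other two alternatives, which upgrades order-preservation to a genuine equivalence of the two comparisons. Everything else is bookkeeping with the definitions of $T_\alpha$, $T_\alpha^{u,\lambda}$, $\ind_\lambda$, and $(-)|_\lambda$.
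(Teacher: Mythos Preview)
Your proof is correct and follows essentially the same approach the paper intends: the paper's own proof simply says to unwrap the definitions and use that $\ind_\lambda$ preserves the partial orders (Proposition~\ref{thm:int_Whittaker_model}), and your write-up carries this out in detail. The trichotomy argument you use to upgrade order-preservation to an equivalence of the comparisons is exactly the device the paper employs in the analogous situation in the proof of Proposition~\ref{lem:decrease_of_length}, so there is no discrepancy in method.
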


The proof is straightforward. It consist of unwrapping definitions and using the facts that $\ind_\lambda$ (defined in Corollary \ref{lem:right_coset_partition} and in \textsection \ref{subsec:KL_poly}) preserves partial orders on right cosets \ref{thm:int_Whittaker_model}.

If such $\alpha$ cannot be found, we will need to use non-integral intertwining functors to move $\alpha$ to some simple root $s_{\beta_s} \cdots s_{\beta_1} \alpha = z\inv \alpha$ and move $\cL(w^C,\lambda,\eta)$ to some irreducible module supported on a smaller orbit, then translate the induction assumption there back. The translation requires the following lemma. The proof is similar to the previous lemma, using \ref{lem:Is_right_coset} instead of \ref{thm:int_Whittaker_model}.

\begin{lemma}
	Let $\alpha \in \Pi_\lambda$, $\beta \in \Pi - \Pi_\lambda$. For any $u \in A_{\Theta,\lambda}$, let $r \in A_{\Theta,s_\beta\lambda}$ be the unique element such that $W_\Theta u s_\beta W_{s_\beta\lambda} = W_\Theta r W_{s_\beta \lambda}$. Then
	\begin{equation*}
		(s_\beta (-) s_\beta ) \circ T_\alpha^{u,\lambda} = T_{s_\beta \alpha}^{r,s_\beta\lambda} \circ (s_\beta (-) s_\beta)
	\end{equation*}
	as maps from $\cH_{\Theta(u,\lambda)}$ to $\cH_{\Theta(r,s_\beta\lambda)}$, where $s_\beta (-) s_\beta$ denotes conjugation by $s_\beta$. In other words, the following diagram commutes
	\begin{equation*}
		\begin{tikzcd}[column sep=10ex]
			\cH_{\Theta(u,\lambda)} \ar[r, "T_\alpha^{u,\lambda}"] \ar[d, "s_\beta(-)s_\beta"'] & \cH_{\Theta(u,\lambda)} \ar[d, "s_\beta(-)s_\beta"]\\
			\cH_{\Theta(r,s_\beta\lambda)} \ar[r, "T_{s_\beta\alpha}^{r,s_\beta\lambda}"] & \cH_{\Theta(r,s_\beta\lambda)}.			
		\end{tikzcd}
	\end{equation*}
\end{lemma}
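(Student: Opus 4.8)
The plan is to verify the asserted identity of $\BZ[q^{\pm1}]$-linear maps by evaluating both composites on the basis $\{\delta_E\}$ of $\cH_{\Theta(u,\lambda)}$, indexed by right $W_{\lambda,\Theta(u,\lambda)}$-cosets $E$ in $W_\lambda$; write $F = s_\beta E s_\beta$ for the image of $E$ under the conjugation isomorphism of Proposition \ref{lem:Is_right_coset}, so that the extended conjugation on Hecke modules is $\delta_E \mapsto \delta_F$. Before the case analysis I would record two preliminary points. Since $\beta \in \Pi - \Pi_\lambda$, Lemma \ref{lem:non-int_refl_subsys}(f) gives $s_\beta \in A_\lambda$, and then Lemma \ref{lem:non-int_refl_subsys}(c) gives $s_\beta \Pi_\lambda = \Pi_{s_\beta\lambda}$; in particular $s_\beta\alpha \in \Pi_{s_\beta\lambda}$, so $T_{s_\beta\alpha}^{r,s_\beta\lambda}$ is defined, and $s_{s_\beta\alpha} = s_\beta s_\alpha s_\beta$ is the corresponding simple reflection of $W_{s_\beta\lambda}$. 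Consequently $F s_{s_\beta\alpha} = (s_\beta E s_\beta)(s_\beta s_\alpha s_\beta) = s_\beta (E s_\alpha) s_\beta$, i.e. conjugation by $s_\beta$ intertwines right multiplication by $s_\alpha$ on $W_{\lambda,\Theta(u,\lambda)}\backslash W_\lambda$ with right multiplication by $s_{s_\beta\alpha}$ on $W_{s_\beta\lambda,\Theta(r,s_\beta\lambda)}\backslash W_{s_\beta\lambda}$.

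Next I would invoke the order-preservation assertion in Proposition \ref{lem:Is_right_coset}: conjugation by $s_\beta$ is an order isomorphism between $(W_{\lambda,\Theta(u,\lambda)}\backslash W_\lambda, \le_{u,\lambda})$ and $(W_{s_\beta\lambda,\Theta(r,s_\beta\lambda)}\backslash W_{s_\beta\lambda}, \le_{r,s_\beta\lambda})$ (it is an involution on the union of these coset spaces, hence works in either direction). Combined with the identity $F s_{s_\beta\alpha} = s_\beta (E s_\alpha) s_\beta$ from the previous paragraph, this shows that the three mutually exclusive alternatives defining $T_\alpha^{u,\lambda}(\delta_E)$, namely $E s_\alpha >_{u,\lambda} E$, $E s_\alpha = E$, $E s_\alpha <_{u,\lambda} E$, correspond bijectively to the three alternatives $F s_{s_\beta\alpha} >_{r,s_\beta\lambda} F$, $F s_{s_\beta\alpha} = F$, $F s_{s_\beta\alpha} <_{r,s_\beta\lambda} F$ defining $T_{s_\beta\alpha}^{r,s_\beta\lambda}(\delta_F)$.

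It then remains to match the two composites case by case. In the first case $(s_\beta(-)s_\beta)(T_\alpha^{u,\lambda}\delta_E) = s_\beta(q\delta_E + \delta_{E s_\alpha})s_\beta = q\delta_F + \delta_{F s_{s_\beta\alpha}}$, while $T_{s_\beta\alpha}^{r,s_\beta\lambda}(\delta_F) = q\delta_F + \delta_{F s_{s_\beta\alpha}}$, and these agree; the third case is identical with $q$ replaced by $q^{-1}$; the second case gives $0 = 0$. This exhausts all possibilities, so the two maps coincide on every $\delta_E$, hence as maps $\cH_{\Theta(u,\lambda)} \to \cH_{\Theta(r,s_\beta\lambda)}$, and the square commutes.

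Since the whole argument reduces to the already-established behaviour of conjugation by $s_\beta$ on cosets, simple roots, and Bruhat-type orders (Lemma \ref{lem:non-int_refl_subsys} and Proposition \ref{lem:Is_right_coset}), there is no substantial obstacle; the only point requiring genuine care is that conjugation must be used not merely as a bijection of cosets but as an order isomorphism, so that the case distinction in the definition of the two $T$-operators is translated correctly — and that is precisely the content of \ref{lem:Is_right_coset}.
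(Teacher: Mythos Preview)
Your proposal is correct and is precisely the argument the paper has in mind: the paper's own proof consists of the single sentence ``The proof is similar to the previous lemma, using \ref{lem:Is_right_coset} instead of \ref{thm:int_Whittaker_model},'' and what you have written is exactly the unwinding of that sentence --- checking on basis elements, using $s_\beta \Pi_\lambda = \Pi_{s_\beta\lambda}$ to see that $T_{s_\beta\alpha}^{r,s_\beta\lambda}$ is defined, and invoking the order-isomorphism part of Proposition \ref{lem:Is_right_coset} to match the three cases in the definitions of the two $T$-operators.
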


Choose $\alpha \in \Pi_\lambda$, $s \ge 0$ and $\beta_1,\ldots,\beta_s \in \Pi$ such that if we write $z_0 = 1$, $z_i = s_{\beta_1} \cdots s_{\beta_i}$ and $z = z_s$, the following conditions hold:
\begin{enumerate}[label=(\alph*)]
	\item for any $0 \le i \le s-1$, $\beta_{i+1}$ is non-integral to $z_i\inv \lambda$;
	\item $z\inv \alpha \in \Pi \cap \Pi_{z\inv \lambda}$;
	\item $C s_\alpha <_{u,\lambda}  C$;
	\item if $s > 0$, $C z < C$;
	\item $C s_\alpha z = C z s_{z\inv \alpha} < C z$.
\end{enumerate}
Such a choice exists by \ref{lem:decrease_of_length}. Combining the lemmas with the diagram (\ref{eqn:Is_right_coset}), we obtain a commutative diagram
\begin{equation}\label{eqn:three_comm_diag}
	\begin{tikzcd}[column sep=10ex]
		& \cH_\Theta \ar[r, "T_{z\inv\alpha}"] \ar[d, "(-)|_{z\inv\lambda}"'] \ar[dl, "(-)z\inv"'] 
		& \cH_\Theta \ar[d, "(-)|_{z\inv\lambda}"]\\[3ex]
		\cH_\Theta \ar[d, "(-)|_\lambda"']
		& {\displaystyle \bigoplus_{r \in A_{\Theta,z\inv\lambda}} \cH_{\Theta(r,z\inv\lambda)}} \ar[r, "{\bigoplus_{r} T_{z\inv\alpha}^{r,z\inv\lambda}}"] \ar[dl, "z(-)z\inv"']
		& {\displaystyle \bigoplus_{r \in A_{\Theta,z\inv\lambda}} \cH_{\Theta(r,z\inv\lambda)}} \ar[dl, "z(-)z\inv"]\\
		{\displaystyle \bigoplus_{u \in A_{\Theta,\lambda}} \cH_{\Theta(u,\lambda)}} \ar[r, "{\bigoplus_u T_\alpha^{u,\lambda}}"]
		& {\displaystyle \bigoplus_{u \in A_{\Theta,\lambda}} \cH_{\Theta(u,\lambda)}}		
	\end{tikzcd}.
\end{equation}

Since $Cz < C$, the induction assumption applies to $C z$ for $z\inv \lambda$. In particular, if we apply \ref{thm:KL_alg}(2) to $C z s_{z\inv\alpha} < C z$ and $z\inv \lambda$, we obtain the equation
\begin{equation}\label{eqn:(4)_(assumption)}
	T_{z\inv \alpha} \big( \varphi_{z\inv \lambda}( C z s_{z\inv \alpha}) \big) 
	= \sum_{\substack{D\in W_\Theta \backslash W_\Theta r W_{z\inv \lambda}\\  D \le_{r,z\inv\lambda} Cz}} c_D \varphi_{z\inv \lambda}(D)
\end{equation}
where $r$ is the unique element in $A_{\Theta,z\inv\lambda}$ such that $Cz \in W_\Theta \backslash W_\Theta r W_{z\inv \lambda}$. We apply to both sides $(-)|_{z\inv \lambda}$ followed by $z(-)z\inv$.

If we view $\varphi_{z\inv \lambda}( C z s_{z\inv \alpha})$ as an element in the middle $\cH_\Theta$ in the diagram, the left side of (\ref{eqn:(4)_(assumption)}) lands in $\cH_{\Theta(u,\lambda)}$ in the bottom middle of the diagram through the rightmost path after applying $(-)|_{z\inv \lambda}$ and $z(-)z\inv$. Going through the leftmost path instead, this element in $\cH_{\Theta(u,\lambda)}$ becomes
\begin{equation*}
	T_\alpha^{u,\lambda}\big( \varphi_{z\inv \lambda}( C z s_{z\inv \alpha} )z\inv|_\lambda \big).
\end{equation*}
Rewrite $Cz s_{z\inv\alpha} = Cs_\alpha z$ and use \ref{thm:KL_alg}(3) for $C s_\alpha$, the above quantity becomes
\begin{equation*}
	T_\alpha^{u,\lambda} \big( \varphi_\lambda( C s_\alpha)|_\lambda \big)
	= T_\alpha^{u,\lambda} (\psi_{u,\lambda}(C|_\lambda) ).
\end{equation*}

Viewing the right side of (\ref{eqn:(4)_(assumption)}) as an element in the middle $\cH_\Theta$ in the diagram, $(-)|_{z\inv \lambda}$ and $z(-)z\inv$ sends it to $\cH_{\Theta(u,\lambda)}$ at the bottom-left along the middle path. Going through the leftmost path instead, this element becomes
\begin{equation*}
	\sum_{\substack{D\in W_\Theta \backslash W_\Theta r W_{z\inv \lambda}\\ D \le_{r,z\inv\lambda} Cz}} c_D  \varphi_\lambda(Dz\inv)|_\lambda
	= \sum_{\substack{D\in W_\Theta \backslash W_\Theta r W_{z\inv \lambda}\\ D \le_{r,z\inv\lambda} Cz}} c_D \psi_{u,\lambda}( (D z\inv)|_\lambda).
\end{equation*}
As in the first part of \textsection\ref{subsec:(1)}, we can rewrite the subscript of the sum. There is an element $w \in W_\Theta$ such that $wr = u z$ by \ref{lem:Is_pres_lowest_db_coset}. Hence
\begin{align*}
	W_\Theta r W_{z\inv \lambda} 
	&= W_\Theta wr z\inv W_\lambda z\\
	&= W_\Theta uz z\inv W_\lambda z\\
	&= W_\Theta u W_\lambda z,
\end{align*}
and $D \in W_\Theta \backslash W_\Theta r W_{z\inv \lambda}$ if and only if $D z\inv \in W_\Theta \backslash W_\Theta u W_\lambda$. Moreover, by \ref{lem:Is_right_coset},
\begin{equation*}
	D \le_{r,z\inv\lambda} Cz \iff  D z\inv \le_{u,\lambda} C.
\end{equation*}
Hence (\ref{eqn:(4)_(assumption)}) becomes
\begin{equation*}
	T_\alpha^{u,\lambda} (\psi_{u,\lambda}(C|_\lambda) )
	= \sum_{\substack{ Dz\inv \in W_\Theta \backslash W_\Theta u W_\lambda\\ D z\inv \le_{u,\lambda} C}} c_D \psi_{u,\lambda}( (D z\inv)|_\lambda).
\end{equation*}
Therefore, $\alpha \in \Pi_\lambda$ is such that $C s_\alpha <_{u,\lambda} C$ and  equation (\ref{eqn:parabolic_KL_poly}) holds for $C s_\alpha$. By \ref{def:parabolic_KL_poly}, the polynomials $P_{CD}^{u,\lambda}$ are parabolic Kazhdan-Lusztig polynomials for $(W_\lambda,\Pi_\lambda,\Theta(u,\lambda))$. Thus \ref{thm:KL_alg}(4) holds for $C$.

This completes the proof of the algorithm \ref{thm:KL_alg}.
\section{Character formula for irreducible modules}\label{sec:character_formula}

\subsection{Regular case}\label{subsec:character_formula}

By standard arguments, the algorithm \ref{thm:KL_alg} leads to a character formula for irreducible Whittaker modules for regular infinitesimal characters: one takes \ref{thm:KL_alg}(1)(4) for $-\lambda$ dominant regular (so that $\lambda$ is antidominant regular), precomposing with holonomic duality $\BD$ (so that standard $\cD$-modules $\cI$ becomes costandard $\cD$-modules $\cM$), descending to the Grothendieck group by specializing at $q=-1$, passing through Beilinson-Bernstein localization, and applying the character map. 

In more details, let $\lambda \in \fh^*$ be antidominant regular. As explained in \textsection\ref{subsec:geom_prelim}, Beilinson-Bernstein's localization and holonomic duality are (anti-)equivalences of categories which send Whittaker modules to $\cD$-modules. Combined with the map $\nu$, we obtain the composition
\begin{equation}\label{eqn:flowchart}
	\arraycolsep=1.4pt
	\begin{array}{ccccccccc}
		\cN_{\theta,\eta} & \xleftarrow{\Gamma(X,-)} &\Mod_{coh}(\cD_\lambda,N,\eta) &\xrightarrow{\BD} &\Mod_{coh}(\cD_{-\lambda},N,\eta) &\xrightarrow{\nu}
		&\cH_\Theta &\xrightarrow{(-)|_{-\lambda}}
		&{\displaystyle \bigoplus_{u \in A_{\Theta,-\lambda}} 	\cH_{\Theta(u,-\lambda)}},\\
		L(w^C\lambda,\eta) &\mapsfrom &\cL(w^C,\lambda,\eta) &\mapsto &\cL(w^C,-\lambda,\eta) &\mapsto &\varphi_{-\lambda}(C) &\mapsto &\varphi_\lambda(C)|_{-\lambda}\\
		M(w^C\lambda,\eta) &\mapsfrom &\cM(w^C,\lambda,\eta) &\mapsto &\cI(w^C,-\lambda,\eta) &\mapsto &\delta_C &\mapsto &\delta_{C|_{-\lambda}}.
	\end{array}
\end{equation}
At $q=-1$, the coefficients $\chi_q i_{w^D}^! \cF$ in the definition of $\nu$ is additive with respect to short exact sequences. Hence $\nu$ factors through the Grothendieck group
\begin{equation*}
	\nu|_{q=-1}: K\Mod_{coh}(\cD_{-\lambda},N,\eta) \aro \cH_\Theta|_{q=-1}
\end{equation*}
which is an isomorphism by \ref{lem:nu_std}. Therefore we have an isomorphism of abelian groups
\begin{equation*}
	\arraycolsep=1.4pt
	\begin{array}{ccc}
		K\cN_{\theta,\eta} & \xrightarrow{\cong} & {\displaystyle \bigoplus_{u \in A_{\Theta,-\lambda}} \cH_{\Theta(u,-\lambda)}|_{q=-1}}\\
		{[L(w^C\lambda,\eta)]} & \mapsto & \varphi_\lambda(C)|_{-\lambda}|_{q=-1}\\
		{[M(w^C\lambda,\eta)]} & \mapsto & \delta_{C|_{-\lambda}}|_{q=-1},
	\end{array}	
\end{equation*}
where $[-]$ takes the class in the Grothendieck group. Hence Theorem \ref{thm:KL_alg}(1) and (4) imply
\begin{equation*}
	[L(w^C\lambda,\eta)] = 
	\sum_{\substack{D \in W_\Theta \backslash W_\Theta u W_{-\lambda}\\	D \le_{u,-\lambda} C}} 
	P_{CD}^{u,-\lambda}(-1) [M(w^D \lambda,\eta)]
\end{equation*}
in $K \cN_{\theta,\eta}$. Note that $\Sigma_\lambda = \Sigma_{-\lambda}$ as subsets of $\Sigma$ and $W_\lambda = W_{-\lambda}$ as subgroups of $W$. Hence all the combinatoric structures defined based on $\lambda$ and $-\lambda$ are canonically identified. Further applying the character map, we thus obtain

\begin{theorem}[Character formula: Regular case] \label{thm:multiplicity}
	Let $\lambda \in \fh^*$ be antidominant and regular. Let $\eta: \fn \to \BC$ be any character. For any $C \in W_\Theta \backslash W$, let $u \in A_{\Theta,\lambda}$ be the unique element such that $C \subseteq W_\Theta u W_\lambda$. Then
	\begin{equation}
		\ch L(w^C\lambda,\eta) = 
		\sum_{\substack{D \in W_\Theta \backslash W_\Theta u W_\lambda\\	D \le_{u,\lambda} C}} 
		P_{CD}^{u,\lambda}(-1) \ch M(w^D \lambda,\eta),
	\end{equation}
	where the polynomials $P_{CD}^{u,\lambda}$ are Whittaker Kazhdan-Lusztig polynomials for $(W_\lambda,\Pi_\lambda,\Theta(u,\lambda))$ as defined in \ref{def:parabolic_KL_poly} and $P_{CC}^{u,\lambda} = 1$.
\end{theorem}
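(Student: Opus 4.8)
The plan is to derive Theorem \ref{thm:multiplicity} as a formal consequence of the main algorithm \ref{thm:KL_alg}, by transporting the parts (1) and (4) of that theorem through the chain of (anti-)equivalences displayed in \eqref{eqn:flowchart}. The only genuinely new ingredient is bookkeeping: one must check that the geometric objects on the flag variety do match the algebraic Whittaker modules, that duality interchanges standard and costandard $\cD$-modules correctly, and that the combinatorial data attached to $\lambda$ and to $-\lambda$ agree.

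First I would fix $\lambda \in \fh^*$ antidominant regular, so that $-\lambda$ is dominant regular; then $\cD_{-\lambda}$ is a sheaf to which Beilinson--Bernstein applies via holonomic duality, and by the flowchart in \textsection\ref{subsec:geom_prelim} the composite $\Gamma(X,-)\circ\BD$ sends $\cL(w^C,-\lambda,\eta)\mapsto \cL(w^C,\lambda,\eta)\mapsto L(w^C\lambda,\eta)$ and $\cI(w^C,-\lambda,\eta)\mapsto\cM(w^C,\lambda,\eta)\mapsto M(w^C\lambda,\eta)$, using that $\lambda$ is antidominant regular so no irreducible is killed. Next I would apply the map $\nu$ of \eqref{eqn:defn_of_nu} and invoke \ref{thm:KL_alg} for the weight $-\lambda$: part (1) gives $\varphi_{-\lambda}(C)=\nu(\cL(w^C,-\lambda,\eta))=\delta_C+\sum_{D<_{u,-\lambda}C}P_{CD}^{u,-\lambda}\delta_D$ with $P_{CD}^{u,-\lambda}\in q\BZ[q]$, while \ref{lem:nu_std} gives $\nu(\cI(w^C,-\lambda,\eta))=\delta_C$. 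The key observation is that at $q=-1$ the coefficients $\chi_q i_{w^D}^!(-)$ are additive on short exact sequences (rank is additive in exact triangles after passing to Euler characteristics, and the degree shifts produce the signs), so $\nu|_{q=-1}$ descends to a group homomorphism $K\Mod_{coh}(\cD_{-\lambda},N,\eta)\to \cH_\Theta|_{q=-1}$; since $\nu$ sends the standard basis $[\cI(w^C,-\lambda,\eta)]$ to the basis $\{\delta_C\}$, this map is an isomorphism, and the standard modules indeed form a basis of the Grothendieck group by the pullback--pushforward argument recalled at the end of \textsection\ref{subsec:geom_prelim}.

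Then I would specialize at $q=-1$: the identity $\varphi_{-\lambda}(C)=\delta_C+\sum_{D<_{u,-\lambda}C}P_{CD}^{u,-\lambda}\delta_D$ becomes, under the inverse isomorphism above, $[L(w^C\lambda,\eta)] = \sum_{D\le_{u,-\lambda}C}P_{CD}^{u,-\lambda}(-1)\,[M(w^D\lambda,\eta)]$ in $K\cN_{\theta,\eta}$, with the convention $P_{CC}^{u,-\lambda}=1$. Here I use that the indexing double coset is $W_\Theta u W_{-\lambda}$ with $u$ the shortest element, and part (4) of \ref{thm:KL_alg} identifies the $P_{CD}^{u,-\lambda}$ as Whittaker Kazhdan--Lusztig polynomials for the triple $(W_{-\lambda},\Pi_{-\lambda},\Theta(u,-\lambda))$. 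To finish, I would note that $\Sigma_\lambda=\Sigma_{-\lambda}$ as subsets of $\Sigma$ (since $\alpha^\vee(\lambda)\in\BZ\iff\alpha^\vee(-\lambda)\in\BZ$) and hence $W_\lambda=W_{-\lambda}$, $\Pi_\lambda=\Pi_{-\lambda}$, $A_{\Theta,\lambda}=A_{\Theta,-\lambda}$, $\Theta(u,\lambda)=\Theta(u,-\lambda)$, and $\le_{u,\lambda}\,=\,\le_{u,-\lambda}$, so that all combinatorial objects built from $\lambda$ and $-\lambda$ coincide; this lets us rewrite $P_{CD}^{u,-\lambda}$ as $P_{CD}^{u,\lambda}$. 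Applying Romanov's character map $\ch$, which factors through $K\cN_{\theta,\eta}$, then yields the asserted formula $\ch L(w^C\lambda,\eta)=\sum_{D\le_{u,\lambda}C}P_{CD}^{u,\lambda}(-1)\,\ch M(w^D\lambda,\eta)$.

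I expect no deep obstacle here — this is a ``standard arguments'' wrap-up — but the step requiring the most care is the additivity of $\nu|_{q=-1}$ and the verification that $\nu|_{q=-1}$ is an isomorphism carrying $[\cM]$ to $\delta_C$ after duality: one must be precise about how $\BD$ turns $\cI(w^C,-\lambda,\eta)$ into $\cM(w^C,\lambda,\eta)$ (via $f_!=\BD f_+\BD$) and that $\nu$ evaluated on $\BD$ of a costandard module reproduces $\delta_C$, i.e. that the flowchart \eqref{eqn:flowchart} is consistent on both rows. A secondary point to state cleanly is why $P_{CD}^{u,\lambda}(-1)$ is the correct evaluation (as opposed to $q=1$): this is forced by the degree-shift conventions in $\chi_q$, matching the normalization in \ref{def:parabolic_KL_poly}. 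Once these are in place the theorem follows immediately.
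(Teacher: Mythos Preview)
Your proposal is correct and follows essentially the same route as the paper's proof in \textsection\ref{subsec:character_formula}: apply \ref{thm:KL_alg}(1)(4) at $-\lambda$, observe that $\nu|_{q=-1}$ is additive and hence descends to an isomorphism of Grothendieck groups sending $[\cI]\mapsto\delta_C$ (by \ref{lem:nu_std}), transport through $\BD$ and $\Gamma(X,-)$ using the flowchart \eqref{eqn:flowchart}, identify the combinatorics for $\lambda$ and $-\lambda$ via $\Sigma_\lambda=\Sigma_{-\lambda}$, and apply $\ch$. Your discussion of the care points (additivity of $\chi_q$ at $q=-1$, $\BD$ exchanging $\cI$ and $\cM$, the $q=-1$ normalization) is exactly what the paper flags as the content of this ``standard arguments'' step.
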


When $\lambda$ is integral, we have a simpler description, which we state separately.

\begin{corollary}[Character formula: Regular integral case]
	Let $\lambda \in \fh^*$ be antidominant, regular, and integral. Let $\eta: \fn \to \BC$ be any character. For any $C \in W_\Theta \backslash W$,
	\begin{equation*}
		\ch L(w^C\lambda,\eta) = 
		\sum_{\substack{D \in W_\Theta \backslash W\\ D \le C}}
		P_{CD}(-1) \ch M(w^D \lambda,\eta),
	\end{equation*}
	where the polynomials $P_{CD}$ are Whittaker Kazhdan-Lusztig polynomials for $(W,\Pi,\Theta)$ as defined in \ref{def:parabolic_KL_poly_Theta} and $P_{CC}=1$.
\end{corollary}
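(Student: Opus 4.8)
The plan is to obtain this corollary as the specialization of Theorem~\ref{thm:multiplicity} to integral $\lambda$, for which all of the $\lambda$-decorated combinatorial data collapses onto the ambient data. Since $\lambda$ is integral we have $\Sigma_\lambda = \Sigma$, hence $\Pi_\lambda = \Pi$ and $W_\lambda = W$. The first step is to identify $A_{\Theta,\lambda}$: by the alternative description $A_\lambda = \{u \in W \mid \Sigma_\lambda^+ \subseteq u^{-1}\Sigma^+\}$ in~(\ref{eqn:A_lambda_alt}), the defining condition reads $\Sigma^+ \subseteq u^{-1}\Sigma^+$, which forces $u^{-1}\Sigma^+ = \Sigma^+$ and thus $u = 1$. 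As $1 \in w_\Theta\,{}^\Theta W$ (indeed $1^{-1}\Theta = \Theta \subseteq \Sigma^+$ by~(\ref{eqn:w_Theta_Theta_W})), we conclude $A_{\Theta,\lambda} = A_\lambda \cap (w_\Theta\,{}^\Theta W) = \{1\}$. Consequently there is a single $(W_\Theta, W_\lambda)$-double coset, namely $W_\Theta \cdot 1 \cdot W_\lambda = W$, with shortest element $u = 1$, so for every $C \in W_\Theta \backslash W$ the element $u$ appearing in Theorem~\ref{thm:multiplicity} is $1$.

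Next I would unwind the integral model of this double coset. By the formula $\Theta(u,\lambda) = u^{-1}\Sigma_\Theta \cap \Pi_\lambda$ from \textsection\ref{subsec:int_model} we get $\Theta(1,\lambda) = \Sigma_\Theta \cap \Pi = \Theta$, whence $W_{\lambda,\Theta(1,\lambda)} = W_\Theta$. The bijection $\ind_\lambda$ of Corollary~\ref{lem:right_coset_partition} is then left multiplication by $W_\Theta \cdot 1 = W_\Theta$ from $W_{\lambda,\Theta(1,\lambda)} \backslash W_\lambda = W_\Theta \backslash W$ to $W_\Theta \backslash W$, i.e. the identity; hence its inverse $(-)|_\lambda$ of Notation~\ref{not:right_coset_partition} is also the identity, $C|_\lambda = C$, and the order $\le_{1,\lambda}$ on $W_{\lambda,\Theta(1,\lambda)} \backslash W_\lambda$ is literally the order $\le$ on $W_\Theta \backslash W$ (\textsection\ref{subsec:WTheta_prelim}). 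Finally, Definition/Theorem~\ref{def:parabolic_KL_poly} applied to the triple $(W_\lambda, \Pi_\lambda, \Theta(1,\lambda)) = (W, \Pi, \Theta)$ is precisely Definition/Theorem~\ref{def:parabolic_KL_poly_Theta}, so $P_{CD}^{1,\lambda} = P_{C|_\lambda, D|_\lambda}^{1,\lambda} = P_{CD}$, the Whittaker Kazhdan-Lusztig polynomial for $(W,\Pi,\Theta)$. Substituting these identifications into the character formula of Theorem~\ref{thm:multiplicity} produces exactly the asserted formula, the sum now running over all $D \in W_\Theta \backslash W$ with $D \le C$.

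There is no genuine obstacle here: the whole content lies in Theorem~\ref{thm:multiplicity}, and the proof is pure bookkeeping, namely verifying $A_{\Theta,\lambda} = \{1\}$, $\Theta(1,\lambda) = \Theta$, and the coincidence of the two notions of Whittaker Kazhdan-Lusztig polynomials. The only point deserving a word is that passing to the antidominant representative in the proof of Theorem~\ref{thm:multiplicity} introduces no change, since $\Sigma_\lambda = \Sigma_{-\lambda} = \Sigma$ and $W_\lambda = W_{-\lambda} = W$, so all combinatorial structures attached to $\lambda$ and $-\lambda$ are canonically identified, as already remarked before Theorem~\ref{thm:multiplicity}.
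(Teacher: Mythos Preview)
Your proof is correct and takes essentially the same approach as the paper, which treats this corollary as an immediate specialization of Theorem~\ref{thm:multiplicity} and gives no explicit proof. You have simply filled in the bookkeeping details (that $A_{\Theta,\lambda}=\{1\}$, $\Theta(1,\lambda)=\Theta$, $(-)|_\lambda=\mathrm{id}$, and $P_{CD}^{1,\lambda}=P_{CD}$) that the paper leaves implicit.
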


Inverting the matrix $(P_{CD}(-1))_{C,D}$, we recover the description in \cite{Milicic-Soergel:Whittaker_algebraic} and \cite{Romanov:Whittaker} of multiplicities of irreducible Whittaker modules in standard Whittaker modules with regular integral infinitesimal characters.

At another extreme, when $\eta = 0$ (i.e. $\Theta = \varnothing$), we recover the well-known non-integral Kazhdan-Lusztig conjecture for highest weight modules (see, for example, \cite[Chapter 1]{Lusztig:Char_finite_field}, \cite[\textsection2.5 Theorem 11]{Soergel:V}, \cite[Theorem 0.1]{Kashiwara-Tanisaki:Non-int_KL}).

\begin{corollary}[Kazhdan-Lusztig conjecture for Verma modules]\label{thm:Verma_multiplicity}
	Let $\lambda \in \fh^*$ be antidominant and regular. For any $w \in W$, let $u \in A_\lambda$ be the unique element so that $w \in u W_\lambda$ (\textsection \ref{subsec:Bruhat_W/Wlambda}). For any $v \in u W_\lambda$, we write $v \le_\lambda w$ if $u\inv v = v|_\lambda \le_\lambda w|_\lambda = u\inv w$. Then
	\begin{equation*}
		\ch L(w\lambda) = 
		\sum_{\substack{v \in u W_\lambda\\ v \le_\lambda w}} P_{wv}^\lambda(-1) \ch M(v\lambda),
	\end{equation*}
	where the polynomials $P_{wv}^\lambda$ are Kazhdan-Lusztig polynomials for $(W_\lambda,\Pi_\lambda, \varnothing)$ as defined in \ref{def:parabolic_KL_poly}, $P_{ww}=1$, $M(v\lambda)$ is the Verma module of highest weight $v\lambda - \rho$, and $L(w \lambda)$ is the unique irreducible quotient of $M(w \lambda)$ (recall that $\rho$ is the half sum of roots in $\fn$).
\end{corollary}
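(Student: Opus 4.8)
The plan is to derive Corollary~\ref{thm:Verma_multiplicity} as the $\eta = 0$ specialization of Theorem~\ref{thm:multiplicity}, and then unwind the combinatorial bookkeeping that this specialization entails. When $\eta = 0$ we have $\Theta = \varnothing$ by definition~\eqref{eqn:defn_of_Theta_intro}, so $W_\Theta = \{1\}$, standard Whittaker modules $M(w\lambda,0)$ are ordinary Verma modules of highest weight $w\lambda - \rho$ (as noted after Definition~\ref{def:std_Wh_mods}), irreducible Whittaker modules $L(w\lambda,0)$ are their unique irreducible quotients, and the character map $\ch$ reduces to the ordinary formal character. Since $W_\Theta$ is trivial, right $W_\Theta$-cosets $C \in W_\Theta\backslash W$ are just elements $w \in W$, and the longest element $w^C$ of $C$ is $w$ itself; likewise $\Pi_\lambda$-parabolic data $\Theta(u,\lambda) = u\inv\Sigma_\varnothing \cap \Pi_\lambda = \varnothing$, so $W_{\lambda,\Theta(u,\lambda)} = \{1\}$ and right $W_{\lambda,\varnothing}$-cosets of $W_\lambda$ are just elements of $W_\lambda$.

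First I would record that, with $W_\Theta$ trivial, the double coset $W_\Theta u W_\lambda = u W_\lambda$ is simply the left $W_\lambda$-coset of $u$, and the unique shortest element $u \in A_{\Theta,\lambda} = A_\lambda$ in it is exactly the cross-section representative of $W/W_\lambda$ from \textsection\ref{subsec:Bruhat_W/Wlambda} (Corollary~\ref{thm:cross-section_db_coset} collapses to the statement that $A_\lambda$ is a cross-section of $W/W_\lambda$, which is the first Lemma of \textsection\ref{subsec:Bruhat_W/Wlambda}). Thus for $w \in W$ the element $u$ appearing in Theorem~\ref{thm:multiplicity} is the unique $u \in A_\lambda$ with $w \in uW_\lambda$, matching the statement of the Corollary. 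Next I would match the partial orders: the map $(-)|_\lambda$ of Notation~\ref{not:right_coset_partition} becomes, via Proposition~\ref{thm:int_Whittaker_model}, just left-translation-by-$u\inv$, i.e. $w|_\lambda = u\inv w \in W_\lambda$; and $D \le_{u,\lambda} C$ unwinds to $u\inv v \le_\lambda u\inv w$ in $(W_\lambda,\le_\lambda)$ — which is precisely the relation ``$v \le_\lambda w$'' defined in the statement of the Corollary. So the index set $\{D \in W_\Theta\backslash W_\Theta u W_\lambda : D \le_{u,\lambda} C\}$ of Theorem~\ref{thm:multiplicity} becomes $\{v \in uW_\lambda : v \le_\lambda w\}$.

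Then I would identify the polynomials: Definition/Theorem~\ref{def:parabolic_KL_poly} applied to the triple $(W_\lambda, \Pi_\lambda, \varnothing)$ gives polynomials $P_{FG}^{u,\lambda}$ indexed by pairs of elements of $W_\lambda$ (since $W_{\lambda,\varnothing}\backslash W_\lambda = W_\lambda$), and in this non-parabolic case Definition/Theorem~\ref{def:parabolic_KL_poly} is literally the characterization of the ordinary Kazhdan–Lusztig basis of the Hecke algebra $\cH_\lambda$ of $W_\lambda$ (the module $\cH_{\varnothing}$ over $\cH_\lambda$ is $\cH_\lambda$ itself, and conditions \ref{enum:WKL_basis_expansion}--\ref{enum:WKL_basis_U} reduce to \ref{enum:KL_basis_expansion}--\ref{enum:KL_basis_U}); this is exactly the relation $P_{wv}(q) = q^{\ell(w)-\ell(v)} P_{v,w}(q^{-2})$ recorded at the end of \textsection\ref{subsec:KL_poly}. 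Writing $P_{wv}^\lambda := P_{F G}^{u,\lambda}$ with $F = u\inv w$, $G = u\inv v$, and substituting $P_{CD}^{u,\lambda} = P_{wv}^\lambda$ and $P_{CC}^{u,\lambda} = P_{ww}^\lambda = 1$ into Theorem~\ref{thm:multiplicity} yields exactly
\begin{equation*}
	\ch L(w\lambda) = \sum_{\substack{v \in uW_\lambda\\ v \le_\lambda w}} P_{wv}^\lambda(-1)\, \ch M(v\lambda),
\end{equation*}
which is the Corollary. The only genuinely substantive point — and the one I would be most careful about — is verifying that Definition/Theorem~\ref{def:parabolic_KL_poly} in the non-parabolic case $\Theta(u,\lambda) = \varnothing$ really does recover the honest Kazhdan–Lusztig basis of $\cH_\lambda$ (rather than some variant), so that $P_{wv}^\lambda$ is the standard non-integral Kazhdan–Lusztig polynomial; this is a matter of matching the characterization in \ref{enum:WKL_basis_expansion}--\ref{enum:WKL_basis_U} with the conditions \ref{enum:KL_basis_expansion}--\ref{enum:KL_basis_U} of \cite{Kazhdan-Lusztig:Hecke_Alg} and noting the normalization $P_{wv}(q) = q^{\ell(w)-\ell(v)}P_{v,w}(q^{-2})$ already discussed in \textsection\ref{subsec:KL_poly}. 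Everything else is a direct translation of notation, requiring no new argument.
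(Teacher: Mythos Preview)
Your proposal is correct and follows exactly the approach the paper intends: the corollary is simply Theorem~\ref{thm:multiplicity} specialized to $\eta = 0$, and the paper does not spell out a separate proof beyond noting this. Your unwinding of the combinatorics ($W_\Theta = \{1\}$, $A_{\Theta,\lambda} = A_\lambda$, $\Theta(u,\lambda) = \varnothing$, $(-)|_\lambda = u\inv(-)$) is accurate and more detailed than what the paper itself provides.
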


As we have remarked at the end of \textsection \ref{subsec:KL_poly}, our (ordinary) Kazhdan-Lusztig polynomials $P_{wv}$ is related to the ones $P_{v,w}$ defined in \cite{Kazhdan-Lusztig:Hecke_Alg} by $P_{wv}(q) = q^{\ell(w)-\ell(v)} P_{v,w}(q^{-2})$. Therefore, if we write $P_{v,w}^\lambda$ for the polynomials in \cite{Kazhdan-Lusztig:Hecke_Alg} defined for $(W_\lambda,\Pi_\lambda)$, the coefficients $P_{wv}^\lambda(-1)$ in the above corollary then becomes $(-1)^{\ell_\lambda(w) - \ell_\lambda(v)} P_{v,w}^\lambda(1)$, which agrees with the coefficient appearing in \cite[Theorem 0.1]{Kashiwara-Tanisaki:Non-int_KL}.

\begin{remark}\label{rmk:Verma_case_old_proofs}
	As is mentioned in the introduction, our argument provides a new proof of the non-integral Kazhdan-Lusztig conjecture for Verma modules. Here let us briefly recall the classical approaches to this conjecture.
	
	After their resolution of the integral Kazhdan-Lusztig conjecture, Beilinson and Bernstein also treated the case where the infinitesimal character is rational (unpublished). They interpreted the multiplicities of irreducible modules in Verma modules topologically in terms of local intersection cohomologies of line bundles over Schubert varieties. Based on this interpretation, Lusztig \cite[Chapter 1]{Lusztig:Char_finite_field} gave explicit formulas for these intersection cohomology groups in positive characteristic. Since these groups can be identified with the corresponding groups in characteristic $0$, Lusztig's formulas resolves the rational Kazhdan-Lusztig conjecture. Once the rational case is treated, the general (regular) case follows by a Zariski density argument.
	
	Soergel gave an alternative proof to the conjecture by showing that the multiplicities we care about only depends on the integral Weyl group $W_\lambda$ \cite[Theorem 11]{Soergel:V}. In particular, the non-integral Kazhdan-Lusztig conjecture is reduced to the integral conjecture for $W_\lambda$. His proof involves the study of coinvariant algebras and Soergel modules, which again goes through the study of intersection cohomology complexes of Schubert varieties.
	
	In comparison, our approach (by using intertwining operators) is uniform for any (possibly non-integral) regular infinitesimal character (in that it does not require first treating the rational case) and is a $\cD$-module theoretic argument. This is important for us: localizations of Whittaker modules have irregular singularities and do not correspond to perverse sheaves (an example of such a module is contained in \cite{Milicic-Soergel:Whittaker_geometric} at the end of \textsection 4), so existing methods do not apply to our situation. Moreover, since non-integral intertwining functors are equivalences of categories of all quasi-coherent $\cD$-modules (Theorem \ref{lem:non-int_I}), an argument similar to ours should work for other non-integral Kazhdan-Lusztig problems for other categories of Lie algebra representations, provided that the integral situation is already treated.
\end{remark}

\subsection{Singular case}\label{subsec:characer_formula_singular}

The singular case can be deduced from the regular case easily.

Let $\lambda \in \fh^*$ be antidominant and singular. We still have the maps (\ref{eqn:flowchart}), but the exact functor $\Gamma(X,-)$ is no longer an equivalence of categories and only descends to a surjection $K \Mod_{coh}(\cD_\lambda,N,\eta) \surj K \cN_{\theta,\eta}$ on Grothendieck groups. However, the identification $\Gamma(X,\cM(w^D,\lambda,\eta)) = M(w^D \lambda,\eta)$ still holds. Therefore, the argument for regular case produces the equality
\begin{equation}\label{eqn:character_formula_pre}
	\ch \Gamma(X,\cL(w^C,\lambda,\eta)) = 
	\sum_{\substack{D \in W_\Theta \backslash W_\Theta u W_\lambda\\%
			D \le_{u,\lambda} C}} 
	P_{CD}^{u,\lambda}(-1) \ch M(w^D \lambda,\eta).
\end{equation}
Here $\Gamma(X,\cL(w^C,\lambda,\eta))$ could be zero, and the standard modules $M(w^D \lambda,\eta)$ could coincide for different $D$. Therefore, it suffices to describe which $M(w^D \lambda,\eta)$ coincide and which $\Gamma(X,\cL(w^C,\lambda,\eta))$ are zero. 

The first question has an easy answer. Recall from \textsection\ref{subsec:Wh_prelim} that for $C,D \in W_\Theta \backslash W$, $M(w^D \lambda,\eta) = M(w^C \lambda,\eta)$ if and only if $W_\Theta w^D \lambda = W_\Theta w^C \lambda$. Let $W^\lambda$ be the stabilizer of $\lambda$ in $W$. Then the above condition is equivalent to $W_\Theta w^D W^\lambda = W_\Theta w^C W^\lambda$, i.e. that $C$ and $D$ are in the same double $(W_\Theta,W^\lambda)$-coset.

\begin{lemma}\label{lem:std_mods_coincide}
	Let $\lambda \in \fh^*$ be antidominant and let $\eta :\fn \to \BC$ be a character. The following are equivalent:
	\begin{enumerate}[label=(\alph*)]
		\item $M(w^C \lambda,\eta) = M(w^D \lambda,\eta)$;
		\item $\Gamma(X,\cM(w^C,\lambda,\eta)) = \Gamma(X,\cM(w^D,\lambda,\eta))$;
		\item $C$ and $D$ are in the same double $(W_\Theta,W^\lambda)$-coset.
	\end{enumerate}
\end{lemma}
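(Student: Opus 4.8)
The plan is to prove the chain of equivalences $(a) \Leftrightarrow (b) \Leftrightarrow (c)$, with $(c)$ as the combinatorial anchor. First I would establish $(a) \Leftrightarrow (c)$, which is essentially already recalled in the excerpt: by the facts of McDowell (quoted in \textsection\ref{subsec:Wh_prelim}), $M(w^C\lambda,\eta) = M(w^D\lambda,\eta)$ if and only if $W_\Theta w^C\lambda = W_\Theta w^D\lambda$, i.e. there is $x \in W_\Theta$ with $x w^C \lambda = w^D\lambda$, i.e. $(w^D)^{-1} x w^C$ stabilizes $\lambda$, i.e. $(w^D)^{-1} x w^C \in W^\lambda$, which rearranges to $w^D \in W_\Theta w^C W^\lambda$; since $w^C, w^D$ are the distinguished representatives of the cosets $C, D$, this says exactly that $C$ and $D$ lie in the same double $(W_\Theta, W^\lambda)$-coset. (Here I would be mildly careful that $W^\lambda$, being the stabilizer of an antidominant weight, is a parabolic subgroup of $W$ generated by simple reflections in $\Sigma_\lambda$, so that double-coset combinatorics behaves well, but this is not strictly needed for the equivalence itself.)

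Next I would prove $(b) \Leftrightarrow (a)$. The implication $(a) \Rightarrow (b)$ is immediate: if the two costandard $\cD$-modules $\cM(w^C,\lambda,\eta)$ and $\cM(w^D,\lambda,\eta)$ are already abstractly isomorphic then so are their global sections, but even without that, Romanov's identification (quoted in \textsection\ref{subsec:geom_prelim}, \cite[Theorem 9]{Romanov:Whittaker}) gives $\Gamma(X,\cM(w^C,\lambda,\eta)) = M(w^C\lambda,\eta)$ and $\Gamma(X,\cM(w^D,\lambda,\eta)) = M(w^D\lambda,\eta)$ as honest Whittaker modules, so $(a)$ and $(b)$ are literally the same equation under this identification. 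Thus $(a) \Leftrightarrow (b)$ reduces to invoking that identification, which is valid because $\lambda$ is antidominant. I would state this plainly rather than re-deriving it.

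Assembling the three, the lemma follows: $(b) \Leftrightarrow (a)$ by Romanov's computation of $\Gamma(X,-)$ on costandards, and $(a) \Leftrightarrow (c)$ by McDowell's description of when standard Whittaker modules agree. The main obstacle — really the only delicate point — is making sure the translation in $(a)\Leftrightarrow(c)$ correctly uses $w^C$ as a \emph{chosen} representative: the condition $W_\Theta w^C\lambda = W_\Theta w^D\lambda$ involves arbitrary elements of the cosets, so I need the observation that $W_\Theta w^C W^\lambda = W_\Theta w^D W^\lambda$ does not depend on which representatives $w^C \in C$, $w^D \in D$ one picks, and that membership of $\lambda$-stabilizing elements in $W^\lambda$ is exactly what converts the left-coset equality into a double-coset equality. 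Everything else is a direct application of results already on the table, so this proof will be short.
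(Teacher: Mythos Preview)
Your proposal is correct and follows essentially the same approach as the paper: the equivalence $(a)\Leftrightarrow(b)$ is exactly Romanov's identification $\Gamma(X,\cM(w^C,\lambda,\eta)) = M(w^C\lambda,\eta)$ for antidominant $\lambda$, and $(a)\Leftrightarrow(c)$ is McDowell's criterion $M(w^C\lambda,\eta)=M(w^D\lambda,\eta)\iff W_\Theta w^C\lambda = W_\Theta w^D\lambda$ rewritten as a double-coset condition. The paper in fact presents this argument in the paragraph immediately preceding the lemma statement rather than as a separate proof, so your write-up is if anything more explicit than the original.
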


Therefore, for a fixed standard Whittaker module $M$, there is a unique double coset $W_\Theta v W^\lambda$ such that  $\Gamma(X,\cM(w^D,\lambda,\eta)) = M$ for all $D \in W_\Theta \backslash W_\Theta v W^\lambda$.

The following proposition answers the second question.

\begin{proposition}\label{lem:irred_vanishing_singular}
	Let $\lambda \in \fh^*$ be antidominant and let $\eta :\fn \to \BC$ be a character. Let $v \in W$. Then the set $W_\Theta \backslash W_\Theta v W^\lambda$ of right $W_\Theta$-cosets contains a unique smallest element $C$. Furthermore,
	\begin{enumerate}[label=(\alph*)]
		\item $\Gamma(X, \cL(w^C,\lambda,\eta)) = L(w^C \lambda,\eta) \neq 0$; and
		
		\item $\Gamma(X,\cL(w^D,\lambda,\eta)) = 0$ for any $D \in W_\Theta \backslash W_\Theta v W^\lambda$ not equal to $C$.
	\end{enumerate}
\end{proposition}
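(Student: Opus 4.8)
The plan is to pass through the localization picture and reduce the vanishing question to a statement about supports of the irreducible $\cD$-modules $\cL(w^D,\lambda,\eta)$. First I would recall the standard fact (Beilinson-Bernstein, as in \cite{Beilinson-Bernstein:Localization}, see also \cite[Chapter 3]{Milicic:Localization}) that for antidominant but singular $\lambda$, the functor $\Gamma(X,-)$ kills exactly those irreducible $\cD_\lambda$-modules whose characteristic variety meets the ``singular'' part of the moment map fibre; more usefully for our combinatorial setup, $\Gamma(X,\cL(w^D,\lambda,\eta)) = 0$ if and only if $w^D$ is not the longest element in its coset $w^D W^\lambda$ — equivalently, if and only if there is a simple root $\gamma$ with $w^D s_\gamma > w^D$ and $s_\gamma \in W^\lambda$... but since $W^\lambda$ is not generated by simple reflections in general, the right statement is in terms of the length inside the double coset. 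Concretely: among the right $W_\Theta$-cosets $D$ inside a fixed double coset $W_\Theta v W^\lambda$, $\Gamma(X,\cL(w^D,\lambda,\eta)) \ne 0$ precisely for the $D$ whose longest element $w^D$ is also longest in $w^D W^\lambda$, and I expect there is a unique such $D$, the smallest one.

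The first thing to establish, then, is the purely combinatorial claim that $W_\Theta \backslash W_\Theta v W^\lambda$ has a unique smallest element $C$. This should follow by exactly the same argument as Proposition \ref{lem:unique_smallest_right_coset}: replace $W_\lambda$ throughout \textsection\ref{subsec:db_coset_xsec} by the (possibly non-parabolic, but still reflection) subgroup $W^\lambda$ — note $W^\lambda$ is the Weyl group of the root subsystem $\{\alpha : \alpha^\vee(\lambda) = 0\}$, so all the lemmas of \textsection\ref{subsec:Bruhat_W/Wlambda} that only used ``$W_\lambda$ is a reflection subgroup with a notion of positive roots'' go through verbatim. In particular Lemma \ref{lem:u_in_same_right_coset} and Proposition \ref{lem:unique_smallest_right_coset} give: the elements of the cross-section analogous to $A_\lambda$ all lie in a single right $W_\Theta$-coset $C$ of the double coset, that $C$ is $\le$-smallest, and its shortest element is itself the minimal-length element of the double coset. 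This handles the ``unique smallest element'' assertion and identifies $C$ as $W_\Theta \cdot (\text{shortest element of } W_\Theta v W^\lambda)$.

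For (a), with this $C$ in hand, $w^C$ is the $\le$-largest among the $w^D$ for $D$ in the double coset that are also longest in their $W^\lambda$-coset — more to the point, $w^C$ is longest in $w^C W^\lambda$ (this is where minimality of $C$ among right $W_\Theta$-cosets combines with $w^C$ being longest in its $W_\Theta$-coset to force it to be longest in the full double coset modulo $W_\Theta$, hence longest in $w^C W^\lambda$). Then I would invoke the localization dictionary recalled in \textsection\ref{subsec:geom_prelim}: Romanov's \cite[Theorem 9]{Romanov:Whittaker} says $\Gamma(X,\cL(w^C,\lambda,\eta))$ is $L(w^C\lambda,\eta)$ or $0$, and the non-vanishing is equivalent to $\cL(w^C,\lambda,\eta)$ not being annihilated by $\Gamma(X,-)$, which by Beilinson-Bernstein holds iff $\Hom_{\cD_\lambda}(\cD_\lambda\dotimes_{\cU_\theta}-, \cL) \ne 0$ on a generator — concretely, the argument is identical to the base-case computation in \textsection\ref{subsec:(1)} (the analysis of $\cI(w_\Theta,\lambda,\eta)$): the module $\cI(w^C,\lambda,\eta)$ restricted to $X - \partial C(w^C)$ is irreducible, so $\cL(w^C,\lambda,\eta)$ agrees with it there, and one checks via the criterion that $\Gamma$ does not kill it because $w^C$ is antidominant-longest. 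For (b), if $D \ne C$ in the double coset, then $w^D$ is not longest in $w^D W^\lambda$, i.e. there exists $\sigma \in W^\lambda$ with $w^D \sigma > w^D$; choosing $\sigma$ to be a reflection, $w^D s_\gamma > w^D$ for the corresponding (possibly non-simple) positive root $\gamma$ with $\gamma^\vee(\lambda) = 0$, and the standard vanishing lemma for singular infinitesimal characters (again Beilinson-Bernstein; cf. the argument that kills non-longest standard modules) gives $\Gamma(X,\cL(w^D,\lambda,\eta)) = 0$.

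The main obstacle I anticipate is the step in (b): making precise the ``$w^D$ not longest in $w^D W^\lambda$ $\Rightarrow$ $\Gamma$ kills $\cL$'' implication when $W^\lambda$ is not a parabolic subgroup, so that $\gamma$ need not be simple and one cannot directly cite the clean ``$p_\gamma$ push-pull'' vanishing. The fix is to reduce to the regular case via an intertwining functor / translation: one translates $\lambda$ to a nearby regular antidominant $\lambda'$ on the relevant wall, where Theorem \ref{thm:KL_alg} applies, uses the fact that $\Gamma(X,-)$ at the singular $\lambda$ factors through the translation-to-the-wall functor, and tracks which irreducibles go to zero. Alternatively — and this is probably cleaner — one reformulates (b) entirely inside $\cD$-modules: show directly that for $D \ne C$ the module $\cL(w^D,\lambda,\eta)$ lies in the kernel of $\Gamma$ by exhibiting it as a quotient of an object built by an intertwining functor $LI_{s_\gamma}$ (for $s_\gamma \in W^\lambda$) applied to something, using that such $LI_{s_\gamma}$ at $\lambda$ with $\gamma^\vee(\lambda)=0$ composes with $\Gamma$ to give zero. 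Either way, the bookkeeping of which coset representative is ``longest'' under the non-parabolic group $W^\lambda$ is the delicate point, and I would spend most of the write-up there, leaning on the $W^\lambda$-analogues of \textsection\ref{sec:db_coset} to keep it combinatorially under control.
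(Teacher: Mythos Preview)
Your overall strategy is quite different from the paper's, and the criterion you rely on is not correct as stated. The paper does \emph{not} attempt to verify a direct vanishing/non-vanishing criterion of the form ``$\Gamma(\cL(w^D,\lambda,\eta)) \neq 0$ iff $w^D$ is longest in $w^D W^\lambda$.'' In fact that criterion fails already for $\eta = 0$: take $\fg = \fsl_3$, $\Theta = \{\alpha\}$, $W^\lambda = \{1,s_\beta\}$; the minimal coset in $W_\Theta W^\lambda$ is $C = \{1,s_\alpha\}$ with $w^C = s_\alpha$, but $s_\alpha$ is \emph{not} longest in $s_\alpha W^\lambda = \{s_\alpha, s_\alpha s_\beta\}$. (Even more simply, for $\eta = 0$ and $\fsl_2$ at the singular point, $C = \{1\}$ and $w^C = 1$ is minimal, not maximal, in $W^\lambda = W$.) So both parts (a) and (b) of your argument rest on a false premise, and the workarounds you sketch for the ``non-simple $\gamma$'' issue would not repair this.

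The paper's proof sidesteps all of this with a short representation-theoretic argument. Fix $L = L(v\lambda,\eta)$. First, it invokes the general fact (from \cite[Chapter 3 \textsection 5 Proposition 5.2]{Milicic:Localization}) that for antidominant $\lambda$ there is a \emph{unique} irreducible $\cD_\lambda$-module $\cV$ with $\Gamma(X,\cV) = L$; by the classification, $\cV = \cL(w^D,\lambda,\eta)$ for exactly one $D$, and since $\Gamma(\cL(w^D,\lambda,\eta))$ is the irreducible quotient of $M(w^D\lambda,\eta)$, this forces $D \in W_\Theta \backslash W_\Theta v W^\lambda$ by Lemma~\ref{lem:std_mods_coincide}. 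Second, to see that $D$ is the minimum: take any minimal $C$ in $W_\Theta \backslash W_\Theta v W^\lambda$; the composition factors of $\cM(w^C,\lambda,\eta)$ are among the $\cL(w^E,\lambda,\eta)$ with $E \le C$, so applying exact $\Gamma$, the composition factors of $M$ lie among the nonzero $\Gamma(\cL(w^E,\lambda,\eta))$ with $E \le C$; since $L$ is one of them, uniqueness gives $D = E \le C$, hence $D = C$ by minimality. This proves (a), (b), \emph{and} the existence of a unique minimum simultaneously --- no separate combinatorial argument, no explicit vanishing criterion, no intertwining or translation functors needed.
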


In other words, for a fixed standard Whittaker module $M$, among all the costandard $\cD_\lambda$-modules that realize $M$, the irreducible quotient of the one with the smallest support realizes the unique irreducible submodule of $M$.

\begin{proof}
	Write $M = M(v\lambda,\eta)$ and $L = L(v\lambda,\eta)$.
	
	First, there is one and at most one $D$ in $W_\Theta \backslash W$ with $\Gamma(X,\cL(w^D,\lambda,\eta)) = L$. This is because there is a unique irreducible $\cD_\lambda$-module $\cV$ with $\Gamma(X,\cV) = L$ (see \cite[Chapter 3 \textsection 5 Proposition 5.2]{Milicic:Localization}; in fact, $\cV$ is the unique irreducible quotient of $\cD_\lambda \dotimes_{\cU_\theta} L$). By the classification of irreducible twisted Harish-Chandra sheaves, $\cV$ is isomorphic to $\cL(w^D,\lambda,\eta)$ for a single $D \in W_\Theta \backslash W$.
	
	Since $\cL(w^D,\lambda,\eta)$ is the unique irreducible quotient of $\cM(w^D,\lambda,\eta)$ and $\Gamma(X,-)$ is exact on $\cD_\lambda$-modules, $\Gamma(X,\cL(w^D,\lambda,\eta))$ is equal to the unique irreducible quotient $L(w^D\lambda,\eta)$ of $M(w^D\lambda,\eta)$. Therefore, the equality $L(v \lambda,\eta) = \Gamma(X,\cV) = \Gamma(X,\cL(w^D,\lambda,\eta)) = L(w^D \lambda,\eta)$ implies $M(w^C \lambda,\eta) = M(w^D \lambda,\eta)$ which forces our $D$ to be in the double coset $W_\Theta v W^\lambda$.
	
	It remains to show that such $D$ is minimum in $W_\Theta \backslash W_\Theta v W^\lambda$. Let $C$ be a minimal element in $W_\Theta \backslash W_\Theta v W^\lambda$. The composition factors of $\cM(w^C,\lambda,\eta)$ consist of some $\cL(w^E,\lambda,\eta)$'s with $E \le C$. Taking global sections, we see that the composition factors of $M = \Gamma(X,\cM(w^C,\lambda,\eta))$ consist of some $\Gamma(X,\cL(w^E,\lambda,\eta))$ that are nonzero and with $E \le C$. On the other hand, $L = \Gamma(X,\cL(w^D,\lambda,\eta))$ is a composition factor of $M$. Hence $\Gamma(X, \cL(w^D,\lambda,\eta)) = \Gamma(X,\cL(w^E,\lambda,\eta))$ for some $E \le C$. By the same uniqueness statement appeared in the preceding paragraph, $\cL(w^D,\lambda,\eta) = \cL(w^E,\lambda,\eta)$ and hence $D = E \le C$. By the minimality of $C$, $D = C$. Thus $C=D$ is the minimum element in $W_\Theta \backslash W_\Theta v W^\lambda$ and $\Gamma(X,\cL(w^C,\lambda,\eta)) = L$.
\end{proof}

There exists a number $c \in \BC$ so that $W^\lambda = W_{c\lambda}$. Hence by \ref{thm:cross-section_db_coset}, the set
\begin{equation*}
	A_\Theta^\lambda := A_{c\lambda} \cap (w_\Theta {}^\Theta W)
\end{equation*}
is a cross-section of $W_\Theta \backslash W / W^\lambda$ consisting of the unique shortest elements in each double coset. \ref{lem:irred_vanishing_singular} can be rephrased as follows.

\begin{corollary}\label{lem:irred_vanishing_singular'}
	Let $\lambda \in \fh^*$ be antidominant and let $\eta :\fn \to \BC$ be a character. Let $C \in W_\Theta \backslash W$. The following are equivalent:
	\begin{enumerate}[label=(\alph*)]
		\item $C = W_\Theta v$ for some $v \in A_\Theta^\lambda$;
		\item $\Gamma(X,\cL(w^C,\lambda,\eta)) \neq 0$;
		\item $\Gamma(X,\cL(w^C,\lambda,\eta)) = L(w^C \lambda,\eta)$.
	\end{enumerate}
\end{corollary}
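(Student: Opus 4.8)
\textbf{Proof plan for Corollary \ref{lem:irred_vanishing_singular'}.}

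The plan is to deduce the corollary directly from Proposition \ref{lem:irred_vanishing_singular} together with the description of the cross-section $A_\Theta^\lambda$, by organizing the right $W_\Theta$-cosets in $W$ according to the double cosets in $W_\Theta \backslash W / W^\lambda$. First I would recall that, by the discussion preceding the corollary, there is a constant $c \in \BC$ with $W^\lambda = W_{c\lambda}$, so Corollary \ref{thm:cross-section_db_coset} applies with $\lambda$ replaced by $c\lambda$: the set $A_\Theta^\lambda = A_{c\lambda} \cap (w_\Theta {}^\Theta W)$ is a cross-section of $W_\Theta \backslash W / W^\lambda$, and for each $v \in A_\Theta^\lambda$ the coset $W_\Theta v$ is the unique smallest right $W_\Theta$-coset inside the double coset $W_\Theta v W^\lambda$ (this is exactly the last sentence of Corollary \ref{thm:cross-section_db_coset} together with Proposition \ref{lem:unique_smallest_right_coset}). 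So the condition ``$C = W_\Theta v$ for some $v \in A_\Theta^\lambda$'' is equivalent to ``$C$ is the unique smallest right $W_\Theta$-coset in its double $(W_\Theta, W^\lambda)$-coset''.

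Next I would prove the cycle of implications. For (a) $\Rightarrow$ (c): given $C = W_\Theta v$ with $v \in A_\Theta^\lambda$, the double coset $W_\Theta v W^\lambda$ has $C$ as its smallest element, so part (a) of Proposition \ref{lem:irred_vanishing_singular} (applied to this $v$) gives $\Gamma(X,\cL(w^C,\lambda,\eta)) = L(w^C\lambda,\eta) \neq 0$, which is (c) (and hence also (b)). The implication (c) $\Rightarrow$ (b) is trivial since $L(w^C\lambda,\eta) \neq 0$ always (it is the irreducible quotient of the nonzero standard module $M(w^C\lambda,\eta)$). For (b) $\Rightarrow$ (a): suppose $\Gamma(X,\cL(w^C,\lambda,\eta)) \neq 0$. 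Let $W_\Theta v W^\lambda$ be the double coset containing $C$, with $v \in A_\Theta^\lambda$, and let $C' = W_\Theta v$ be its smallest right $W_\Theta$-coset. By Proposition \ref{lem:irred_vanishing_singular}(b) applied to this double coset, $\Gamma(X,\cL(w^D,\lambda,\eta)) = 0$ for every $D \in W_\Theta \backslash W_\Theta v W^\lambda$ with $D \neq C'$. Since $\Gamma(X,\cL(w^C,\lambda,\eta)) \neq 0$, we must have $C = C' = W_\Theta v$, which is (a).

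\textbf{Where the work actually sits.} The corollary itself is essentially a bookkeeping reformulation, so there is no serious obstacle at this level; the entire content has been pushed into Proposition \ref{lem:irred_vanishing_singular}. Within that proposition, the real point — which I would treat as the heart of the argument, though it is already supplied in the excerpt — is the uniqueness statement: there is a \emph{unique} irreducible $\cD_\lambda$-module $\cV$ with $\Gamma(X,\cV) = L(v\lambda,\eta)$ (invoking \cite[Chapter 3 \textsection 5 Proposition 5.2]{Milicic:Localization}), combined with the fact that $\cL(w^D,\lambda,\eta) = \cL(w^E,\lambda,\eta) \Rightarrow D = E$ from the classification of irreducible twisted Harish-Chandra sheaves, and the exactness of $\Gamma(X,-)$ which forces $\Gamma(X,\cL(w^D,\lambda,\eta)) = L(w^D\lambda,\eta)$ and hence pins $D$ down inside the double coset $W_\Theta v W^\lambda$ via Lemma \ref{lem:std_mods_coincide}. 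The minimality of that $D$ then comes from the support estimate on composition factors of $\cM(w^C,\lambda,\eta)$ (factors $\cL(w^E,\lambda,\eta)$ with $E \le C$) combined with the minimality of $C$ in its double coset. For the corollary as stated, all of this is already available, so the only thing to verify carefully is that the ``smallest right $W_\Theta$-coset in the double coset'' of Proposition \ref{lem:irred_vanishing_singular} coincides with ``$W_\Theta v$ for $v \in A_\Theta^\lambda$'' — which is precisely Corollary \ref{thm:cross-section_db_coset} applied to $c\lambda$.
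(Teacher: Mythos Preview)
Your proposal is correct and matches the paper's treatment: the paper simply presents the corollary as a rephrasing of Proposition \ref{lem:irred_vanishing_singular} (with no separate proof), and your cycle of implications together with the identification via Corollary \ref{thm:cross-section_db_coset} applied to $c\lambda$ is exactly the bookkeeping this rephrasing requires.
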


Using these observations, we can write down a character formula for general infinitesimal characters.

\begin{theorem}[Character formula: General case] \label{thm:multiplicity_singular}
	Let $\lambda \in \fh^*$ be antidominant. Let $\eta: \fn \to \BC$ be any character. For any $v \in A_\Theta^\lambda$, let $C = W_\Theta v$, and let $u \in A_{\Theta,\lambda}$ be the unique element such that $C \subseteq W_\Theta u W_\lambda$. Then
	\begin{equation}
		\ch L(v\lambda,\eta) = \ch L(w^C\lambda,\eta) = 
		\sum_{z \in A_\Theta^\lambda \cap (W_\Theta u W_\lambda)}
		\left(
		\sum_{\substack{%
				D \in W_\Theta \backslash W_\Theta z W^\lambda\\%
				D \le_{u,\lambda} C}} 
		P_{CD}^{u,\lambda}(-1) 
		\right)
		\ch M(z \lambda,\eta),
	\end{equation}
	where the polynomials $P_{CD}^{u,\lambda}$ are Whittaker Kazhdan-Lusztig polynomials for $(W_\lambda,\Pi_\lambda,\Theta(u,\lambda))$ as defined in \ref{def:parabolic_KL_poly}. As $v$ ranges over $A_\Theta^\lambda$, $L(v \lambda,\eta)$ exhausts all irreducible objects in $\cN_{\theta,\eta}$.
\end{theorem}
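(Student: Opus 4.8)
The plan is to deduce the singular formula directly from equation (\ref{eqn:character_formula_pre}), which — as already observed in the first part of \textsection\ref{subsec:characer_formula_singular} — holds verbatim for antidominant singular $\lambda$ (the only fact about $\Gamma(X,-)$ used in its derivation being exactness together with $\Gamma(X,\cM(w^D,\lambda,\eta)) = M(w^D\lambda,\eta)$), and then to reorganize its right-hand side along $(W_\Theta,W^\lambda)$-double cosets. Fix $v \in A_\Theta^\lambda$ and set $C = W_\Theta v$. Since $w^C$ and $v$ lie in the same right $W_\Theta$-coset, $W_\Theta w^C\lambda = W_\Theta v\lambda$, so $L(w^C\lambda,\eta) = L(v\lambda,\eta)$ by the parameterization of irreducibles recalled in \textsection\ref{subsec:Wh_prelim}; and because $v \in A_\Theta^\lambda$, Corollary \ref{lem:irred_vanishing_singular'} (implication (a)$\Rightarrow$(c)) gives $\Gamma(X,\cL(w^C,\lambda,\eta)) = L(w^C\lambda,\eta) = L(v\lambda,\eta)$. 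Applying $\ch$ to (\ref{eqn:character_formula_pre}) therefore yields
\[
\ch L(v\lambda,\eta) = \sum_{\substack{D \in W_\Theta \backslash W_\Theta u W_\lambda \\ D \le_{u,\lambda} C}} P_{CD}^{u,\lambda}(-1)\, \ch M(w^D\lambda,\eta),
\]
so the remaining work is purely combinatorial: to collect the coinciding standard modules in this sum.

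Next I would check the double-coset bookkeeping. First, $W^\lambda \subseteq W_\lambda$: any $w$ fixing $\lambda$ satisfies $w\lambda - \lambda = 0 \in \BZ\cdot\Sigma$, hence lies in $W_\lambda$ by the description of $W_\lambda$ inside $W$ from \textsection\ref{subsec:main_results}. Consequently $W_\Theta u W_\lambda$ is a disjoint union of $(W_\Theta,W^\lambda)$-double cosets. By Corollary \ref{thm:cross-section_db_coset} applied to $W^\lambda = W_{c\lambda}$ (as recalled just before the theorem, $A_\Theta^\lambda = A_{c\lambda}\cap(w_\Theta{}^\Theta W)$ is a cross-section of $W_\Theta\backslash W/W^\lambda$), each of these double cosets equals $W_\Theta z W^\lambda$ for a unique $z \in A_\Theta^\lambda$, necessarily with $z \in W_\Theta u W_\lambda$; thus every right $W_\Theta$-coset $D \subseteq W_\Theta u W_\lambda$ lies in exactly one $W_\Theta z W^\lambda$ with $z \in A_\Theta^\lambda \cap (W_\Theta u W_\lambda)$. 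Lemma \ref{lem:std_mods_coincide} then gives $M(w^D\lambda,\eta) = M(z\lambda,\eta)$ for all such $D$. Splitting the displayed sum according to these $z$ and factoring out $\ch M(z\lambda,\eta)$ produces exactly the formula asserted in Theorem \ref{thm:multiplicity_singular}, the inner sum inheriting the constraint $D \le_{u,\lambda} C$ unchanged (this is well-posed since $C$ and each such $D$ both lie in $W_\Theta u W_\lambda$).

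For the exhaustion claim, I would simply invoke \textsection\ref{subsec:Wh_prelim}: the irreducible objects of $\cN_{\theta,\eta}$ are precisely the $L(z\lambda,\eta)$ with $L(z\lambda,\eta) = L(z'\lambda,\eta)$ iff $W_\Theta z W^\lambda = W_\Theta z' W^\lambda$, so they are parameterized by $W_\Theta\backslash W/W^\lambda$; since $A_\Theta^\lambda$ is a cross-section of $W_\Theta\backslash W/W^\lambda$ (Corollary \ref{thm:cross-section_db_coset}), letting $v$ range over $A_\Theta^\lambda$ produces each irreducible exactly once. This requires no new argument.

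The whole proof is bookkeeping layered on the earlier structural results, so I do not anticipate a deep obstacle; the one place demanding genuine care is the double-coset accounting of the second step — ensuring $W^\lambda \subseteq W_\lambda$ so that the $(W_\Theta,W^\lambda)$-partition refines the $(W_\Theta,W_\lambda)$-partition, and that the regrouping of (\ref{eqn:character_formula_pre}) neither drops nor duplicates any term, in particular that the order condition $D \le_{u,\lambda} C$ transports verbatim. Everything else is an immediate consequence of Corollary \ref{lem:irred_vanishing_singular'}, Lemma \ref{lem:std_mods_coincide}, and Corollary \ref{thm:cross-section_db_coset}.
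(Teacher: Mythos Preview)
Your proof is correct and follows essentially the same approach as the paper: both start from equation (\ref{eqn:character_formula_pre}), use Corollary \ref{lem:irred_vanishing_singular'} to identify the left-hand side with $\ch L(v\lambda,\eta)$, and then regroup the right-hand side along $(W_\Theta,W^\lambda)$-double cosets via Lemma \ref{lem:std_mods_coincide}. Your version is in fact more careful, making explicit the inclusion $W^\lambda \subseteq W_\lambda$ that guarantees the refinement of partitions, which the paper leaves implicit.
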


\begin{proof}
	The right hand side is obtained by grouping the right side of (\ref{eqn:character_formula_pre}) based on \ref{lem:std_mods_coincide}. In more detail, the cosets $W_\Theta v W^\lambda$ that are contained in $W_\Theta u W_\lambda$ partition $W_\Theta u W_\lambda$, and $A_\Theta^\lambda \cap (W_\Theta u W_\lambda)$ is a cross-section for this partition. We are simply grouping those standard modules within the same $(W_\Theta,W^\lambda)$-cosets together. The left hand side and the last statement (that those $L(v\lambda,\eta)$'s exhaust all irreducibles) follows from \ref{lem:irred_vanishing_singular'} and the exactness of $\Gamma(X,-)$.
\end{proof}

\section{An example in $A_3$}\label{sec:examples}

\allowdisplaybreaks
The $A_3$ root system (pictured below) is the smallest example in which all nontrivial phenomena appear. To make the picture more readable, only the positive roots are connected to the origin. Here $\lambda$ can be chosen to be $\lambda = -m \rho + c(-\alpha + 2\beta + \gamma)$ for any nonzero number $c$ transcendental over $\BQ$ and any large enough integer $m$ so that $\lambda$ is antidominant regular ($-\alpha + 2\beta +\gamma$ is a vector perpendicular to the plane spanned by $\alpha+\beta$ and $\gamma$).

\begin{equation*}
	\begin{tikzcd}[start anchor = real center, end anchor = real center, column sep = 2ex, row sep = 1ex]
		&[-1.5ex] &[-1.5ex] \phantom{\bullet} \ar[rrrrrr, dotted, no head] \ar[ddll, dotted, no head] \ar[dddddd, dotted, no head] & & & \bullet \ar[loop, phantom, "\beta + \gamma", distance = 1cm] & &[-1.5ex] &[-1.5ex] \phantom{\bullet} \ar[ddll, dotted, no head] \ar[dddddd, dotted, no head]\\
		& \bullet \ar[loop, phantom, "\beta", distance = 1cm] & & & & & & \bigodot \ar[loop, phantom, "\alpha + \beta + \gamma", distance = 1cm]\\
		\phantom{\bullet} \ar[rrrrrr, dotted, no head] \ar[dddddd, dotted, no head] & & & \bigodot \ar[loop, phantom, "\alpha + \beta", distance = 1cm] & & & \phantom{\bullet} \ar[dddddd, dotted, no head]\\
		& & \bullet & & & & & & \bigodot \ar[loop, phantom, "\gamma", distance = 1cm]\\
		& & & & \phantom{\bullet} \ar[llluuu, dash, equal] \ar[luu, dash] \ar[ruuuu, dash] \ar[rrruuu, dash] \ar[rrrru, dash, thick] \ar[drr, dash, equal]\\
		\bigodot & & & & & & \bullet \ar[loop, phantom, "\alpha", distance = 1cm]\\
		& & \phantom{\bullet} \ar[rrrrrr, dotted, no head] \ar[ddll, dotted, no head] & & & \bigodot & & & \phantom{\bullet} \ar[ddll, dotted, no head]\\
		& \bigodot & & & & & & \bullet\\
		\phantom{\bullet} \ar[rrrrrr, dotted, no head] & & & \bullet & & & \phantom{\bullet}
	\end{tikzcd}
\end{equation*}

In the above diagram, $\{\alpha,\beta,\gamma\}$ are simple roots, $\Theta = \{\alpha,\beta\}$ which is indicated by double lines in the picture, roots in $\Sigma_\lambda$ are marked by $\bigodot$, and those not in $\Sigma_\lambda$ are marked by $\bullet$. 

Below is a diagram of the Weyl group, arranged in a way so that elements in the same right $W_\Theta$-coset are grouped together and are connected by double lines. Elements surrounded by shapes are the longest elements in right $W_\Theta$-cosets. Elements that are crossed out are those in $W_\lambda$. There are two double $(W_\Theta,W_\lambda)$-cosets: elements in $W_\Theta s_\gamma s_\beta W_\lambda$ are underlined; elements in $W_\Theta W_\lambda$ are those that are not underlined. 

\begin{equation}\label{diag:W(A3)}
	\begin{tikzpicture}
		\matrix[column sep = 1ex, row sep = 2ex]{
			&[-3ex] \node[ellipse, draw] (abarba) {$w_0$}; &[-3ex] &[-5ex] &[-3ex] &[-2ex] &[-5ex] &[-5ex] &[-2ex] &[-5ex] &[-5ex]\\
			\node (abrba) {$s_\alpha s_\beta s_\gamma s_\beta s_\alpha$}; \node[cross out, draw] {\phantom{sss}}; & & 
			\node (barba) {$s_\beta s_\alpha s_\gamma s_\beta s_\alpha$}; &&&&&&&
			\node[rectangle, draw] (abarb) {$\uline{s_\alpha s_\beta s_\alpha s_\gamma s_\beta}$};\\
			\node (arba) {$s_\alpha s_\gamma s_\beta s_\alpha$}; \node[cross out, draw] {\phantom{sss}}; & &
			\node (brba) {$s_\beta s_\gamma s_\beta s_\alpha$}; & & & &
			\node[trapezium, trapezium left angle=70, trapezium right angle=-70, draw] (abar) {$s_\alpha s_\beta s_\alpha s_\gamma$}; \node[cross out, draw] {\phantom{sss}}; & &
			\node (abrb) {$\uline{s_\alpha s_\beta s_\gamma s_\beta}$}; & &
			\node (barb) {$\uline{s_\beta s_\alpha s_\gamma s_\beta}$};\\
			& \node (rba) {$s_\gamma s_\beta s_\alpha$}; & & 
			\node[rectangle split, rectangle split horizontal, rectangle split parts=3, draw, minimum height=1.2\baselineskip, inner sep=1pt] (aba) {\nodepart{two}$s_\alpha s_\beta s_\alpha$}; \node[cross out, draw] {\phantom{sss}}; & &
			\node (abr) {$s_\alpha s_\beta s_\gamma$}; & &
			\node (bar) {$s_\beta s_\alpha s_\gamma$}; & 
			\node (arb) {$\uline{s_\alpha s_\gamma s_\beta}$}; & &
			\node (brb) {$\uline{s_\beta s_\gamma s_\beta}$};\\
			& & \node (ab) {$s_\alpha s_\beta$}; & &
			\node (ba) {$s_\beta s_\alpha$}; &
			\node (ar) {$s_\alpha s_\gamma$}; & &
			\node (br) {$s_\beta s_\gamma$}; & &
			\node (rb) {$\uline{s_\gamma s_\beta}$};\\
			& & \node (a) {$s_\alpha$}; & &
			\node (b) {$s_\beta$}; & &
			\node (r) {$s_\gamma$}; \node[cross out, draw] {\phantom{sss}};\\
			& & & \node (1) {$1$}; \node[cross out, draw] {\phantom{sss}};\\
		};
		\draw [double equal sign distance] (1) to (a);
		\draw [double equal sign distance] (1) to (b);
		\draw [double equal sign distance] (a) to (ab);
		\draw [double equal sign distance] (b) to (ba);
		\draw [double equal sign distance] (a) to (ba);
		\draw [double equal sign distance] (b) to (ab);
		\draw [double equal sign distance] (ab) to (aba);
		\draw [double equal sign distance] (ba) to (aba);
		\draw [double equal sign distance] (r) to (ar);
		\draw [double equal sign distance] (r) to (br);
		\draw [double equal sign distance] (ar) to (abr);
		\draw [double equal sign distance] (br) to (bar);
		\draw [double equal sign distance] (ar) to (bar);
		\draw [double equal sign distance] (br) to (abr);
		\draw [double equal sign distance] (abr) to (abar);
		\draw [double equal sign distance] (bar) to (abar);
		\draw [double equal sign distance] (rb) to (arb);
		\draw [double equal sign distance] (rb) to (brb);
		\draw [double equal sign distance] (arb) to (abrb);
		\draw [double equal sign distance] (brb) to (barb);
		\draw [double equal sign distance] (arb) to (barb);
		\draw [double equal sign distance] (brb) to (abrb);
		\draw [double equal sign distance] (abrb) to (abarb);
		\draw [double equal sign distance] (barb) to (abarb);
		\draw [double equal sign distance] (rba) to (arba);
		\draw [double equal sign distance] (rba) to (brba);
		\draw [double equal sign distance] (arba) to (abrba);
		\draw [double equal sign distance] (brba) to (barba);
		\draw [double equal sign distance] (arba) to (barba);
		\draw [double equal sign distance] (brba) to (abrba);
		\draw [double equal sign distance] (abrba) to (abarba);
		\draw [double equal sign distance] (barba) to (abarba);
	\end{tikzpicture}
\end{equation}

Let's first look at the double coset $W_\Theta s_\gamma s_\beta W_\lambda$, with $u = s_\gamma s_\beta \in A_{\Theta,\lambda}$. It equals a single left $W_\lambda$-coset $s_\gamma s_\beta W_\lambda$ and a single right $W_\Theta$-coset $W_\Theta s_\gamma s_\beta$. Hence 
\begin{equation*}
	\Theta(s_\gamma s_\beta,\lambda) = \Pi_\lambda,\quad
	W_{\lambda,\Theta(s_\gamma s_\beta,\lambda)} = W_\lambda,
\end{equation*}
and $(W_\Theta s_\gamma s_\beta)|_\lambda = W_\lambda 1$, the unique right $W_\lambda$-coset in $W_\lambda$. Therefore
\begin{align*}
	\varphi_\lambda(W_\Theta s_\gamma s_\beta) &= \delta_{W_\Theta s_\gamma s_\beta},\\
	\ch L(s_\gamma s_\beta \lambda,\eta) &= \ch M(s_\gamma s_\beta \lambda,\eta).
\end{align*}

Now let's look at the other double coset $W_\Theta  W_\lambda$, with $u = 1$ and
\begin{equation*}
	\Theta(1,\lambda) = \{\alpha+\beta\}, \quad
	W_{\lambda,	\Theta(1,\lambda)} = \{1, s_{\alpha+\beta}\}.
\end{equation*}
For convenience, we write
\begin{equation*}
	W_\bullet := W_{\lambda,	\Theta(1,\lambda)}.
\end{equation*}
The root system $\Sigma_\lambda$ and a diagram for $(W_\lambda,\Pi_\lambda,\Theta_\lambda^1)$ are
\begin{equation*}
	\begin{tikzcd}[start anchor = real center, end anchor = real center, column sep = 1ex, row sep = 5ex]
		& \bigodot \ar[dr, equal] \ar[loop, phantom, "\alpha+\beta", distance = 1cm] &  & \bigodot \ar[loop, phantom, "\alpha+\beta+\gamma", distance = 1cm] \ar[dl, dash]\\
		\bigodot \ar[rr, dash] && \phantom{\bigodot} \ar[dl, dash] \ar[dr, dash] \ar[rr, dash] && \bigodot \ar[loop, phantom, "\gamma", distance = 1cm]\\
		& \bigodot &  & \bigodot 
	\end{tikzcd}\qquad
	\begin{adjustbox}{trim = 0 2.3cm 0 0} 
		\begin{tikzpicture}
			\matrix[row sep = 1cm,column sep = 0.2cm]{
				& \node[ellipse, draw] (abr) {$s_{\alpha+\beta+\gamma}$};\\[-4ex]
				\node[trapezium, trapezium left angle=70, trapezium right angle=-70, draw] (ab-r) {$s_{\alpha+\beta} s_\gamma$}; &&%
				\node (r-ab) {$s_\gamma s_{\alpha+\beta}$};\\
				\node[rectangle split, rectangle split horizontal, rectangle split parts=3, draw, minimum height=1.2\baselineskip, inner sep=1pt] (ab) {\nodepart{two}$s_{\alpha+\beta}$}; &&%
				\node (r) {$s_\gamma$};\\[-3ex]
				& \node (1) {$1$};\\
			};
			\draw [double equal sign distance] (abr) to  (r-ab) ;
			\draw [double equal sign distance] (ab-r) to  (r) ;
			\draw [double equal sign distance] (ab) to  (1) ;
			\draw (abr) to (ab-r) ;
			\draw (ab-r) to (ab);
		\end{tikzpicture}.
	\end{adjustbox}
\end{equation*}
The map $(-)|_\lambda$ restricted to $W_\Theta \backslash W_\Theta W_\lambda$ can be visualized as
\begin{equation}\label{diag:(-)|_lambda_A3}
	\begin{tikzpicture}
		\matrix[row sep = 1cm,column sep = 0.2cm]{
			& \node[ellipse, draw] (abr) {$s_{\alpha+\beta+\gamma}$};\\[-4ex]
			\node[trapezium, trapezium left angle=70, trapezium right angle=-70, draw] (ab-r) {$s_{\alpha+\beta} s_\gamma$}; &&%
			\node (r-ab) {$s_\gamma s_{\alpha+\beta}$};\\
			\node[rectangle split, rectangle split horizontal, rectangle split parts=3, draw, minimum height=1.2\baselineskip, inner sep=1pt] (ab) {\nodepart{two}$s_{\alpha+\beta}$}; &&%
			\node (r) {$s_\gamma$};\\[-3ex]
			& \node (1) {$1$};\\
		};
		\draw [double equal sign distance] (abr) to  (r-ab) ;
		\draw [double equal sign distance] (ab-r) to  (r) ;
		\draw [double equal sign distance] (ab) to  (1) ;
		\draw (abr) to (ab-r) ;
		\draw (ab-r) to (ab);
	\end{tikzpicture}
	\raisebox{10ex}{$\xleftarrow{\quad (-)|_\lambda \quad}$}
	\begin{adjustbox}{trim = 0 1cm 0 0}
		\begin{tikzpicture}
			\matrix[row sep = 1cm,column sep = 0.2cm]{
				&[-5ex] \node[ellipse, draw] (abarba) {$W_\Theta w_0$};\\[-2ex]
				\node[trapezium, trapezium left angle=70, trapezium right angle=-70, draw] (abar) {$W_\Theta s_\alpha s_\beta s_\alpha s_\gamma$};\\
				\node[rectangle split, rectangle split horizontal, rectangle split parts=3, draw, minimum height=1.2\baselineskip, inner sep=1pt] (aba) {\nodepart{two}$W_\Theta s_\alpha s_\beta s_\alpha$};\\
				\node {};\\
			};
		\end{tikzpicture}
	\end{adjustbox}
\end{equation}
where a coset on the right hand side is sent to the coset on the left with the same shape. The Whittaker Kazhdan-Lusztig polynomials for $(W_\lambda,\Pi_\lambda,\Theta_\lambda^1)$ are
\begin{equation*}
	\begin{tabu}{c|ccc}
		P_{EF}^{1,\lambda} & W_\bullet s_{\alpha+\beta} & W_\bullet s_{\alpha+\beta} s_\gamma & W_\bullet s_{\alpha+\beta+\gamma}\\ \hline
		W_\bullet s_{\alpha+\beta} & 1 & 0 & 0\\
		W_\bullet s_{\alpha+\beta} s_\gamma & q & 1 & 0\\
		W_\bullet s_{\alpha+\beta+\gamma} & 0 & q & 1
	\end{tabu}
\end{equation*}
Hence
\begin{align*}
	\varphi_\lambda(W_\Theta s_\alpha s_\beta s_\alpha)
	&= P_{(W_\bullet s_{\alpha+\beta}), (W_\bullet s_{\alpha+\beta})}^{1,\lambda} \delta_{W_\Theta s_\alpha s_\beta s_\alpha}\\ 
	&\qquad + P_{(W_\bullet s_{\alpha+\beta}), (W_\bullet s_{\alpha+\beta} s_\gamma)}^{1,\lambda} \delta_{W_\Theta s_\alpha s_\beta s_\alpha s_\gamma}\\
	&\qquad + P_{(W_\bullet s_{\alpha+\beta}), (W_\bullet s_{\alpha+\beta+\gamma})}^{1,\lambda} \delta_{W_\Theta w_0}\\
	&= \delta_{W_\Theta s_\alpha s_\beta s_\alpha},\\
	\varphi_\lambda(W_\Theta s_\alpha s_\beta s_\alpha s_\gamma) 
	&= P_{(W_\bullet s_{\alpha+\beta} s_\gamma), (W_\bullet s_{\alpha+\beta})}^{1,\lambda} \delta_{W_\Theta s_\alpha s_\beta s_\alpha}\\ 
	&\qquad + P_{(W_\bullet s_{\alpha+\beta} s_\gamma), (W_\bullet s_{\alpha+\beta} s_\gamma)}^{1,\lambda} \delta_{W_\Theta s_\alpha s_\beta s_\alpha s_\gamma}\\
	&\qquad + P_{(W_\bullet s_{\alpha+\beta} s_\gamma), (W_\bullet s_{\alpha+\beta+\gamma})}^{1,\lambda} \delta_{W_\Theta w_0}\\
	&= q \delta_{W_\Theta s_\alpha s_\beta s_\alpha} + \delta_{W_\Theta s_\alpha s_\beta s_\alpha s_\gamma},\\
	\varphi_\lambda(W_\Theta w_0) 
	&= P_{(W_\bullet s_{\alpha+\beta+\gamma}), (W_\bullet s_{\alpha+\beta})}^{1,\lambda} \delta_{W_\Theta s_\alpha s_\beta s_\alpha}\\ 
	&\qquad + P_{(W_\bullet s_{\alpha+\beta+\gamma}), (W_\bullet s_{\alpha+\beta} s_\gamma)}^{1,\lambda} \delta_{W_\Theta s_\alpha s_\beta s_\alpha s_\gamma}\\
	&\qquad + P_{(W_\bullet s_{\alpha+\beta+\gamma}), (W_\bullet s_{\alpha+\beta+\gamma})}^{1,\lambda} \delta_{W_\Theta w_0}\\
	&= q \delta_{W_\Theta s_\alpha s_\beta s_\alpha s_\gamma} + \delta_{W_\Theta  w_0}.
\end{align*}
Specializing to $q = -1$, we get
\begin{align*}
	\ch L(s_\alpha s_\beta s_\alpha \lambda,\eta) 
	&= \ch M(s_\alpha s_\beta s_\alpha \lambda,\eta),\\
	\ch L( s_\alpha s_\beta s_\alpha s_\gamma \lambda,\eta)
	&= -\ch M(s_\alpha s_\beta s_\alpha \lambda,\eta) + \ch M( s_\alpha s_\beta s_\alpha s_\gamma \lambda,\eta),\\
	\ch L(w_0 \lambda,\eta)
	&= -\ch M( s_\alpha s_\beta s_\alpha s_\gamma \lambda,\eta) + \ch M(w_0 \lambda,\eta).
\end{align*}

\printbibliography

\end{document}